\documentclass[a4paper,oneside,11pt]{amsart}

\reversemarginpar

\usepackage[colorlinks,hyperindex,linkcolor=blue,urlcolor=black,pdftitle={On expansive operators that are quasisimilar
 to the unilateral shift of finite multiplicity},pdfauthor={Maria F.Gamal'}]
{hyperref}

\usepackage{amsmath}
\usepackage{amsfonts}
\usepackage{amssymb}
\usepackage{amsthm}

\usepackage[english]{babel}
\usepackage{enumerate}

\usepackage{graphicx}
\usepackage{color}

\usepackage[abbrev]{amsrefs}

\numberwithin{equation}{section}

\theoremstyle{plain}
\newtheorem{theorem}{Theorem}[section]
\newtheorem{lemma}[theorem]{Lemma}

\newtheorem{corollary}[theorem]{Corollary}
\newtheorem{proposition}[theorem]{Proposition}
\newtheorem{theoremcite}{Theorem}

\theoremstyle{definition}
\newtheorem{remark}[theorem]{Remark}
\newtheorem{example}[theorem]{Example}


\begin{document}

\title[On expansive operators]{On expansive operators that are quasisimilar
 to the unilateral shift of finite multiplicity}

\author{Maria F. Gamal'}
\address{
 St. Petersburg Branch\\ V. A. Steklov Institute 
of Mathematics\\
Russian Academy of Sciences\\ Fontanka 27, St. Petersburg\\ 
191023, Russia  
}
\email{gamal@pdmi.ras.ru}

\keywords{Expansive operator, contraction, quasisimilarity, similarity, unilateral shift, invariant subspaces,  unitary asymptote,  intertwining relation.}


\begin{abstract}
An operator $T$ on a Hilbert space $\mathcal H$ is called expansive, if $\|Tx\|\geq \|x\|$ ($x\in\mathcal H$). 
Expansive operators  $T$  quasisimilar to the unilateral shift $S_N$ of finite multiplicity $N$ are studied. 
It is proved that $I-T^*T$ is of trace class for such $T$. Also the lattice $\mathrm{Lat}T$ of invariant subspaces of an expansive operator  $T$ quasisimilar to $S_N$ is studied. It is proved that $\dim\mathcal M\ominus T\mathcal M\leq  N$ 
for  every $\mathcal M\in\mathrm{Lat}T$.  It is shown that if $N\geq 2$, then there exist $\mathcal M_j\in\mathrm{Lat}T$ ($j=1,\ldots, N$)  such that 
the restriction $T|_{\mathcal M_j}$ of $T$ on $\mathcal M_j$ is similar to the  unilateral shift $S$ of multiplicity $1$ 
for every $j=1,\ldots, N$, 
and $\mathcal H=\vee_{j=1}^N\mathcal M_j$. For $N=1$, that is, for $T$ quasisimilar to $S$, there exist two spaces 
 $\mathcal M_1$,  $\mathcal M_2\in\mathrm{Lat}T$  such that $T|_{\mathcal M_j}$ is similar to $S$  
for $j=1,2$, 
and $\mathcal H=\mathcal M_1\vee\mathcal M_2$. Example of an expansive operator $T$ quasisimilar to $S$ is given such that intertwining transformations do  not give an isomorphism of $\mathrm{Lat}T$ and $\mathrm{Lat}S$.

2020 \emph{Mathematics Subject Classification}. 47A45, 47A15, 47A55.

 \end{abstract}

\maketitle

\section{Introduction}

 Let $\mathcal H$ be a (complex, separable) Hilbert space, 
and let  $\mathcal L(\mathcal H)$ be the algebra of all (bounded, linear)  operators acting on  $\mathcal H$. A (closed) subspace  $\mathcal M$ of  $\mathcal H$ is called \emph{invariant} 
for an operator $T\in\mathcal L(\mathcal H)$, if $T\mathcal M\subset\mathcal M$.
 The complete lattice of all invariant subspaces of $T$ is denoted by  $\operatorname{Lat}T$. 

The \emph{multiplicity} $\mu_T$ of an operator  $T\in \mathcal L(\mathcal H)$ 
 is the minimum dimension of its reproducing subspaces: 
\begin{equation}\label{mu}  \mu_T=\min\{\dim E: E\subset \mathcal H, \ \ 
\vee_{n=0}^\infty T^n E=\mathcal H \}.\end{equation}

For Hilbert spaces  $\mathcal H$ and $\mathcal K$, let    $\mathcal L(\mathcal H, \mathcal K)$ denote the space of (bounded, linear) 
transformations acting from $\mathcal H$ to $\mathcal K$.
For $A\in\mathcal L(\mathcal H)$ and  $B\in\mathcal L(\mathcal K)$ 
set
\begin{equation*}
\mathcal I(A,B)=\{X\in\mathcal L(\mathcal H, \mathcal K)\ :\ XA=BX\}.
\end{equation*} 
Then $\mathcal I(A,B)$ is the set of all transformations $X$ which  \emph{intertwine} $A$ and $B$.
Let $X\in\mathcal I(A,B)$.  
If $X$ is unitary, then $A$ and $B$ 
are called  \emph{unitarily equivalent}, in notation: $A\cong B$. If $X$ is invertible, that is, $X^{-1}\in\mathcal L(\mathcal K, \mathcal H)$, 
then $A$ and $B$ are called \emph{similar}, in notation: $A\approx B$.
If $X$ is a \emph{quasiaffinity}, that is, $\ker X=\{0\}$ and $\operatorname{clos}X\mathcal H=\mathcal K$, then
$A$ is called a  \emph{quasiaffine transform} of $B$, in notation: $A\prec B$. 
If  $\ker X=\{0\}$, we write $A\buildrel i \over\prec B$. 
If  $\operatorname{clos}X\mathcal H=\mathcal K$, we write $A\buildrel d \over\prec B$. 
If $A\prec B$ and 
$B\prec A$, then $A$ and $B$ are called  \emph{quasisimilar}, in notation: $A\sim B$. 

An operator $T\in\mathcal L(\mathcal H)$ is called \emph{expansive} if $\|Tx\|\geq\|x\|$ for every $x\in \mathcal H$. 

An  operator $T$ is called  \emph{power bounded}, if $\sup_{n\geq 0}\|T^n\| < \infty$.
 An  operator $T$  is called a  \emph{contraction}, if $\|T\|\leq 1$. 
Clearly, a contraction is power bounded.

Let   $T\in\mathcal L(\mathcal H)$ be a power bounded operator. 
 It is easy to see that the space 
\begin{equation}\label{01}
\mathcal H_{T,0}=\{x\in\mathcal H \ :\ \|T^nx\|\to 0\}
\end{equation}
 is invariant for $T$ (cf. {\cite[Theorem II.5.4]{nfbk}}). Classes $C_{ab}$, $a$, $b=0,1,\cdot$, of power bounded operators 
are defined as follows (see  {\cite[Sec. II.4]{nfbk}} and \cite{ker89}). If $\mathcal H_{T,0}=\mathcal H$, then  $T$ is  \emph{of class} $C_{0\cdot}$, while if  $\mathcal H_{T,0}=\{0\}$, then $T$ is 
 \emph{of class} $C_{1\cdot}$. Furthermore,  $T$  is \emph{of class} $C_{\cdot a}$, if $T^\ast$ is of class  $C_{a\cdot}$,  
 and $T$ is  \emph{of class} $C_{ab}$, if $T$ is of classes $C_{a\cdot}$ and $C_{\cdot b}$, $a$, $b=0,1$.

For a power bounded operator $T\in\mathcal L(\mathcal H)$ 
\begin{equation*}\text{ the \emph{isometric asymptote} }(X_{+,T},T_+^{(a)}) \end{equation*}   
 can be defined using a Banach limit $\mathop{\text{\rm Lim}}$, see \cite{ker89}. (For the isometric and unitary asymptotes of a contraction $T$ see also {\cite[Sec. IX.1]{nfbk}}). 
Here $T_+^{(a)}$ is an isometry on a Hilbert space $\mathcal H_+^{(a)}$, and $X_{+,T}$ is the \emph{canonical intertwining mapping}: 
$X_{+,T}T=T_+^{(a)}X_{+,T}$. Recall that 
the range of $X_{+,T}$ is dense. 
Thus, $X_{+,T}$ realizes the relation $T\buildrel d\over\prec  T_+^{(a)}$.
 We do not recall the construction of the canonical intertwining mapping from \cite{ker89} here. 
We recall only that $\|X_{+,T}x\|^2=\mathop{\text{\rm Lim}}_n\|T^nx\|^2$ for every $x\in \mathcal H$. 
It easy follows from this relation that an operator  $T\in\mathcal L(\mathcal H)$ is similar 
to an isometry if and only if $T$ is power bounded and there exists $c>0$ such that 
$\|T^nx\|\geq c\|x\|$ for every $x\in\mathcal H$ and $n\in\mathbb N$. In this case, $X_{+,T}$ is invertible 
and realizes the relation $T\approx T_+^{(a)}$.

The \emph{unitary asymptote} $(X_T,T^{(a)})$ of a power bounded operator $T\in\mathcal L(\mathcal H)$ is a pair where 
 $T^{(a)}\in\mathcal L(\mathcal H^{(a)})$   
(here $\mathcal H^{(a)}$ is a some Hilbert space) 
is the minimal unitary extension of $T_+^{(a)}$, and $X_T$ is a natural extension of $X_{+,T}$. 
\emph{The isometry $T_+^{(a)}$ and the unitary operator  $T^{(a)}$  will also be called the isometric and unitary asymptotes of $T$, respectively.}

Let $S$ be the simple unilateral shift, 
that is, the multiplication by the independent variable on the Hardy space $H^2$ on the unit circle $\mathbb T$. 
A particular case of \cite{ker07} is the following (see also {\cite[Sec. IX.3]{nfbk}}). 
 Let $T\in\mathcal L(\mathcal H)$ be an absolutely continuous  (a.c.) contraction (the definition is recalled in Sec. 2 of the present paper), 
and let $T^{(a)}$ contain the bilateral shift as an orthogonal summand.
Then 
\begin{equation}\label{shifttype}\mathcal H=\vee\{\mathcal M \ : \ \mathcal M\in\operatorname{Lat}T,\  T|_{\mathcal M} \approx S\}. 
\end{equation}
In \cite{gamal22} this result is generalized  to a.c. polynomially bounded operators (the definition  
can be found, for example, in {\cite[Ch. 15]{peller}}, see also {\cite[Ch. I.13]{nfbk}} where other terminology is used; see references therein). 
Also it is shown in \cite{gamal22} that the quantity of subspaces $\mathcal M$ in \eqref{shifttype} can be equal to $\mu_T$, 
if $\mu_T\geq 2$, and to $2$, if $\mu_T=1$. On the other hand, there exists  power bounded operator $T$ such that  $T^{(a)}_+=S$ and 
there is no $\mathcal M\in\operatorname{Lat}T$ such that $T|_{\mathcal M} \approx S$ {\cite[Sec. 5]{gamal16}}.  
The purpose of this paper is  to show that \eqref{shifttype} is fulfilled for expansive operators $T$ which are quasisimilar 
to the unilateral shift $S_N=\oplus_{j=1}^N S$ of finite multiplicity $N\in\mathbb N$, and  
the quantity of subspaces $\mathcal M$ in \eqref{shifttype} is as decribed above (Theorems \ref{thm1} and \ref{thmnn}). Expansive operators are right inverses for contractions. 
The proof is based on the result for contractions from \cite{ker07} (see also {\cite[Sec. IX.3]{nfbk}})  and 
on representations of unimodular  functions on $\mathbb T$ given in \cite{bourgain} and developed in \cite{hjelle}. 
Some other properties 
 of an expansive operator $T$ such that $T\sim S_N$,  where $N\in\mathbb N$, are studied. In particular, it is proved that 
$\dim(\mathcal M\ominus T\mathcal M)\leq N$ for every $\mathcal M\in \operatorname{Lat}T$, and 
$I-T^*T\in \frak S_1$, where
 $\frak S_1$ is the trace class operators (Theorems \ref{thmmain0} and \ref{thmmain1}).  

The paper is organized as follows. In Sec. 2 some simple observations are collected, some of them are of own interest, and some of them will be used in the sequel. In Sec. 3 a special kind of finite perturbations of $S_N$ ($N\in\mathbb N$) that are  expansive operators  is considered. Sec. 4 is the main part of the paper. In Sec. 5 the relationship between similarity to an isometry 
of expansive operator and its Cauchy dual (adjoint of the standard left inverse) is studied. In Sec.  6 it is shown that there exist expansive operators $T$ such that $T\sim S$, but 
the intertwining quasiffinities do not give the isomorphism of $\operatorname{Lat}T$ and $\operatorname{Lat}S$ 
(in contrast with the case when $T$ is a contraction).

The following notation will be used. For a (closed) subspace $\mathcal M$ of a Hilbert space $\mathcal H$, by 
$P_{\mathcal M}$ and $I_{\mathcal M}$ the orthogonal projection from $\mathcal H$ onto $\mathcal M$ and 
 the identity operator on $\mathcal M$ are denoted,  respectively. By $\mathbb O$ the zero transformation acting between (maybe nonzero) spaces is denoted.

Symbols $\mathbb D$ and $\mathbb T$ denote the open unit disc
and the unit circle, respectively. The normalized Lebesgue measure on $\mathbb T$ is denoted by $m$. 
Set $L^p=L^p(\mathbb T,m)$. For $0<p\leq\infty$ by $H^p$ the Hardy space on $\mathbb T$ is denoted. Set $\chi(\zeta)=\zeta$ and $\mathbf{1}(\zeta)=1$ for $\zeta\in\mathbb T$. 
The simple unilateral  $S$ is the operator  of multiplication by $\chi$   on $H^2$.  Set $H^2_-=L^2\ominus H^2$. 
For a measurable set $\sigma\subset\mathbb T$ denote by $U_\sigma$ the operator of multiplication by $\chi$ on 
 $L^2(\sigma,m)$. Then $U_{\mathbb T}$ is the simple bilateral shift. 

For $N\in\mathbb N\cup\{\infty\}$ denote by  $H^2_N$, $L^2_N$, $(H^2_-)_N$   the orthogonal sum of $N$ copies of $H^2$, $L^2$, $H^2_-$, respectively. For $N\in\mathbb N$, vectors from $H^2_N$, $L^2_N$, $(H^2_-)_N$ are columns of functions from $H^2$, $L^2$, $H^2_-$, respectively. For $1\leq k\leq N$ denote by $e_k$ the vector from $H^2_N$ 
with $\mathbf{1}$ on $k$-th place and zeros on all other places.
Then $\{e_k\}_{k=1}^{N}$ is an orthonormal basis of $\ker S_N^*$. 
By $P_+$ and $P_-$ the orthogonal projections from $L^2_N$ onto $H^2_N$ and $(H^2_-)_N$  
are denoted, respectively (they depend on $N$, but it will not be mentioned in notation). 
Set $S_*=P_-U_{\mathbb T}|_{H^2_-}$. By $S_N$, $S_{*, N}$,  and $U_{\mathbb T, N}$
 the orthogonal sum of $N$ copies of $S$, $S_*$, and $U_{\mathbb T}$ are denoted,  respectively. 
Recall that $\mu_{S_N}=\mu_{U_{\mathbb T, N}}=N$, and $\mu_{U_{\mathbb T, N}|_{\mathcal M}}\leq N$ for every 
$\mathcal M\in\operatorname{Lat}U_{\mathbb T, N}$.

For a matrix 
$F=[f_{jk}]_{j,k}$ whose elemets are functions $f_{jk}$ set  $\overline F=[\overline f_{jk}]_{j,k}$.

\section{General observations}

The following lemma is well known and can be proved easily, so its proof is omitted. 

\begin{lemma}\label{lemdefect} Let $A$, $B\in\mathcal L(\mathcal H)$ be such that $BA=I_{\mathcal H}$ and $\dim\ker A^*<\infty$. 
Then the following are equivalent: $\mathrm{(i)}$ $ I_{\mathcal H}-A^*A\in\frak S_1$; $\mathrm{(ii)}$ 
 $I_{\mathcal H}-AA^*\in\frak S_1$; $\mathrm{(iii)}$ $I_{\mathcal H}-B^*B\in\frak S_1$;
$\mathrm{(iv)}$  $I_{\mathcal H}-BB^*\in\frak S_1$. 
\end{lemma}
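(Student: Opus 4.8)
The plan is to deduce all four equivalences from the single chain $\text{(i)}\Rightarrow\text{(ii)}\Rightarrow\text{(iii)}\Rightarrow\text{(iv)}\Rightarrow\text{(i)}$, using the hypothesis $BA=I_{\mathcal H}$ to relate the relevant selfadjoint operators and the finite-rank hypothesis $\dim\ker A^*<\infty$ to control the error terms. The key algebraic facts are that $A$ is bounded below (since $\|Ax\|\ge\|BA x\|/\|B\|=\|x\|/\|B\|$), so $A^*A$ is invertible; that $P:=AA^*(A^*A)^{-1}A^{-1}$-type expressions can be avoided by working instead with the idempotent $AB$; and that $AB$ is the (in general non-orthogonal) projection onto $\operatorname{ran}A$ along $\ker B$, with $I_{\mathcal H}-AB$ of rank at most $\dim\ker B=\dim\ker A^*<\infty$ once one checks $\ker B=\ker A^*$ — actually only $\operatorname{ran}(I-AB)$ finite-dimensional is needed, and $\operatorname{ran}(I-AB)\subset\ker B$.

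First I would record the preliminary observations: $A$ is injective with closed range and $B$ is surjective, $BA=I$ forces $\operatorname{ran}A\oplus\ker B=\mathcal H$ algebraically, and $\operatorname{rank}(I_{\mathcal H}-AB)\le\dim\ker B<\infty$ (the finiteness of $\dim\ker B$ follows because $B^*$ is bounded below, being a left-inverse situation for $A^*$, hence $\ker B=(\operatorname{ran}B^*)^\perp$ has the same dimension as $\ker A^*$ up to the standard index bookkeeping; in fact $\dim\ker B=\dim\ker A^*$ since $B^*A^*=I$ too). For (i)$\Leftrightarrow$(ii): $A^*A$ and $AA^*$ have the same nonzero spectrum with multiplicities, and since $A^*A$ is invertible while $AA^*$ has finite-dimensional kernel, $I-A^*A\in\frak S_1$ iff $I-AA^*\in\frak S_1$ — concretely, $\operatorname{tr}$ of the two differs by a finite correction coming from $\ker A^*$; alternatively write $AA^*-A A^*A A^*\cdot(\ldots)$ and use that conjugation by the partial isometry in the polar decomposition of $A$ is trace-class preserving, modulo the finite-rank projection onto $\ker A^*$.

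For (iii)$\Leftrightarrow$(iv) the argument is symmetric with $A$ replaced by $B^*$ (note $B^*A^*=I$). The remaining link (ii)$\Leftrightarrow$(iii), or equivalently (i)$\Leftrightarrow$(iv), is where $BA=I$ is used substantively: from $BA=I$ one gets $B=B$, $A^*B^*=I$, and one computes, say, $I-BB^*$ in terms of $I-AA^*$. I would use the identity $B(I-AA^*)B^*=BB^*-B A A^* B^*=BB^*-A^* B^*\cdot(\ldots)$ — more cleanly, since $A^{-1}$ exists on $\operatorname{ran}A$, write $B=A^{-1}P$ where $P=AB$ is the idempotent onto $\operatorname{ran}A$; then $B^*B=P^*(A^{-1})^*A^{-1}P=P^*(AA^*)^{-1}|_{\operatorname{ran}A}P$ modulo finite rank, so $I-B^*B$ and $I-(A^*A)^{-1}$ agree modulo $\frak S_{\text{finite}}$, and finally $I-(A^*A)^{-1}\in\frak S_1\Leftrightarrow I-A^*A\in\frak S_1$ because $A^*A$ is bounded and bounded below, so $t\mapsto(1-1/t)$ and $t\mapsto(1-t)$ are comparable on its spectrum $\subset[c,\infty)$ for some $c>0$.

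The main obstacle is the bookkeeping of the finite-rank corrections: one must be careful that each of the projections $AB$, $BA=I$, the polar-decomposition partial isometry of $A$, and the kernel projections onto $\ker A^*=\ker B^*{}^\perp{}^\perp$-type spaces all differ from honest orthogonal projections only by finite-rank operators, so that "$\in\frak S_1$" is insensitive to them; none of this is deep, but it is the only place a careless argument could go wrong. Since the statement asserts all of this is "well known and can be proved easily," I would present the chain compactly, emphasising (a) $A$ bounded below $\Rightarrow A^*A$ invertible, (b) $\frak S_1$ is a two-sided ideal invariant under the spectral calculus comparison above, and (c) $\operatorname{rank}(I-AB)<\infty$, and leave the routine verifications to the reader — which is presumably exactly why the authors omit the proof.
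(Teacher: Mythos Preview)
The paper omits the proof of this lemma entirely, stating only that it ``is well known and can be proved easily,'' so there is no authorial argument to compare against. Your sketch contains the right ingredients and would go through: the key points are (a) $A$ is bounded below, so $A^*A$ is invertible and the polar decomposition $A=U|A|$ with $U$ an isometry yields $I-AA^*=U(I-A^*A)U^*+P_{\ker A^*}$, giving (i)$\Leftrightarrow$(ii); (b) the same applied to $B^*$ (using $A^*B^*=I$ and $\dim\ker B<\infty$) gives (iii)$\Leftrightarrow$(iv); (c) any left inverse $B$ differs from the canonical one $L_A=(A^*A)^{-1}A^*$ by a finite-rank operator (since $(B-L_A)A=0$ and $\operatorname{codim}\operatorname{ran}A<\infty$), whence $BB^*=(A^*A)^{-1}+(\text{finite rank})$ and $I-(A^*A)^{-1}=-(A^*A)^{-1}(I-A^*A)$ gives (i)$\Leftrightarrow$(iv).

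Two small comments on your write-up. First, your justification of $\dim\ker B<\infty$ is muddled; the clean argument is that $AB$ is a bounded idempotent with $\operatorname{ran}(AB)=\operatorname{ran}A$ and $\ker(AB)=\ker B$, so $\mathcal H=\operatorname{ran}A\dotplus\ker B$ topologically, and comparing with the orthogonal decomposition $\mathcal H=\operatorname{ran}A\oplus\ker A^*$ gives $\dim\ker B=\dim\ker A^*$. Second, the route through $B^*B=P^*(AA^*)^{-1}|_{\operatorname{ran}A}P$ is correct but unnecessarily indirect; working with $BB^*=L_AL_A^*+(\text{finite rank})=(A^*A)^{-1}+(\text{finite rank})$ as above is shorter and avoids having to interpret $(AA^*)^{-1}$ on a subspace.
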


Recall that  $A\in L(\mathcal H)$ is called a Fredholm operator, if $A\mathcal H$ is closed, $\dim\ker A<\infty$, and 
$\dim\ker A^*<\infty$.   
 Denote by $\operatorname{ind}$ the Fredholm index of a Fredholm operator $A$,  that is, 
$\operatorname{ind}A=\dim\ker A-\dim\ker A^*$. See, for example, {\cite[Ch. XI]{conway}}.

\begin{lemma} \label{lemfred} Suppose that $N\in\mathbb N$, $A\in L(\mathcal H)$, $\ker A=\{0\}$, $\dim\ker A^*=N$, 
and $Y\in\mathcal I(S_N,A)$ is such that $\operatorname{clos}YH^2_N=\mathcal H$.
Then $\ker Y=\{0\}$.
\end{lemma}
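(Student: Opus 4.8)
The plan is to argue by contradiction. Suppose $\ker Y\neq\{0\}$. Since $YS_N=AY$, the subspace $\ker Y$ is invariant for $S_N$, so by the Beurling--Lax--Halmos theorem there are $N'\in\mathbb N$ with $1\leq N'\leq N$ and an $N\times N'$ inner function $\Theta$ (that is, $\Theta(\zeta)$ is an isometry from $\mathbb C^{N'}$ into $\mathbb C^N$ for a.e.\ $\zeta\in\mathbb T$) such that $\ker Y=\Theta H^2_{N'}$. The idea is to extract from this two facts which together are contradictory: first, that $\Theta(0)=\mathbb O$ (this uses the hypothesis $\dim\ker A^*=N$ and the bound it forces on the defect of an associated model operator), and second, that $\Theta(0)=\mathbb O$ lets one produce a vector on which $Y$ does not vanish but which $S_N$ sends into $\ker Y$, contradicting $\ker A=\{0\}$.

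For the first part, set $\mathcal N=H^2_N\ominus\ker Y$, $T_0=P_{\mathcal N}S_N|_{\mathcal N}$, and $\widetilde Y=Y|_{\mathcal N}$. Because $\ker Y$ is $S_N$-invariant, for $x\in\mathcal N$ one has $S_Nx-P_{\mathcal N}S_Nx\in\ker Y$, hence $YS_Nx=YP_{\mathcal N}S_Nx$, which gives the intertwining relation $\widetilde YT_0=A\widetilde Y$. The map $\widetilde Y$ is a quasiaffinity ($\ker\widetilde Y=\mathcal N\cap\ker Y=\{0\}$, and $\operatorname{clos}\widetilde Y\mathcal N=\operatorname{clos}YH^2_N=\mathcal H$ since $Y$ vanishes on $\ker Y$), hence so is $\widetilde Y^*$, and $\widetilde Y^*A^*=T_0^*\widetilde Y^*$. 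Thus $\widetilde Y^*$ carries $\ker A^*$ injectively into $\ker T_0^*$, so $N=\dim\ker A^*\leq\dim\ker T_0^*$. Now $\mathcal N$ is invariant for $S_N^*$, $T_0^*=S_N^*|_{\mathcal N}$, and $\ker T_0^*=\mathcal N\cap\ker S_N^*$ consists of those constant vectors $c\in\mathbb C^N$ orthogonal to $\Theta H^2_{N'}$; evaluating $\langle c,\Theta h\rangle$ through the value of $\Theta h$ at the origin gives $\langle c,\Theta h\rangle=\langle c,\Theta(0)h(0)\rangle$, so $\ker T_0^*=\mathbb C^N\ominus\Theta(0)\mathbb C^{N'}$ and $\dim\ker T_0^*=N-\operatorname{rank}\Theta(0)$. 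Combining, $\operatorname{rank}\Theta(0)=0$, i.e.\ $\Theta(0)=\mathbb O$.

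For the second part, fix a nonzero $c\in\mathbb C^{N'}$ and put $y=\bar\chi\,\Theta c$. Since $\Theta(0)=\mathbb O$, the analytic $\mathbb C^N$-valued function $\Theta c$ vanishes at the origin, so $y\in H^2_N$; moreover $S_Ny=\chi y=\Theta c\in\Theta H^2_{N'}=\ker Y$. Hence $AYy=YS_Ny=0$, so $Yy\in\ker A=\{0\}$, i.e.\ $y\in\ker Y$, say $y=\Theta h$ with $h\in H^2_{N'}$. Then $\Theta(\bar\chi c-h)=0$ a.e., and as $\Theta(\zeta)$ is injective a.e.\ we get $h=\bar\chi c$; but $\bar\chi c\notin H^2_{N'}$, a contradiction. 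This completes the argument, so $\ker Y=\{0\}$.

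The step I expect to require the most care is the computation $\dim\ker T_0^*=N-\operatorname{rank}\Theta(0)$: one must recognise $T_0=P_{\mathcal N}S_N|_{\mathcal N}$ as the model operator on $\mathcal N=H^2_N\ominus\Theta H^2_{N'}$, check that $\mathcal N$ is invariant for $S_N^*$ so that $T_0^*=S_N^*|_{\mathcal N}$, and compute $\langle c,\Theta h\rangle$ for constant $c$ correctly. The remaining ingredients — invariance of $\ker Y$, the relation $\widetilde YT_0=A\widetilde Y$, the fact that the adjoint of a quasiaffinity is a quasiaffinity together with the resulting defect inequality, and the final one-line argument — are routine. Note that one cannot shortcut via Fredholm-index stability on the side of $A$, since $A$ is not assumed to have closed range; this is why the argument is routed entirely through the shift side. (Alternatively, one can split into the cases $\ker\Theta(0)\neq\{0\}$ and $\ker\Theta(0)=\{0\}$, treating the first directly by the $y$-construction and the second by the sharper inequality $N\leq\dim\ker T_0^*=N-N'$; the argument above unifies the two.)
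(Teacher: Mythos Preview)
Your proof is correct, but the paper's argument is quite different and considerably shorter. The paper writes $S_N$ in upper-triangular form with respect to the decomposition $H^2_N=\ker Y\oplus(H^2_N\ominus\ker Y)$, getting $S_N|_{\ker Y}$ in the upper-left corner and the compression $R$ (your $T_0$) in the lower-right. From the quasiaffinity $R\prec A$ it extracts exactly the same inequality you use, $\dim\ker R^*\geq N$, together with $\ker R=\{0\}$; but then it invokes additivity of the Fredholm index for upper-triangular operators (Conway, Theorem~XI.3.7), all on the shift side: $-N=\operatorname{ind}S_N=\operatorname{ind}(S_N|_{\ker Y})+\operatorname{ind}R\leq\operatorname{ind}(S_N|_{\ker Y})-N$, forcing $\operatorname{ind}(S_N|_{\ker Y})\geq 0$. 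Since $S_N|_{\ker Y}$ is an isometry, its index is $\leq 0$, hence equals $0$, which forces $\ker Y=\{0\}$.

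So your remark that one cannot use Fredholm index ``on the side of $A$'' is right, but the paper does use Fredholm index---on the side of $S_N$, where all the pieces are automatically Fredholm. Your route replaces this index bookkeeping with an explicit Beurling--Lax--Halmos parametrisation $\ker Y=\Theta H^2_{N'}$, the identification $\dim\ker T_0^*=N-\operatorname{rank}\Theta(0)$, and the hands-on contradiction via $y=\bar\chi\,\Theta c$. The advantage of the paper's approach is brevity and the fact that it never needs to name $\Theta$ or compute $\ker T_0^*$ explicitly; the advantage of yours is that it is entirely elementary (no appeal to index additivity for triangular operators) and makes the mechanism---that $\Theta(0)$ must vanish, which then clashes with $\ker A=\{0\}$---completely transparent.
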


\begin{proof} We have 
 \begin{equation*}S_N=\left[\begin{matrix}S_N|_{\ker Y} & * \\ \mathbb O & R\end{matrix}\right], 
\end{equation*}
and $Y|_{H^2_N\ominus \ker Y}$ realizes the relation $R\prec A$. This relation implies that $\ker R=\{0\}$ and 
$\dim\ker R^*\geq N$. By {\cite[Theorem  XI.3.7]{conway}},  
 \begin{equation*} -N=\operatorname{ind}S_N=\operatorname{ind}S_N|_{\ker Y}+\operatorname{ind}R\leq
\operatorname{ind}S_N|_{\ker Y}-N.\end{equation*}
This means that $\operatorname{ind}S_N|_{\ker Y} =0$. Therefore, $\ker Y=\{0\}$.
\end{proof}

For $A\in\mathcal L(\mathcal H)$ set $\mathcal R^\infty(A)=\cap_{n\in\mathbb N}A^n\mathcal H$. 
If $\mathcal R^\infty(A)=\{0\}$, then $A$ is called \emph{analytic} \cite{shimorin} or \emph{pure} \cite{olofsson}. 
The following simple lemma is given for convenience of references; its proof is evident and  omitted.

\begin{lemma}\label{lemaabbxx}  Let $A$ and  $B$ be  operators, and let $X\in\mathcal I(A,B)$. 
Then $X\mathcal R^\infty(A)\subset\mathcal R^\infty(B)$.
\end{lemma}

Let $A$ be left-invertible, equivalently, let $A$ be bounded below:
 there exists $c>0$ such that $\|Ax\|\geq c\|x\|$ for every $x\in \mathcal H$. 
Then $\mathcal R^\infty(A)\in\operatorname{Lat}A$, $A|_{\mathcal R^\infty(A)}$ is invertible,  and if 
 $\mathcal M\in\operatorname{Lat}A$ is such that $A\mathcal M=\mathcal M$, then $\mathcal M\subset\mathcal R^\infty(A)$. 
Consequently, 
$P_{\mathcal H\ominus\mathcal R^\infty(A)}A|_{\mathcal H\ominus\mathcal R^\infty(A)}$ is left-invertible, and 
\begin{equation} \label{pure} 
\mathcal R^\infty(P_{\mathcal H\ominus\mathcal R^\infty(A)}A|_{\mathcal H\ominus\mathcal R^\infty(A)})=\{0\}.
\end{equation}  
For a left-invertible $A\in\mathcal L(\mathcal H)$ the operator $L_A=(A^*A)^{-1}A^*$ is the standard  left inverse for $A$: 
 $L_AA=I_{\mathcal H}$,  and $\ker L_A=\ker A^*$. Set $A'=L_A^*=A(A^*A)^{-1}$. The operator $A'$ is called the \emph{Cauchy dual} to $A$ (\cite{shimorin},  \cite{olofsson}).   Note that $A'$ is left-invertible and $A''=A$.

\begin{lemma} \label{lemshimorin} \begin{enumerate}[\upshape (i)]

\item {\cite[Prop. 2.7]{shimorin}}
 Let $A$, $B\in\mathcal L(\mathcal H)$ be such that $BA=I_{\mathcal H}$. Then 
$ \mathcal H=\mathcal R^\infty(A)\oplus\vee_{n=0}^\infty B^{*n}\ker A^*$.

\item {\cite[Lemma 2.1]{shimorin}}
 Let $A\in\mathcal L(\mathcal H)$ be left-invertible.
 \begin{equation*} \text { Let }\mathcal H=\vee_{n=0}^\infty A^n\ker A^*. \text{ Then  }
\mathcal H=\vee_{n=0}^\infty \ker L_A^n.\end{equation*}
\end{enumerate}
\end{lemma}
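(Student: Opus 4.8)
The plan is to deduce both parts from the single algebraic relation $B^nA^n=I_{\mathcal H}$, its consequence $B^nA^{n+k}=A^k$, and the elementary Hilbert-space facts that a left-invertible operator has closed powers and that $(\ker M)^\perp=\operatorname{clos}M^*\mathcal H$ for every bounded $M$.

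For part (i), I would first note that $BA=I_{\mathcal H}$ forces $A$ to be bounded below, so each $A^n\mathcal H$ is closed and $(\ker A^*)^\perp=A\mathcal H$. To see that the sum is orthogonal, take $x\in\mathcal R^\infty(A)$; then $x=A^{n+1}w$ for some $w$, and for $z\in\ker A^*$ one computes $\langle x,B^{*n}z\rangle=\langle B^nx,z\rangle=\langle B^nA^{n+1}w,z\rangle=\langle Aw,z\rangle=0$, so $\mathcal R^\infty(A)\perp\vee_{n=0}^\infty B^{*n}\ker A^*$. To see that the two pieces span $\mathcal H$, suppose $x\perp B^{*n}\ker A^*$ for all $n\geq0$; then $B^nx\in(\ker A^*)^\perp=A\mathcal H$ for every $n$, and an induction on $k$ — if $x=A^ku$ then $u=B^kx\in A\mathcal H$, hence $x\in A^{k+1}\mathcal H$ — yields $x\in A^k\mathcal H$ for every $k$, i.e. $x\in\mathcal R^\infty(A)$. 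Combining the two statements gives the asserted orthogonal decomposition.

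For part (ii), the crucial point is the self-dual identity $L_{A'}=A^*$ for left-invertible $A$: since $(A')^*A'=(A^*A)^{-1}$, one gets $L_{A'}=\big((A')^*A'\big)^{-1}(A')^*=(A^*A)(A^*A)^{-1}A^*=A^*$, and in particular $A^*A'=I_{\mathcal H}$. Applying part (i) to the pair $(A',A^*)$ — legitimate because $A^*A'=I_{\mathcal H}$, and because $(A^*)^*=A$ and $\ker(A')^*=\ker L_A=\ker A^*$ — produces
\begin{equation*}
\mathcal H=\mathcal R^\infty(A')\oplus\vee_{n=0}^\infty A^n\ker A^* .
\end{equation*}
Hence the hypothesis $\mathcal H=\vee_{n=0}^\infty A^n\ker A^*$ is equivalent to $\mathcal R^\infty(A')=\{0\}$. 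Finally, since $A'$ is left-invertible, each $(A')^n$ is left-invertible (with left inverse $(L_{A'})^n=(A^*)^n$) and hence has closed range, so $(\ker L_A^n)^\perp=\operatorname{clos}(L_A^n)^*\mathcal H=\operatorname{clos}(A')^n\mathcal H=(A')^n\mathcal H$; taking orthogonal complements of the increasing chain $\{\ker L_A^n\}_n$ gives $\big(\vee_{n=0}^\infty\ker L_A^n\big)^\perp=\cap_{n=0}^\infty(A')^n\mathcal H=\mathcal R^\infty(A')=\{0\}$, which is the desired equality $\mathcal H=\vee_{n=0}^\infty\ker L_A^n$.

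The only real subtlety is to keep track of which left inverse is in play: part (i) is formulated for an \emph{arbitrary} left inverse $B$, whereas the conclusion of (ii) involves the \emph{standard} left inverse $L_A$; the identity $L_{A'}=A^*$ is exactly what allows (i) to be invoked with $A$ replaced by its Cauchy dual $A'$. Apart from this, everything is routine manipulation with $B^nA^n=I_{\mathcal H}$ and with orthogonal complements of ranges, so I do not anticipate a genuine obstacle.
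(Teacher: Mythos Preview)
The paper does not prove this lemma at all: it is stated with attributions to \cite[Prop.~2.7]{shimorin} and \cite[Lemma~2.1]{shimorin} and used as a black box. Your self-contained argument is correct in both parts---the orthogonality and spanning steps in (i) via $B^nA^n=I$ are exactly the right computations, and the reduction of (ii) to (i) through the Cauchy-dual identity $L_{A'}=A^*$ (hence $A^*A'=I$, $\ker(A')^*=\ker A^*$, and $(\ker L_A^n)^\perp=(A')^n\mathcal H$) is clean and valid---so there is nothing to compare against and nothing to fix.
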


\begin{lemma}\label{lemexp} Let $T\in\mathcal L(\mathcal H)$ be expansive. 
Then $P_{\mathcal H\ominus\mathcal R^\infty(T)}T|_{\mathcal H\ominus\mathcal R^\infty(T)}$ is expansive.
\end{lemma}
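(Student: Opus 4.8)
The plan is to deduce the expansivity of the compression directly from that of $T$, using only that $T$ maps $\mathcal R^\infty(T)$ onto itself. Write $\mathcal R=\mathcal R^\infty(T)$ and $\mathcal N=\mathcal H\ominus\mathcal R$, and set $C=P_{\mathcal N}T|_{\mathcal N}$; the goal is to show $\|Cy\|\geq\|y\|$ for every $y\in\mathcal N$. First I would record that, $T$ being expansive, it is bounded below with constant $1$, hence left-invertible, so by the observations preceding the lemma $\mathcal R\in\operatorname{Lat}T$ and $T\mathcal R=\mathcal R$ (indeed $T|_{\mathcal R}$ is invertible). In particular $\mathcal N$ is the orthogonal complement of an invariant subspace and $C$ is the indicated compression.

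Next I would rewrite $\|Cy\|$ as a distance: for $y\in\mathcal N$ one has $Cy=P_{\mathcal N}Ty$, so $\|Cy\|=\operatorname{dist}(Ty,\mathcal R)=\inf\{\|Ty-r\|:r\in\mathcal R\}$. Thus it suffices to prove $\|Ty-r\|\geq\|y\|$ for every $r\in\mathcal R$. The crucial step: since $T\mathcal R=\mathcal R$, choose $r'\in\mathcal R$ with $Tr'=r$; then $Ty-r=T(y-r')$, and since $T$ is expansive and $y\perp r'$,
\begin{equation*}
\|Ty-r\|=\|T(y-r')\|\geq\|y-r'\|=\bigl(\|y\|^2+\|r'\|^2\bigr)^{1/2}\geq\|y\|.
\end{equation*}
Taking the infimum over $r$ (equivalently, $r=P_{\mathcal R}Ty$) yields $\|Cy\|\geq\|y\|$, which is the assertion.

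The argument is short, and the only real obstacle is conceptual rather than technical: one must spot that the distance $\operatorname{dist}(Ty,\mathcal R)$ should be measured by subtracting $Tr'$ with $r'\in\mathcal R$ — which is possible precisely because $T$ acts surjectively on $\mathcal R$ — thereby converting the expression into $\|T(y-r')\|$, a genuine image under $T$ to which expansivity applies; the orthogonality $y\perp r'$ then closes the estimate. No iteration of $T$ and no appeal to the Cauchy dual is needed.
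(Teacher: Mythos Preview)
Your proof is correct and is essentially the same as the paper's: both use that $T\mathcal R^\infty(T)=\mathcal R^\infty(T)$ to write the compression as $T(y-r')$ for some $r'\in\mathcal R^\infty(T)$, then apply expansivity together with the orthogonality $y\perp r'$. The only cosmetic difference is that the paper selects the specific preimage $r'$ with $Tr'=P_{\mathcal R^\infty(T)}Ty$ at the outset, whereas you phrase the same step as an infimum over $r\in\mathcal R^\infty(T)$.
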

\begin{proof} Let $x\in \mathcal H\ominus\mathcal R^\infty(T)$. Since $T\mathcal R^\infty(T)=\mathcal R^\infty(T)$, 
there exists $y\in\mathcal R^\infty(T)$ such that $Ty=P_{\mathcal R^\infty(T)}Tx$. We have
\begin{equation*} \|P_{\mathcal H\ominus\mathcal R^\infty(T)}Tx\|^2=\|T(x-y)\|^2\geq \|x-y\|^2=
\|x\|^2+\|y\|^2\geq \|x\|^2.\qedhere\end{equation*} 
\end{proof} 

Let $R$ be a contraction. Then $R=U_s\oplus U_a\oplus R_1$, where $U_s$ and $U_a$ are singular and absolutely 
continuous  unitary operators (that is, their spectral measures are singular and absolutely 
continuous  with respect to $m$), respectively, and $R_1$ is a completely nonunitary contraction (that  is, there is no 
$\{0\}\neq\mathcal M\in\operatorname{Lat}R_1$ such that  $T|_{\mathcal M}$ is unitary). 
If $U_s$ acts on the zero space $\{0\}$, then $R$ is called an \emph{absolutely continuous (a.c.)} contraction. If 
$U$ is a singular unitary operator and $R$ is an a.c. contraction, then $\mathcal I(R,U)=\mathbb O$. 
For an a.c. contraction $R$ the $H^\infty$-functional calculus is defined. If there exists $0\not\equiv\varphi\in H^\infty$ 
such that $\varphi(R)=\mathbb O$, then $R$ is called a \emph{$C_0$-contraction}. $C_0$-contractions are of class $C_{00}$. 
 For references, see {\cite[Theorems I.3.2, II.2.3, II.6.4, and Secs. III.2, III.4]{nfbk}}.

\begin{lemma}\label{lemnotuu} Let $T\in\mathcal L(\mathcal H)$ be expansive. Then $T'$ is a contraction.
Furthermore, the following statements hold true.
\begin{enumerate}[\upshape (i)]
\item Suppose that $U$ is a singular unitary operator,  $\mathcal M\in\operatorname{Lat}T$,  
and $T|_{\mathcal M}\approx U$. 
Then $T|_{\mathcal M}\cong U$ and  $\mathcal H\ominus\mathcal M\in\operatorname{Lat}T$. Also  
 $\mathcal M$,  $\mathcal H\ominus\mathcal M\in\operatorname{Lat}T'$ and  $T'|_{\mathcal M}\cong U$. 
\item If $R$ is an a.c. contraction such that $R\prec T$, then $T'$ is an a.c. contraction.
\item If $\mathcal R^\infty(T)=\{0\}$, 
then $T'$ is a completely non-unitary contraction; 
\item If  $\mathcal H=\vee_{n=0}^\infty T^n\ker T^*$, then $T'$ is a contraction of class $C_{\cdot 0}$.
\end{enumerate}
\end{lemma}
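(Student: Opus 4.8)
The plan is as follows. Expansivity gives $T^*T\geq I_{\mathcal H}$, hence $(T^*T)^{-1}\leq I_{\mathcal H}$, and since $T'=T(T^*T)^{-1}$ one has $(T')^*T'=(T^*T)^{-1}$, so $\|T'x\|^2=\langle(T^*T)^{-1}x,x\rangle\leq\|x\|^2$ and $T'$ is a contraction. The same computation yields the elementary fact used throughout: \emph{if $x\in\mathcal H$ and $\|T'x\|=\|x\|$, then $(T^*T)^{-1}x=x$, hence $T^*Tx=x$, $\|Tx\|=\|x\|$ and $T'x=Tx$} (indeed $\langle(I_{\mathcal H}-(T^*T)^{-1})x,x\rangle=0$ forces $(I_{\mathcal H}-(T^*T)^{-1})x=0$, the operator being nonnegative). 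In particular, if $\mathcal M_0\in\operatorname{Lat}T'$ and $T'|_{\mathcal M_0}$ is unitary, then $\|T'x\|=\|x\|$ for $x\in\mathcal M_0$, whence $T'|_{\mathcal M_0}=T|_{\mathcal M_0}$ and $T\mathcal M_0=T'\mathcal M_0=\mathcal M_0$, so $\mathcal M_0\in\operatorname{Lat}T$; writing $y\in\mathcal M_0$ as $y=Tx$ with $x\in\mathcal M_0$ gives $T^*y=T^*Tx=x\in\mathcal M_0$, so $\mathcal M_0$ reduces $T$, hence also $T^*T$, $(T^*T)^{-1}$ and $T'$.

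Granting this, (iii) is immediate: if $\{0\}\neq\mathcal M_0\in\operatorname{Lat}T'$ with $T'|_{\mathcal M_0}$ unitary, then $\mathcal M_0\in\operatorname{Lat}T$ and $T\mathcal M_0=\mathcal M_0$; since $T$ is left-invertible, this forces $\mathcal M_0\subset\mathcal R^\infty(T)=\{0\}$, a contradiction, so $T'$ is completely non-unitary. For (ii), suppose $T'$ is not a.c.; then $T'$ has a nonzero reducing subspace $\mathcal N$ with $T'|_{\mathcal N}$ a singular unitary, and by the previous paragraph $\mathcal N$ reduces $T$ and $T|_{\mathcal N}=T'|_{\mathcal N}$ is that singular unitary. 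If $X$ is a quasiaffinity realizing $R\prec T$, then, $\mathcal N$ reducing $T$, the operator $P_{\mathcal N}X$ intertwines $R$ with the singular unitary $T|_{\mathcal N}$; as $R$ is an a.c. contraction this gives $P_{\mathcal N}X=\mathbb O$, contradicting the density of $X\mathcal H$. Hence $T'$ is an a.c. contraction.

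The substantial point is (i), where the obstacle is to promote the similarity $T|_{\mathcal M}\approx U$ to a unitary equivalence. Being similar to the invertible operator $U$, $T|_{\mathcal M}$ is invertible; set $A=T|_{\mathcal M}$. Since $A$ is expansive, $A^{-1}$ is a contraction, and $A^{-1}\approx U^{-1}=U^*$, again a singular unitary. Write $A^{-1}=W\oplus C$ with $W$ unitary and $C$ completely non-unitary. A completely non-unitary contraction has trivial unitary part, so it is an a.c. contraction, and the fact recalled before the lemma gives $\mathcal I(C,U^*)=\mathbb O$; an invertible operator intertwining $A^{-1}$ with $U^*$ must therefore vanish on the $C$-summand and, being injective, forces that summand to be $\{0\}$. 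Thus $A^{-1}$ is unitary, so $A=T|_{\mathcal M}$ is unitary, and, being similar to the unitary $U$, it is unitarily equivalent to $U$. Finally, $T|_{\mathcal M}$ unitary means $\|Tx\|=\|x\|$ for $x\in\mathcal M$, so by the elementary fact $T^*Tx=x$ and $T'x=Tx$ on $\mathcal M$; arguing as in the first paragraph, $\mathcal M$ (hence $\mathcal H\ominus\mathcal M$) reduces $T$ and $T'$, and $T'|_{\mathcal M}=T|_{\mathcal M}\cong U$, which settles (i).

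Finally, for (iv) I would apply Lemma \ref{lemshimorin}(ii) to $A=T$: the hypothesis $\mathcal H=\vee_{n=0}^\infty T^n\ker T^*$ becomes $\mathcal H=\vee_{n=0}^\infty\ker L_T^n$. Since $L_T=(T')^*$ is a contraction, each $\ker L_T^n$ is contained in the closed subspace $\{x\in\mathcal H:\|L_T^kx\|\to 0\}$, which is hence all of $\mathcal H$; so $L_T$ is of class $C_{0\cdot}$, i.e. $T'$ is of class $C_{\cdot 0}$. The step I expect to be the main obstacle is the passage to $A^{-1}$ in part (i) together with the structural input that a contraction similar to a singular unitary can carry no completely non-unitary summand; the remaining parts reduce to the elementary fact above and to the properties of left-invertible operators recalled before Lemma \ref{lemshimorin}.
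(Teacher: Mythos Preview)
Your proof is correct and follows essentially the same route as the paper's. The one presentational difference is in part~(i): the paper notes directly from $T'^*T=I_{\mathcal H}$ that $\mathcal M\in\operatorname{Lat}T'^*$ with $T'^*|_{\mathcal M}=(T|_{\mathcal M})^{-1}$, and then applies the ``contraction similar to a singular unitary is unitarily equivalent to it and the subspace is reducing'' step to $T'^*$ rather than to your abstract $A^{-1}$; this gives the reducing property for $T'$ (hence for $T=T''$) in one stroke, without the separate appeal to your elementary fact $T^*Tx=x$. Both arguments hinge on the same structural input, namely the decomposition of a contraction into unitary $\oplus$ completely non-unitary parts together with $\mathcal I(\text{a.c.},\text{singular unitary})=\mathbb O$, so the difference is cosmetic.
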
 

\begin{proof} The estimate $\|T'\|\leq 1$ easy follows from the relations $T'^*T=I$ and $\ker T'^*=\ker T^*$. 

(i) Since $T'^*T=I$ and $T|_{\mathcal M}$ is invertible, we have 
$\mathcal M\in\operatorname{Lat}T'^*$ and  $T'^*|_{\mathcal M}=(T|_{\mathcal M})^{-1}\approx U^{-1}$. 
Since $T'^*$ is a contraction and $U$ is a singular unitary operator, we have 
$T'^*|_{\mathcal M}\cong U^{-1}$ and $\mathcal H\ominus\mathcal M\in\operatorname{Lat}T'^*$. 
The conclusion of part (i) of the lemma follows from these relations. 

 (ii) Assume that $T'$ is not an a.c. contraction.
 Therefore, there exist a singular unitary operator  $U$ and  $\{0\}\neq\mathcal M\in\operatorname{Lat}T'$ such that 
 $\mathcal H\ominus\mathcal M\in\operatorname{Lat}T'$ and  $T'|_{\mathcal M}\cong U$. 
Consequently, $\mathcal M$, $\mathcal H\ominus\mathcal M\in\operatorname{Lat}T$ and  $T|_{\mathcal M}\cong U$.
Let $Y$ be a quasiaffinity such that $YR=TY$.  
The transformation   
$P_{\mathcal M}Y$ realizes the relation $R\buildrel d \over\prec T|_{\mathcal M}$. Thus, $R\buildrel d \over\prec U$,
a contradiction.

(iii) Assume that  there exists 
$\mathcal K\in\operatorname{Lat}T'$ such that $U:=T'|_{\mathcal K}$ is unitary. Then $T'=U\oplus R$ for some  
 $R\in\mathcal L(\mathcal H\ominus\mathcal K)$. 
We have 
\begin{gather*} T'^*T'=I_{\mathcal K}\oplus R^*R, 
\ \ (T'^*T')^{-1}=I_{\mathcal K}\oplus (R^*R)^{-1},\\ \text{ and } 
T=T''=U\oplus R(R^*R)^{-1}. \end{gather*} 
 Consequently, $\mathcal K\subset\mathcal R^\infty(T)$. Thus, $\mathcal K=\{0\}$. 

(iv) This is a straightforward corollary of Lemma \ref{lemshimorin}(ii), because $T'=L_T^*$. 
\end{proof}

\begin{lemma}\label{lemmodel}
 Let $R\in\mathcal L(\mathcal H)$ be a contraction, 
 and let $1\leq N=\dim\ker R^*\leq \infty$. 
Then there exists $Y\in\mathcal I(S_N,R)$ such that  $Y\ker S_N^*=\ker R^*$ and 
$\operatorname{clos}Y H^2_N=\vee_{n=0}^\infty R^n\ker R^*$. Furthermore, if $R$ is left-invertible, then 
there exists $X\in\mathcal I(R',S_N)$ such that  $\operatorname{clos}X\mathcal H=H^2_N$, $X\ker R'^*=\ker S_N^*$ and 
$\ker X=\mathcal R^\infty(R')$. 
\end{lemma}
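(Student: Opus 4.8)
The plan is to build $Y$ first and then obtain $X$ from $Y$ by duality, since $R'$ and $R$ are intertwined with $S_N$ in "dual" ways.

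\textbf{Construction of $Y$.} First I would recall that since $R$ is a contraction with $\dim\ker R^*=N$, the wandering subspace $\ker R^*$ generates the subspace $\mathcal K:=\vee_{n=0}^\infty R^n\ker R^*\in\operatorname{Lat}R$. Fix an isometric identification $J\colon\ker S_N^*=\mathbb C^N\to\ker R^*$ (an isometry onto $\ker R^*$, possible since both spaces have dimension $N$). I want to define $Y$ on $H^2_N$ so that $Y e_k$ runs through the generators and $Y$ intertwines $S_N$ and $R$; concretely, on the dense set of $H^2_N$-polynomials $p=\sum_{n\ge 0}\chi^n v_n$ (with $v_n\in\mathbb C^N$, finitely many nonzero) one is forced by the intertwining relation to set $Yp=\sum_{n\ge 0}R^n J v_n$. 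The content is that this $Y$ is bounded. This is exactly the statement that $R$ restricted to $\mathcal K$ is a \emph{contractive} image of the shift built on its own wandering data; equivalently, one checks $\sum_n\|v_n\|^2-\sum_n\|R^n Jv_n\|^2\ge 0$, which follows from the standard telescoping/Fejér–Riesz type computation using $\|Rx\|\le\|x\|$ and the fact that the $Jv_n$ lie in $\ker R^*$, so successive blocks are "almost orthogonal" after applying powers of $R$. (Alternatively, one may simply invoke the minimal isometric dilation of $R$ and the classical fact that the shift part of that dilation maps onto $\mathcal K$, or quote a standard reference; the existence of such a $Y$ is well known.) Once $Y$ is bounded, $Y S_N=RY$ holds on polynomials and hence everywhere by continuity, $Y\ker S_N^*=Y(\operatorname{span}\{e_k\})=\ker R^*$ by construction, and $\operatorname{clos}YH^2_N=\operatorname{clos}\vee_{n}R^n\ker R^*=\mathcal K$.

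\textbf{Passage to $X$.} Now assume $R$ is left-invertible, so $R'=R(R^*R)^{-1}$ is defined, is again left-invertible by the remark before the lemma, and $R''=R$, $\ker R'^*=\ker R^*$. Apply the already-proved first part to the contraction $R'$ (note $R'$ is a contraction by Lemma~\ref{lemnotuu}, and $\dim\ker R'^*=\dim\ker R^*=N$): there is $Y'\in\mathcal I(S_N,R')$ with $Y'\ker S_N^*=\ker R'^*$ and $\operatorname{clos}Y'H^2_N=\vee_{n\ge0}R'^n\ker R'^*$. I would then set $X=(Y')^*$, viewed in $\mathcal L(\mathcal H,H^2_N)$. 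Taking adjoints of $Y'S_N=R'Y'$ gives $S_N^*X=XR'^*$; to get $X\in\mathcal I(R',S_N)$ in the form $XR'=S_NX$ one has to be a little careful — the clean route is to observe that $X(\ker R')=\{0\}$ is automatic because $\ker R'=\ker R=\{0\}$ (since $R$ expansive-type, $R'^*R=I$ forces... actually $R$ left-invertible gives $\ker R=\{0\}$, hence $\ker R'=\{0\}$), and then use $S_N^* X=XR'^*$ together with $R'^*R'=$ identity-plus to derive $XR'=S_NX$ on $\mathcal H$, or, more transparently, directly define $X$ on $\vee_n R'^n\ker R'^*$ and its orthogonal complement using the structure $\mathcal H=\mathcal R^\infty(R')\oplus\vee_{n}R'^n\ker R'^*$ from Lemma~\ref{lemshimorin}(i) applied to the pair $(R',L_{R'}^\dagger)$ — wait, one should instead apply Lemma~\ref{lemshimorin}(i) with $A=R'$ and $B$ its standard left inverse, giving $\mathcal H=\mathcal R^\infty(R')\oplus\vee_{n=0}^\infty R'^{*n}\ker R'^*$. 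Hmm, this last decomposition is the one that matches $\ker X$.

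\textbf{Identifying $\ker X$ and the range.} Regardless of which concrete route is taken, the key identities to verify for $X$ are: (a) $\operatorname{clos}X\mathcal H=H^2_N$ — this is the adjoint translation of $\ker Y'=\{0\}$, which holds since $Y'$ is injective (on $H^2_N$, because $S_N$ has no eigenvalues, or by Lemma~\ref{lemfred} if $\mathcal R^\infty(R')=\{0\}$, or in general because the wandering construction is injective on polynomials and the norm lower bound for $R'$ as a left-invertible operator gives injectivity in the limit); (b) $X\ker R'^*=\ker S_N^*$ — the adjoint of $Y'\ker S_N^*=\ker R'^*$, using that $\ker S_N^*$ and $\ker R'^*$ are the relevant cokernels; (c) $\ker X=\mathcal R^\infty(R')$. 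For (c): the inclusion $\ker X\supset\mathcal R^\infty(R')$ follows from Lemma~\ref{lemaabbxx} since $X$ intertwines $R'$ with $S_N$ and $\mathcal R^\infty(S_N)=\{0\}$, so $X\mathcal R^\infty(R')\subset\mathcal R^\infty(S_N)=\{0\}$; the reverse inclusion follows from $\operatorname{clos}Y'H^2_N=\vee_n R'^n\ker R'^*$, because $(\ker X)^\perp=\operatorname{clos}X^*H^2_N=\operatorname{clos}Y'H^2_N=\vee_n R'^n\ker R'^*$, and by Lemma~\ref{lemshimorin}(i) (for the left-invertible $R'$ and its standard left inverse) $\mathcal H\ominus\mathcal R^\infty(R')=\vee_n (L_{R'})^{*n}\ker R'^*$; so one needs $\vee_n R'^n\ker R'^*=\vee_n R'^{*?}\ldots$ — the correct matching is $\vee_{n}R'^{n}\ker R'^*=(\mathcal R^\infty(R'))^\perp$, which is precisely the content of applying Lemma~\ref{lemshimorin}(i) to the pair $(A,B)=((R')',\,(R')'\text{-left-inverse})=(R,L_R)$ appropriately; spelling this out carefully is the main bookkeeping obstacle.

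\textbf{Where the difficulty lies.} The genuine analytic input is the boundedness of $Y$ in the first part (the contractivity estimate for the shift built on the wandering subspace of a contraction); everything after that is adjoint-taking and organizing the orthogonal decompositions of Lemma~\ref{lemshimorin}. I expect the fiddliest point to be matching $\vee_n R'^n\ker R'^*$ with $(\mathcal R^\infty(R'))^\perp$ with the correct side (power of $R'$ versus power of its left inverse) so that $\ker X=\mathcal R^\infty(R')$ comes out exactly, rather than $\ker X=\mathcal R^\infty$ of the Cauchy dual; one resolves this by noting $(R')'=R$ and that $X^*=Y'$ has range $\vee_n R'^n\ker R'^*$, while Lemma~\ref{lemshimorin}(i) with $A=R'$, $B=L_{R'}$ gives $\mathcal H=\mathcal R^\infty(R')\oplus\vee_n L_{R'}^{*n}\ker R'^*=\mathcal R^\infty(R')\oplus\vee_n R'^n\ker R'^*$ since $L_{R'}^*=R''=R$... no: $L_{R'}^*=(R')'=R''=R$, so $\vee_n L_{R'}^{*n}\ker R'^*=\vee_n R^n\ker R^*$, not $\vee_n R'^n\ker R'^*$. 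Hence the honest statement is $(\ker X)^\perp=\vee_n R'^n\ker R'^*$ and one must instead invoke Lemma~\ref{lemshimorin}(i) with $A=R$, $B=L_R$ to get $\mathcal H=\mathcal R^\infty(R)\oplus\vee_n R'^n\ker R'^*$; but $\mathcal R^\infty(R)=\{0\}$ is false in general — so in fact one should apply part (i) with the roles chosen so $\mathcal R^\infty(R')$ appears, i.e. $A$ with $A'=R'$ meaning $A=R$: this gives exactly $\mathcal H=\mathcal R^\infty(R)\oplus\vee_n R'^n\ker R'^*$. Since this does not obviously give $\mathcal R^\infty(R')$, the cleanest fix is to replace $R$ by $R'$ throughout the application: apply Lemma~\ref{lemshimorin}(i) to $B=L_{R'}$, $A=R'$ getting $\mathcal H=\mathcal R^\infty(R')\oplus\vee_n L_{R'}^{*n}\ker L_{R'}$. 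Now $\ker L_{R'}=\ker R'^*$ and $L_{R'}^*=R$, giving $\mathcal H=\mathcal R^\infty(R')\oplus\vee_n R^n\ker R^*$, and separately $(\ker X)^\perp=\operatorname{clos}X^*H^2_N=\operatorname{clos}Y'H^2_N=\vee_n R'^n\ker R'^*$; these two are reconciled by applying the first part of the present lemma to $R'$ (which produced $Y'$ with that exact range) — consistency then forces $\vee_n R'^n\ker R'^*=\vee_n R^n\ker R^*=(\mathcal R^\infty(R'))^\perp$, i.e. $\ker X=\mathcal R^\infty(R')$, as claimed. This reconciliation is the step I would write out most carefully.
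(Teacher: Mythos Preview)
Your construction of $Y$ is essentially fine; the dilation route you mention parenthetically is exactly what the paper does (take the minimal isometric dilation $V$ of $R$, set $\mathcal M=\oplus_{n\ge 0}V^nE$ with $E=\ker R^*$, and let $Y=P_{\mathcal H}|_{\mathcal M}$), and it delivers boundedness of $Y$ for free.

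The second part, however, has a genuine gap. Your plan is to apply the first part to $R'$ and set $X=(Y')^*$. Two things go wrong:
\begin{itemize}
\item You cite Lemma~\ref{lemnotuu} to say $R'$ is a contraction, but that lemma says: if $T$ is \emph{expansive} then $T'$ is a contraction. Here $R$ is a left-invertible \emph{contraction}, so $R^*R\le I$ gives $(R^*R)^{-1}\ge I$ and hence $\|R'x\|^2=((R^*R)^{-1}x,x)\ge\|x\|^2$: $R'$ is expansive, not a contraction. The first part of the lemma therefore does not apply to $R'$.
\item Even if you had such a $Y'$, the adjoint of $Y'S_N=R'Y'$ is $S_N^*X=XR'^*$, not $XR'=S_NX$. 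You notice this but none of your attempted fixes produces the correct intertwining; the long discussion at the end does not close the gap (and the claimed equality $\vee_nR'^n\ker R'^*=\vee_nR^n\ker R^*$ is neither proved nor generally true).
\end{itemize}

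The paper avoids both problems by reusing the \emph{same} dilation. With $V$, $\mathcal M$, $E$ as above, set $X=P_{\mathcal M}|_{\mathcal H}$ and verify $XR'=(V|_{\mathcal M})X$ by a direct inner-product computation using $R'=P_{\mathcal H}V|_{\mathcal H}(V^*|_{\mathcal H}P_{\mathcal H}V|_{\mathcal H})^{-1}$ and $\mathcal M=V\mathcal M\oplus E$. For the kernel: $\mathcal F:=\ker X=\mathcal M^\perp\cap\mathcal H$ is invariant for $V^*$, hence for $R^*$; the identity $R^*R'=I$ then gives $R^*|_{\mathcal F}R'|_{\mathcal F}=I_{\mathcal F}$, and since $\ker R^*|_{\mathcal F}=E\cap\mathcal F=\{0\}$, $R'|_{\mathcal F}=(R^*|_{\mathcal F})^{-1}$ is surjective on $\mathcal F$, forcing $\mathcal F\subset\mathcal R^\infty(R')$. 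The reverse inclusion is Lemma~\ref{lemaabbxx}. This is short and completely sidesteps the duality juggling that trips up your approach.
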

\begin{proof} By {\cite[Theorem I.4.1]{nfbk}}, there exists a Hilbert space $\mathcal K$ and an
 isometry $V\in\mathcal L(\mathcal K)$ such that $\mathcal H\subset\mathcal K$, 
$\mathcal K\ominus\mathcal H\in\operatorname{Lat}V$, and $R=P_{\mathcal H}V|_{\mathcal H}$. 
Set $E=\ker R^*$. Then $E=\ker V^*\cap\mathcal H$. 
Set   $\mathcal M=\oplus_{n=0}^\infty V^n E$. Then $\ker (V|_{\mathcal M})^*=E$ and $V|_{\mathcal M}\cong S_N$.
Set $Y=P_{\mathcal H}|_{\mathcal M}$. Then $Y$ satisfies the conclusion of the lemma.

Set $X=P_{\mathcal M}|_{\mathcal H}$. If $R$ is left-invertible,
 then $R'=P_{\mathcal H}V|_{\mathcal H}(V^*|_{\mathcal H}P_{\mathcal H}V|_{\mathcal H})^{-1}$ and 
$\ker R'^*=\ker R^*=E$. Let $x\in\mathcal H$, $v\in\mathcal M$ and $u\in E$. 
Then $(R'x,u)=0$ and $V^*(Vv+u)=v$. Therefore,
\begin{align*} (XR'x, Vv+u)&=(P_{\mathcal M}R'x,Vv+u)=(R'x,Vv+u)=(V^*R'x,v)\\&=
(V^*P_{\mathcal H}V|_{\mathcal H}(V^*|_{\mathcal H}P_{\mathcal H}V|_{\mathcal H})^{-1}x,v)=(x,v)=
(P_{\mathcal M}x,v)\\&=(Xx,v)=(Xx, V^*(Vv+u))=(VXx, Vv+u).\end{align*}
Since $\mathcal M=V\mathcal M\oplus E$, we conclude that $XR'=VX$. Clearly, $XE=E$. Since 
$E\subset\operatorname{clos}X\mathcal H\in\operatorname{Lat}V$, we have 
$\operatorname{clos}X\mathcal H=\mathcal M$. 

Set $\mathcal F=\ker X$. By Lemma \ref{lemaabbxx}, $\mathcal R^\infty(R')\subset \mathcal F$. Also, $\mathcal F\in\operatorname{Lat}R'$. 
Since $\mathcal F=\ker P_{\mathcal M}|_{\mathcal H}=\mathcal M^\perp\cap\mathcal H$, we have 
$\mathcal F\in\operatorname{Lat}V^*$. Consequently,  $\mathcal F\in\operatorname{Lat}R^*$. 
The equality $R^*R'=I_{\mathcal H}$ implies that $R^*|_{\mathcal F}R'|_{\mathcal F}=I_{\mathcal F}$. Therefore, 
 $R^*\mathcal F=\mathcal F$. Furthermore, 
\begin{equation*}
\ker R^*|_{\mathcal F}=E\cap\mathcal F\subset E\cap\mathcal M^\perp=\{0\}.
\end{equation*}
Thus, $R^*|_{\mathcal F}$ is invertible, and $(R^*|_{\mathcal F})^{-1}=R'|_{\mathcal F}$. Thus,  
$\mathcal F\subset\mathcal R^\infty(R')$. 
\end{proof}

 \begin{corollary}\label{cormodel} Suppose that $T$ is  expansive, $1\leq N=\dim\ker T^*\leq \infty$, 
and $\mathcal R^\infty(T)=\{0\}$. Then there exists a quasiaffinity $X\in\mathcal I(T,S_N)$ such that  $X\ker T^*=\ker S_N^*$.
\end{corollary}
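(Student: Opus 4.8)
The plan is to obtain this at once from the second (``furthermore'') part of Lemma~\ref{lemmodel} applied to the Cauchy dual of $T$. Since $T$ is expansive it is bounded below, hence left-invertible, so $T'=T(T^*T)^{-1}$ is defined; recall that then $T'$ is itself left-invertible and $T''=T$. By Lemma~\ref{lemnotuu}, $T'$ is a contraction, and since $T'=L_T^*$ with $\ker L_T=\ker T^*$ we have $\ker T'^*=\ker T^*$, so $1\leq\dim\ker T'^*=N\leq\infty$.

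Thus $R:=T'$ satisfies the hypotheses of Lemma~\ref{lemmodel}: it is a contraction with $\dim\ker R^*=N$, and it is left-invertible. The second part of that lemma therefore yields $X\in\mathcal I(R',S_N)$ with $\operatorname{clos}X\mathcal H=H^2_N$, $X\ker R'^*=\ker S_N^*$, and $\ker X=\mathcal R^\infty(R')$. Since $R'=T''=T$, this says precisely that $X\in\mathcal I(T,S_N)$, $\operatorname{clos}X\mathcal H=H^2_N$, $X\ker T^*=\ker S_N^*$, and $\ker X=\mathcal R^\infty(T)$. By the hypothesis $\mathcal R^\infty(T)=\{0\}$ we get $\ker X=\{0\}$, so $X$ is a quasiaffinity intertwining $T$ and $S_N$ with $X\ker T^*=\ker S_N^*$, as required.

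There is no real obstacle here beyond the bookkeeping: the only points to verify are that $T'$ is a contraction (this is exactly Lemma~\ref{lemnotuu}), that $T'$ is left-invertible (a general property of the Cauchy dual noted above), and that $\dim\ker T'^*=\dim\ker T^*$. Once these are in place, the conclusion is simply the translation of Lemma~\ref{lemmodel} through the identity $T''=T$ together with the standing assumption that $T$ is pure.
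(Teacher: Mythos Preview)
Your proof is correct and follows exactly the same approach as the paper: set $R=T'$ and invoke the ``furthermore'' part of Lemma~\ref{lemmodel}, then use $R'=T''=T$ and the hypothesis $\mathcal R^\infty(T)=\{0\}$. The paper's own proof is the one-liner ``Set $R=T'$ and apply Lemma~\ref{lemmodel} to $R$''; you have simply spelled out the verifications (that $T'$ is a left-invertible contraction with $\ker T'^*=\ker T^*$) that the paper leaves implicit.
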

\begin{proof} Set $R=T'$ and apply Lemma \ref{lemmodel} to $R$.\end{proof}

\begin{lemma}\label{lemyy} Let $A\in\mathcal L(\mathcal H)$ and  $B\in\mathcal L(\mathcal K)$ be power bounded operators, and let $Y\in\mathcal L(\mathcal H,\mathcal K)$ be 
such that $BYA=Y$. Then 
 \begin{equation}\label{asymp01} \mathcal H_{A,0}\subset\ker Y, \end{equation}
where $\mathcal H_{A,0}$ is defined by \eqref{01}. Consequently, 
\begin{enumerate}[\upshape (i)]
\item if $A$ is of class $C_{0\cdot}$, then $Y=\mathbb O$;
\item if $\ker Y=\{0\}$, then $A$ is of class $C_{1\cdot}$.
\end{enumerate}
Moreover, 
there exists $X_+\in\mathcal L(\mathcal H_+^{(a)},\mathcal K)$  such that
$\|X_+\|\leq\sup_{n\in\mathbb N}\|B^n\|\|Y\|$, 
$X_+X_{+,A}=Y$ and $X_+=BX_+A_+^{(a)}$.
\end{lemma}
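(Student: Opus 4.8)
The plan is to iterate the hypothesis $BYA=Y$ to $Y=B^nYA^n$ ($n\geq 0$, by an obvious induction), and then combine this with the defining property $\|X_{+,A}x\|^2=\operatorname{Lim}_n\|A^nx\|^2$ of the isometric asymptote, together with two elementary facts about a Banach limit $\operatorname{Lim}$: it is monotone (being positive and linear) and it fixes constant sequences.

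Set $M=\sup_{n\geq 0}\|B^n\|$. From $Y=B^nYA^n$ we obtain, for every $x\in\mathcal H$ and every $n$, the estimate
\[
\|Yx\|^2=\|B^nYA^nx\|^2\leq M^2\|Y\|^2\,\|A^nx\|^2 .
\]
The left-hand side does not depend on $n$, so applying $\operatorname{Lim}_n$ and using monotonicity gives
\[
\|Yx\|^2\leq M^2\|Y\|^2\operatorname{Lim}_n\|A^nx\|^2=M^2\|Y\|^2\,\|X_{+,A}x\|^2 .
\]
If $x\in\mathcal H_{A,0}$, then $\|A^nx\|\to 0$, hence $\|X_{+,A}x\|=0$ and therefore $Yx=0$; this proves \eqref{asymp01}. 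Parts (i) and (ii) are then immediate: if $A$ is of class $C_{0\cdot}$ then $\mathcal H=\mathcal H_{A,0}\subset\ker Y$, so $Y=\mathbb O$; and if $\ker Y=\{0\}$ then $\mathcal H_{A,0}\subset\{0\}$, i.e.\ $A$ is of class $C_{1\cdot}$.

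For the last assertion, the inequality $\|Yx\|\leq M\|Y\|\,\|X_{+,A}x\|$ is precisely what is needed. It shows that the assignment $X_{+,A}x\mapsto Yx$ is well defined and bounded, with norm at most $M\|Y\|=\sup_{n}\|B^n\|\,\|Y\|$, on the range of $X_{+,A}$; since this range is dense in $\mathcal H_+^{(a)}$, it extends uniquely to an operator $X_+\in\mathcal L(\mathcal H_+^{(a)},\mathcal K)$ with the stated norm bound and with $X_+X_{+,A}=Y$. Finally, to verify $X_+=BX_+A_+^{(a)}$ it suffices to test both sides on the dense set $X_{+,A}\mathcal H$: using $X_{+,A}A=A_+^{(a)}X_{+,A}$ and then $BYA=Y$,
\[
BX_+A_+^{(a)}X_{+,A}x=BX_+X_{+,A}Ax=BYAx=Yx=X_+X_{+,A}x ,
\]
and the identity propagates to all of $\mathcal H_+^{(a)}$ by continuity.

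No genuine obstacle is expected here. The one step worth a moment's care is the passage through the Banach limit: one uses that $\operatorname{Lim}$ is monotone and leaves the $n$-independent quantity $\|Yx\|^2$ unchanged, while collapsing $\operatorname{Lim}_n\|A^nx\|^2$ to $\|X_{+,A}x\|^2$. The rest is routine bookkeeping with $Y=B^nYA^n$ and the density of the range of $X_{+,A}$.
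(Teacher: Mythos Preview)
Your proof is correct and follows essentially the same approach as the paper: iterate to $Y=B^nYA^n$, derive the key estimate $\|Yx\|\leq M\|Y\|\,\|A^nx\|$, use the Banach limit to collapse this to $\|Yx\|\leq M\|Y\|\,\|X_{+,A}x\|$, and then define $X_+$ on the dense range of $X_{+,A}$ and check the intertwining relation there. The only cosmetic difference is that you invoke the Banach limit before extracting \eqref{asymp01}, whereas the paper reads \eqref{asymp01} off directly from $\|A^nx\|\to 0$; both are equally valid.
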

\begin{proof} For every $n\in\mathbb N$ we have $B^nYA^n=Y$. Set $C=\sup_{n\in\mathbb N}\|B^n\|$.
 Then $\|Yx\|\leq C\|Y\|\|A^nx\|$ for every $x\in\mathcal H$ and every $n\in\mathbb N$. Consequently, \eqref{asymp01} is fulfilled. 
 
Set $X_+(X_{+,A}x)=Yx$ for $x\in\mathcal H$. Inclusion \eqref{asymp01} implies that the definition is correct.    We have
\begin{equation*} \|X_+X_{+,A}x\|^2= \|Yx\|^2\leq C^2\|Y\|^2\mathop{\mathrm{Lim}}_n\|A^nx\|^2=C^2\|Y\|^2\|X_{+,A}x\|^2,  \end{equation*}
where $\mathop{\mathrm{Lim}}$ is a Banach limit which is used in the construction of $(X_{+,A},A_+^{(a)})$, 
and
\begin{equation*}X_+X_{+,A}=Y=BYA=BX_+X_{+,A}A=BX_+A_+^{(a)}X_{+,A}.
\end{equation*}
Since the range of $X_{+,A}$ is dense, we conclude that 
$X_+$ can be extended as (linear, bounded) transformation onto $\mathcal H_+^{(a)}$, and $X_+=BX_+A_+^{(a)}$. 
\end{proof}

\begin{corollary}\label{lemasymp11} Suppose that $T\in\mathcal  L(\mathcal H)$ is an expansive operator,  
$R\mathcal\in \mathcal L(\mathcal K)$ is a power bounded operator, and $Z\in\mathcal I(R,T)$. Then 
  $\mathcal K_{R,0}\subset\ker Z$, where  $\mathcal K_{R,0}$ is defined by \eqref{01}. Moreover, 
there exists $Y\in\mathcal I(R_+^{(a)},T)$  such that $\|Y\|\leq\|Z\|$  and $Z=YX_{+,R}$.
\end{corollary}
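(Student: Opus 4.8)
The plan is to derive the corollary from Lemma~\ref{lemyy} by producing a \emph{contractive} left inverse of $T$. Since $T$ is expansive, $T^{\ast}T\geq I_{\mathcal H}$, hence $T^{\ast}T$ is invertible and $T$ is left-invertible; its standard left inverse $L_T=(T^{\ast}T)^{-1}T^{\ast}$ satisfies $L_TT=I_{\mathcal H}$. Its adjoint is $L_T^{\ast}=T(T^{\ast}T)^{-1}=T'$, the Cauchy dual of $T$, which is a contraction by Lemma~\ref{lemnotuu}. Therefore $L_T$ is a contraction, so in particular it is power bounded with $\sup_{n\in\mathbb N}\|L_T^n\|\leq 1$.

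Next I would put the intertwining relation into the form required by Lemma~\ref{lemyy}. From $Z\in\mathcal I(R,T)$, that is, $ZR=TZ$, it follows that $L_TZR=L_TTZ=Z$. So, taking $R$ in the role of $A$, $L_T$ in the role of $B$, and $Z$ in the role of the intertwiner $Y$, the hypotheses of Lemma~\ref{lemyy} are met: $R$ and $L_T$ are power bounded and $L_TZR=Z$. Lemma~\ref{lemyy} then yields at once the inclusion $\mathcal K_{R,0}\subset\ker Z$, and it produces a transformation — call it $Y$, denoted $X_+$ in the statement of Lemma~\ref{lemyy} — with $\|Y\|\leq\bigl(\sup_n\|L_T^n\|\bigr)\|Z\|\leq\|Z\|$, with $YX_{+,R}=Z$, and with $Y=L_TYR_+^{(a)}$. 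To finish, it remains only to check that $Y\in\mathcal I(R_+^{(a)},T)$; for this I would not use the weaker relation $Y=L_TYR_+^{(a)}$ but rather the original $ZR=TZ$ together with $R_+^{(a)}X_{+,R}=X_{+,R}R$, computing $YR_+^{(a)}X_{+,R}=YX_{+,R}R=ZR=TZ=TYX_{+,R}$; since the range of $X_{+,R}$ is dense, this gives $YR_+^{(a)}=TY$, as desired.

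I do not expect a genuine obstacle. The single point is the elementary observation that an expansive operator has a contractive left inverse, which converts the hypothesis $ZR=TZ$ into the shape $BYA=Y$ of Lemma~\ref{lemyy}; the remainder is routine manipulation with the canonical intertwining map $X_{+,R}$ and the density of its range. If one prefers not to invoke Lemma~\ref{lemyy}, one can instead iterate $Z=L_TZR$ to obtain $Z=L_T^nZR^n$, whence $\|Zk\|\leq\|Z\|\,\|R^nk\|$ for every $k\in\mathcal K$ and $n\in\mathbb N$, and therefore $\|Zk\|^2\leq\|Z\|^2\mathop{\mathrm{Lim}}_n\|R^nk\|^2=\|Z\|^2\|X_{+,R}k\|^2$; this one estimate simultaneously gives $\mathcal K_{R,0}\subset\ker Z$ and $\ker X_{+,R}\subset\ker Z$, and makes the formula $Y(X_{+,R}k):=Zk$ a well-defined transformation of norm at most $\|Z\|$, after which $YR_+^{(a)}=TY$ is checked exactly as above.
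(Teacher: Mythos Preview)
Your proof is correct and essentially identical to the paper's: the paper writes the left inverse as $T'^*$ rather than $L_T$, but since $T'^*=(T(T^*T)^{-1})^*=(T^*T)^{-1}T^*=L_T$, this is purely notational. Both arguments convert $ZR=TZ$ into $T'^*ZR=Z$, invoke Lemma~\ref{lemyy} (using $\|T'\|\leq 1$ to get the norm bound), and then verify $YR_+^{(a)}=TY$ via the density of the range of $X_{+,R}$ exactly as you do.
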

\begin{proof} Since $ZR=TZ$ and $T'^*T=I$, we have $T'^*ZR=Z$. By Lemma \ref{lemyy}, there exists 
$Y\in\mathcal L(\mathcal K_+^{(a)},\mathcal H)$  such that $Z=YX_{+,R}$, and  $\|Y\|\leq\|Z\|$, 
because $\|T'\|\leq 1$. Furthermore, 
 \begin{equation*}  TYX_{+,R} = TZ = ZR = YX_{+,R} R = YR_+^{(a)} X_{+,R}. 
\end{equation*}
Since the range of $X_{+,R}$ is dense, we conclude that $TY=YR_+^{(a)}$. 
\end{proof}

\begin{corollary}\label{colssquasi}  Suppose that $N\in\mathbb N$, $T$ is expansive, $R$ is a contraction, and 
$R\prec T\prec S_N$. Then $T\sim S_N$.
\end{corollary}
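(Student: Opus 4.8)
The plan is to prove $S_N\buildrel d\over\prec T$ and then upgrade it to $S_N\prec T$; together with the given relation $T\prec S_N$ this gives $T\sim S_N$. Fix quasiaffinities $Y_0$ realizing $R\prec T$ and $W_0$ realizing $T\prec S_N$, and write $T'$ for the Cauchy dual of $T$. Since $\mathcal R^\infty(S_N)=\{0\}$, Lemma~\ref{lemaabbxx} applied to $W_0$ forces $\mathcal R^\infty(T)=\{0\}$, so $T'$ is a completely non-unitary contraction by Lemma~\ref{lemnotuu}(iii). Applying Lemma~\ref{lemaabbxx} to $Y_0$ shows the unitary summand of $R$ is carried into $\mathcal R^\infty(T)=\{0\}$, so $R$ is completely non-unitary; as a quasiaffine transform of the a.c.\ operator $S_N$ it is then an a.c.\ contraction. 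From $Y_0R=TY_0$ and $T'^{\,*}T=I$ one gets $T'^{\,*}Y_0R=Y_0$, whence $R$ is of class $C_{1\cdot}$ by Lemma~\ref{lemyy}(ii); consequently $X_{+,R}$ is injective (for a contraction $\|X_{+,R}x\|^2=\lim_n\|R^nx\|^2$) and realizes $R\prec R_+^{(a)}$.

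Next I would pass to the isometric asymptote of $R$. Both $T$ and the isometry $S_N$ are expansive, so Corollary~\ref{lemasymp11} produces dense-range $Y_1\in\mathcal I(R_+^{(a)},T)$ and $V_1\in\mathcal I(R_+^{(a)},S_N)$ with $Y_1X_{+,R}=Y_0$, $V_1X_{+,R}=W_0Y_0$, and $W_0Y_1=V_1$ (equality on the dense set $X_{+,R}\mathcal H$), so $\ker Y_1=\ker V_1=:\mathcal N\in\operatorname{Lat}R_+^{(a)}$. Writing the Wold decomposition $R_+^{(a)}=S_M\oplus U'$ on $\mathcal H_s\oplus\mathcal H_u$ and using $\mathcal R^\infty(U')=\mathcal H_u$ together with $Y_1\mathcal H_u\subset\mathcal R^\infty(T)=\{0\}$, one gets $\mathcal H_u\subset\mathcal N$, hence $\mathcal N=\mathcal H_u\oplus\mathcal N_0$ with $\mathcal N_0\in\operatorname{Lat}S_M$, and $S_M\buildrel d\over\prec T$, $S_M\buildrel d\over\prec S_N$ (via $Y_1|_{\mathcal H_s}$, $V_1|_{\mathcal H_s}$), both with kernel $\mathcal N_0$. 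By Beurling--Lax $\mathcal N_0=\Theta H^2_{M'}$ for an inner $\Theta\in H^\infty_{M\times M'}$; the compression $\widetilde R:=P_{\mathcal H_s\ominus\mathcal N_0}S_M|_{\mathcal H_s\ominus\mathcal N_0}$ then satisfies $\widetilde R\prec T$ and $\widetilde R\prec S_N$ with \emph{injective} realizing maps, is completely non-unitary with $\widetilde R^{\,*}$ a restriction of $S_M^{\,*}$ (so $\widetilde R\in C_{\cdot 0}$) and is of class $C_{1\cdot}$ by Lemma~\ref{lemyy}(ii); in particular $M'<M$, since a square inner $\Theta$ would make $\widetilde R$ a $C_0$- and hence $C_{00}$-contraction.

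The core of the argument is now to deduce $S_N\buildrel d\over\prec T$. The relation $\widetilde R\prec S_N$, combined with the explicit description of $\widetilde R$ as a compression of $S_M$ against $\Theta H^2_{M'}$ and the Beurling--Lax--Halmos classification of shift-invariant subspaces, lets one improve $S_M\buildrel d\over\prec T$ step by step: replacing $(R,R_+^{(a)})$ by $(\widetilde R,\widetilde R_+^{(a)})$ strictly lowers the shift-multiplicity $M$ (because $M'\ge1$), so the construction stops at a \emph{unilateral shift} $S_{M_\infty}$ with $S_{M_\infty}\buildrel d\over\prec T$ and $S_{M_\infty}\prec S_N$; the latter forces $M_\infty=N$ (a quasiaffinity between unilateral shifts of finite multiplicity exists only between equal multiplicities), so $S_N\buildrel d\over\prec T$. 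Finally, take $\Psi\in\mathcal I(S_N,T)$ with $\operatorname{clos}\Psi H^2_N=\mathcal H$: the compression of $S_N$ to $(\ker\Psi)^\perp$ is a quasiaffine transform of $T$, which through the chain of inequalities $N\le\dim\ker T^*\le\dim\ker(\text{that compression})^*\le\dim\ker S_N^*=N$ (the first from $T\prec S_N$) gives $\dim\ker T^*=N$; since also $\ker T=\{0\}$, Lemma~\ref{lemfred} yields $\ker\Psi=\{0\}$, i.e.\ $S_N\prec T$, and $T\sim S_N$.

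I expect the main obstacle to be the reduction in the last paragraph: a quasiaffine transform of $S_N$ need not have multiplicity $N$, so one must genuinely exploit the structure of $\widetilde R$ as a compression of a shift against a \emph{proper} ($M'<M$) invariant subspace, and must verify that iterating the asymptote-and-compression construction terminates at a true shift — in particular that $M<\infty$, or else that the limiting step still goes through — all while tracking \emph{injectivity} rather than merely density of the intertwiners produced by Corollary~\ref{lemasymp11}. It is precisely here that the hypotheses ``$R$ is a contraction'' and ``$T$ is expansive'' are both used in an essential way.
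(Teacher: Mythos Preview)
Your first two paragraphs are essentially correct and in fact reproduce the skeleton of the paper's argument: you build $Y_1\in\mathcal I(R_+^{(a)},T)$ and $V_1=W_0Y_1\in\mathcal I(R_+^{(a)},S_N)$ with dense range and $\ker Y_1=\ker V_1$. The paper, however, finishes in one line from here, and the reason is the cited result {\cite[Lemma~2.1]{gamalshiftindex}}: for a \emph{contraction} $R$ with $R\prec S_N$ one has $R_+^{(a)}\cong S_N$. Thus $V_1\in\mathcal I(S_N,S_N)$ has dense range, Lemma~\ref{lemfred} (with $A=S_N$) gives $\ker V_1=\{0\}$, hence $\ker Y_1=\{0\}$, and $Y_1$ is the desired quasiaffinity realizing $S_N\prec T$.

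Your third paragraph is where the gap lies. You do not have access to the identification $R_+^{(a)}\cong S_N$, so you try to reach it by an iteration, claiming that ``replacing $(R,R_+^{(a)})$ by $(\widetilde R,\widetilde R_+^{(a)})$ strictly lowers the shift-multiplicity $M$ (because $M'\ge1$)''. This is not substantiated. The new contraction $\widetilde R$ is the model operator on $H^2_M\ominus\Theta H^2_{M'}$; it is indeed of class $C_{10}$ and satisfies $\widetilde R\prec S_N$, but the multiplicity of the shift $\widetilde R_+^{(a)}$ has no a priori reason to be smaller than $M$ --- in fact, the very lemma you are missing says it equals $N$. So the ``strict decrease'' is illusory and the inductive scheme does not terminate for the reason you give. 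You also have not secured $M<\infty$ at the outset (you flag this yourself), and your workaround via Beurling--Lax--Halmos does not supply the needed monotone quantity. In short, the whole iteration is an attempt to re-prove {\cite[Lemma~2.1]{gamalshiftindex}} by hand, and that is a genuine theorem, not a bookkeeping step.

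Your final paragraph --- upgrading $S_N\buildrel d\over\prec T$ to $S_N\prec T$ via a defect-dimension count and Lemma~\ref{lemfred} --- is correct, though once $R_+^{(a)}\cong S_N$ is known it is simpler to apply Lemma~\ref{lemfred} directly to $W_0Y_1\in\mathcal I(S_N,S_N)$, as the paper does, and read off $\ker Y_1=\{0\}$.
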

\begin{proof} Since $R\prec S_N$, we have $R_+^{(a)}\cong S_N$  by {\cite[Lemma 2.1]{gamalshiftindex}}.
 Denote by $Z$ and $X$ the quasiaffinities such that $ZR=TZ$ and $XT=S_NX$.  By Corollary \ref{lemasymp11}, there exists $Y\in\mathcal I(S_N,T)$  such that 
$Z=YX_{+,R}$. The latest equality implies that $Y$ has dense range. 
 Applying  Lemma \ref{lemfred}  to $XY$ with $A=S_N$, we obtain that $\ker XY=\{0\}$. Consequently, 
$\ker Y=\{0\}$.
\end{proof}

\begin{corollary} Suppose that $N\in\mathbb N$, $T$ is expansive, and $S_N\buildrel d\over\prec T$. 
Then there exists $M\in\mathbb N$ such that $M\leq N$ and $S_M\prec T$.
\end{corollary}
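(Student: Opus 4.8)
The plan is to prove the stronger assertion that $S_{M_0}\prec T$, where
\[M_0=\min\{m\in\mathbb N\ :\ S_m\buildrel d\over\prec T\}.\]
(We may assume $\mathcal H\neq\{0\}$, the statement being vacuous otherwise.) The hypothesis $S_N\buildrel d\over\prec T$ shows that $N$ lies in this set, so $M_0$ is well defined and $M_0\le N$. Fix $Y\in\mathcal I(S_{M_0},T)$ with $\operatorname{clos}YH^2_{M_0}=\mathcal H$. It then suffices to prove that $\ker Y=\{0\}$: once this is known, $Y$ is a quasiaffinity realizing $S_{M_0}\prec T$, and $M=M_0$ works. Thus the whole problem is reduced to showing that a dense-range intertwiner of \emph{minimal} shift multiplicity is automatically injective.

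Suppose, towards a contradiction, that $\ker Y\neq\{0\}$. Since $\ker Y\in\operatorname{Lat}S_{M_0}$, the Beurling--Lax--Halmos theorem yields an $M_0\times d'$ inner matrix function $\Psi$ with $\ker Y=\Psi H^2_{d'}$, where $1\le d'\le M_0$. Put $\mathcal N=H^2_{M_0}\ominus\ker Y$ and $R=P_{\mathcal N}S_{M_0}|_{\mathcal N}$, the Sz.-Nagy--Foias model operator $S(\Psi)$. Exactly as in the proof of Lemma~\ref{lemfred}, $Y|_{\mathcal N}$ is injective, has dense range, and satisfies $Y|_{\mathcal N}R=TY|_{\mathcal N}$, so $R\prec T$. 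As $\mathcal N\in\operatorname{Lat}S_{M_0}^*$ we have $R^*=S_{M_0}^*|_{\mathcal N}$, whence $\|R^{*n}x\|=\|S_{M_0}^{*n}x\|\to0$ and $R$ is of class $C_{\cdot0}$ (this reflects that $\Psi$ is inner). At the same time Corollary~\ref{lemasymp11}, applied to $Z=Y|_{\mathcal N}\in\mathcal I(R,T)$, gives $\mathcal N_{R,0}\subset\ker Z=\{0\}$, so $R$ is of class $C_{1\cdot}$; hence $R$ is of class $C_{10}$.

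The decisive step is to compute the isometric asymptote $R_+^{(a)}$. I would invoke the asymptotic model theory of completely non-unitary contractions (see {\cite[Sec. IX.3]{nfbk}}): for a contraction of class $C_{\cdot0}$ the asymptote $R_+^{(a)}$ is a \emph{unilateral} shift, while its unitary asymptote $R^{(a)}$ is multiplication by $\chi$ on $\operatorname{clos}\Delta_*L^2_{M_0}$ with $\Delta_*=(I-\Psi\Psi^*)^{1/2}$ (here $\Psi^*$ is the pointwise adjoint). Because $\Psi$ is inner, $\Psi(\zeta)\Psi(\zeta)^*$ is, for a.e.\ $\zeta$, the orthogonal projection onto the $d'$-dimensional range of $\Psi(\zeta)$, so $\operatorname{rank}\Delta_*(\zeta)=M_0-d'$ is constant; therefore $R^{(a)}$ is an absolutely continuous unitary of uniform multiplicity $M_0-d'$ and $R_+^{(a)}\cong S_{M_0-d'}$. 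Since $R\in C_{1\cdot}$ and $\mathcal N\neq\{0\}$, the asymptote is nonzero, so $M_0-d'\ge1$; and $d'\ge1$ forces $M_0-d'<M_0$. Finally Corollary~\ref{lemasymp11} produces $Y_+\in\mathcal I(R_+^{(a)},T)=\mathcal I(S_{M_0-d'},T)$ with $Y|_{\mathcal N}=Y_+X_{+,R}$; as $Y|_{\mathcal N}$ has dense range, so has $Y_+$, and hence $S_{M_0-d'}\buildrel d\over\prec T$ with $M_0-d'<M_0$, contradicting the minimality of $M_0$. Consequently $\ker Y=\{0\}$ and $S_{M_0}\prec T$.

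The part I expect to require the most care is this middle identification $R_+^{(a)}\cong S_{M_0-d'}$: one must know simultaneously that the asymptote carries \emph{no unitary summand} (which comes from $R\in C_{\cdot0}$) and that its multiplicity is \emph{exactly} $M_0-d'$ (which comes from the constancy of $\operatorname{rank}\Delta_*$ for an inner $\Psi$). Everything else is bookkeeping resting on Corollary~\ref{lemasymp11}, the class computation for coinvariant compressions of $S_{M_0}$, and the lattice structure of $S_{M_0}$.
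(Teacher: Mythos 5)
Your proposal is correct, and its skeleton up to the decisive step coincides with the paper's: compress the dense\--range intertwiner by its kernel, use Corollary~\ref{lemasymp11} to see that the compression $R$ of the shift to the co\--invariant subspace is of class $C_{1\cdot}$ and to factor the intertwiner through $R_+^{(a)}$, and identify $R_+^{(a)}$ as a finite\--multiplicity unilateral shift. Where you genuinely diverge is in proving that the resulting dense\--range $Y_+\in\mathcal I(S_M,T)$ is injective. The paper does this directly, for an arbitrary dense\--range intertwiner of $S_N$: it invokes the lattice isomorphism $\mathcal M\mapsto\operatorname{clos}X_{+,R}\mathcal M$ between $\operatorname{Lat}R$ and $\operatorname{Lat}R_+^{(a)}$ from \cite{gamal02}, so that $\ker Y_+=\operatorname{clos}X_{+,R}\mathcal M$ forces $\mathcal M\subset\ker Z=\{0\}$; for this it only needs the existence of \emph{some} $M\le N$ with $R_+^{(a)}\cong S_M$. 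You instead run a descent on the minimal multiplicity $M_0$, which buys you independence from \cite{gamal02} but obliges you to prove the \emph{strict} inequality $\mu_{R_+^{(a)}}<M_0$, i.e.\ the exact count $M_0-d'$. That count is correct, but be aware that its two halves are not equally priced: the identification $R^{(a)}\cong U_{\mathbb T,M_0-d'}$ via $\operatorname{rank}\Delta_*$ is the residual\--part theory of \cite[Sec.~IX.1]{nfbk} (not IX.3), whereas the absence of a unitary summand in $R_+^{(a)}$ is exactly the Uchiyama--Takahashi input (\cite{uchiyama83}, \cite{takahashi84}) that the paper itself cites at the same point --- it does not come for free from ``$R\in C_{\cdot0}$'' by soft asymptote formalism, since a $C_{\cdot0}$ contraction can still satisfy $T\buildrel d\over\prec U$ for a nonzero unitary $U$. (A more self\--contained route to the exact multiplicity, in the spirit of the paper's own Lemmas~\ref{lemfred} and~\ref{lem13}: once $R\prec S_j$ is known, Takahashi's theorem \cite{takahashi} gives $\operatorname{ind}R=-j$, while index additivity in the triangulation $S_{M_0}=\left[\begin{smallmatrix}S_{M_0}|_{\Psi H^2_{d'}} & *\\ \mathbb O & R\end{smallmatrix}\right]$ gives $\operatorname{ind}R=-(M_0-d')$.) With that caveat on sourcing, your argument is complete and the reduction to the minimal $M_0$ is a legitimate alternative to the paper's use of the invariant\--subspace correspondence.
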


\begin{proof} Let $Y_0$ realize the relation $S_N\buildrel d\over\prec T$. Set
\begin{equation*}\mathcal N=\ker Y_0, \ \ \ R=P_{H^2_N\ominus \mathcal N}S_N|_{H^2_N\ominus \mathcal N}
\ \ \text{and } \ \ Z=Y_0|_{H^2_N\ominus \mathcal N}. \end{equation*} 
Then $Z$ realizes the relation $R\prec T$. By Corollary \ref{lemasymp11}, $R\in C_{1\cdot}$. By 
\cite{uchiyama83} or \cite{takahashi84}, and \cite{ker89} or {\cite[Sec. IX.1]{nfbk}}, and
{\cite[Lemma 2.1]{gamalshiftindex}}, there exists $M\leq N$ such that  $R_+^{(a)}\cong S_M$. By Corollary \ref{lemasymp11}, 
 there exists $Y\in\mathcal I(S_M, T)$ such that $Z=YX_{+,R}$. The range of $Y$ is dense, because the range of $Z$ is dense. 
Set $\mathcal E=\ker Y$. Then $\mathcal E\in\operatorname{Lat}S_M$. By \cite{gamal02}, there exists 
$\mathcal M\in\operatorname{Lat}R$ such that $\mathcal E=\operatorname{clos}X_{+,R}\mathcal M$.
Consequently, $\mathcal M\subset\ker Z=\{0\}$. Thus, $\mathcal E=\{0\}$.
\end{proof}

\section{Intertwining by Toeplitz operators}

Let $\theta\in H^\infty$ be an inner function. Set $\mathcal K_\theta=H^2\ominus\theta H^2$. Then 
\begin{equation}\label{30}\mathcal K_\theta=\theta\overline\chi\overline{\mathcal K_\theta}=P_+\theta H^2_-,
\end{equation}
and $\mathcal K_\theta\in\operatorname{Lat}S^*$. 
Set $S(\theta)=P_{\mathcal K_\theta}S|_{\mathcal K_\theta}$.
Then 
\begin{equation}
\label{sstheta}
S(\theta) f =\chi f-(f,P_+\overline\chi\theta)\theta \ \ ( f\in\mathcal K_\theta). 
\end{equation}

For an inner function $\theta$ such that $\theta(0)=0$ 
there exists a singular (with respect to $m$) positive Borel  measure $\nu$ on $\mathbb T$ such that $\nu(\mathbb T)=1$ and 
\begin{equation}
\label{thetaclark}
\frac{1}{1-\theta(z)}=\int_{\mathbb T}\frac{1}{1-z\overline\zeta}\mathrm{d}\nu(\zeta)  \ \ (z\in\mathbb D),\end{equation}
which is called the \emph{Clark measure} of $\theta$. 
Every function $f\in\mathcal K_\theta$ has nontangential boundary values $f(\zeta)$ for 
a.e. $\zeta\in\mathbb T$ with respect to $\nu$, and 
\begin{equation}
\label{yy1}
 f(z)=(1-\theta(z))\int_{\mathbb T}\frac{f(\zeta)}{1-z\overline\zeta}\mathrm{d}\nu(\zeta)  \ \ (z\in\mathbb D).
\end{equation}
The relation \eqref{yy1} and F. and M. Riesz theorem (see, for example, {\cite[Theorem 1.21]{gmr}}
 or {\cite[Theorem II.3.8]{garnett}}) imply that 
\begin{equation}\label{31} (1-\theta)H^2\cap\mathcal K_\theta=\{0\}. \end{equation}
For an inner function $\theta$ such that $\theta(0)=0$ set 
\begin{equation}
\label{uutheta}
U(\theta)=S(\theta)+\mathbf{1}\otimes\overline\chi\theta.
\end{equation}
For a  singular positive Borel  measure $\nu$ on $\mathbb T$ such that $\nu(\mathbb T)=1$ denote by $U_\nu$ 
the operator of multiplication by the independent variable on $L^2(\mathbb T, \nu)$. If $\nu$ is 
the Clark measure for $\theta$, then 
\begin{equation}
\label{thetanucong}
U(\theta)\cong U_\nu.
\end{equation}

Conversely, if $\nu$ is a singular positive Borel  measure on 
$\mathbb T$ such that $\nu(\mathbb T)=1$ and $\theta$ is defined by \eqref{thetaclark}, 
then $\theta$ is an inner function and $\theta(0)=0$. 

For references,  see \cite{clark} and \cite{polt} or \cite{garciaross15}, \cite{gmr}.

If $\theta$ is an inner function such that $\theta(0)\neq 0$, then $S(\theta)$ is invertible and 
it is follows from \eqref{sstheta} that 
\begin{equation}\label{ssthetainv} (S(\theta)^*)^{-1}=S(\theta)+(\theta-\frac{1}{\overline{\theta(0)}})\otimes P_+\overline\chi\theta.
\end{equation}

The \emph{Toeplitz operator} $T_\psi$ with  the symbol $\psi\in L^2$ acts by the formula 
$T_\psi h=P_+\psi h$ for $h\in H^\infty$. It can be extended as a \emph{bounded} operator on $H^2$   if and only if 
 $\psi\in L^\infty$, and then it acts by the formula $T_\psi h=P_+\psi h$ ($h\in H^2$). 
The following lemma can be found, for example, in {\cite[Theorem 3.1.2]{peller}}.

\begin{lemma}\label{lemcc} Let $T\in\mathcal L(H^2)$. Then $T=T_\psi$ for some $\psi\in L^\infty$ if and only if $S^*TS=T$.
\end{lemma}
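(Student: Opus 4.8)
The plan is to prove the classical characterization of bounded Toeplitz operators via the identity $S^*TS = T$, following the standard argument (Brown–Halmos). The forward direction is a direct computation, and the backward direction is where the real work lies, so I would organize the proof accordingly.

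First I would establish the easy implication: if $\psi\in L^\infty$ and $T = T_\psi$, then $S^*T_\psi S h = P_+\bigl(\overline\chi\, P_+(\psi\chi h)\bigr)$ for $h\in H^2$. Writing $\psi\chi h = P_+(\psi\chi h) + P_-(\psi\chi h)$ and noting that $\overline\chi P_-(\psi\chi h) \in H^2_-$ (since $\chi h \in \chi H^2$ forces $P_-(\psi\chi h)$ to have vanishing constant term, so multiplying by $\overline\chi$ keeps it in $H^2_-$), one gets $P_+(\overline\chi \psi\chi h) = P_+(\psi h) = T_\psi h$. Hence $S^*T_\psi S = T_\psi$.

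For the converse, suppose $T\in\mathcal L(H^2)$ satisfies $S^*TS = T$. I would define the candidate symbol through the matrix entries of $T$ in the orthonormal basis $\{\chi^n\}_{n\ge 0}$ of $H^2$: set $a_k = (T\chi^k, \mathbf 1)$ for $k\ge 0$ and, using the relation $S^*TS = T$, show that $(T\chi^{n}, \chi^{m})$ depends only on $m-n$. Indeed iterating $T = S^{*j}TS^j$ gives $(T\chi^n,\chi^m) = (TS^j\chi^n, S^j\chi^m) = (T\chi^{n+j},\chi^{m+j})$, so the matrix of $T$ is a (two-sided) Toeplitz matrix with entries $c_{m-n}$ where $c_\ell = (T\chi^n, \chi^{n+\ell})$ for any valid $n$. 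The natural candidate is $\psi \sim \sum_{\ell\in\mathbb Z} c_\ell \chi^\ell$; the task is to show this Fourier series is that of an $L^\infty$ function and that $T = T_\psi$.

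The main obstacle is showing $\psi\in L^\infty$ with $\|\psi\|_\infty \le \|T\|$ (rather than merely $\psi\in L^2$), since a formal Toeplitz matrix need not come from a bounded symbol. I would handle this by the standard dilation/compression trick: for each $r\in(0,1)$ consider $\psi_r(\zeta) = \sum_\ell c_\ell r^{|\ell|}\zeta^\ell$, a smooth function on $\mathbb T$, and show $\|\psi_r\|_\infty \le \|T\|$ by testing against unimodular trigonometric polynomials and using that $T_{\psi_r}$ (which is a nice operator) can be written as a limit of compressions of $T$ conjugated by shifts — concretely, $\langle T_{\psi_r} f, g\rangle$ is expressible through the matrix entries $c_\ell$ weighted by $r^{|\ell|}$, which are Poisson-type averages of $\langle T\chi^n,\chi^m\rangle$, giving the norm bound. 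Then $\psi_r \to \psi$ in the weak-$*$ topology of $L^\infty$ along a subnet, so $\psi\in L^\infty$ with $\|\psi\|_\infty\le\|T\|$. Finally, since both $T$ and $T_\psi$ are bounded operators on $H^2$ with the same matrix entries $(T\chi^n,\chi^m) = c_{m-n} = (T_\psi\chi^n, \chi^m)$, they agree on the dense span of $\{\chi^n\}$ and hence $T = T_\psi$. I would remark that, since the lemma is quoted from \cite[Theorem 3.1.2]{peller}, in the paper itself it suffices to cite this; the sketch above indicates the route one would take for a self-contained argument.
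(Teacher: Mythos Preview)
The paper does not prove this lemma at all: it simply records the statement and refers to \cite[Theorem~3.1.2]{peller}. So there is nothing to compare your argument against; you are supplying a proof where the paper supplies only a citation, and you yourself note this at the end of your proposal.

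Your sketch is the standard Brown--Halmos argument and is correct in outline. Two small comments. In the forward direction, your parenthetical justification (``since $\chi h\in\chi H^2$ forces $P_-(\psi\chi h)$ to have vanishing constant term'') is unnecessary: $P_-$ of anything lies in $H^2_-$, which already has no nonnegative Fourier coefficients, so $\overline\chi P_-(\cdot)\in H^2_-$ automatically. In the converse, the step showing $\|\psi_r\|_\infty\le\|T\|$ is the one place where your description is vague; the clean way to say it is that the Toeplitz matrix $(c_{m-n})$ extends to a Laurent matrix on $\ell^2(\mathbb Z)$ of the same norm, and the Schur multiplier $(r^{|m-n|})$ has Schur norm $1$ because it is the matrix of convolution by the Poisson kernel (a probability measure), so the Schur product has operator norm at most $\|T\|$, which gives $\|M_{\psi_r}\|=\|\psi_r\|_\infty\le\|T\|$. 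With that clarification your argument is complete and matches the textbook proof your citation points to.
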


 It can be checked by the straightforward calculation that 
\begin{equation}\label{toeplitz} T_\psi S-ST_\psi=\mathbf{1}\otimes P_+\overline{\chi\psi}.
\end{equation}

Let $0\not\equiv g\in H^2$, and let $f\in H^2$ be such that $|f|\leq |g|$ $m$-a.e. on $\mathbb T$. Using the equality $H^1\cap\overline\chi\overline H^1=\{0\}$ it is easy to see that 
$\ker T_{\frac{f}{\overline g}}=\{0\}$. If $g$, $1/g\in H^2$, then $ T_{\frac{g}{\overline g}}$ is a quasiaffinity. 
A description of functions $g$ such that $\ker T_{\frac{\overline g}{g}}=\{0\}$ is given in \cite{sarason89}. 
A necessary condition for $\ker T_{\frac{\overline g}{g}}=\{0\}$ is that $g$ is outer, and $g\neq(1-\theta)f$ for any inner function 
 $\theta\in H^\infty$ and any $f\in H^2$. But this condition is not sufficient \cite{inoue}. With respect to this subject see also \cite{po}.

For $g\in H^2$ such that $\|g\|=1$ define the function $\omega\in H^\infty$ as follows:

\begin{equation}\label{nakamura1} \frac{1}{1-\omega(z)}=\int_{\mathbb T}\frac{|g(\zeta)|^2\mathrm{d}m(\zeta)}{1-z\overline\zeta}
 \ \ \ (z\in\mathbb D).
\end{equation}
Then $1-\omega$ is an outer function. The following theorem is proved in \cite{sarason}. 

\begin{theoremcite}\label{theoremaa}{\cite[Lemma 2]{sarason}} Let $g\in H^2$, and let $\|g\|=1$. Define $\omega$ by 
\eqref{nakamura1}. Then  
$T_{1-\omega} T_{\overline g}\in\mathcal L(H^2)$.
\end{theoremcite}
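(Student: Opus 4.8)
The plan is to identify $T_{1-\omega}T_{\overline g}$, after precomposition with a natural isometry, with a normalised Cauchy transform, and then to quote the basic contractivity property of the latter from Clark--Aleksandrov theory. First I would read off what \eqref{nakamura1} says about $\omega$. Set $\nu=|g|^2\,m$, a positive Borel measure on $\mathbb T$ with $\nu(\mathbb T)=\|g\|^2=1$; then \eqref{nakamura1} says precisely that $\tfrac1{1-\omega}$ is the Cauchy transform $(C\nu)(z)=\int_{\mathbb T}\tfrac{\mathrm d\nu(\zeta)}{1-z\overline\zeta}$ of $\nu$. Taking real parts and using $\operatorname{Re}\tfrac1{1-z\overline\zeta}=\tfrac12\bigl(\tfrac{1-|z|^2}{|1-z\overline\zeta|^2}+1\bigr)$ one gets $\operatorname{Re}\tfrac1{1-\omega}\ge\tfrac12$ on $\mathbb D$, so $1-\omega$ maps $\mathbb D$ into $\{w:|w-1|\le1\}$, $\omega$ maps $\mathbb D$ into $\overline{\mathbb D}$, and $\omega(0)=0$. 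Thus $\omega$ is a Schur function vanishing at $0$, and $\nu$ is its Aleksandrov--Clark measure at the point $1$, i.e. the representing measure of the Herglotz function $\tfrac{1+\omega}{1-\omega}=\tfrac2{1-\omega}-1$.

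The key computation is the following. Fix $h\in H^\infty$ (a dense subspace of $H^2$). Since $g\not\equiv0$ we have $g\ne0$ $m$-a.e., so $h/g$ is a well-defined measurable function, and $\int_{\mathbb T}|h/g|^2\,\mathrm d\nu=\int_{\mathbb T}|h|^2\,\mathrm dm=\|h\|^2$; hence $h/g\in L^2(\nu)$ with $\|h/g\|_{L^2(\nu)}=\|h\|_{H^2}$. Moreover $\tfrac hg\,|g|^2=h\overline g$ $m$-a.e., and $\overline g h\in L^1(m)$ because $\|\overline g h\|_{L^1}\le\|g\|_{H^2}\|h\|_{H^2}$, so for $z\in\mathbb D$
\[
(T_{\overline g}h)(z)=\bigl(P_+(\overline g h)\bigr)(z)=\int_{\mathbb T}\frac{\overline{g(\zeta)}\,h(\zeta)}{1-z\overline\zeta}\,\mathrm dm(\zeta)=\int_{\mathbb T}\frac{(h/g)(\zeta)}{1-z\overline\zeta}\,\mathrm d\nu(\zeta)=\Bigl(C\bigl(\tfrac hg\,\nu\bigr)\Bigr)(z).
\]
Since $1-\omega\in H^\infty$, $T_{1-\omega}$ is multiplication by $1-\omega$ on $H^2$, and therefore, using $C\nu=\tfrac1{1-\omega}$,
\[
T_{1-\omega}T_{\overline g}h=(1-\omega)\,C\!\bigl(\tfrac hg\,\nu\bigr)=\frac{C(\tfrac hg\,\nu)}{C\nu}=V_\nu\!\bigl(\tfrac hg\bigr),
\]
where $V_\nu f:=C(f\nu)/C\nu$ is the normalised Cauchy transform associated with $\nu$.

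Now I would invoke the generalised Clark (Aleksandrov) theorem: for a positive measure $\nu$ on $\mathbb T$ with $\nu(\mathbb T)=1$, the normalised Cauchy transform $V_\nu$ is a partial isometry of $L^2(\nu)$ onto the de Branges--Rovnyak space $\mathcal H(\omega)$, $\omega=1-1/C\nu$, which is contractively contained in $H^2$ (see \cite{polt}, \cite{gmr}, \cite{garciaross15}); in particular $\|V_\nu f\|_{H^2}\le\|f\|_{L^2(\nu)}$. Combined with the previous step this yields
\[
\|T_{1-\omega}T_{\overline g}h\|_{H^2}=\bigl\|V_\nu(\tfrac hg)\bigr\|_{H^2}\le\|h/g\|_{L^2(\nu)}=\|h\|_{H^2}\qquad(h\in H^\infty),
\]
and since $H^\infty$ is dense in $H^2$, $T_{1-\omega}T_{\overline g}$ extends to a contraction of $H^2$; in particular $T_{1-\omega}T_{\overline g}\in\mathcal L(H^2)$. (As a byproduct $\|T_{1-\omega}T_{\overline g}\|\le1$, with equality since $T_{1-\omega}T_{\overline g}g=(1-\omega)P_+(|g|^2)=\mathbf 1$.)

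The conceptual heart is the identification $T_{1-\omega}T_{\overline g}=V_\nu\circ(\,\cdot/g\,)$; everything else is bookkeeping, and the only result quoted is the contractivity of $V_\nu$. If one wants a self-contained argument, this contractivity reduces to checking that the adjoint $V_\nu^{\ast}\colon\mathcal H(\omega)\to L^2(\nu)$ is an isometry, which is a short computation on reproducing kernels: from $(V_\nu f)(\lambda)=(1-\omega(\lambda))\langle f,k_\lambda\rangle_{L^2(\nu)}$, with $k_\lambda(\zeta)=\tfrac1{1-\overline\lambda\zeta}$, one gets $V_\nu^{\ast}k_\lambda^{\omega}=\overline{(1-\omega(\lambda))}\,k_\lambda$ for the kernel $k_\lambda^{\omega}(z)=\tfrac{1-\overline{\omega(\lambda)}\,\omega(z)}{1-\overline\lambda z}$ of $\mathcal H(\omega)$, and then
\[
\|V_\nu^{\ast}k_\lambda^{\omega}\|_{L^2(\nu)}^2=|1-\omega(\lambda)|^2\!\int_{\mathbb T}\frac{\mathrm d\nu(\zeta)}{|1-\overline\lambda\zeta|^2}=\frac{1-|\omega(\lambda)|^2}{1-|\lambda|^2}=\|k_\lambda^{\omega}\|_{\mathcal H(\omega)}^2,
\]
the middle equality being the Poisson‑integral form of the Herglotz identity $\operatorname{Re}\tfrac{1+\omega}{1-\omega}=\tfrac{1-|\omega|^2}{|1-\omega|^2}$; the off‑diagonal (Clark kernel) identity is proved the same way. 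I expect that — rather than any hard analytic estimate on the (unbounded) operator $T_{\overline g}$ — to be the only real technical point.
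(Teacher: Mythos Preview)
The paper does not give a proof of this statement; it is quoted as Theorem~\ref{theoremaa} from \cite[Lemma~2]{sarason} and used as a black box. So there is no in-paper proof to compare against.

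Your argument is correct. The identification $T_{1-\omega}T_{\overline g}h=V_\nu(h/g)$ with $\nu=|g|^2m$ is clean, and the contractivity of the normalised Cauchy transform $V_\nu\colon L^2(\nu)\to\mathcal H(\omega)\hookrightarrow H^2$ is exactly the right tool. This is in fact essentially Sarason's own approach in \cite{sarason}: that paper is precisely where the link between $(1-\omega)T_{\overline g}$ and the de~Branges--Rovnyak space $\mathcal H(\omega)$ is set up, and your computation reproduces it. One small remark: since here $\nu=|g|^2m$ is absolutely continuous, $\omega$ is typically not inner, so you are right to invoke the general (Aleksandrov) form of the Clark theory rather than the inner/singular case recalled in \eqref{thetaclark}--\eqref{thetanucong}; the references you cite (\cite{polt}, \cite{gmr}) cover this.
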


A simplest example of expansive operator is one-dimensional perturbation of $S$ from Lemma \ref{lemgsimss} below. Other one-dimensional perturbations of $S$  was considered in \cite{nakamura93} and \cite{cassiertimotin}.

\begin{lemma} \label{lemgsimss} Let $g\in H^2$, and let $g(0)=1$. Set $T=S-\mathbf{1}\otimes S^*g$. Set $g=\theta f$, where $\theta$ is inner and $f$ is outer. Then the following are fulfilled. 
\begin{enumerate}[\upshape (i)]
\item  
 $\mathcal K_\theta=\mathcal R^\infty(T)$, and 
$T|_{\mathcal K_\theta}=(S(\theta)^*)^{-1}$. 
\item  
$T_{\frac{f}{\overline g}}S=TT_{\frac{f}{\overline g}}$, and  there exists an outer function $\varphi\in H^\infty$ such that
$T_\varphi T_{\overline g}\in\mathcal L(H^2)$, and $T_\varphi T_{\overline g}T=ST_\varphi T_{\overline g}$.
\item The following are equivalent: 

$\mathrm{(a)}$ $T$ is similar to an isometry;

$\mathrm{(b)}$ $T\approx S$;

$\mathrm{(c)}$  $T_{\frac{g}{\overline g}}$ is invertible.
\end{enumerate}
\end{lemma}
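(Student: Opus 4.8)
The plan is to prove (ii) first, deduce (i) from it, and then settle (iii).

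\emph{Part (ii).} Since $g=\theta f$ with $\theta$ inner and $f$ outer, $|f|=|g|$ $m$-a.e., so $f/\overline g$ is unimodular, $f/\overline g\in L^\infty$, and $T_{f/\overline g}\in\mathcal L(H^2)$; its injectivity is the fact recalled before Theorem \ref{theoremaa}, applied to the pair $f$, $g$. Since $TT_{f/\overline g}-T_{f/\overline g}S=-(T_{f/\overline g}S-ST_{f/\overline g})-(\mathbf 1\otimes S^*g)T_{f/\overline g}$, a direct computation with \eqref{toeplitz} (with $\psi=f/\overline g$), the convention for $\otimes$, and $S^*g=\overline\chi(g-1)$ reduces the vanishing of the right-hand side to the identity $P_+\overline{\chi\cdot(f/\overline g)}+T_{\overline f/g}S^*g=0$, which holds because both $P_+\overline{\chi\cdot(f/\overline g)}$ and $-T_{\overline f/g}S^*g$ equal $P_+(\overline{\chi f}/g)$, using $P_+\overline{\chi f}=0$ (as $\chi f\in\chi H^2$); hence $T_{f/\overline g}S=TT_{f/\overline g}$. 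For the second assertion I take $\varphi=1-\omega$, where $\omega$ is associated by \eqref{nakamura1} to $g/\|g\|$; then $\varphi$ is outer, $\varphi(0)=1$, and $T_\varphi T_{\overline g}=\|g\|\,T_\varphi T_{\overline{g/\|g\|}}\in\mathcal L(H^2)$ by Theorem \ref{theoremaa}. From \eqref{toeplitz} one gets $T_{\overline g}S-ST_{\overline g}=\mathbf 1\otimes S^*g$; combining this with $T_\varphi S=ST_\varphi$, $T_{\overline g}\mathbf 1=\overline{g(0)}\mathbf 1=\mathbf 1$ and $T_\varphi\mathbf 1=\varphi$ yields $T_\varphi T_{\overline g}S-ST_\varphi T_{\overline g}=\varphi\otimes S^*g=(T_\varphi T_{\overline g})(\mathbf 1\otimes S^*g)$, whence $T_\varphi T_{\overline g}T=ST_\varphi T_{\overline g}$. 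All manipulations are carried out on $H^\infty$ and extend by boundedness of the operators involved.

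\emph{Part (i).} For $x\in\mathcal K_\theta$ one has $Tx=\chi x-(x,S^*g)\mathbf 1$ and, by \eqref{sstheta}, $\chi x=S(\theta)x+(x,P_+\overline\chi\theta)\theta$. The key point is the identity $P_{\mathcal K_\theta}S^*g=\tfrac1{\theta(0)}P_+\overline\chi\theta$, which follows from $g=\theta f$ and $(\theta f)(0)=1$ (so $f(0)=1/\theta(0)$) by a computation with \eqref{30}; substituting and using \eqref{ssthetainv} gives $Tx=(S(\theta)^*)^{-1}x\in\mathcal K_\theta$. Thus $\mathcal K_\theta\in\operatorname{Lat}T$, $T|_{\mathcal K_\theta}=(S(\theta)^*)^{-1}$, and $T\mathcal K_\theta=\mathcal K_\theta$, so $\mathcal K_\theta\subset T^nH^2$ for all $n$, i.e. $\mathcal K_\theta\subset\mathcal R^\infty(T)$. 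Conversely, by Lemma \ref{lemaabbxx} the intertwiner $T_\varphi T_{\overline g}\in\mathcal I(T,S)$ from (ii) maps $\mathcal R^\infty(T)$ into $\mathcal R^\infty(S)=\{0\}$, so $\mathcal R^\infty(T)\subset\ker(T_\varphi T_{\overline g})$; since $\varphi$ is outer, $T_\varphi$ is injective, and $\ker T_{\overline g}=(\operatorname{clos}T_gH^2)^\perp=(\theta H^2)^\perp=\mathcal K_\theta$ (because $T_g=T_\theta T_f$ with $f$ outer forces $\operatorname{clos}T_gH^2=\theta H^2$), so $\ker(T_\varphi T_{\overline g})=\mathcal K_\theta$ and $\mathcal R^\infty(T)\subset\mathcal K_\theta$. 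Hence $\mathcal R^\infty(T)=\mathcal K_\theta$.

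\emph{Part (iii).} The implication (b)$\Rightarrow$(a) is trivial. For (a)$\Rightarrow$(b): $T$ expansive gives $\|T^nx\|\ge\|x\|$ for all $x,n$, so by the criterion recalled in the Introduction, $T$ is similar to an isometry if and only if $T$ is power bounded, and then $X_{+,T}$ is invertible and $T\approx T_+^{(a)}$. By (i), $T|_{\mathcal K_\theta}=(S(\theta)^*)^{-1}$ is then power bounded; writing $\|(S(\theta)^n)^{-1}\|=\|(S(\theta)^*)^{-n}\|\le\sup_m\|T^m\|$ shows $\|S(\theta)^nx\|\ge c\|x\|$ uniformly in $n$ on $\mathcal K_\theta$, contradicting $S(\theta)\in C_{0\cdot}$ unless $\mathcal K_\theta=\{0\}$; hence $\theta$ is constant, $T$ is analytic, so $T_+^{(a)}$ is a pure isometry of multiplicity $\dim\ker T^*=1$ (one checks $\ker T^*=\mathbb Cg$, using $g(0)=1$), giving $T_+^{(a)}\cong S$ and $T\approx S$. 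For (c)$\Rightarrow$(b): if $T_{g/\overline g}$ is invertible then $\ker T_{\overline g/g}=\ker(T_{g/\overline g})^*=\{0\}$, so by the necessary condition of \cite{sarason89} quoted in Section 3, $g$ is outer; then $\theta$ is constant, $f=g$, and by (ii) $T_{g/\overline g}$ realizes $T\approx S$.

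The implication (b)$\Rightarrow$(c) I expect to be the main obstacle. Assume $T\approx S$; then $g$ is outer ($\mathcal R^\infty(T)=\mathcal K_\theta\cong\mathcal R^\infty(S)=\{0\}$), $f=g$, and there is an invertible $J\in\mathcal I(T,S)$. Since the commutant of $S$ equals $\{T_h:h\in H^\infty\}$ (Lemma \ref{lemcc}), write $T_{g/\overline g}=J^{-1}T_{h_0}$ and $T_\varphi T_{\overline g}=T_{h_1}J$ with $h_0,h_1\in H^\infty$; a direct computation (using $T_{\overline g}P_+(g/\overline g)=g$) gives $(T_\varphi T_{\overline g})T_{g/\overline g}=T_{\varphi g}$, so $h_1h_0=\varphi g\in H^\infty$, which is outer, and therefore $h_0$ and $h_1$ are outer. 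It remains to show that $\varphi g$ — equivalently $h_0$ — is invertible in $H^\infty$, for then $T_{h_0}$, hence $T_{g/\overline g}=J^{-1}T_{h_0}$, is invertible. This last step is where power boundedness of $T$ enters quantitatively: from the formula $T^nx=\chi^nx-\sum_{k=0}^{n-1}(T^kx,S^*g)\,\chi^{\,n-1-k}$ one should read off that $\sup_n\|T^n\|<\infty$ forces $|g|^2$ to be a Helson--Szeg\H{o} (Muckenhoupt $(A_2)$) weight, which is precisely the condition making the Toeplitz operator $T_{g/\overline g}$, and hence $\varphi g$, invertible. I expect this weight estimate to be the technical heart of the argument.
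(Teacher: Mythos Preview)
Your arguments for (i), (ii), and the implications (b)$\Rightarrow$(a), (c)$\Rightarrow$(b), (a)$\Rightarrow$(b) in (iii) are correct and essentially coincide with the paper's. One small slip: in (a)$\Rightarrow$(b) you invoke ``$T$ expansive'', which is not assumed in the lemma; but this is harmless, since (a) itself gives both power boundedness and the uniform lower bound $\|T^nx\|\ge c\|x\|$, so the isometric-asymptote argument still runs.

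The genuine gap is (b)$\Rightarrow$(c). Your reduction to ``$\varphi g$ invertible in $H^\infty$'' (equivalently $\|\omega\|_\infty<1$, since $|(1-\omega)g|^2=1-|\omega|^2$ a.e.) is correct, and the Helson--Szeg\H{o}/$A_2$ route you propose is a known equivalence; but you have not shown that similarity $T\approx S$ forces $|g|^2\in A_2$, and extracting this from the formula for $T^n$ is not short. The paper bypasses this analytic step entirely by an algebraic argument:
\begin{itemize}
\item Any $Y\in\mathcal I(S,T)$ satisfies $S^*YS=Y$ (since $S^*T=I$), hence $Y=T_\psi$ for some $\psi\in L^\infty$ by Lemma~\ref{lemcc}.
\item Comparing $T_\psi S-ST_\psi=-\mathbf 1\otimes T_\psi^*S^*g$ with \eqref{toeplitz} gives $P_+(\overline{\chi\psi}g)=0$, so $\psi=h/\overline g$ for some $h\in H^2$.
\item If $Y=T_\psi$ is invertible, then $1/\psi\in L^\infty$ (\cite[Lemma 3.1.10]{peller}); combined with $\psi\in L^\infty$ this yields $|h|\asymp|g|$, whence $h=\vartheta\eta g$ with $\vartheta$ inner and $\eta,1/\eta\in H^\infty$.
\item Then $T_\psi=T_{\vartheta g/\overline g}T_\eta$ with $T_\eta$ invertible, so $T_{\vartheta g/\overline g}$ is invertible; but if $\vartheta$ is non-constant, $0\ne gS^*\vartheta\in\ker T_{\vartheta g/\overline g}^*$, a contradiction. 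Hence $\vartheta$ is constant and $T_{g/\overline g}$ is invertible.
\end{itemize}
This replaces your weight estimate by a two-line Toeplitz/kernel computation, and is the missing idea in your proposal.
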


\begin{proof} Straightforward computation shows that 
 $\mathcal K_\theta\in\operatorname{Lat}T$  and $T|_{\mathcal K_\theta}=(S(\theta)^*)^{-1}$. 
Thus, $\mathcal K_\theta\subset\mathcal R^\infty(T)$. Existence of $\varphi$ from (ii) follows from Theorem
\ref{theoremaa}. Intertwining relation from (ii) can be checked by straightforward computation. 
Set $X= T_\varphi T_{\overline g}$. By Lemma \ref{lemaabbxx},   $\ker X\supset\mathcal R^\infty(T)$, 
because $XT=SX$. Since $\ker X=\mathcal K_\theta$, we have $\mathcal K_\theta\supset\mathcal R^\infty(T)$. 
Thus, (i) and (ii) are proved.

It follows from (i) that if $T$ is similar to isometry, then $g$ is outer. Indeed, if $T$ is similar to an isometry, then 
 $(S(\theta)^*)^{-1}\approx U$ for some unitary $U$. Consequently, $S(\theta)^*\approx U^{-1}$. 
But $S(\theta)$ is a $C_0$-contraction. Therefore, $\mathcal K_\theta=\{0\}$. Also, it is easy to see that (c) implies that $g$ is outer. 

Suppose that $g$ is outer. Then $X$ is a quasiaffinity. Therefore, (a)$\Rightarrow$(b). The relation (c)$\Rightarrow$(b) follows from (ii), and the relation (b)$\Rightarrow$(a) is evident.

Let $Y\in\mathcal L(H^2)$ be such that $YS=TY$. Then $S^*YS=Y$. By Lemma \ref{lemcc},  
 there exists $\psi\in L^\infty$ such that 
$Y=T_\psi$. The equality $T_\psi S-ST_\psi=- \mathbf{1}\otimes T_\psi^*S^*g$ and \eqref{toeplitz} imply that $\psi=\frac{h}{\overline g}$ 
for some $h\in H^2$. 

If $T_\psi$ is invertible, then by {\cite[Lemma 3.1.10]{peller}} $1/\psi\in L^\infty$. Therefore, $h=\vartheta\eta g$,  
where  $\vartheta$ is inner  and $\eta$, $1/\eta\in H^\infty$. Since $T_\psi=T_{\frac{\vartheta g}{\overline g}}T_\eta$ 
and $T_\eta$ is ivertible, we conclude that $T_{\frac{\vartheta g}{\overline g}}$ is invertible. If $\vartheta$
is not a constant, then $0\not\equiv gS^*\vartheta\in\ker T_{\frac{\vartheta g}{\overline g}}^*$. 
Thus, $\vartheta$ is a constant, and (b)$\Rightarrow$(c) is proved.
\end{proof}

\begin{example} Let $\theta\in H^\infty$ be an inner function, and let $\theta(0)=0$. 
Let $\nu$ be the Clark measure for $\theta$. 
Set \begin{equation*} T=S+\mathbf{1}\otimes\overline\chi\theta. \end{equation*} 
Then $\theta H^2\in\operatorname{Lat}T$, $T|_{\theta H^2}\cong S$ and 
$P_{\mathcal K_\theta}T|_{\mathcal K_\theta}\cong U_\nu$ (see \eqref{uutheta} and \eqref{thetanucong}).
Set $Y=T_\theta$ and $X=T_{1-\overline\theta}$. 
It is easy to see that $YS=TY$ and $XT=SX$. 
Since $X$ is a quasiaffinity, we have $T\prec S$. Since $\mathcal I(U_\nu^*,S^*)=\{0\}$, 
we conclude that $S\not\prec T$.
\end{example}

\begin{example} Let $\theta\in H^\infty$ be an inner function, and let $\theta(0)\neq 0$. 
Set \begin{equation*} T=S+\mathbf{1}\otimes\overline\chi(1-\theta/\theta(0)). \end{equation*} 
By Lemma \ref{lemgsimss}, $\mathcal R^\infty(T)=\mathcal K_\theta$ and $T|_{\mathcal K_\theta}=(S(\theta)^*)^{-1}$. 
Furthermore,  $\theta H^2\in\operatorname{Lat}T$ and $T|_{\theta H^2}\cong S$. 
 Let $X\in\mathcal I(T,S)$. By Lemma \ref{lemaabbxx}, 
$X|_{\mathcal K_\theta}=\mathbb O$.  Let $Y\in\mathcal I(S,T)$. Set $Y_0=P_{\mathcal K_\theta}Y$. Then $Y_0S=(S(\theta)^*)^{-1}Y_0$. 
Consequently, $Y_0^*=S^*Y_0^*S(\theta)$. By Lemma \ref{lemyy} (i), $Y_0=\mathbb O$. Thus, 
$S\not\prec T\not\prec S$.
\end{example}

In the proof of the next lemma, 
Toeplitz operators with matrix-valued symbols 
 are used. Namely, let $N\in\mathbb N$, and let $\Psi$ be an $N\times N$ matrix whose elements are functions from $L^2$. 
The \emph{Toeplitz operator} $T_\Psi$ with the symbol $\Psi$ acts by the formula $T_\Psi h=P_+\Psi h$ for $h\in H^2_N$ 
such that their elements are  functions  from $H^\infty$. It can be extended as a \emph{bounded} operator on $H^2_N$ if and only if all elements of the matrix $\Psi$ are functions from $L^\infty$, and then it acts by the formula 
 $T_\Psi h=P_+\Psi h$  ($h\in H^2_N$).  See, for example, {\cite[Sec. 3.4]{peller}}.

\begin{lemma}\label{lemma1} Let $N\in\mathbb N$, and let $\{f_k\}_{k=1}^{N}\subset H^2_N$. 
Set $T=S_N+\sum_{k=1}^{N}e_k\otimes f_k$. 
Then $S_N\buildrel i\over\prec T\buildrel d\over\prec S_N$.
\end{lemma}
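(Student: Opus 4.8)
The plan rests on the elementary identity $S_N^*T=I_{H^2_N}$, valid because the range of $\sum_{k=1}^N e_k\otimes f_k$ lies in $\operatorname{span}\{e_k\}_{k=1}^N=\ker S_N^*$. In particular $\|x\|=\|S_N^*Tx\|\le\|Tx\|$, so $T$ is expansive; it is left-invertible with $\ker T=\{0\}$; and, being a finite rank (hence compact) perturbation of $S_N$, it is Fredholm with $\operatorname{ind}T=\operatorname{ind}S_N=-N$ (see {\cite[Ch. XI]{conway}}), so that $\dim\ker T^*=N$.

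For the relation $T\buildrel d\over\prec S_N$ I would apply the second assertion of Lemma \ref{lemmodel} to the Cauchy dual $R=T'$: by Lemma \ref{lemnotuu} it is a contraction, it is left-invertible (since $T'^*T'=(T^*T)^{-1}$), its Cauchy dual is $T$, and $\dim\ker T'^*=\dim\ker T^*=N$. Lemma \ref{lemmodel} then yields $X\in\mathcal I(T,S_N)$ with $\operatorname{clos}XH^2_N=H^2_N$ (incidentally $\ker X=\mathcal R^\infty(T)$), which is exactly $T\buildrel d\over\prec S_N$.

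For $S_N\buildrel i\over\prec T$ I would build an injective Toeplitz intertwiner, imitating Lemma \ref{lemgsimss}(ii). Let $F$ be the $N\times N$ matrix whose $k$-th column is $f_k$, so that $G:=I_N-\chi F$ has entries in $H^2$ and $G(0)=I_N$; then $\det G\not\equiv 0$, and $G=\Theta G_o$ for a matrix inner--outer factorization with $\Theta(\zeta)$ unitary a.e. Set $\Phi_0=\Theta\bigl(\overline{G_o^{\,T}}\bigr)^{-1}G_o^{\,T}$; a short computation (using $\Theta^{T}\overline\Theta=I_N$ on $\mathbb T$) gives $\Phi_0^{\,T}\overline G=G_o$. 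Because the $f_k$ lie only in $H^2$, $\Phi_0$ need not be bounded, so I would choose an outer $\varphi\in H^\infty$ with the entries of $\varphi\Phi_0$ in $L^\infty$ (such a multiplier exists; this is the point at which an estimate of Sarason type, Theorem \ref{theoremaa}, enters) and put $Y=T_{\varphi\Phi_0}$. One then checks, first, that $YS_N=TY$: the commutator $T_{\varphi\Phi_0}S_N-S_NT_{\varphi\Phi_0}$ has range in $\ker S_N^*$, and matching it against $(T-S_N)T_{\varphi\Phi_0}=\sum_k e_k\otimes(T_{\varphi\Phi_0}^*f_k)$ reduces, via the matrix analogue of \eqref{toeplitz}, precisely to the analyticity of $(\varphi\Phi_0)^{T}\overline G=\varphi G_o$; and second, that $\ker Y=\{0\}$: if $P_+(\varphi\Phi_0 h)=0$ then, multiplying on the left by $\overline{G_o^{\,T}}\,\Theta^*$, one finds that each entry of $\varphi\,G_o^{\,T}h$ lies in $H^1\cap\overline{\chi H^1}=\{0\}$ by the F.\ and M.\ Riesz theorem, whence $h=0$ since $\varphi\ne 0$ and $\det G_o\ne 0$ a.e. Thus $Y$ realizes $S_N\buildrel i\over\prec T$; note that $Y$ need not have dense range (already for $N=1$ the Example after Lemma \ref{lemgsimss} shows $S\not\prec T$ can occur), so the one-sided statement is all one should aim for.

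The step I expect to be the main obstacle is the construction of $Y$: in the scalar case $\Phi_0$ is automatically unimodular, whereas for $N\ge2$ it is only a ratio of (possibly unbounded) outer matrix functions, so one must simultaneously secure the scalar outer multiplier $\varphi$ and push the matrix-valued bookkeeping through the commutator identity and the F.\ and M.\ Riesz step. The direction $T\buildrel d\over\prec S_N$ is, by comparison, immediate once $S_N^*T=I$ and Lemma \ref{lemmodel} are available.
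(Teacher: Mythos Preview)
Your argument is essentially correct, but it follows a different path from the paper's proof, and a couple of small points deserve comment.

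\medskip
\textbf{Comparison with the paper.} The paper constructs \emph{both} intertwiners explicitly as Toeplitz-type operators: it sets $Y=T_\Psi$ with $\Psi=\eta\bigl((I-\overline\chi\overline F)^{-1}\bigr)^{\mathrm T}$ (so $\Psi^{\mathrm T}\overline G=\eta I$, which is exactly the analyticity that forces $YS_N=TY$), and $X=T_{\varphi I}T_{(I-\overline\chi\overline F)^{\mathrm T}}$, where Sarason's Theorem~\ref{theoremaa} is used to make this unbounded product land in $\mathcal L(H^2_N)$. The payoff is that one computes $XY=\varphi\eta(\cdot)$ (multiplication by a scalar outer function), which simultaneously yields $\ker Y=\{0\}$ and $\operatorname{clos}XH^2_N=H^2_N$. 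You instead get $T\buildrel d\over\prec S_N$ abstractly via the Cauchy dual and Lemma~\ref{lemmodel} (a clean shortcut that avoids Sarason altogether), and prove $\ker Y=\{0\}$ separately by an F.~and~M.~Riesz argument. Your symbol $\Phi_0=\Theta(\overline{G_o^{\mathrm T}})^{-1}G_o^{\mathrm T}$ differs from the paper's $\eta(\overline G^{-1})^{\mathrm T}=\eta\,\Theta(\overline{G_o^{\mathrm T}})^{-1}$ by the extra right factor $G_o^{\mathrm T}$; both satisfy the same analyticity criterion $\Psi^{\mathrm T}\overline G\in H^2_{N\times N}$, so both intertwine. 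The paper's choice is slightly simpler because it avoids the matrix inner--outer factorization of $G$ and gives the elegant $XY$ identity.

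\medskip
\textbf{Two small points.} First, the outer multiplier $\varphi$ making $\varphi\Phi_0\in L^\infty_{N\times N}$ exists for the elementary reason that each entry of $\Phi_0$ has $\log^+|\cdot|\in L^1$ (numerators are sums of products of $H^2$ functions, the denominator is $\overline{\det G_o}$ with $|\det G_o|=|\det G|$ and $\det G(0)=1$); this does not require Theorem~\ref{theoremaa}. In the paper Sarason's lemma is used only for the \emph{other} intertwiner $X$, which you do not construct. Second, the factorization $G=\Theta G_o$ for a square $H^2$-matrix with $\det G\not\equiv 0$ is standard (Beurling--Lax--Halmos applied to $\operatorname{clos}GH^2_N$), but it is worth noting that $G_o$ only has $H^2$ entries; your computations use this correctly.
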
 

\begin{proof} The case $N=1$ is considered in Lemma \ref{lemgsimss}.  Consider the case $N\geq 2$.  
Recall that $f_k$ are columns of $N$  functions from $H^2$. 
Denote by $f_{kj}$ the element of $f_k$ on $j$-th place. 
Set \begin{equation*} F=[f_{kj}]_{k,j=1}^N=[f_1, \ldots, f_N] \ \ \text{ and } \ \ \psi=\det(I_{N\otimes N}-\chi F).
\end{equation*}
 Denote by $f_{\mathrm{Ad}kj}$ 
($k,j=1,\ldots,N$) the elements of (algebraic) adjoint matrix of $I_{N\otimes N}-\chi F$. 
 Since  $f_{kj}\in H^2$, we have $\psi\in H^{\frac {2}{N}}$ and 
 $f_{\mathrm{Ad}kj}\in H^{\frac {2}{N-1}}$ 
($k,j=1,\ldots,N$). 
Since $\psi(0)=1$, we have $\psi\not\equiv 0$. Therefore, 
$\log|\psi|$, $\log|f_{\mathrm{Ad}kj}|\in L^1$ ($k,j=1,\ldots,N$), 
and the elements of the matrix $(I_{N\otimes N}-\chi F)^{-1}$ are functions defined 
$m$-a.e. on $\mathbb T$.  Furthermore, there exists an outer function $\eta\in H^\infty$ such that 
\begin{equation*} |\eta|=\begin{cases} 1, &\text{ if } |f_{\mathrm{Ad}kj}|\leq |\psi| \text { for all } k,j=1,\ldots,N,\\
\displaystyle{ \frac{|\psi|}{\max_{k,j=1,\ldots,N}|f_{\mathrm{Ad}kj}|}}, &\text{ if } |f_{\mathrm{Ad}kj}|\geq |\psi| \text { for some } k,j=1,\ldots,N
\end{cases}  \end{equation*}
$m$-a.e. on $\mathbb T$.
Set $\Psi=\eta\bigl((I_{N\otimes N}-\overline\chi \overline F)^{-1}\bigr)^{\mathrm T}$. 
Then the elements $\psi_{kj}$ (where $k$ is the number of row and $j$ is the number of column ($k,j=1,\ldots,N$))
 of $\Psi$ are functions from $L^\infty$. Set $Y=T_\Psi$ and 
 $\psi_k=[P_+\overline\chi\overline\psi_{kj}]_{j=1}^N$ ($k=1,\ldots,N$). Then 
$YS_N-S_NY=\sum_{k=1}^{N}e_k\otimes \psi_k$. Since $\psi_k=Y^*f_k$ ($k=1,\ldots,N$), we have  $YS_N=TY$. 

Denote by $\varphi_{kj}$ ($k,j=1,\ldots,N$) 
the outer  functions from Theorem \ref{theoremaa} applied to the elements of 
$(I_{N\otimes N}-\overline\chi \overline F)^{\mathrm T}$ (which multiplied by appropriate constants). Set $\varphi=\prod_{1\leq k,j\leq N}\varphi_{kj}$. 
Set $X=T_{\varphi I_{N\otimes N}}T_{(I_{N\otimes N}-\overline\chi \overline F)^{\mathrm T}}$. Then 
$X\in\mathcal L(H^2_N)$.  
Straightforward calculation shows that $XT=S_NX$. 

If $g\in H^2$ and $\gamma\in L^\infty$, then $P_+\overline gP_+\gamma=P_+\overline g\gamma$. 
 Therefore, if  $h\in H^2_N$ is such that its  elements are functions from $H^\infty$, then  
\begin{equation*} XYh=\varphi P_+(I_{N\otimes N}-\overline\chi \overline F)^{\mathrm T}P_+\Psi h=
\varphi P_+(I_{N\otimes N}-\overline\chi \overline F)^{\mathrm T}\Psi h=\varphi\eta h.
 \end{equation*}
Since $X$ and $Y$ are bounded, we conclude that $XYh=\varphi\eta h$ ($h\in H^2_N$). 
Consequently,  $\ker XY=\{0\}$ and $\operatorname{clos}XYH^2_N=H^2_N$ (since $\varphi$ and $\eta$ are outer).
Therefore, $\ker Y=\{0\}$ and $\operatorname{clos}XH^2_N=H^2_N$.
\end{proof}

\section{Expansive operators for which the  unilateral shift of finite multiplicity is their quasiaffine transform}

\subsection{Preliminaries}

In this subsection, some relationships between isometries are studies, which will be used in the sequel. Also, Theorem \ref{theorembb} from \cite{hjelle} is formulated in the end of this subsection.

\begin{lemma}\label{lemvv} Let an isometry $V$ have the representation 
\begin{equation*} V=\left[\begin{matrix}V_1 & * \\ \mathbb O & V_0\end{matrix}\right],
\end{equation*}
where $V_0$ is of class $C_{00}$. Then $V\cong V_1$.
\end{lemma}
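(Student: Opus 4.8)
The plan is to use the standard structure theory of isometries. Write $V$ on $\mathcal H = \mathcal H_1 \oplus \mathcal H_0$, with $V_1$ acting on $\mathcal H_1$, $V_0$ of class $C_{00}$ on $\mathcal H_0$, and the upper-triangular form meaning $\mathcal H_1 \in \operatorname{Lat} V$ with $V|_{\mathcal H_1} = V_1$. First I would observe that $V_1$, being a piece of an isometry restricted to an invariant subspace, is itself an isometry. Decompose $V = W \oplus U$ into its Wold decomposition, where $W$ is a unilateral shift (on the space $\mathcal R^\infty(V)^\perp$, i.e. $\vee_n V^n \ker V^*$) and $U$ is a unitary operator (on $\mathcal R^\infty(V) = \cap_n V^n\mathcal H$).

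The key step is to show $\mathcal R^\infty(V) \subset \mathcal H_1$. Indeed, by Lemma \ref{lemaabbxx} applied to the inclusion map (or directly: $\mathcal R^\infty(V)$ is the largest subspace on which $V$ acts as a surjection), for any $x \in \mathcal R^\infty(V)$ we can write $x = V^n y_n$ with $y_n \in \mathcal R^\infty(V)$, so $\|P_{\mathcal H_0} x\| = \|P_{\mathcal H_0} V^n y_n\|$. Using the triangular form, $P_{\mathcal H_0} V^n y_n = V_0^n P_{\mathcal H_0} y_n$ (since $\mathcal H_1$ is invariant, the $(0,1)$-block of $V^n$ vanishes), and $\|V_0^n P_{\mathcal H_0} y_n\| \le \|V_0^n P_{\mathcal H_0}\| \cdot \|y_n\| = \|V_0^n P_{\mathcal H_0}\| \cdot \|x\|$ because $V$ is an isometry so $\|y_n\| = \|x\|$. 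Wait — I should be careful: $V_0$ is only a contraction of class $C_{00}$, and $\|V_0^{*n} z\| \to 0$ for all $z$; what I need is $\|V_0^n w\| \to 0$, i.e. $V_0 \in C_{0\cdot}$. Since a $C_{00}$ contraction is in particular of class $C_{0\cdot}$, indeed $\|V_0^n P_{\mathcal H_0} y_n\|$... hmm, but $y_n$ varies with $n$. Better: note $V^*|_{\mathcal H_0}$-type argument, or simpler — use that $\mathcal H_1 \in \operatorname{Lat} V$ forces $\mathcal H_0 \in \operatorname{Lat} V^*$, and $(V^*)^n|_{\mathcal H_0}$ is a power of $V_0^*$; for $x \in \mathcal R^\infty(V)$ and $z \in \mathcal H_0$, $(x, z) = (V^n y_n, z) = (y_n, V^{*n} z) = (y_n, V_0^{*n} z)$, so $|(x,z)| \le \|x\| \|V_0^{*n} z\| \to 0$ since $V_0 \in C_{00} \subset C_{\cdot 0}$...

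Let me restate cleanly: since $V_0$ is of class $C_{00}$, $V_0^* $ is of class $C_{0\cdot}$, so $\|V_0^{*n}z\| \to 0$ for every $z \in \mathcal H_0$. For $x \in \mathcal R^\infty(V)$, pick $y_n$ with $V^n y_n = x$, $\|y_n\| = \|x\|$; then for any $z \in \mathcal H_0 = \mathcal H \ominus \mathcal H_1$, $(x, z) = (y_n, V^{*n}z) = (y_n, V_0^{*n}z)$, whence $|(x,z)| \le \|x\|\,\|V_0^{*n}z\| \to 0$, so $(x,z) = 0$. Thus $\mathcal R^\infty(V) \perp \mathcal H_0$, i.e. $\mathcal R^\infty(V) \subset \mathcal H_1$. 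Combining with the Wold decomposition, the unitary part $U$ of $V$ acts on a subspace of $\mathcal H_1$, so $\mathcal H_1 = \mathcal R^\infty(V) \oplus (\mathcal H_1 \cap \mathcal R^\infty(V)^\perp)$ with $V_1 = U \oplus V|_{\mathcal H_1 \cap \mathcal R^\infty(V)^\perp}$.

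Finally I would compute the multiplicity of the shift parts. The shift part of $V$ is $V|_{\mathcal R^\infty(V)^\perp}$ with wandering subspace $\ker V^*$ of dimension $\dim\ker V^*$; the shift part of $V_1$ has wandering subspace $\ker V_1^*$ and equals $V|_{\mathcal H_1 \cap \mathcal R^\infty(V)^\perp}$. Since $\mathcal H_0 \in \operatorname{Lat} V^*$ with $V^*|_{\mathcal H_0} = V_0^*$ and $V_0$ is a $C_{00}$ contraction hence has no isometric summand, one checks $\ker V^* \subset \mathcal H_1$: if $V^* x = 0$ then $V_0^* P_{\mathcal H_0} x = P_{\mathcal H_0} V^* x = 0$ and $\|V_0 P_{\mathcal H_0}x\| = \ldots$; more directly, since $V$ is an isometry, $\ker V^* = \mathcal H \ominus V\mathcal H$, and for $z \in \mathcal H_0$, $z \in \ker V^*$ iff $z \perp V\mathcal H \supset V\mathcal H_1$; but also $z = V_0 w + (\text{component in } \mathcal H_1)$ for $w = V_0^* z$... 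Actually the cleanest route: both $V$ and $V_1$ are isometries with $\mathcal R^\infty(V) = \mathcal R^\infty(V_1)$ (the inclusion $\subset$ is Lemma \ref{lemaabbxx}; for $\supset$, $\mathcal R^\infty(V_1) \subset \mathcal H_1$ and $V_1 = V|_{\mathcal H_1}$ acts surjectively on it, and any subspace on which $V$ is surjective lies in $\mathcal R^\infty(V)$). Hence the unitary parts of $V$ and $V_1$ coincide (both equal $V|_{\mathcal R^\infty(V)}$), and the shift parts are shifts whose multiplicities are $\dim(\ker V^*)$ and $\dim(\ker V_1^*)$ respectively. To finish, note $\mathcal H \ominus V\mathcal H = \ker V^*$ and, since $\mathcal H_0 = \mathcal H\ominus\mathcal H_1$ is coinvariant with cnu (indeed $C_{00}$) part $V_0^*$, $V\mathcal H \supset \mathcal H_1 \ominus V_1\mathcal H_1 \ominus$... — I would instead invoke the standard fact that for the triangular isometry, $\ker V^* \subset \mathcal H_1$: if $x \in \ker V^*$, write $x = x_1 + x_0$; then $0 = V^* x = V_1^* x_1 + V^* x_0$ and $V^*x_0 \in \mathcal H_0$ (coinvariance), while $V_1^* x_1 \in \mathcal H_1$, forcing $V_1^* x_1 = 0$ and $V_0^* x_0 = 0$; since $V_0$ is an isometry on... no, $V_0$ is not an isometry. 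But $V_0^* x_0 = 0$ with $V_0 \in C_{0\cdot}$ gives $\|x_0\|^2 = \|V_0 V_0^* x_0\|^2 + \|(I - V_0 V_0^*)x_0\|$-type decomposition is not available for non-isometries either. The honest fix: since $\|V x\| = \|x\|$ and $x_1 \perp x_0$, $\|x\|^2 = \|Vx_1 + Vx_0\|^2$; $Vx_1 \in \mathcal H_1$; writing $Vx_0 = a_1 + V_0 x_0$ with $a_1 \in \mathcal H_1$, we get $\|x_1\|^2 + \|x_0\|^2 = \|Vx_1 + a_1\|^2 + \|V_0 x_0\|^2 \le \|Vx_1 + a_1\|^2 + \|x_0\|^2$, and I can iterate: $V^n x_0 = (\text{stuff in }\mathcal H_1) + V_0^n x_0$ with $\|V_0^n x_0\| \to 0$, so $\|x_0\|^2 \ge \limsup$... this shows $\|V^n x_0\|^2$'s $\mathcal H_1$-component approaches $\|x_0\|^2$. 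Then for $x \in \ker V^*$: $\|x\|^2 = \|V x\|^2 = \lim_n$ is not quite it. I expect this multiplicity-matching — equivalently, showing that the shift parts of $V$ and $V_1$ have equal multiplicity — to be the main obstacle, and I would resolve it by the preceding limiting argument showing $P_{\mathcal H_1} V^n |_{\mathcal H_0} \to$ an isometry in the strong sense that $\|P_{\mathcal H_1} V^n z\| \to \|z\|$ for $z \in \mathcal H_0$, which identifies $\mathcal H_0$ (after the unitary $U$ is split off) isometrically with a subspace of the shift part of $V_1$ of the "right" size, yielding $\dim\ker V^* = \dim\ker V_1^*$; then $V \cong V_1$ because both are isometries with the same unitary part and shift parts of the same multiplicity.
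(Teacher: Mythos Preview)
Your reduction of the problem to matching the unitary parts is correct and clean: the argument that $\mathcal R^\infty(V)\subset\mathcal H_1$ via $(x,z)=(y_n,V_0^{*n}z)\to 0$ (using $V_0\in C_{\cdot 0}$) is valid, and $\mathcal R^\infty(V)=\mathcal R^\infty(V_1)$ follows as you indicate. This reduces everything to the pure-shift situation: $V$ is a unilateral shift of some multiplicity $M$, $V_1=V|_{\mathcal H_1}$ is a unilateral shift of multiplicity $N$, and one must show $M=N$.

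The gap is precisely here. Your first attempt, $\ker V^*\subset\mathcal H_1$, is false in general (for instance $V_0$ could be $S(\theta)$ with $\theta(0)=0$, which has $\ker V_0^*\neq\{0\}$), and you rightly abandon it. Your fallback observation $\|P_{\mathcal H_1}V^n z\|\to\|z\|$ for $z\in\mathcal H_0$ is true, but you never explain how it ``identifies $\mathcal H_0$ isometrically with a subspace of the shift part of $V_1$'' or why that would force $\dim\ker V^*=\dim\ker V_1^*$. There is no single limiting map here (the operators $P_{\mathcal H_1}V^n$ do not converge), and the sketch does not amount to a proof of $M=N$.

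The paper handles this step by a different, cleaner mechanism. It Wold-decomposes $V_1=U\oplus S_N$ first (not $V$), notes that the $U$-summand is reducing for $V$ (any subspace on which an isometry acts bijectively is reducing), and writes
\[
V=U\oplus V_{10},\qquad V_{10}=\begin{pmatrix} S_N & * \\ \mathbb O & V_0\end{pmatrix}.
\]
Then it invokes the structural theorem on asymptotes of triangular power-bounded operators (\cite[Theorem~3]{ker89} or \cite[Theorem~IX.1.6]{nfbk}): since $S_N$ and $V_0$ are of class $C_{\cdot 0}$, so is $V_{10}$, hence $V_{10}$ is a pure shift; and since $V_0\in C_{0\cdot}$, the unitary asymptote of $V_{10}$ coincides with that of $S_N$, namely $U_{\mathbb T,N}$. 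A $C_{\cdot 0}$ isometry with unitary asymptote $U_{\mathbb T,N}$ must be $S_N$, giving $V_{10}\cong S_N$ and hence $V\cong V_1$. This asymptote result is exactly the tool that absorbs the multiplicity bookkeeping you were unable to complete directly.
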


\begin{proof} Let $V_1=U\oplus S_N$ be the Wold decomposition of the isometry $V_1$, where $U$ is unitary and $0\leq N\leq\infty$. 
Then  \begin{equation*} V=U\oplus V_{10},\ \ \  \text{ where }\  V_{10}=\left[\begin{matrix}S_N & * \\ \mathbb O & V_0\end{matrix}\right].
\end{equation*}
Since $S_N$ and $V_0$ are of class $C_{\cdot 0}$, then $V_{10}$ is of class $C_{\cdot 0}$, too, by {\cite[Theorem 3]{ker89}} or {\cite[Theorem IX.1.6]{nfbk}}
(applied to adjoint). Since $V_0$ is of class $C_{0\cdot}$, by  {\cite[Theorem 3]{ker89}} or {\cite[Theorem IX.1.6]{nfbk}}, 
 $V_{10}^{(a)}\cong S_N^{(a)}=U_{\mathbb T, N}$. Since $V_{10}$ is an isometry, we conclude that $V_{10}\cong S_N$.
\end{proof}

\begin{lemma}\label{lemasymp1} Suppose that a power bounded operator $R$ has the form 
\begin{equation*} R=\left[\begin{matrix}R_1 & * \\ \mathbb O & R_0\end{matrix}\right],\end{equation*}
and there exists a $C_0$-contraction $A$ such that $A\buildrel d\over\prec R_0$.
Then $R_+^{(a)}\cong (R_1)_+^{(a)}$.
\end{lemma}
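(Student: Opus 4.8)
The plan is to reduce the claim about $R$ to the already-proved statement about isometries, namely Lemma~\ref{lemvv}, by passing to isometric asymptotes. First I would apply the functorial properties of the isometric asymptote to the block-triangular form of $R$. Since $R$ is power bounded, so are $R_1$ and $R_0$; moreover, writing $\mathcal H=\mathcal H_1\oplus\mathcal H_0$ for the decomposition in which $R$ is triangular, the subspace $\mathcal H_1$ is invariant for $R$ and $R|_{\mathcal H_1}=R_1$, while the compression to $\mathcal H_0$ is $R_0$. By the general theory of the isometric asymptote (from \cite{ker89}; see the discussion in Sec.~1), the canonical intertwining map $X_{+,R}$ carries this structure to an isometry $R_+^{(a)}$ which has a corresponding block-triangular form
\begin{equation*}
R_+^{(a)}\cong\left[\begin{matrix}(R_1)_+^{(a)} & * \\ \mathbb O & (R_0)_+^{(a)}\end{matrix}\right].
\end{equation*}
Concretely, $X_{+,R}|_{\mathcal H_1}$ factors through $X_{+,R_1}$ and identifies $(R_1)_+^{(a)}$ with the restriction of $R_+^{(a)}$ to $\operatorname{clos}X_{+,R}\mathcal H_1$, which is an invariant subspace; the compression of $R_+^{(a)}$ to its orthogonal complement is intertwined by the induced map with $R_0$ and hence is a quasiaffine-transform-image of $(R_0)_+^{(a)}$.

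Next I would pin down what $(R_0)_+^{(a)}$ is. The hypothesis is that there is a $C_0$-contraction $A$ with $A\buildrel d\over\prec R_0$. A $C_0$-contraction is of class $C_{00}$, hence $A_+^{(a)}=\mathbb O$ (acting on the zero space), because $\|X_{+,A}x\|^2=\operatorname{Lim}_n\|A^nx\|^2=0$ for all $x$. Using Lemma~\ref{lemyy} (or, more precisely, the asymptote functoriality: a dense-range intertwiner $A\buildrel d\over\prec R_0$ induces a dense-range intertwiner $A_+^{(a)}\buildrel d\over\prec (R_0)_+^{(a)}$ together with the fact that $(R_0)_+^{(a)}$ is already the isometric asymptote), one gets that $(R_0)_+^{(a)}$ has dense range coming from the zero space, so $(R_0)_+^{(a)}$ acts on $\{0\}$. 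Equivalently, $R_0$ is of class $C_{0\cdot}$: indeed $A\buildrel d\over\prec R_0$ forces $\mathcal H_0=\operatorname{clos}(\text{image})$ built from vectors whose orbit norms vanish in the Banach-limit sense, and since $A$ is $C_0$ one pushes this through. So the bottom block of the asymptote is trivial.

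Therefore $R_+^{(a)}\cong\left[\begin{matrix}(R_1)_+^{(a)} & * \\ \mathbb O & V_0\end{matrix}\right]$ where $V_0=(R_0)_+^{(a)}$ is an isometry of class $C_{00}$ (it acts on $\{0\}$, the trivial case, but one may equally allow the genuinely nonzero $C_{00}$-isometry case if one only uses that $R_0$ is $C_{0\cdot}$ rather than that its asymptote vanishes — either way Lemma~\ref{lemvv} applies). Now $R_+^{(a)}$ is an isometry presented in exactly the block form of Lemma~\ref{lemvv} with $V_1=(R_1)_+^{(a)}$ and the lower-right corner a $C_{00}$-isometry, so Lemma~\ref{lemvv} gives $R_+^{(a)}\cong (R_1)_+^{(a)}$, which is the assertion.

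The main obstacle is the first step: making precise and correctly justifying that the isometric asymptote of a block-triangular power bounded operator inherits a compatible block-triangular form, with the top corner being the asymptote of $R_1$ and the bottom corner a quasiaffine-transform-image of the asymptote of $R_0$. This is a standard functoriality property of the asymptote construction (it is used repeatedly in \cite{ker89} and \cite{nfbk}), but one must be a little careful that $\operatorname{clos}X_{+,R}\mathcal H_1$ is genuinely the asymptote of $R_1$ and not merely a quasiaffine transform of it — this does hold because $X_{+,R_1}$ is itself the canonical map and the restriction of $X_{+,R}$ to $\mathcal H_1$ agrees with it up to the natural identification. Once that bookkeeping is in place, the rest is a direct citation of Lemma~\ref{lemvv} together with the triviality of the asymptote of a $C_0$-contraction.
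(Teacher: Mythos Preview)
Your overall plan matches the paper's: pass to the isometric asymptote $V=R_+^{(a)}$, write it in block-triangular form with respect to $\mathcal G_1=\operatorname{clos}X_{+,R}\mathcal H_1$ and $\mathcal G_0=\mathcal H_+^{(a)}\ominus\mathcal G_1$, identify the upper block $V_1$ with $(R_1)_+^{(a)}$ via \cite{ker89}, show the lower block $V_0$ is of class $C_{00}$, and invoke Lemma~\ref{lemvv}. The bookkeeping you flag for the top corner is indeed routine.

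The gap is in your treatment of the lower-right block. The compression $V_0=P_{\mathcal G_0}V|_{\mathcal G_0}$ is \emph{not} $(R_0)_+^{(a)}$, and it is not an isometry either --- it is only a contraction, being the compression of the isometry $V$ to a co-invariant subspace. So the universal property of the isometric asymptote does not produce a map $(R_0)_+^{(a)}\to V_0$, and your displayed block form with $(R_0)_+^{(a)}$ in the corner is unjustified. What one does get, via $P_{\mathcal G_0}X_{+,R}|_{\mathcal H_0}$, is only the relation $R_0\buildrel d\over\prec V_0$. Your fallback --- deduce $R_0\in C_{0\cdot}$ from $A\buildrel d\over\prec R_0$ (correct), hence $V_0\in C_{0\cdot}$ (also correct, by pushing through the dense-range intertwiner) --- does not close the argument: Lemma~\ref{lemvv} requires $V_0\in C_{00}$, and you have not supplied the $C_{\cdot 0}$ half.

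The paper fixes this differently and more directly: from $A\buildrel d\over\prec R_0\buildrel d\over\prec V_0$ one has $A\buildrel d\over\prec V_0$. Since $A$ is a $C_0$-contraction, the dense-range intertwiner forces the minimal function of $A$ to annihilate the (a.c.) contraction $V_0$, so $V_0$ is itself a $C_0$-contraction and hence of class $C_{00}$ by \cite[Prop.~III.4.2]{nfbk}. Then Lemma~\ref{lemvv} applies as you intended. Replacing your asymptote-of-the-corner argument by this $C_0$ step makes the proof go through.
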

\begin{proof} Denote by $\mathcal K$ the space on which $R$ acts. Let 
$\mathcal K=\mathcal K_1\oplus\mathcal K_0$ be the  decomposition of $\mathcal K$ such that 
$R_1=R|_{\mathcal K_1}$ and $R_0=P_{\mathcal K_0}R|_{\mathcal K_0}$.  
Set $\mathcal G_1=\operatorname{clos}X_{+,R}\mathcal K_1$, 
$\mathcal G_0= \mathcal K_+^{(a)}\ominus\mathcal G_1$, and  $V=R_+^{(a)}$. 
Then
\begin{equation*} V=\left[\begin{matrix}V_1 & * \\ \mathbb O & V_0\end{matrix}\right]
\end{equation*}
  with respect to the decomposition 
$\mathcal K_+^{(a)}=\mathcal G_1\oplus\mathcal G_0$. By \cite{ker89}, 
 $X_{+, R_1}=X_{+, R}|_{\mathcal K_1}$ and $(R_1)_+^{(a)}=V|_{\mathcal G_1}=V_1$.
 
We have $A\buildrel d\over\prec R_0\buildrel d\over\prec V_0$. Since $A$ is a $C_0$-contraction, $V_0$ is a $C_0$-contraction, too. 
In particular, $V_0$ is of class $C_{00}$ {\cite[Prop. III.4.2]{nfbk}}. By Lemma \ref{lemvv}, $V\cong V_1$.
\end{proof}

\begin{lemma} \label{lem11}Suppose that $\sigma\subset\mathbb T$, $X\in\mathcal I(U_\sigma^{-1}, S^*)$, and there exists 
 $f_1\in L^2(\sigma,m)$   such that $Xf_1=\mathbf{1}$.
 Then $\sigma=\mathbb T$ 
and \begin{equation*} U_{\mathbb T}|_{\vee_{n=0}^\infty U_{\mathbb T}^n f_1}\cong S.\end{equation*}
\end{lemma}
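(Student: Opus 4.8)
\textbf{Proof plan for Lemma \ref{lem11}.}

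The plan is to extract information from the intertwining relation $XU_\sigma^{-1}=S^*X$ by taking adjoints. Write $Y=X^*\in\mathcal I(S, U_\sigma)$, so $YS=U_\sigma Y$, and let $g=Y\mathbf 1\in L^2(\sigma,m)$. The hypothesis $Xf_1=\mathbf 1$ says $(f_1,Yh)=(Xf_1,h)=(\mathbf 1,h)=h(0)$ for every $h\in H^2$; applying this with $h=\chi^n$ gives $(f_1,\chi^n g)_{L^2(\sigma)}=0$ for all $n\geq 1$, while with $h=\mathbf 1$ it gives $(f_1,g)_{L^2(\sigma)}=1$. Thus $f_1\overline g$ (extended by zero off $\sigma$) is a function in $L^1(\mathbb T,m)$ whose Fourier coefficients of index $\leq -1$ vanish, i.e. $f_1\overline g\in H^1$, with $\int_{\mathbb T}f_1\overline g\,\mathrm dm=1$.

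Next I would identify $g$ more precisely. Since $YS=U_\sigma Y$, for any $h\in H^\infty$ we have $Y(\chi h)=\chi\,Yh$, and iterating, $Y(ph)$ for a polynomial $p$ equals (on $\sigma$) $p\cdot Yh$; taking $h=\mathbf 1$ gives $Yp=p|_\sigma\cdot g$ for every polynomial $p$, hence $\|p\,g\|_{L^2(\sigma)}=\|Yp\|\le\|Y\|\,\|p\|_{H^2}$. So multiplication by $g$ extends boundedly from $H^2$ into $L^2(\sigma,m)$, and $\vee_{n\ge0}U_\sigma^n g=\operatorname{clos}YH^2$. Now combine with the previous paragraph: $f_1\overline g\in H^1$ and $\int f_1\overline g\,\mathrm dm=1\neq0$, so $f_1\overline g\not\equiv0$; in particular $g\neq0$ $m$-a.e.\ on a set of positive measure inside $\sigma$, and more importantly $\log|g|\in L^1$ on that set. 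The key point to push through is that $|g|^2$ cannot be a genuinely singular-type weight: I want to show $\sigma=\mathbb T$ (up to a null set) and that $g$ is an outer-type function generating all of $L^2(\mathbb T,m)$ under $U_{\mathbb T}$.

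Here is how I would finish. Consider the restriction $U_\sigma|_{\mathcal M}$ where $\mathcal M=\vee_{n\ge0}U_\sigma^n g=\operatorname{clos}YH^2$. This is a power-bounded (indeed contractive, as a restriction of a unitary) operator with $S\buildrel d\over\prec U_\sigma|_{\mathcal M}$ via $Y$, so $\mu_{U_\sigma|_{\mathcal M}}=1$. A cyclic unitary operator is unitarily equivalent to $U_{\mu}$ for a scalar measure $\mu$ (the scalar spectral measure of $g$), so $U_\sigma|_{\mathcal M}\cong U_\mu$ where $\mathrm d\mu=|g|^2\mathrm dm$ as a measure on $\sigma$ (this uses that $p\mapsto pg$ is the unitary identification, giving $\|pg\|_{L^2(\sigma)}^2=\int|p|^2|g|^2\mathrm dm$). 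Then $S\buildrel d\over\prec U_\mu$ with the intertwiner sending $\mathbf 1\mapsto\mathbf 1$; taking adjoints, $U_\mu^{-1}\buildrel i\over\prec S^*$, equivalently $U_\mu\buildrel i\over\prec S^*$ after inversion — so there is an injection of $L^2(\mu)$ into $H^2$ intertwining $U_\mu$ with $S$, forcing $\mu$ to be a (Szeg\H o) measure with $U_\mu\approx S$ only if $\mu$ is Lebesgue-type. More concretely: $f_1\overline g\in H^1$ with $f_1\in L^2(\sigma)$, $g\in L^2(\sigma)$; write $f_1\overline g=h_0\in H^1$. If $\sigma\neq\mathbb T$, then $h_0$ vanishes on $\mathbb T\setminus\sigma$, a set of positive measure, contradicting the F.\ and M.\ Riesz theorem (an $H^1$ function vanishing on a set of positive measure is identically zero) — but $\int h_0\,\mathrm dm=1\neq0$. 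Hence $m(\mathbb T\setminus\sigma)=0$, i.e.\ $\sigma=\mathbb T$. For the unitary-equivalence claim: now $U_{\mathbb T}|_{\mathcal M}\cong U_\mu$ with $\mu$ absolutely continuous ($\mathrm d\mu=|g|^2\mathrm dm$), and $S\buildrel d\over\prec U_\mu$; since $\mathbf 1\in\mathcal M$ is sent to itself under $Y$ and $f_1$ witnesses that $Y^*=X$ also has $\mathbf1$ in its range, one gets $S\sim U_\mu$, and a quasisimilarity between $S$ and an a.c.\ cyclic unitary forces $\mu=m$ (e.g.\ $S\prec U_\mu$ means $1/g\in$ appropriate class / $\log|g|\in L^1$ and the reverse relation pins down $|g|=1$ a.e.), whence $\mathcal M=L^2(\mathbb T,m)$ and $U_{\mathbb T}|_{\vee_{n\ge0}U_{\mathbb T}^nf_1}=U_{\mathbb T}\cong S$ after the identification. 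I expect the main obstacle to be the last step — showing the a.c.\ measure $\mu$ must actually be Lebesgue measure $m$ — for which I would argue: $g$ multiplies $H^2$ into $L^2(\mathbb T)$ boundedly so $g\in H^\infty$? no — rather, $S\buildrel d\over\prec U_\mu$ gives (taking the cyclic vector $\mathbf1\mapsto g$) that $\|p\|_{H^2}^2\gtrsim\|pg\|^2=\int|p|^2|g|^2\mathrm dm$ fails in general, so one needs the \emph{other} direction $U_\mu\prec S$: $f_1$ gives $X\colon L^2(\mu)\to H^2$ with $X\mathbf1=\mathbf1$ and $X$ intertwining $U_\mu^{-1}$ with $S^*$, hence $X^*\colon H^2\to L^2(\mu)$ intertwines $S$ with $U_\mu$ sending $\mathbf1\mapsto\mathbf1$... and the composition $X^*Y\colon H^2\to H^2$ fixes $\mathbf1$ and lies in $\{S\}'=\{T_b:b\in H^\infty\}$, so $X^*Y=T_b$ with $b(0)=1$; this $T_b$ factors through $L^2(\mu)$ with both factors injective, forcing $\ker T_b=\{0\}$ and... ultimately $|g|^2=1$ a.e. I would likely invoke Lemma \ref{lemyy}(i) / the $\mathcal I(U_\nu^*,S^*)=\{0\}$ principle together with the Szeg\H o condition to close this cleanly rather than by bare-hands estimates.
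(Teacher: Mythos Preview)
Your argument that $\sigma=\mathbb T$ is essentially the paper's: with $g=X^*\mathbf 1$ you obtain $f_1\overline g\in\overline{H^1}$ (note: your Fourier computation actually kills the \emph{positive} coefficients, so it is $\overline{H^1}$ rather than $H^1$, but this is harmless) with zeroth coefficient $1$, and F.~and M.~Riesz rules out vanishing on a set of positive measure.

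The second half, however, has a genuine gap. You switch to the subspace $\mathcal M=\vee_{n\ge0}U^n g$ and try to prove that $\mu=|g|^2\,\mathrm dm$ is Lebesgue measure, but the lemma is about $\vee_{n\ge0}U^n f_1$, and there is no reason for $|g|=1$ a.e. (any bounded $g$ can occur). Worse, the route you sketch cannot work: you aim for $S\sim U_\mu$, but a unitary $U_\mu$ is never a quasiaffine transform of $S$ (if $Y\in\mathcal I(U_\mu,S)$ has dense range then $Y^*$ is injective and $Y^*\ker S^*\subset\ker U_\mu^*=\{0\}$, a contradiction). Your concluding line ``$U_{\mathbb T}|_{\vee_n U_{\mathbb T}^n f_1}=U_{\mathbb T}\cong S$'' is also false as written: the bilateral shift is not unitarily equivalent to $S$.

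The fix is short and is exactly what the paper does. From $YS=U_\sigma Y$ one gets not just that multiplication by $g$ is bounded $H^2\to L^2$, but that $g\in L^\infty$: for $h\in H^2$ and $n\ge0$, $\|g\cdot\overline\chi^{\,n}h\|=\|gh\|\le\|Y\|\,\|h\|=\|Y\|\,\|\overline\chi^{\,n}h\|$, so $M_g$ is bounded on the dense set $\bigcup_n\overline\chi^{\,n}H^2\subset L^2$. Now use what you already proved: $f_1\overline g$ is (the conjugate of) a nonzero $H^1$ function, hence $\int\log|f_1\overline g|\,\mathrm dm>-\infty$; since $\log|g|\le\log\|g\|_\infty$, this gives $\int\log|f_1|\,\mathrm dm>-\infty$. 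By the Beurling--Helson description of $\operatorname{Lat}U_{\mathbb T}$ (Szeg\H o's condition), $\vee_{n\ge0}U_{\mathbb T}^n f_1=uH^2$ for some unimodular $u$, and $U_{\mathbb T}|_{uH^2}\cong S$. No analysis of $\mu$ or of quasisimilarity is needed.
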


\begin{proof}  We have $X^*S=U_\sigma X^*$. Set $X^*\mathbf{1}=\psi$, 
then $\psi \in L^\infty (\sigma,m)$ and $X^*h=\psi h$ for every $h\in H^2$. Therefore, 
$Xf=P_+\overline\psi f$ for every $f\in L^2(\sigma,m)$. Since $\mathbf{1}=P_+\overline\psi f_1$,   
there exists $h\in H^2$ such that $1+\overline\chi \overline h=\overline\psi f_1$ $m$-a.e. on $\mathbb T$. 
Since $\psi=0$ $m$-a.e. on $\mathbb T\setminus\sigma$ and $1+\chi h\in H^2$, we conclude that 
$m(\mathbb T\setminus\sigma)=0$. Furthermore, 
\begin{equation*} \int_{\mathbb T}\log(|\psi||f_1|)\mathrm{d}m=
\int_{\mathbb T}\log|1+\chi h|\mathrm{d}m>-\infty.
\end{equation*}
Since $\psi \in L^\infty$, we conclude that $\int_{\mathbb T}\log|f_1|\mathrm{d}m>-\infty$. 
The conclusion of the lemma follows from this relation and well-known description of $\operatorname{Lat}U_{\mathbb T}$. 
\end{proof}

\begin{lemma} \label{lem12} Suppose that $N\in\mathbb N$,
  $V_+\in\mathcal L(\mathcal K_+)$ is an isometry, $\dim\ker V_+^*<\infty$, 
 $X_+\in\mathcal L(\mathcal K_+, H_N^2)$, and $S_N^*X_+V_+=X_+$.
 Let $V\in\mathcal L(\mathcal K)$ be the minimal unitary extension of $V_+$.
Then there exists $X\in\mathcal L(\mathcal K, H_N^2)$ such  $S_N^*XV=X$ and 
$X|_{\mathcal K_+}=X_+$.
\end{lemma}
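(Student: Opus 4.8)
The plan is to extend $X_+$ from the isometry $V_+$ to its minimal unitary extension $V$ by building an operator-valued analytic function and using the standard description of intertwinings with $S_N^*$. The relation $S_N^*X_+V_+=X_+$ says that $V_+^{*n}X_+^*$ increases the "order of vanishing" at the origin, or more precisely that $X_+^* h = S_N^n X_+^* V_+^{*n}h$-type recursions hold; dually, $X_+ V_+^n = (S_N^*)^n X_+ + (\text{something in the span of } e_1,\dots,e_N)$. I would first reformulate: since $S_N^*X_+V_+ = X_+$, for each $x \in \mathcal K_+$ the $H^2_N$-valued function $n \mapsto X_+ V_+^n x$ together with the "lost" pieces $P_{\ker S_N^*}$ at each step assembles into a bounded analytic function. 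Concretely, set $W(z) = \sum_{n\ge 0} z^n P_{\ker S_N^*} X_+ V_+^n \in \mathcal L(\mathcal K_+, \mathbb C^N)$ (a $\ker S_N^*$-valued power series); the hypothesis $S_N^* X_+ V_+ = X_+$ makes the coefficients summable enough that $W$ extends boundedly, and $X_+$ is recovered from $W$ via the Cauchy-type formula encoding $X_+ x = $ the $H^2_N$-function $z \mapsto W(z) x$ (up to identification).

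Next, I would define $X$ on $\mathcal K$ by the same formula, replacing $V_+^n$ by $V^n$: since $V$ is unitary and $\mathcal K = \bigvee_{n\ge 0} V^{-n}\mathcal K_+$ (minimality of the unitary extension), it suffices to define $X$ on each $V^{-n}\mathcal K_+$ consistently. The key computation is that the intertwining relation $S_N^* X_+ V_+ = X_+$ forces $X_+ V_+^{-1} =: $ a well-defined bounded operator on $V_+\mathcal K_+$ extending to $\mathcal K_+$, namely essentially $S_N X_+$ composed with a correction living in $\ker S_N^* = \operatorname{span}\{e_k\}$; the hypothesis $\dim\ker V_+^* < \infty$ controls this correction. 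Iterating, $X$ on $V^{-n}\mathcal K_+$ is given by $S_N^n X_+ \circ (\text{shift by } n)$ plus finitely many rank-one corrections at each stage, and one checks $S_N^* X V = X$ by construction and $X|_{\mathcal K_+} = X_+$ trivially. Boundedness of the extension is where the finite-defect hypothesis enters decisively: on the wandering subspace generating $\mathcal K$ over $V$, the corrections only involve $\ker V_+^*$, a finite-dimensional space, so their contributions to $\|X\|$ are uniformly controlled.

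The main obstacle I expect is establishing that the formally-defined $X$ is bounded on all of $\mathcal K$ (not merely densely defined on $\bigcup_n V^{-n}\mathcal K_+$). The recursion $X V^{-1} = S_N X + (\text{correction in } \operatorname{span}\{e_k\})$ iterated $n$ times produces $S_N^n$ applied to things, and a priori $\|S_N^n X_+\|$ does not decay; one must verify that on the actual space $\mathcal K$—where the vectors $V^{-n}x$ for $x \in \mathcal K_+$ are not arbitrary but constrained by $V$ being the \emph{minimal} unitary extension—the norm stays bounded. I would handle this by passing to the Wold-type / spectral picture: decompose $V_+ = V_{+,u} \oplus V_{+,s}$ into its unitary part and a shift part $S_M$ (with $M \le \dim\ker V_+^* < \infty$, using that the shift summand has finite multiplicity); on the unitary part the extension is trivial and isometric in the relevant sense, and on the $S_M$ part the minimal unitary extension is $U_{\mathbb T,M}$, for which the extension of an intertwiner with $S_N^*$ is classical (Toeplitz-operator theory, in the spirit of Lemma \ref{lemcc}). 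Gluing the two pieces and checking compatibility on $\mathcal K_+$ gives the result; the finite-dimensionality of $\ker V_+^*$ is exactly what makes the shift summand finite-multiplicity and hence keeps everything bounded.
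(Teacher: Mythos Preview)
Your proposal is correct and ultimately lands on essentially the same argument as the paper: after your detour through a recursive/power-series construction, you resolve the boundedness issue exactly as the paper does, by taking the Wold decomposition $V_+\cong S_M\oplus U$ (with $M\leq\dim\ker V_+^*<\infty$), noting that on the unitary summand there is nothing to extend, and on the shift summand $S_M$ the intertwining relation $S_N^*X_+S_M=X_+$ forces each matrix entry to be a Toeplitz operator $T_{\psi_{jk}}$ (via Lemma~\ref{lemcc}), which one then extends from $H^2$ to $L^2$ by $f\mapsto P_+\psi_{jk}f$. The paper goes straight to this Wold/Toeplitz argument without the preliminary recursive formulation; your initial approach is not needed, but the resolution you propose is the paper's proof.
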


\begin{proof}
Using the Wold decomposition and appropriate unitary equivalence, we may assume that  $V_+= S_M\oplus U$, 
where $U\in\mathcal L(\mathcal G)$ is unitary and $1\leq M\leq\dim\ker V_+^*$. Then $V=U_{\mathbb T,M}\oplus U$. 
Set $X_1=X_+|_{H_M^2\oplus\{0\}}$ and $X_0=X_+|_{\{0\}\oplus\mathcal G}$. Then $S_N^*X_1S_M=X_1$ and $S_N^*X_0U=X_0$. 
Writing $S_N^*$ and  $S_M$ as $N\times N$ and $M\times M$ diagonal   matrices, whose elements on the main diagonal are 
$S^*$ and $S$, respectively, and $X_1$ as a $N\times M$ matrix: $X_1=[X_{+jk}]_{j=1,\ldots, N\atop k=1,\ldots,M}$, 
we have $S^*X_{+jk}S=X_{+jk}$ for all $j=1,\ldots, N$, $k=1,\ldots,M$. By Lemma \ref{lemcc}, there exist 
$\psi_{jk}\in L^\infty$ such that $X_{+jk}=T_{\psi_{jk}}$.  Define $X_{jk}\in\mathcal L(L^2, H^2)$
by the formula $X_{jk}f=P_+\psi f$ ($f\in L^2$). 
Set \begin{equation*} X=\left[[X_{jk}]_{j=1,\ldots, N \atop k=1,\ldots,M},\ X_0\right].
\end{equation*}
It is easy to see that $X$ satisfies the conclusion of the lemma. 
\end{proof}

\begin{lemma}\label{lem13} Let $N\in\mathbb N$. Write $L^2_{N+1}=H^2_N\oplus (H^2_-)_N\oplus L^2$. 
Let $h_0\in H^2_N$, and let $f\in L^2$ be such that $\int_{\mathbb T}\log|f|\mathrm{d}m>-\infty$. 
Set \begin{equation*}
\mathcal M=H^2_N\vee\vee_{n=0}^\infty U_{\mathbb T, N+1}^n(\overline\chi\overline h_0\oplus f).
\end{equation*}
Then  $U_{\mathbb T, N+1}|_{\mathcal M}\cong S_{N+1}$.
\end{lemma}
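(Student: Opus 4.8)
The plan is as follows. Put $V:=U_{\mathbb T,N+1}|_{\mathcal M}$. Since $\mathcal M\in\operatorname{Lat}U_{\mathbb T,N+1}$, $V$ is an isometry and $\mu_V\le N+1$. I claim it suffices to prove the two facts $S_{N+1}\prec V$ and that $V$ is a \emph{pure} isometry (i.e. has no unitary direct summand). Indeed, granting these, $V\cong S_K$ for some $K\le\mu_V\le N+1$, whence $S_{N+1}\prec S_K$; applying {\cite[Lemma 2.1]{gamalshiftindex}} to $R=S_{N+1}$ (note $S_{N+1}$ is an isometry, so $(S_{N+1})_+^{(a)}\cong S_{N+1}$) gives $S_{N+1}\cong S_K$, i.e. $K=N+1$ and $U_{\mathbb T,N+1}|_{\mathcal M}\cong S_{N+1}$.

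For $S_{N+1}\prec V$: set $\rho=|h_0|^2+|f|^2\in L^1(m)$; from $\rho\ge|f|^2$ and $\int_{\mathbb T}\log|f|\,\mathrm dm>-\infty$ we get $\log\rho\in L^1$, so there is an outer $g_\rho\in H^2$ with $|g_\rho|^2=\rho$ $m$-a.e. Writing $v=\overline\chi\overline{h_0}\oplus f$ we have $|v|^2=\rho$ $m$-a.e., so $\Omega\colon H^2\to L^2_{N+1}$, $\Omega\varphi=(\varphi/g_\rho)v$, is an isometry with $\Omega S=U_{\mathbb T,N+1}\Omega$ and $\operatorname{ran}\Omega=\vee_{n=0}^\infty U_{\mathbb T,N+1}^n v=:\mathcal N$ (so $U_{\mathbb T,N+1}|_{\mathcal N}\cong S$). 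Define $Y_0\colon H^2_{N+1}=H^2_N\oplus H^2\to L^2_{N+1}$ by $Y_0(g\oplus\varphi)=g+\Omega\varphi$, placing $g$ in the first $N$ coordinates. Then $Y_0S_{N+1}=VY_0$, $\operatorname{clos}Y_0H^2_{N+1}=H^2_N\vee\mathcal N=\mathcal M$, and $\ker Y_0=\{0\}$, since the last coordinate of $Y_0(g\oplus\varphi)$ is $(\varphi/g_\rho)f$, which vanishes $m$-a.e. only if $\varphi=0$ (as $f\ne0$ $m$-a.e. and $g_\rho$ is outer); thus $Y_0$ is a quasiaffinity.

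It remains to prove $V$ is pure, i.e. $\mathcal M_u:=\bigcap_{n\ge0}\chi^n\mathcal M=\{0\}$; here $\mathcal M_u$ is reducing for $U_{\mathbb T,N+1}$ and $V|_{\mathcal M_u}$ is the unitary part of $V$. Projecting onto the last coordinate: this projection commutes with $U_{\mathbb T,N+1}$, so $\operatorname{clos}P_{\mathrm{last}}\mathcal M_u$ is reducing for the scalar bilateral shift $U_{\mathbb T}$; but $\operatorname{clos}P_{\mathrm{last}}\mathcal M_u\subseteq\operatorname{clos}P_{\mathrm{last}}\mathcal M=\vee_{n=0}^\infty\chi^n f=uH^2$, $u=f/|f|$ (here $\int\log|f|\,\mathrm dm>-\infty$ is used once more), and $uH^2$ contains no nonzero reducing subspace of $U_{\mathbb T}$; hence $P_{\mathrm{last}}\mathcal M_u=\{0\}$, so $\mathcal M_u\subseteq L^2_N\oplus\{0\}$ and $\mathcal M_u$ is reducing for $U_{\mathbb T,N}$. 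If $\mathcal M_u\ne\{0\}$, choose $\sigma\subseteq\mathbb T$ with $m(\sigma)>0$ and a measurable unit vector field $\zeta\mapsto e(\zeta)\in\mathbb C^N$ on $\sigma$ with $\chi^m(\mathbf 1_\sigma e)\in\mathcal M$ for all $m\in\mathbb Z$; then every $y=(y',y'')\in\mathcal M^\perp$ satisfies $\langle y'(\zeta),e(\zeta)\rangle_{\mathbb C^N}=0$ for $m$-a.e. $\zeta\in\sigma$, while at the same time $y'\in(H^2_-)_N$ and $y\perp\chi^n v$ for all $n\ge0$. Using the explicit description $\mathcal N=\operatorname{ran}\Omega$ — so that, in the first $N$ coordinates, the $\chi^n v$ are the shifts of $\overline\chi\overline{h_0}/g_\rho$ — together with the outerness of $g_\rho$, one checks that these relations are incompatible with $e$ being a unit vector on $\sigma$; hence $\mathcal M_u=\{0\}$. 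This last step, controlling the first $N$ coordinates where $h_0$ enters, is the technically delicate part of the argument, and it is there that the hypothesis $\int_{\mathbb T}\log|f|\,\mathrm dm>-\infty$ (equivalently $\log\rho\in L^1$) is fully exploited. As an alternative one may bypass the purity argument altogether by constructing directly a quasiaffinity $\mathcal M\to H^2_{N+1}$ intertwining $V$ and $S_{N+1}$ out of matrix‑valued Toeplitz operators, as in Lemma \ref{lemma1} and using Theorem \ref{theoremaa}; this yields $V\prec S_{N+1}$, which already forces $V$ to be a pure isometry of class $C_{\cdot0}$ and then $V\cong S_{N+1}$ exactly as in the first paragraph.
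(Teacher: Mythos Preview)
Your first two paragraphs are correct: the reduction to ``$S_{N+1}\prec V$ and $V$ pure'' is valid, and your quasiaffinity $Y_0$ (built via the outer function $g_\rho$ with $|g_\rho|^2=|h_0|^2+|f|^2$) cleanly establishes $S_{N+1}\prec V$.

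The gap is in the purity argument. You correctly reach $\mathcal M_u\subseteq L^2_N\oplus\{0\}$ with $\mathcal M_u$ reducing for $U_{\mathbb T,N}$, but the assertion that the remaining orthogonality constraints are ``incompatible with $e$ being a unit vector on $\sigma$'' is not a proof; the sketch with the vector field $e(\zeta)$ gives no mechanism for producing the contradiction, and you yourself flag this as ``the technically delicate part''. To see what is actually required, write $\mathcal M=H^2_N\oplus\mathcal N'$ orthogonally with $\mathcal N'=\mathcal M\ominus H^2_N\subset(H^2_-)_N\oplus L^2$. Then
\[
\mathcal M\cap\bigl(L^2_N\oplus\{0\}\bigr)=H^2_N\oplus\bigl(\mathcal N'\cap((H^2_-)_N\oplus\{0\})\bigr),
\]
so $\mathcal M_u=\{0\}$ follows once one knows
\[
\mathcal N'\cap\bigl((H^2_-)_N\oplus\{0\}\bigr)=\{0\}.
\]
This is exactly the key step \eqref{nncap} in the paper's proof. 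The paper proves it by building an auxiliary outer function $\varphi\in H^\infty$ with $|\varphi|\le|f|/\max_j|h_j|$ on $\{\max_j|h_j|>|f|\}$ and $|\varphi|=1$ elsewhere (this is where $\int\log|f|\,\mathrm dm>-\infty$ enters), and then using $\varphi(S_{*,N})$ to convert $p_nf\to 0$ into $\varphi(S_{*,N})P_-p_n\overline\chi\overline{h_0}\to 0$, forcing the limit in $(H^2_-)_N$ to vanish. Nothing in your outline substitutes for this computation, and your proposed alternative via matrix Toeplitz operators is only a pointer, not an argument.

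For comparison, the paper does \emph{not} establish $S_{N+1}\prec V$; once \eqref{nncap} is in hand it instead shows that the last-coordinate projection realises $R:=(S_{*,N}\oplus U_{\mathbb T})|_{\mathcal N'}\prec S$, invokes \cite{takahashi} for $\operatorname{ind}R=-1$, and then uses additivity of the Fredholm index for the triangular form $V=\bigl[\begin{smallmatrix}S_N&*\\\mathbb O&R\end{smallmatrix}\bigr]$ together with $\mu_V\le N+1$ to conclude $V\cong S_{N+1}$. So your route and the paper's diverge in packaging, but both rest on the same unavoidable technical core \eqref{nncap}, which your proposal leaves unproved.
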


\begin{proof} Set $\mathcal N=\vee_{n=0}^\infty( S_{*,N}^n\overline\chi\overline h_0\oplus U_{\mathbb T}^n f)$.
Then $\mathcal M=H^2_N\oplus\mathcal N$. We show that 
 \begin{equation}\label{nncap}
\mathcal N\cap((H^2_-)_N\oplus\{0\})=\{0\}. 
\end{equation}
Indeed, assume that $\{p_n\}_n$ is a sequence of analytic polynomials, $h\in H^2_N$,  
\begin{equation*} p_n(S_{*,N}) \overline\chi\overline h_0\to \overline\chi\overline h \ \ \text{ and } \ \ \ 
p_n( U_{\mathbb T})f \to 0.
\end{equation*}
Let $h_0=[h_j]_{j=1}^N$, where $h_j\in H^2$ ($j=1,\ldots, N$). Set $s(\zeta)=\max_{j=1,\ldots, N}|h_j(\zeta)|$ for $m$-a.e. $\zeta\in\mathbb T$.
Since $\int_{\mathbb T}\log|f|\mathrm{d}m>-\infty$, there exists an outer function $\varphi\in H^\infty$ such that 
\begin{equation*} |\varphi|=\begin{cases}\frac{|f|}{s}, & \ \ \text{ if }|f|\leq s,\\
1, & \ \ \text{ if }|f|\geq s.
\end{cases}
\end{equation*}
We have \begin{equation*}\varphi(S_{*,N})p_n(S_{*,N}) \overline\chi\overline h_0=[P_-\varphi P_-p_n\overline\chi\overline h_j]_{j=1}^N
=[P_-\varphi p_n\overline\chi\overline h_j]_{j=1}^N
\to \varphi(S_{*,N})\overline\chi\overline h.
\end{equation*}
But  \begin{align*}\|\varphi(S_{*,N})p_n(S_{*,N}) \overline\chi\overline h_0\|^2&
\leq\sum_{j=1}^N\|\varphi p_n\overline\chi\overline h_j\|^2
\leq \sum_{j=1}^N\int_{\mathbb T}|\varphi|^2s^2|p_n|^2\mathrm{d}m\\&\leq N\int_{\mathbb T}|f|^2|p_n|^2\mathrm{d}m\to 0.
\end{align*}
We obtain that $\varphi(S_{*,N})\overline\chi\overline h=0$. Since $\varphi$ is outer, {\cite[Prop. III.3.1]{nfbk}} implies that  $\overline\chi\overline h=0$. Thus, \eqref{nncap} is proved. 

Set $R= (S_{*,N}\oplus U_{\mathbb T})|_{\mathcal N}$. 
There exist $u\in L^\infty$ and $g\in H^2$ such that $|u|=1$ $m$-a.e. on $\mathbb T$, $g$ is outer, and $f=ug$. 
We have 
$\mathcal N\subset (H^2_-)_N\oplus uH^2$. By \eqref{nncap}, $P_{\{0\}\oplus uH^2}|_{\mathcal N}$ realizes the relation $R\prec U_{\mathbb T}|_{uH^2}$. 
Since $U_{\mathbb T}|_{uH^2}\cong S$ and $R$ is a contraction, we have $\operatorname{ind}R=-1$ \cite{takahashi}.
Since   
\begin{equation*} U_{\mathbb T, N+1}|_{\mathcal M}=\left[\begin{matrix} S_N & *\\ \mathbb O & R \end{matrix}\right], \end{equation*}
 {\cite[Theorem  XI.3.7]{conway}} implies that $\operatorname{ind}U_{\mathbb T, N+1}|_{\mathcal M}=\operatorname{ind}S_N+\operatorname{ind}R=-N-1$. 
Since $\mu_{U_{\mathbb T, N+1}|_{\mathcal M}}\leq N+1$ (where $\mu_T$ for an operator $T$ is defined in \eqref{mu}), we conclude that $U_{\mathbb T, N+1}|_{\mathcal M}\cong S_{N+1}$.
\end{proof}

Recall that the multiplicity  $\mu_T$ for an operator $T$ is defined in \eqref{mu}. 

\begin{theorem} \label {thmmainiso}Suppose than $N\in\mathbb N$, $V_+\in\mathcal L(\mathcal K_+)$ is an a.c. isometry, 
$\mu_{V_+}\leq N$, 
$X_+\in\mathcal L(\mathcal K_+, H^2_N)$, and $S_N^*X_+V_+=X_+$. Suppose that there exist
 $\{f_j\}_{j=1}^N\subset\mathcal K_+$
such that $X_+f_j=e_j$ ($j=1,\ldots, N$). Then \begin{equation*}V_+|_{\vee_{j=1}^N\vee_{n=0}^\infty V_+^n f_j}\cong S_N.\end{equation*}
\end{theorem}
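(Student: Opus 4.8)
The plan is to reduce the statement to Lemma~\ref{lem13} via the Wold decomposition of $V_+$, handling the possible unitary part by extending $X_+$ to the minimal unitary extension and applying Lemma~\ref{lem11}. First I would set $\mathcal N=\vee_{j=1}^N\vee_{n=0}^\infty V_+^n f_j$; this is the smallest reducing\slash invariant subspace of $V_+$ containing all the $f_j$, and since $X_+f_j=e_j$ and $\{e_j\}_{j=1}^N$ generates $H^2_N$ under $S_N^{*n}$ (indeed under $S_N$), the intertwining relation $S_N^*X_+V_+=X_+$ forces $X_+\mathcal N$ to be dense in $H^2_N$; in particular $V_+|_{\mathcal N}$ has multiplicity $\le N$ (it is generated by $N$ vectors), and $\ker(V_+|_{\mathcal N})^*$ has dimension $\le N$ as well. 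So, replacing $V_+$ by $V_+|_{\mathcal N}$, I may assume $\mathcal K_+=\mathcal N$, $\mu_{V_+}\le N$, and $\dim\ker V_+^*\le N<\infty$.

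Next I would take the Wold decomposition $V_+=S_M\oplus U$ with $U\in\mathcal L(\mathcal G)$ an a.c. unitary operator and $0\le M\le N$ (finiteness of $\dim\ker V_+^*$ gives $M$ finite; absolute continuity is inherited). If $M=N$ and $\mathcal G=\{0\}$ we are already done. Otherwise I would invoke Lemma~\ref{lem12}: since $V\in\mathcal L(\mathcal K)$, the minimal unitary extension of $V_+$, is $U_{\mathbb T,M}\oplus U$, and $\dim\ker V_+^*=M<\infty$, there is $X\in\mathcal L(\mathcal K,H^2_N)$ with $S_N^*XV=X$ and $X|_{\mathcal K_+}=X_+$. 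Now decompose $V=S_N'\oplus(\text{rest})$ appropriately; the key point is to realize $V$ concretely as (a piece of) $U_{\mathbb T,N+1}$ so that Lemma~\ref{lem13} applies. Writing things componentwise: $X$ consists of $N$ rows, the $j$-th row of the form $h\mapsto P_+\psi_{j\cdot}h$ for suitable $L^\infty$ symbols (as produced in Lemma~\ref{lem12}); the condition $Xf_j=e_j$ together with Lemma~\ref{lem11}, applied row by row to the unitary summand, will show that each cyclic piece of $U$ generated by the relevant component of $f_j$ is unitarily equivalent to the simple bilateral shift $U_{\mathbb T}$, and that the logarithmic integrability condition $\int_{\mathbb T}\log|\cdot|\,dm>-\infty$ needed in Lemma~\ref{lem13} holds for the $L^2$-components of the $f_j$.

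With that in hand I would argue by induction on $N-M$ (the ``number of unitary directions that still must be absorbed''). If $M<N$, pick an index $j$ for which $f_j$ has nonzero component in $\mathcal G$; conjugating so that $S_M\oplus U_{\mathbb T}$ sits inside $U_{\mathbb T,M+1}$, Lemma~\ref{lem11} identifies the cyclic subspace generated by that component with $U_{\mathbb T}\cong S$ together with the required log-integrability, and then Lemma~\ref{lem13} (with $N$ there equal to $M$) upgrades $S_M\oplus U_{\mathbb T}|_{(\text{cyclic})}$ restricted to $H^2_M\vee(\text{that cyclic orbit})$ to $S_{M+1}$. Absorbing this summand replaces the pair $(M,\mathcal G)$ by $(M+1,\mathcal G')$ with $\mathcal G'$ of strictly smaller ``unitary multiplicity,'' while the remaining $f_j$ and the (restricted) intertwiner $X_+$ still satisfy all the hypotheses. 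After at most $N-M\le N$ steps all the $f_j$ lie in an invariant subspace on which $V_+$ is unitarily equivalent to $S_N$, and since $\vee_{j=1}^N\vee_{n=0}^\infty V_+^nf_j$ is contained in that subspace and has multiplicity $\le N$ with Fredholm index $-N$ (by the index computation as in Lemma~\ref{lem13}), it must be all of it, giving $V_+|_{\vee_{j=1}^N\vee_{n=0}^\infty V_+^nf_j}\cong S_N$.

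The main obstacle I expect is the bookkeeping in the inductive step: one must check that after absorbing one unitary direction into an $S_{M+1}$, the leftover data — the remaining generators, the remaining unitary part $U'$, and the co-restricted intertwiner — genuinely satisfy the hypotheses of the theorem again (in particular that $X_+$ restricted to the orthogonal complement still has all $e_j$'s, with $j$ ranging over the not-yet-absorbed indices, in its range, and that $\mu$ and $\dim\ker(\cdot)^*$ stay $\le$ the new $N$). Getting the concrete matrix realization from Lemma~\ref{lem12} to line up with the $U_{\mathbb T,N+1}$-picture of Lemma~\ref{lem13}, so that the outer-function\slash log-integrability hypothesis transfers cleanly via Lemma~\ref{lem11}, is where the real care is needed; the index arithmetic and the multiplicity bound are then routine.
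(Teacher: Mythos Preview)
Your overall architecture is close to the paper's, and you have correctly identified Lemmas~\ref{lem11}, \ref{lem12}, and \ref{lem13} as the building blocks. However, there is a genuine gap in the inductive step, precisely at the place you yourself flag as needing ``real care''.

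The problem is the claim that Lemma~\ref{lem11} can be applied ``row by row'' to obtain log-integrability of the $\mathcal G$-component of some $f_j$. Write $f_j=g_j\oplus u_j$ with $g_j$ in the shift part and $u_j\in\mathcal G$, and split $X=[X_1\ X_2]$ accordingly. The hypothesis $Xf_j=e_j$ gives, on the $j$-th row, $X_{j1}g_j+X_{j2}u_j=\mathbf 1$. To invoke Lemma~\ref{lem11} you would need an intertwiner $Y$ with $S^*YU_\sigma=Y$ and $Yu_j=\mathbf 1$; the natural candidate is $X_{j2}$, but $X_{j2}u_j=\mathbf 1-X_{j1}g_j$, and there is no reason for the shift contribution $X_{j1}g_j$ to vanish or even to lie in $\chi H^2$. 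So the hypothesis of Lemma~\ref{lem11} is simply not available for the component $u_j$, and no amount of bookkeeping will produce it from a single row. The same difficulty obstructs the claim that the remaining data ``still satisfy all the hypotheses'' after one absorption step: once one direction is folded into an $S_{M+1}$, the other $f_k$ need not lie in that new shift subspace, and the relation $Xf_k=e_k$ does not restrict usefully.

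The paper resolves this by a different induction --- on $N$, not on $N-M$ --- and by a \emph{matrix determinant} argument that treats all generators simultaneously. Passing to the minimal unitary extension $V$ and using the inductive hypothesis for $k=1,\dots,N$ first, one arranges $\mathcal M_k=\vee_{j\le k}\vee_{n\in\mathbb Z}V^nf_j=L^2_k\oplus\{0\}$, so that $F=[f_1,\dots,f_{N+1}]$ is \emph{upper triangular} with diagonal entries $f_{0k}$. The relations $Xf_j=e_j$ become $P_+\Psi F=I$, i.e.\ $\Psi F=I+\overline\chi\,\overline G$ with $G$ an $H^2$-valued matrix; taking determinants gives
\[
\det\Psi\cdot\prod_{k=1}^{N+1}f_{0k}=\overline{\det(I+\chi G)},
\]
and since $\det(I+\chi G)\in H^{2/(N+1)}$ with value $1$ at $0$, its log-modulus is integrable. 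This yields $\int_{\mathbb T}\log|f_{0,N+1}|\,\mathrm dm>-\infty$ (and forces $\sigma=\mathbb T$) in one stroke, after which Lemma~\ref{lem13} finishes the step. The determinant trick is the missing idea; without it, isolating a single coordinate does not give the log-integrability that Lemma~\ref{lem13} needs.
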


\begin{proof} The theorem will be proved using induction. 
Let $N=1$. Since there exists $f_1\in\mathcal K_+$ such that $Xf_1=e_1=\mathbf{1}$, we have 
 $\mathcal K_+\neq\{0\}$, and $V_+\cong S$ or $V_+\cong U_\sigma$ for some $\sigma\subset\mathbb T$. If $V_+\cong S$, 
the conclusion of the theorem
is fulfilled for every $0\not\equiv f_1\in\mathcal K_+$. If $V_+\cong U_\sigma$, Lemma \ref{lem11} is applied.   
Thus, if $N=1$, then the theorem is proved.

If $N\geq 1$, assume that the theorem is proved for all $1\leq k\leq N$. 
We will to prove the theorem for $N+1$. Let $X$ and $V$ be from Lemma \ref{lem12} applied to $V_+$ and $X_+$. 
Then $V$ is unitary, and  $S_{N+1}^*XV=X$. Set
\begin{equation} \mathcal M_k=\vee_{j=1}^k\vee_{n\in\mathbb Z}V^n f_j 
\ \ \  \text{ and } X_k=P_{H_k^2\oplus\{0\}}X|_{\mathcal M_k} \ \ \ (k=1,\ldots, N+1). 
\end{equation} 
Then $S_k^*X_kV|_{\mathcal M_k} =X_k$ and $X_kf_j=e_j$ for $j=1, \ldots, k$ ($k=1,\ldots, N+1$). Thus,  
$X_k$ and $V|_{\mathcal M_k}$ satisfy the assumption of the theorem. By the inductive hypothesis, 
$V|_{\vee_{j=1}^k \vee_{n=0}^\infty V^n f_j}\cong S_k$ for $k=1, \ldots, N$. Consequently,
\begin{equation}\label{mmconguu} V|_{\mathcal M_k}\cong U_{\mathbb T,k}\ \ \  (k=1, \ldots, N). \end{equation}

Taking into account relations \eqref{mmconguu} and the estimate $\mu_V\leq N+1$, and using appropriate unitary equivalence, we may assume that $V=U_{\mathbb T,N}\oplus U_\sigma$ 
for some $\sigma\subset \mathbb T$, and $\mathcal M_k=L^2_k\oplus\{0\}\subset L^2_N$ ($k=1, \ldots, N$). 

Write $S_{N+1}^*$ as $(N+1)\times (N+1)$  diagonal   matrix, whose elements on the main diagonal are 
$S^*$. Write  $V$  as $(N+1)\times (N+1)$  diagonal   matrix, whose $N$ elements on the main diagonal are  $U_{\mathbb T}$
and the ending element is $U_\sigma$. Write $X$ as a $(N+1)\times (N+1)$ matrix: 
$X=[X_{jk}]_{j,k=1,\ldots, N+1}$.  
Then  $S^*X_{jk}U_{\mathbb T}=X_{jk}$  and $S^*X_{j, N+1}U_{\sigma}=X_{j, N+1}$ for all $j=1,\ldots, N+1$, $k=1,\ldots,N$. 
Therefore, there exist 
$\psi_{jk}\in L^\infty$ such that $X_{jk}f=P_+\psi_{jk} f$ for every $f\in L^2$, and $X_{j,N+1}f=P_+\psi_{j,N+1} f$ for every $f\in L^2(\sigma,m)$ and for all $j=1,\ldots, N+1$, $k=1,\ldots,N$.

Set $\Psi=[\psi_{jk}]_{j,k=1,\ldots, N+1}$. 
For $k=1,\ldots, N$ write $f_k\in L^2_k\oplus\{0\}$ as a column whose first $k$ elements are functions from $L^2$ and other are zeros functions. 
 Write $f_{N+1}$ as a column whose first $N$ elements are functions from $L^2$ and $(N+1)$th element is a 
function from $L^2(\sigma,m)$. Set $F=[f_1,\ldots,f_{N+1}]$.
 Then $F$ is a upper-triangular $(N+1)\times (N+1)$ matrix, 
whose elements are functions from $L^2$ and $L^2(\sigma,m)$. Denote the elements from the main diagonal of $F$ by $f_{0k}$ ($k=1,\ldots, N+1$).
Then $f_{0k}\in L^2$ for $k=1,\ldots, N$,  $f_{0,N+1}\in L^2(\sigma,m)$, and $\det F=\prod_{k=1}^{N+1}f_{0k}$.

Since $Xf_j=e_j$ ($j=1,\ldots, N+1$), we have $P_+\Psi F=I_{(N+1)\times (N+1)}$. Therefore,
 there exists $(N+1)\times (N+1)$ matrix $G$, whose elements are functions from $H^2$, such that 
$\Psi F=I_{(N+1)\times (N+1)}+\overline\chi\overline G$.
Set $h=\det(I_{(N+1)\times (N+1)}+\chi G)$. Then $h\in H^{\frac{2}{N+1}}$, and $h(0)=1$. Therefore,  
$\int_{\mathbb T}\log|h|\mathrm{d}m>-\infty$. 
Set $\psi=\det \Psi$. Then $\psi\in L^\infty$. We have 
\begin{equation*}\overline h=\det(\Psi F)=\det\Psi\det F=\psi\prod_{k=1}^{N+1}f_{0k}.
\end{equation*}
Therefore, 
\begin{equation*}
\int_{\mathbb T}\log|\psi|\mathrm{d}m+\sum_{k=1}^{N+1}\int_{\mathbb T}\log|f_{0k}|\mathrm{d}m=
\int_{\mathbb T}\log|h|\mathrm{d}m>-\infty.
\end{equation*}
We obtain that $\int_{\mathbb T}\log|f_{0k}|\mathrm{d}m>-\infty$ for all $k=1,\ldots, N+1$. In particular, $\sigma=\mathbb T$ 
and $V=U_{\mathbb T,N+1}$. 

By the inductive hypothesis, $V|_{\vee_{j=1}^N \vee_{n=0}^\infty V^n f_j}\cong S_N$. We may assume that 
\begin{equation*}\vee_{j=1}^N \vee_{n=0}^\infty V^n f_j=H^2_N\oplus\{0\}\oplus\{0\}\subset 
H^2_N\oplus (H^2_-)_N\oplus L^2=L^2_{N+1}. \end{equation*}  
Note that  $f_{0,N+1}=P_{\{0\}\oplus\{0\}\oplus L^2}f_{N+1}$. 
Set $\overline\chi\overline h_0=P_{\{0\}\oplus (H^2_-)_N\oplus\{0\}}f_{N+1}$, where $h_0\in H^2_N$.  
Then \begin{equation*}\vee_{j=1}^{N+1} \vee_{n=0}^\infty V^n f_j 
=H^2_N\vee\vee_{n=0}^\infty V^n(\overline\chi\overline h_0\oplus f_{0,N+1}).
\end{equation*}
By Lemma \ref{lem13}, \begin{equation*}V|_{\vee_{j=1}^{N+1} \vee_{n=0}^\infty V^n f_j}\cong S_{N+1}.\end{equation*}
Since $V$ is a unitary extension of $V_+$, the theorem is proved. 
\end{proof}

Let $\psi\in L^\infty$. The \emph{Hankel operator} $H_\psi\in\mathcal L(H^2,H^2_-)$ with the symbol $\psi$ acts by the formula 
$H_\psi h=P_-\psi h$ ($h\in H^2$). By {\cite[formula (1.1.9)]{peller}}, $\|H_\psi\|=\operatorname{dist}(\psi, H^\infty)$.
If $\theta_k$ ($k=1,2)$ are inner functions, then 
\begin{equation}\label{hankel}\|P_{\mathcal K_{\theta_1}}|_{\theta_2 H^2}\|=\|H_{\overline{\theta_1}\theta_2}\|=
\operatorname{dist}(\theta_1,\theta_2 H^\infty)\leq\|\theta_1-\theta_2\|_\infty.
\end{equation}

For an inner function $\theta\in H^\infty$ and $0\neq a\in\mathbb D$ set 
\begin{equation}\label{thetaa} \theta_a=\frac{\theta-a}{1-\overline a\theta}.
\end{equation}
Then $\theta_a$ is an inner function, $\theta$ and $\theta_a$ are relatively prime, and 
\begin{equation}\label{estthetaa} \|\theta-\theta_a\|_\infty\leq\frac{2|a|}{1-|a|}. 
\end{equation}

\begin{lemma}\label{lem27} Suppose that 
$N\in\mathbb N$, $N\geq 2$, 
 $\delta_0>0$, $\theta\in H^\infty$ is an inner function, $\mathcal H$ is a Hilbert space, and 
$Z\in\mathcal L(H^2_N, \mathcal H)$ is such that 
\begin{equation*}\|Z(\theta h\oplus\{0\})\|\geq\delta_0\|h\| \ \ \text{  for every } \ h\in H^2.\end{equation*} 
Then for every $0<\delta<\delta_0$ there exist $\{\mathcal N_j\}_{j=1}^N\subset\operatorname{Lat}S_N$ such that 
$S_N|_{\mathcal N_j}\cong S$, 
$\|Zh\|\geq\delta\|h\|$ for every $h\in \mathcal N_j$ and $j=1,\ldots, N$, and $\vee_{j=1}^N\mathcal N_j=H^2_N$.
\end{lemma}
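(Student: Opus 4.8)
The plan is to reduce the assertion to a statement about matrix inner functions and then build those functions explicitly. Call a column $G=(g^{(1)},\dots,g^{(N)})^{\mathrm T}$ of functions from $H^\infty$ an \emph{inner column} if $\sum_{i=1}^N|g^{(i)}(\zeta)|^2=1$ for a.e.\ $\zeta\in\mathbb T$; then $h\mapsto Gh$ is an isometry of $H^2$ onto the closed subspace $GH^2\in\operatorname{Lat}S_N$ which intertwines $S$ with $S_N|_{GH^2}$, so $S_N|_{GH^2}\cong S$. Thus it is enough to produce inner columns $G_1,\dots,G_N$ such that (a) $\operatorname{ess\,sup}_\zeta\|G_j(\zeta)-\theta(\zeta)e_1\|<(\delta_0-\delta)/\|Z\|$ for every $j$ (note $\|Z\|>0$, since otherwise the hypothesis on $Z$ is impossible), and (b) $\det[G_1|\cdots|G_N]$ is outer. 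Granting these, set $\mathcal N_j:=G_jH^2$. Then $S_N|_{\mathcal N_j}\cong S$; for $v=G_jh\in\mathcal N_j$ one has $\|v\|=\|h\|$, and by the hypothesis together with (a),
\begin{equation*}
\|Zv\|\ \ge\ \|Z(\theta h\oplus\{0\})\|-\|Z\|\Bigl(\int_{\mathbb T}|h|^2\,\|G_j-\theta e_1\|^2\,\mathrm dm\Bigr)^{1/2}\ \ge\ \delta\|v\|;
\end{equation*}
finally $\vee_{j=1}^N\mathcal N_j=\overline{[G_1|\cdots|G_N]H^2_N}=H^2_N$, because a square $H^\infty$ matrix function has dense range on $H^2_N$ exactly when its determinant is outer (matrix inner--outer factorization).

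For the construction I would fix a small $t>0$ and distinct small positive reals $a_1,\dots,a_N$, use the inner functions $\theta_{a_j}=(\theta-a_j)(1-a_j\theta)^{-1}$ from \eqref{thetaa}, and set
\begin{equation*}
G_j:=(\cos t)\,\theta_{a_j}\,e_1+\frac{\sin t}{\sqrt{N-1}}\bigl(0,\,1,\,\theta_{a_j}^{2},\,\theta_{a_j}^{3},\,\dots,\,\theta_{a_j}^{N-1}\bigr)^{\mathrm T}\qquad(j=1,\dots,N).
\end{equation*}
Since $|\theta_{a_j}|=1$ a.e., the column has $\|G_j(\zeta)\|^2=\cos^2t+\sin^2t=1$, so $G_j$ is an inner column; and $\|G_j(\zeta)-\theta(\zeta)e_1\|\le(1-\cos t)+\|\theta_{a_j}-\theta\|_\infty+\sin t\le(1-\cos t)+\frac{2a_j}{1-a_j}+\sin t$ by \eqref{estthetaa}, which is $<(\delta_0-\delta)/\|Z\|$ once $t$ and the $a_j$ are small enough; that is (a). For (b) I pull $\cos t$ out of the first row and $\sin t/\sqrt{N-1}$ out of each of the remaining $N-1$ rows of $[G_1|\cdots|G_N]$, and after interchanging the first two rows recognize the Vandermonde determinant of $\theta_{a_1},\dots,\theta_{a_N}$; hence
\begin{equation*}
\det[G_1|\cdots|G_N]=-(\cos t)\Bigl(\frac{\sin t}{\sqrt{N-1}}\Bigr)^{N-1}\!\!\prod_{1\le i<j\le N}(\theta_{a_j}-\theta_{a_i}).
\end{equation*}
For real $a_i\ne a_j$ one has $\theta_{a_i}-\theta_{a_j}=(a_i-a_j)(\theta^2-1)\bigl((1-a_i\theta)(1-a_j\theta)\bigr)^{-1}$; here $(1-a_i\theta)^{-1}\in H^\infty$, so $1-a_i\theta$ is outer, and $1\pm\theta$ is outer since $\operatorname{Re}(1\pm\theta)\ge 1-|\theta|\ge 0$ on $\mathbb D$ and $1\pm\theta\not\equiv 0$ (assuming, as I may, that $\theta$ is nonconstant). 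So $\theta^2-1=(\theta-1)(\theta+1)$, hence each $\theta_{a_j}-\theta_{a_i}$, hence the determinant, is outer.

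I expect the genuine difficulty to lie entirely in (b): each $G_j$ has to stay close to $\theta e_1$ for the norm estimate in (a), which forces $\det[G_1|\cdots|G_N]$ to be pointwise tiny, and the obvious device of taking the first coordinates to be scalar multiples of $\theta$ would make a power of $\theta$ divide the determinant, ruling out outerness. Using the pairwise relatively prime rotations $\theta_{a_j}$ of $\theta$ in the first coordinate, together with the Vandermonde arrangement in the remaining coordinates, is what makes the determinant telescope into the constant multiple of $\prod_{i<j}(\theta_{a_j}-\theta_{a_i})$ above, each factor of which can be exhibited as outer.
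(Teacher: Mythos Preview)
Your argument is correct. The overall strategy---produce inner columns $G_j$ lying uniformly close to $\theta e_1$ so that the triangle inequality transfers the lower bound from $\theta H^2\oplus\{0\}$, while arranging for $\det[G_1|\cdots|G_N]$ to be outer so that the columns span $H^2_N$---is exactly the paper's, and your appeal to the fact that a square $H^\infty$ matrix function is outer iff its scalar determinant is outer matches the paper's use of \cite[Prop.~V.6.1, Thm.~V.6.2]{nfbk}. The reduction to nonconstant $\theta$ is harmless: if $\theta$ is a unimodular constant then $\theta H^2=H^2$ and one may replace $\theta$ by $\chi$ without affecting the hypothesis.

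Where your construction genuinely differs is in the choice of matrix. The paper perturbs only one column, taking the first row to be $(1-\varepsilon^2)^{1/2}$ times $(\theta_a,\theta,\theta,\dots,\theta)$ and filling the remaining $N-1$ rows with a single $\varepsilon$ per column; the determinant then collapses to a $2\times 2$ block and equals a constant multiple of $\theta_a-\theta$. You instead use $N$ distinct real M\"obius rotations $\theta_{a_1},\dots,\theta_{a_N}$ in the first row and a Vandermonde pattern $(1,\theta_{a_j}^2,\dots,\theta_{a_j}^{N-1})$ below, so that the determinant becomes the Vandermonde $\prod_{i<j}(\theta_{a_j}-\theta_{a_i})$; your factorisation $\theta_{a_i}-\theta_{a_j}=(a_i-a_j)(\theta^2-1)\bigl((1-a_i\theta)(1-a_j\theta)\bigr)^{-1}$ then reduces outerness to that of $1\pm\theta$ and $1-a_k\theta$, exactly the same ingredients the paper needs for its single factor $\theta_a-\theta$. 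The paper's matrix is more economical (one free parameter $a$ instead of $N$), and its lower bound for columns $2,\dots,N$ avoids the $\theta_{a_j}\!\to\!\theta$ approximation altogether; your Vandermonde device is more symmetric and makes the determinant computation a one-liner. Either route proves the lemma.
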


\begin{proof} Let $0\neq a\in\mathbb D$, and let $0<\varepsilon<1$. Define $N\times N$ matrix $\Theta$ as follows:  
\begin{equation*} \Theta=\left[\begin{matrix}
(1-\varepsilon^2)^{\frac{1}{2}}\theta_a & (1-\varepsilon^2)^{\frac{1}{2}}\theta & 
(1-\varepsilon^2)^{\frac{1}{2}}\theta &\ldots & (1-\varepsilon^2)^{\frac{1}{2}}\theta \\
\varepsilon & \varepsilon & 0 & \ldots & 0 \\ 
0 & 0 &  \varepsilon & \ldots & 0 \\  \ldots &  \ldots &  \ldots &   \ldots &  \ldots \\ 
0 & 0 & 0 &  \ldots &  \varepsilon \end{matrix}\right].
\end{equation*}
Then 
\begin{align*} \det \Theta &=\det  \left[\begin{matrix}(1-\varepsilon^2)^{\frac{1}{2}}\theta_a & (1-\varepsilon^2)^{\frac{1}{2}}\theta \\
\varepsilon & \varepsilon\end{matrix}\right]
\det\left[\begin{matrix}\varepsilon & \ldots & 0 \\   \ldots &   \ldots &  \ldots \\ 
 0 &  \ldots &  \varepsilon\end{matrix}\right] \\ &
= (1-\varepsilon^2)^{\frac{1}{2}}\varepsilon^{N-1}(\theta_a-\theta)=-(1-\varepsilon^2)^{\frac{1}{2}}\varepsilon^{N-1}
\frac{a(1-\frac{\overline a}{a}\theta^2)}{1-\overline a \theta}.
\end{align*} 
Therefore, $\det\Theta$  is an outer function. By {\cite[Prop. V.6.1 and Theorem V.6.2]{nfbk}}, $\Theta$ is an outer function. 
The columns $\Theta_j$ ($j=1,\ldots, N$) of the matrix $\Theta$ are inner functions from $H^\infty(\mathbb C, \mathbb C^N)$. 
Set $\mathcal N_j=\Theta_j H^2$ ($j=1,\ldots, N$). Then $\mathcal N_j\in\operatorname{Lat}S_N$, 
$S_N|_{\mathcal N_j}\cong S$,  ($j=1,\ldots, N$), 
and $\vee_{j=1}^N\mathcal N_j=H^2_N$.

Let $h\in H^2$. For $2\leq j \leq N$ we have 
\begin{align*} \|Z\Theta_j h\|&\geq\|Z(1-\varepsilon^2)^{\frac{1}{2}}(\theta h\oplus\{0\})\|-\|Z(0\oplus\ldots\oplus\varepsilon h\oplus\ldots \oplus 0)\|\\&
\geq(1-\varepsilon^2)^{\frac{1}{2}}\delta_0\|h\|-\|Z\|\varepsilon \|h\|\\&=
\bigl((1-\varepsilon^2)^{\frac{1}{2}}\delta_0-\|Z\|\varepsilon\bigr) \|h\|=
\bigl((1-\varepsilon^2)^{\frac{1}{2}}\delta_0-\|Z\|\varepsilon\bigr) \|\Theta_jh\|.
\end{align*}
By \eqref{hankel} and \eqref{estthetaa}, $\|P_{\mathcal K_\theta}\theta_a h\|\leq\frac{2|a|}{1-|a|}\|h\|$. Therefore, 
\begin{equation*} \|P_{\theta H^2}\theta_a h\|^2=\|h\|^2-\|P_{\mathcal K_\theta}\theta_a h\|^2\geq \frac{1-2|a|-3|a|^2}{(1-|a|)^2}\|h\|^2.
\end{equation*}
For $j=1$  we have 
\begin{align*} \|Z\Theta_1 h\|&\geq\|Z(1-\varepsilon^2)^{\frac{1}{2}}(P_{\theta H^2}\theta_a h\oplus\{0\})\|\\&\quad
-\|Z(1-\varepsilon^2)^{\frac{1}{2}}(P_{\mathcal K_\theta}\theta_a h\oplus\{0\})\|
-\|Z(0\oplus\varepsilon h\oplus\ldots \oplus 0)\|\\&
\geq(1-\varepsilon^2)^{\frac{1}{2}}\delta_0\frac{(1-2|a|-3|a|^2)^{\frac{1}{2}}}{1-|a|}\|h\|\\&\quad -
\|Z\|(1-\varepsilon^2)^{\frac{1}{2}}\frac{2|a|}{1-|a|}\|h\|-
\|Z\|\varepsilon \|h\|\\&=
\Bigl(\frac{(1-\varepsilon^2)^{\frac{1}{2}}}{1-|a|}(\delta_0(1-2|a|-3|a|^2)^{\frac{1}{2}} -2|a|\|Z\|)-\|Z\|\varepsilon\Bigr) \|h\|\\&=
\Bigl(\frac{(1-\varepsilon^2)^{\frac{1}{2}}}{1-|a|}(\delta_0(1-2|a|-3|a|^2)^{\frac{1}{2}} -2|a|\|Z\|)-\|Z\|\varepsilon\Bigr) \|\Theta_1h\|.
\end{align*}
When $0<\delta<\delta_0$ is given, the conclusion of the lemma is fulfilled for sufficiently small $|a|$ and $\varepsilon$.
\end{proof}

\begin{theoremcite}\label{theorembb}\cite{hjelle} Let $u\in L^\infty$, and let $|u|=1$ $m$-a.e. 
on $\mathbb T$. Then for every $\varepsilon>0$ there exist  $\alpha$, $\beta$, $\varphi\in H^\infty$ such that $\alpha$ and $\beta$ are inner,  $\frac{1}{\varphi}\in H^\infty$, $\|\varphi\|_\infty\leq 1+\varepsilon$, 
$\|\frac{1}{\varphi}\|_\infty\leq 1+\varepsilon$, 
and 
\begin{equation*}
u=\frac{\overline\varphi}{\varphi}\alpha\overline\beta. 
\end{equation*}
\end{theoremcite}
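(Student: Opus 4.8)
The plan is to derive this from the Douglas--Rudin approximation theorem --- unimodular functions on $\mathbb T$ are uniformly approximable by quotients of Blaschke products --- by upgrading approximation to an \emph{exact} factorization through an iteration in which the outer factor is kept under control. The two ingredients are the classical Douglas--Rudin theorem together with its quantitative refinement due to \cite{bourgain}, which already isolates an outer factor; the role of \cite{hjelle} is to push the norm of that outer factor down to $1+\varepsilon$.

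\textbf{Reduction to $u$ near $1$.} Given $\varepsilon>0$, fix a small $\varepsilon_1>0$ to be chosen later. By Douglas--Rudin there are inner functions $\alpha_1,\beta_1$ with $\|u-\overline{\beta_1}\alpha_1\|_\infty<\varepsilon_1$; then $u_1:=u\beta_1\overline{\alpha_1}$ is unimodular, $\|u_1-1\|_\infty<\varepsilon_1$, and $u=u_1\alpha_1\overline{\beta_1}$. Since a product of inner functions is inner, it suffices to factor a unimodular $u_1$ with $\|u_1-1\|_\infty$ as small as we please in the form $\tfrac{\overline\varphi}{\varphi}\alpha_0\overline{\beta_0}$ with $\alpha_0,\beta_0$ inner and $\|\varphi\|_\infty,\|1/\varphi\|_\infty\le 1+\varepsilon$, and then rename $\alpha:=\alpha_0\alpha_1$, $\beta:=\beta_0\beta_1$.

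\textbf{The near-$1$ case and the obstruction.} For $\|u_1-1\|_\infty<1$ the principal branch $\log u_1=:i\psi$ is a bounded, purely imaginary function with $\|\psi\|_\infty\le 2\|u_1-1\|_\infty$. The naive attempt $\varphi:=\exp(-P_+\log u_1)$, which would formally give $\overline\varphi/\varphi=u_1$, fails because the Riesz projection $P_+$ is unbounded on $L^\infty$: $P_+\log u_1$ lies only in $\mathrm{BMOA}$, so $\varphi$ need not be in $H^\infty$, let alone invertible with small norm. This is the main obstacle, and it is also why one cannot hope for an exact bounded outer factor but only for the $(1+\varepsilon)$-estimate; the inner factors $\alpha\overline\beta$ are exactly what must absorb the part of $\arg u_1$ that cannot be realized as the (bounded) argument of an invertible outer function.

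\textbf{The iteration.} The remedy is to peel off at each stage only a genuinely bounded outer piece together with an inner correction, arranging geometric decay of the residue. Using the quantitative Douglas--Rudin factorization of \cite{bourgain} one produces, at stage $n$, an outer function $\varphi_n$ with $\|\log|\varphi_n|\|_\infty\le \varepsilon_1 2^{-n}$ and inner functions $\alpha_n^{(0)},\beta_n^{(0)}$ with $\sum_n\bigl(\|\alpha_n^{(0)}-1\|_\infty+\|\beta_n^{(0)}-1\|_\infty\bigr)<\infty$, such that the residuals $u_{n+1}:=u_n\tfrac{\varphi_n}{\overline{\varphi_n}}\,\overline{\alpha_n^{(0)}}\beta_n^{(0)}$ are unimodular with $\|u_{n+1}-1\|_\infty\le\tfrac12\|u_n-1\|_\infty$. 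Then $\varphi:=\prod_n\varphi_n$ converges to an outer function with $\|\varphi\|_\infty,\|1/\varphi\|_\infty\le\exp\bigl(\sum_n\varepsilon_1 2^{-n}\bigr)\le 1+\varepsilon$ once $\varepsilon_1$ is small, the uniformly (hence $\ell^1$-) convergent products $\alpha_0:=\prod_n\alpha_n^{(0)}$ and $\beta_0:=\prod_n\beta_n^{(0)}$ are inner, and letting $n\to\infty$ in the recursion gives $u_1=\tfrac{\overline\varphi}{\varphi}\alpha_0\overline{\beta_0}$; combined with the reduction step this is the asserted factorization of $u$. I expect the single step --- choosing $\varphi_n,\alpha_n^{(0)},\beta_n^{(0)}$ so that the outer factor is bounded above and below by the right constant, the inner corrections are summable, and the residue genuinely contracts, all against the unboundedness of $P_+$ noted above --- to be the delicate part, and this is precisely where the argument of \cite{hjelle}, refining \cite{bourgain}, does its work.
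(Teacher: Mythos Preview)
The paper does not prove this statement: it is quoted as Theorem~\ref{theorembb} from \cite{hjelle} and used as a black box in the proof of Lemma~\ref{lemrr1}. There is therefore no proof in the paper to compare your attempt against.

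As for the sketch itself, the overall architecture --- Douglas--Rudin to reduce to $u$ close to $1$, then an iterative scheme that at each step peels off a bounded outer piece and an inner correction with geometrically decaying residue --- is indeed the strategy of \cite{bourgain} and \cite{hjelle}. But your write-up is an outline rather than a proof: the crucial ``single step'' is not carried out, and you explicitly defer it to \cite{hjelle}. In particular, the assertions that one can choose $\varphi_n$ with $\|\log|\varphi_n|\|_\infty\le\varepsilon_1 2^{-n}$ \emph{and} inner corrections $\alpha_n^{(0)},\beta_n^{(0)}$ with summable $\|\cdot-1\|_\infty$ \emph{and} residue contraction $\|u_{n+1}-1\|_\infty\le\tfrac12\|u_n-1\|_\infty$ simultaneously are exactly the content of the cited papers, not something you have established. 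So what you have written is a correct road map to the literature, not an independent proof.
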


\subsection{Resuts}

In this subsection main results of the paper are proved.

\begin{lemma}\label{lemmain0} Suppose that  $T$ is an expansive operator,  $N=\dim\ker T^*<\infty$, 
and $S_N\prec T$.  
 Set $\mathcal H_1= \vee_{n=0}^\infty T'^n\ker T^*$. 
Then $T'$ is an a.c.  contraction of class $C_{1\cdot}$, and $(T'|_{\mathcal H_1})_+^{(a)}\cong S_N$.
\end{lemma}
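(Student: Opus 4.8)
The plan is to prove the three assertions in order, disposing of the absolute continuity and the class $C_{1\cdot}$ by short arguments and then devoting the main effort to the isometric asymptote of $T'|_{\mathcal H_1}$. For the absolute continuity, note that $S_N$ is an a.c.\ contraction and $S_N\prec T$ by hypothesis; hence Lemma \ref{lemnotuu}(ii), applied with $R=S_N$, gives immediately that $T'$ is an a.c.\ contraction. For the class $C_{1\cdot}$, fix a quasiaffinity $Y\in\mathcal I(S_N,T)$ realizing $S_N\prec T$, so $YS_N=TY$ and $\operatorname{clos}YH^2_N=\mathcal H$. Since $T'^*T=I_{\mathcal H}$, we obtain $T'^*YS_N=T'^*TY=Y$, and taking adjoints $S_N^*Y^*T'=Y^*$. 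Applying Lemma \ref{lemyy} with $A=T'$ and $B=S_N^*$ (both contractions, hence power bounded) to the transformation $Y^*$ yields $\mathcal H_{T',0}\subset\ker Y^*=(\operatorname{clos}YH^2_N)^\perp=\{0\}$. Thus $\mathcal H_{T',0}=\{0\}$, i.e.\ $T'$ is of class $C_{1\cdot}$.

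It remains to identify $(T'|_{\mathcal H_1})_+^{(a)}$. First I would record the orthogonal splitting obtained from Lemma \ref{lemshimorin}(i) with $A=T$ and $B=T'^*$ (legitimate because $T'^*T=I$), namely $\mathcal H=\mathcal R^\infty(T)\oplus\mathcal H_1$, where $\mathcal H_1=\vee_{n\ge0}T'^n\ker T'^*$ is exactly the space in the statement; in particular $\mathcal H_1\in\operatorname{Lat}T'$, and set $T_1=T'|_{\mathcal H_1}$. This $T_1$ is an a.c.\ contraction, and as the restriction of the $C_{1\cdot}$ operator $T'$ to an invariant subspace it is again of class $C_{1\cdot}$; hence the canonical mapping $X_{+,T_1}$ is injective and $V:=(T_1)_+^{(a)}$ is an a.c.\ isometry. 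Applying the first part of Lemma \ref{lemmodel} to the left-invertible contraction $T'$ (for which $\dim\ker T'^*=\dim\ker T^*=N$) produces $Y_0\in\mathcal I(S_N,T')$ with $Y_0\ker S_N^*=\ker T'^*$ and $\operatorname{clos}Y_0H^2_N=\mathcal H_1$; thus $Y_0\in\mathcal I(S_N,T_1)$ has dense range, and $\gamma:=X_{+,T_1}Y_0$ satisfies $\gamma S_N=V\gamma$ with $\operatorname{clos}\gamma H^2_N=(\mathcal H_1)_+^{(a)}$, so that $S_N\buildrel d\over\prec V$ and therefore $\mu_V\le N$. This is precisely the situation in which Theorem \ref{thmmainiso} can conclude $V\cong S_N$, provided one supplies $X_+\in\mathcal L((\mathcal H_1)_+^{(a)},H^2_N)$ with $S_N^*X_+V=X_+$ together with vectors $f_j$ in the asymptote space satisfying $X_+f_j=e_j$.

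Producing this co-asymptote intertwiner is where I expect the real difficulty to lie. The natural source of the $e_j$ is the model quasiaffinity given by the second part of Lemma \ref{lemmodel} applied to $T'$, namely $X_0\in\mathcal I((T')',S_N)=\mathcal I(T,S_N)$ with $X_0\ker T^*=\ker S_N^*$ and $\ker X_0=\mathcal R^\infty(T)=\mathcal H_1^\perp$; it satisfies $S_N^*X_0T=X_0$, and for each $j$ there is $w_j\in\ker T^*\subset\mathcal H_1$ with $X_0w_j=e_j$. Because $X_0$ annihilates $\mathcal H_1^\perp$, the restriction $Z:=X_0|_{\mathcal H_1}$ is a quasiaffinity of $\mathcal H_1$ onto $H^2_N$ intertwining the compression $\widehat T:=P_{\mathcal H_1}T|_{\mathcal H_1}$ with $S_N$, i.e.\ $Z\widehat T=S_NZ$ (equivalently $S_N^*Z\widehat T=Z$) and $Zw_j=e_j$. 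The bridge to $T_1$ is the dual-pair identity $\widehat T^*T_1=T_1^*\widehat T=I_{\mathcal H_1}$, which follows from $T'^*T=I$ together with $\mathcal H_1^\perp\in\operatorname{Lat}T'^*$ and shows in particular that $\widehat T$ is expansive on $\mathcal H_1$ and is a left-inverse partner of $T_1$. The crux is then to convert the $\widehat T$-relation satisfied by $Z$ into the $T_1$-co-asymptote relation $S_N^*Z'T_1=Z'$ for a transformation $Z'$ still realizing each $e_j$ (this is the only step that does not reduce to the already-used lemmas, since $Z$ intertwines $\widehat T$ rather than $T_1=T'|_{\mathcal H_1}$, and the two differ by the factor $(T^*T)^{-1}$); once such $Z'$ is in hand, Lemma \ref{lemyy} factors it through $X_{+,T_1}$ to give $X_+$ with $S_N^*X_+V=X_+$, the vectors $f_j:=X_{+,T_1}w_j$ satisfy $X_+f_j=e_j$, and Theorem \ref{thmmainiso} yields $V\cong S_N$, that is $(T'|_{\mathcal H_1})_+^{(a)}\cong S_N$.
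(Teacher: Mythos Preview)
Your treatment of absolute continuity and of the class $C_{1\cdot}$ is correct and matches the paper's argument (the paper's quasiaffinity ``$Y$'' is your $Y^*$). The gap is in the third part, and you name it yourself: you need a transformation $Z'$ with $S_N^*Z'T_1=Z'$ and $Z'w_j=e_j$, and you do not produce one. The detour through the second half of Lemma~\ref{lemmodel} lands you on the compression $\widehat T=P_{\mathcal H_1}T|_{\mathcal H_1}$ rather than on $T_1=T'|_{\mathcal H_1}$; the relation $Z\widehat T=S_NZ$ does not convert to $S_N^*ZT_1=Z$ because $X_0T'=S_NX_0(T^*T)^{-1}$, and the factor $(T^*T)^{-1}$ cannot be absorbed.

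The fix is that you already have exactly the required $Z'$ from your $C_{1\cdot}$ step. You derived $S_N^*Y^*T'=Y^*$ for the quasiaffinity $Y^*$. Since $\mathcal H_1\in\operatorname{Lat}T'$ and $\ker T^*=\ker T'^*\subset\mathcal H_1$, the restriction $Z':=Y^*|_{\mathcal H_1}$ satisfies $S_N^*Z'T_1=Z'$; moreover $Y^*T^*=S_N^*Y^*$ gives $Y^*\ker T^*\subset\ker S_N^*$, and injectivity of $Y^*$ together with $\dim\ker T^*=N$ forces $Y^*\ker T^*=\ker S_N^*$, so each $e_j$ is attained. Now Lemma~\ref{lemyy} factors $Z'=X_+X_{+,T_1}$ with $S_N^*X_+V=X_+$, and Theorem~\ref{thmmainiso} concludes exactly as you outlined. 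This is the paper's argument, with the cosmetic difference that the paper applies Lemma~\ref{lemyy} and Theorem~\ref{thmmainiso} to the asymptote of the full $T'$ and only afterwards restricts to $\mathcal H_1$ via \cite{ker89}, whereas you restrict first; either order works. The second half of Lemma~\ref{lemmodel} is not needed.
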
  
\begin{proof} By Lemma \ref{lemnotuu}(ii), $T'$ is an a.c. contraction. 
Denote by $Y$  a quasiaffinity such that $YT^*=S_N^*Y$. Then $Y\ker T^*=\ker S_N^*$ and 
$Y=S_N^*YT'$. By Lemma \ref{lemyy}(ii), $T'$ is a contraction of class $C_{1\cdot}$. 

 Set  $V_+=(T')^{(a)}_+$. By {\cite[Sec. IX.1]{nfbk}}, $V_+$ is an a.c. isometry. Let $X_+$ be from Lemma \ref{lemyy}.
Then $Y=X_+X_{+,T'}$ and  $X_+=S_N^*X_+V_+$. Set $\mathcal F=X_{+,T'}\ker T^*$. Then  $\ker S_N^*=Y\ker T^*=X_+\mathcal F$. By Theorem \ref{thmmainiso}, 
\begin{equation*} V_+|_{\vee_{n=0}^\infty V_+^n\mathcal F}\cong S_N.\end{equation*}
By \cite{ker89},  $(T'|_{\mathcal H_1})_+^{(a)}=V_+|_{\operatorname{clos}X_{+,T'}\mathcal H_1}$. Furthermore,  
  \begin{equation*}  
\operatorname{clos}X_{+,T'}\mathcal H_1=\operatorname{clos}X_{+,T'}(\vee_{n=0}^\infty T'^n\ker T^*)=
\vee_{n=0}^\infty V_+^n\mathcal F.
\end{equation*}
Thus, $(T'|_{\mathcal H_1})_+^{(a)}\cong S_N$.  
\end{proof}

\begin{lemma}\label{lemc0} Suppose that  $T$ is an expansive operator,   $N=\dim\ker T^*<\infty$, and $S_N\prec T$.
  Then   $T'$ is a contraction of class $C_{10}$, and $(T')_+^{(a)}\cong S_N$.
\end{lemma}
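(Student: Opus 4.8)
The plan is to deduce this from Lemma \ref{lemmain0} by identifying the ``extra'' part of $T'$ outside $\mathcal H_1 = \vee_{n=0}^\infty T'^n\ker T^*$ and showing it contributes nothing. First I would invoke Lemma \ref{lemmain0} directly: under the stated hypotheses $T'$ is an a.c.\ contraction of class $C_{1\cdot}$ and $(T'|_{\mathcal H_1})_+^{(a)}\cong S_N$. So two things remain: (a) upgrade $C_{1\cdot}$ to $C_{10}$, i.e.\ show $T'$ is of class $C_{\cdot 0}$; and (b) show $(T')_+^{(a)}\cong S_N$, not merely the restriction to $\mathcal H_1$.

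For (a): the natural route is Lemma \ref{lemnotuu}(iv), which says that if $\mathcal H = \vee_{n=0}^\infty T^n\ker T^*$ then $T'$ is of class $C_{\cdot 0}$. So the task reduces to verifying $\mathcal H = \vee_{n=0}^\infty T^n\ker T^*$, equivalently (since $T$ is expansive, hence left-invertible, with $T' = L_T^*$ and $\ker T^* = \ker T'^* = \ker L_T$) to verifying $\mathcal H = \vee_{n=0}^\infty \ker L_T^n$, which by Lemma \ref{lemshimorin}(ii) is equivalent to $\mathcal H = \vee_{n=0}^\infty T^n\ker T^*$ again --- so really I need an independent argument that $\mathcal R^\infty(T) = \{0\}$ and that the pure part generates. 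Here I would use Lemma \ref{lemshimorin}(i) with $A = T$, $B = T'^*$ (so $BA = T'^*T = I$): this gives $\mathcal H = \mathcal R^\infty(T)\oplus\vee_{n=0}^\infty T^n\ker T^*$. Thus it suffices to show $\mathcal R^\infty(T) = \{0\}$. Now $\mathcal R^\infty(T)\in\operatorname{Lat}T$ with $T|_{\mathcal R^\infty(T)}$ invertible, and by Lemma \ref{lemnotuu}(i)/(iii)-type reasoning, combined with the hypothesis $S_N\prec T$, any such invertible-restriction piece would force (via Lemma \ref{lemaabbxx} applied to a quasiaffinity $X\in\mathcal I(S_N,T)$, pulling $\mathcal R^\infty(S_N) = \{0\}$ back, or rather pushing forward) a contradiction with the class $C_{1\cdot}$ property of $T'$ established in Lemma \ref{lemmain0}: indeed if $\mathcal R^\infty(T)\neq\{0\}$ then $T'|_{\mathcal R^\infty(T)}$ is unitary, so $T'\notin C_{1\cdot}$ would fail --- wait, I should be careful: a $C_{1\cdot}$ contraction may have unitary direct summands. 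The cleaner statement is Lemma \ref{lemnotuu}(iii): if $\mathcal R^\infty(T) = \{0\}$ then $T'$ is completely non-unitary. So I instead argue directly that $\mathcal R^\infty(T) = \{0\}$: a quasiaffinity $Y\in\mathcal I(S_N,T)$ exists by hypothesis; since $\mathcal R^\infty(S_N) = \{0\}$, there is no room on the source side, but this doesn't immediately kill $\mathcal R^\infty(T)$. Better: $T'\in C_{1\cdot}$ from Lemma \ref{lemmain0} means $\mathcal H_{T',0} = \{0\}$; but if $\mathcal K := \mathcal R^\infty(T)\neq\{0\}$ then $T'|_{\mathcal K}$ is unitary (as in the proof of Lemma \ref{lemnotuu}(iii)), which is consistent with $C_{1\cdot}$, so I genuinely need the $C_{\cdot 0}$ conclusion to rule it out, not the reverse. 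The honest path: show $\mathcal H = \mathcal H_1$. We have $\mathcal H = \mathcal R^\infty(T)\oplus\mathcal H_1$ by Lemma \ref{lemshimorin}(i), and on $\mathcal R^\infty(T)$ the operator $T' = (T'|_{\mathcal R^\infty(T)})$ is unitary; composing a quasiaffinity $X\in\mathcal I(T,S_N)$ with the inclusion of $\mathcal R^\infty(T)$ gives a map from a unitary into $S_N$, and since $S_N$ has $C_{1\cdot}\cap C_{\cdot 0}$-type asymptotics while... actually the clean kill is: $X|_{\mathcal R^\infty(T)}\in\mathcal I(U, S_N)$ with $U$ unitary has dense-ish range issues --- no. The decisive observation is that $X_{+,T'}$ restricted to $\mathcal R^\infty(T)$ is isometric-modulo-Banach-limit and $T'|_{\mathcal R^\infty(T)}$ unitary would inject into the a.c.\ isometry $V_+ = (T')_+^{(a)}$ a unitary summand; but then since by Lemma \ref{lemmain0} $(T'|_{\mathcal H_1})_+^{(a)}\cong S_N$ is pure (shift), and asymptotes are additive over the direct sum $\mathcal H = \mathcal R^\infty(T)\oplus\mathcal H_1$, we get $V_+\cong U\oplus S_N$ with $U = T'|_{\mathcal R^\infty(T)}$ unitary --- this is fine as an isometry. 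So the unitary summand must be shown absent by a separate argument, namely Lemma \ref{lemnotuu}(ii) gave a.c., and then one shows a.c.\ unitary summand of $T'$ is impossible here because $S_N\prec T = T''$ would descend... I expect \textbf{this is the main obstacle}: pinning down that $\mathcal R^\infty(T) = \{0\}$, i.e.\ that the expansive operator $T$ with $S_N\prec T$ is pure. I anticipate it follows by applying Lemma \ref{lemaabbxx} to a quasiaffinity $X\in\mathcal I(T,S_N)$: $X\mathcal R^\infty(T)\subset\mathcal R^\infty(S_N) = \{0\}$, hence $\mathcal R^\infty(T)\subset\ker X = \{0\}$. That's the clean one-line kill --- I simply need $X$ to have trivial kernel, which it does since it is a quasiaffinity.

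So the structure is: (1) By hypothesis $S_N\prec T$, fix a quasiaffinity $X\in\mathcal I(T,S_N)$; by Lemma \ref{lemaabbxx}, $X\mathcal R^\infty(T)\subset\mathcal R^\infty(S_N) = \{0\}$, and $\ker X = \{0\}$, so $\mathcal R^\infty(T) = \{0\}$. (2) By Lemma \ref{lemshimorin}(i) (with $A = T$, $B = T'^*$, noting $T'^*T = I$), $\mathcal H = \mathcal R^\infty(T)\oplus\vee_{n=0}^\infty T^n\ker T^* = \vee_{n=0}^\infty T^n\ker T^*$, i.e.\ $\mathcal H = \mathcal H_1$ in the notation of Lemma \ref{lemmain0} once one checks $\vee_{n=0}^\infty T'^n\ker T^* = \vee_{n=0}^\infty T^n\ker T^*$ --- or just apply Lemma \ref{lemshimorin}(ii) to get $\mathcal H = \vee_{n=0}^\infty\ker L_T^n = \vee_{n=0}^\infty T'^n\ker T^*$ since $T' = L_T^*$ and $\ker L_T = \ker T^*$. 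Either way $\mathcal H = \mathcal H_1$. (3) Apply Lemma \ref{lemnotuu}(iv): since $\mathcal H = \vee_{n=0}^\infty T^n\ker T^*$, $T'$ is a contraction of class $C_{\cdot 0}$; combined with the $C_{1\cdot}$ from Lemma \ref{lemmain0}, $T'\in C_{10}$. (4) Since $\mathcal H = \mathcal H_1$, Lemma \ref{lemmain0}'s conclusion $(T'|_{\mathcal H_1})_+^{(a)}\cong S_N$ reads $(T')_+^{(a)}\cong S_N$. This completes the proof.

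One subtlety to watch in step (2): I must make sure the two spans $\vee_{n}T^n\ker T^*$ and $\vee_n T'^n\ker T^*$ really are being matched correctly --- Lemma \ref{lemshimorin}(i) produces the former, while Lemma \ref{lemmain0} is stated with the latter; the bridge is Lemma \ref{lemshimorin}(ii), which converts ``$\mathcal H = \vee_n A^n\ker A^*$'' into ``$\mathcal H = \vee_n\ker L_A^n$'', and $\ker L_T^n = \ker (T'^*)^n$... hmm, that gives the span of $\ker T'^{*n}$, not $T'^n\ker T^*$. The correct reading: Lemma \ref{lemshimorin}(ii) with $A = T$ says $\mathcal H = \vee_n T^n\ker T^*$ implies $\mathcal H = \vee_n\ker L_T^n$; dualizing and using $T' = L_T^*$ this is $\mathcal H = \vee_n\ker T'^{*n}$, and a standard argument (as in \cite{shimorin}) identifies $\vee_n\ker T'^{*n}$ with $\vee_n T'^n\ker T'^* = \vee_n T'^n\ker T^*$ because $T'$ is left-invertible. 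This chain is routine but I would spell it out with one sentence of justification. Everything else is a direct citation of the lemmas already in hand, so the proof is short.
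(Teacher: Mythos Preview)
Your argument has a genuine gap at step (1), and it is fatal. You write ``By hypothesis $S_N\prec T$, fix a quasiaffinity $X\in\mathcal I(T,S_N)$,'' but the relation $S_N\prec T$ means there is a quasiaffinity in $\mathcal I(S_N,T)$, i.e.\ a quasiaffinity $Y$ with $YS_N=TY$ going \emph{from} $H^2_N$ \emph{to} $\mathcal H$. It does \emph{not} give you a quasiaffinity in $\mathcal I(T,S_N)$. (You yourself noted earlier in the proposal that the quasiaffinity $Y\in\mathcal I(S_N,T)$ ``doesn't immediately kill $\mathcal R^\infty(T)$,'' and that observation was correct.) Without an injective $X\in\mathcal I(T,S_N)$, Lemma~\ref{lemaabbxx} cannot be used to conclude $\mathcal R^\infty(T)=\{0\}$; Lemma~\ref{lemmodel} applied to $R=T'$ does produce some $X\in\mathcal I(T,S_N)$, but with $\ker X=\mathcal R^\infty(T)$, which is exactly what you are trying to show is zero.

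In fact the equality $\mathcal H=\mathcal H_1$ (equivalently $\mathcal R^\infty(T)=\{0\}$) is established only later, in Theorem~\ref{thmmain0}, and that theorem \emph{uses} Lemma~\ref{lemc0} as an input. So your proposed route is circular. The paper's proof of Lemma~\ref{lemc0} does not attempt to show $\mathcal H=\mathcal H_1$ at all. Instead it writes $T'$ in upper-triangular form with respect to $\mathcal H=\mathcal H_1\oplus\mathcal H_0$, calls the compression to $\mathcal H_0$ by $T_0$, and shows $T_0$ is a $C_0$-contraction. This is the real work: one shows that every $Z_0\in\mathcal I(S,T_0)$ has nontrivial kernel by lifting it (via \cite{takshiftindex}) to an intertwiner $Z\colon H^2_{N+1}\to\mathcal H$ for $T'$, composing with the given quasiaffinity $Y^*\in\mathcal I(T',S_N)$, and then invoking Lemma~\ref{lemma1} to see that $YZ$ factors through a finite-rank perturbation of $S_{N+1}$; the resulting chain $S_{N+1}\buildrel i\over\prec S_N$ would be a contradiction unless $\ker Z\neq\{0\}$. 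Once $T_0$ is $C_0$, Lemma~\ref{lemasymp1} combined with Lemma~\ref{lemmain0} gives $(T')_+^{(a)}\cong S_N$, and the class $C_{\cdot 0}$ then follows because $T'\in C_{1\cdot}$ implies $T'\prec (T')_+^{(a)}\cong S_N$.
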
  

\begin{proof} Denote by $\mathcal H$ the space on which $T$ acts.  Set $\mathcal H_1= \vee_{n=0}^\infty T'^n\ker T^*$,  $T_1=T'|_{\mathcal H_1}$, 
$\mathcal H_0=\mathcal H\ominus\mathcal H_1$. Then 
\begin{equation*} T'=\left[\begin{matrix}T_1 & T_2 \\ \mathbb O & T_0\end{matrix}\right]
\end{equation*}
with respect to the decomposition $\mathcal H=\mathcal H_1\oplus\mathcal H_0$. Note that $\ker T'^*=\ker T_1^*$. 
We will to prove that $T_0$ is a $C_0$-contraction. 

Let $Z_0\in\mathcal  I(S, T_0)$.  
By {\cite[Lemma 1]{takshiftindex}}, there exists $Z_2\in\mathcal  L(H^2,\mathcal H_1)$ 
such that  \begin{equation*} \left[\begin{matrix}I_{\mathcal H_1} & Z_2 \\ \mathbb O & Z_0\end{matrix}\right]
\left[\begin{matrix}T_1 & \mathbb O \\ \mathbb O & S\end{matrix}\right] = 
\left[\begin{matrix}T_1 & T_2 \\ \mathbb O & T_0\end{matrix}\right]
\left[\begin{matrix}I_{\mathcal H_1} & Z_2 \\ \mathbb O & Z_0\end{matrix}\right].
\end{equation*}
Let $Z_1\in\mathcal  I(S_N, T_1)$ be from Lemma \ref{lemmodel} applied to $T_1$. Then $Z_1\ker S_N^*=\ker T_1^*=\ker T^*$. 
 Since $\operatorname{clos}Z_1H^2_N=\mathcal H_1$, 
 Lemma \ref{lemfred} implies that  
\begin{equation}\label{kerzz1} \ker Z_1=\{0\}.
\end{equation}  
It is easy to see that  \begin{equation*} \left[\begin{matrix}Z_1 & Z_2 \\ \mathbb O & Z_0\end{matrix}\right]
\left[\begin{matrix}S_N & \mathbb O \\ \mathbb O & S\end{matrix}\right] = 
\left[\begin{matrix}T_1 & T_2 \\ \mathbb O & T_0\end{matrix}\right]
\left[\begin{matrix}Z_1 & Z_2 \\ \mathbb O & Z_0\end{matrix}\right].\end{equation*}
Let $Y$ be a quasiaffinity such that $Y^*S_N=TY^*$.
 Since $\dim\ker T^*=N$, we have $Y\ker T^*=\ker S_N^*$. 

Set \begin{equation*} Z=\left[\begin{matrix} Z_1 & Z_2 \\ \mathbb O & Z_0\end{matrix}\right] 
\end{equation*}
and $Z_+=YZ$. 
We have $S_N^*Z_+S_{N+1}=Z_+$. 
Since $Z_1\ker S_N^*=\ker T^*$, we have $Z_+(\ker S_N^*\oplus\{0\})=\ker S_N^*$. 
Therefore, 
\begin{align*}S_NZ_+=S_NS^*_NZ_+S_{N+1}=(I_{H^2_N}-P_{\ker S_N^*})Z_+S_{N+1}\\=
(I_{H^2_N}-P_{Z_+(\ker S_N^*\oplus\{0\})})Z_+S_{N+1}=Z_+\Bigl(S_{N+1}+\sum_{k=1}^N e_k\otimes f_k\Bigr)
\end{align*}
for some $\{f_k\}_{k=1}^N\subset H^2_N$. 
Set $A=S_{N+1}+\sum_{k=1}^N e_k\otimes f_k$. 
By Lemma \ref{lemma1}, $S_{N+1}\buildrel i\over\prec A$. If $\ker Z_+=\{0\}$, then 
$S_{N+1}\buildrel i\over\prec A\buildrel i\over\prec S_N$, a contradiction. Thus,  $\ker Z_+\neq\{0\}$. Consequently, 
$\ker Z\neq\{0\}$.  
 From this relation, \eqref{kerzz1} and the definition of $Z$ we conclude that $\ker Z_0\neq\{0\}$. 
By {\cite[Introduction]{takshiftindex}},  $T_0$ is a $C_0$-contraction. 

By Lemmas \ref{lemasymp1} and \ref{lemmain0},  $(T')_+^{(a)}\cong S_N$. Since $T'$ is of class $C_{1\cdot}$, we have $T'\prec S_N$.  Therefore, 
$T'$ is of class $C_{10}$.  
\end{proof}

\begin{theorem}\label{thmmain0} Suppose that  $T$ is an expansive operator,   $N=\dim\ker T^*<\infty$, and
 $S_N\buildrel d\over\prec T$.
 Then  $I-T^*T\in\frak S_1$, $T\sim S_N$, and $T'\sim S_N$.  
\end{theorem}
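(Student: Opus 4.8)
The plan is to replace the hypothesis $S_N\buildrel d\over\prec T$ by the genuine relation $S_N\prec T$, then read off the structure of $T'$ from Lemma~\ref{lemc0}, then establish the crucial fact that $\mathcal R^\infty(T)=\{0\}$, and finally deduce the three assertions. The main obstacle will be the vanishing of $\mathcal R^\infty(T)$.

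First I would show $S_N\prec T$. By the last corollary of Section~2 there is $M\le N$ with $S_M\prec T$, realized by a quasiaffinity $Z$; then $Z^*$ is a quasiaffinity and $Z^*T^*=S_M^*Z^*$, so $Z^*$ maps $\ker T^*$ injectively into $\ker S_M^*$, whence $N=\dim\ker T^*\le\dim\ker S_M^*=M$ and so $M=N$. Fix a quasiaffinity $W\in\mathcal I(S_N,T)$ with $WS_N=TW$. Next, Lemma~\ref{lemc0} gives that $T'$ is a contraction of class $C_{10}$ with $(T')^{(a)}_+\cong S_N$; since $T'$ is of class $C_{1\cdot}$ one has $\|X_{+,T'}x\|>0$ for $x\ne0$, so $X_{+,T'}$ is a quasiaffinity and $T'\prec(T')^{(a)}_+\cong S_N$.

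The crucial step is $\mathcal R^\infty(T)=\{0\}$. By Lemma~\ref{lemshimorin}(i) applied to $A=T$ and $B=L_T=T'^*$ one has $\mathcal H=\mathcal R^\infty(T)\oplus\mathcal H_1$ with $\mathcal H_1=\vee_{n\ge0}T'^n\ker T^*$, and, exactly as in the proof of Lemma~\ref{lemc0}, with respect to this decomposition $T'=\left[\begin{smallmatrix}T_1&T_2\\ \mathbb O&T_0\end{smallmatrix}\right]$ with $T_0$ a $C_0$-contraction; moreover $\mathcal R^\infty(T)\in\operatorname{Lat}T$, $T|_{\mathcal R^\infty(T)}$ is invertible, and the identity $T'^*T=I_{\mathcal H}$ forces $T_0^*=(T|_{\mathcal R^\infty(T)})^{-1}$, so $T_0$ is invertible and $T_0,T_0^*$ are of class $C_{00}$. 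Writing also $T=\left[\begin{smallmatrix}\widetilde T_1&\mathbb O\\ B&\widetilde T_0\end{smallmatrix}\right]$ and $W=\left[\begin{smallmatrix}W_1\\ W_0\end{smallmatrix}\right]$ with respect to $\mathcal H_1\oplus\mathcal R^\infty(T)$, where $\widetilde T_0=T|_{\mathcal R^\infty(T)}$, the equality $WS_N=TW$ gives $W_0S_N=BW_1+\widetilde T_0W_0$, that is, $T_0^*W_0S_N-W_0=T_0^*BW_1$; iterating and using $\|T_0^*\|\le1$ together with $T_0^{*n}\to\mathbb O$ strongly (since $T_0\in C_{0\cdot}$), the aim is to conclude $W_0=\mathbb O$. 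When $B=\mathbb O$ this is immediate, since then $W_0^*=S_N^*W_0^*T_0$ and Lemma~\ref{lemyy}(i) (with $A=T_0$ of class $C_{0\cdot}$) gives $W_0=\mathbb O$; treating general $B$ is the delicate point. As $\overline{WH^2_N}=\mathcal H$ we have $\overline{W_0H^2_N}=\mathcal R^\infty(T)$, so $W_0=\mathbb O$ forces $\mathcal R^\infty(T)=\{0\}$.

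Finally, from $\mathcal R^\infty(T)=\{0\}$ and $\dim\ker T^*=N<\infty$, Corollary~\ref{cormodel} yields a quasiaffinity in $\mathcal I(T,S_N)$, i.e.\ $T\prec S_N$; together with $S_N\prec T$ and Corollary~\ref{colssquasi} (with the contraction taken there to be $S_N$) this gives $T\sim S_N$. Since now $\mathcal H=\mathcal H_1=\vee_{n\ge0}T'^n\ker T'^*$, Lemma~\ref{lemmodel} applied to the contraction $T'$ produces $Y\in\mathcal I(S_N,T')$ with dense range, and $\ker Y=\{0\}$ by Lemma~\ref{lemfred}, so $S_N\prec T'$ and hence $T'\sim S_N$. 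For the trace-class statement, $L_TT=I_{\mathcal H}$ with $\dim\ker T^*=N<\infty$, so by Lemma~\ref{lemdefect} it suffices to show $I_{\mathcal H}-T'T'^*\in\frak S_1$; this I would derive from the fact that $T'$ is a $C_{10}$-contraction quasisimilar to $S_N$ with $\dim\ker T'^*=N$ and $\mathcal H=\vee_{n\ge0}T'^n\ker T'^*$, using $(T')^{(a)}_+\cong S_N$ and the identity $I-X_{+,T'}^*X_{+,T'}=\sum_{k\ge0}T'^{*k}(I-T'^*T')T'^k$ (and its analogue for $I-T'T'^*$) to bound the defect — a separate technical point.
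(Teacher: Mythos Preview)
Your overall strategy---upgrade to $S_N\prec T$, apply Lemma~\ref{lemc0}, then prove $\mathcal R^\infty(T)=\{0\}$ and read off the conclusions---is the right shape, and the reductions at the beginning and end are essentially correct. But the heart of the argument, the step $\mathcal R^\infty(T)=\{0\}$, has a genuine gap that you yourself flag.

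With the notation you set up, the relation $W_0=T_0^*W_0S_N-T_0^*BW_1$ iterated gives
\[
W_0=T_0^{*n}W_0S_N^n-\sum_{k=1}^{n}T_0^{*k}BW_1S_N^{k-1}.
\]
The first term does not tend to $0$: although $T_0^{*n}\to\mathbb O$ strongly, the vector $W_0S_N^n h$ moves with $n$, and $\|T_0^{*n}\|$ need not decay (indeed $T_0$ is invertible, so $\|T_0^{*n}\|\ge\|T_0^{-n}\|^{-1}$, which is typically bounded away from $0$). Nor does the partial sum converge in general, for the same reason. Your special case $B=\mathbb O$ goes through precisely because then $W_0^*=S_N^*W_0^*T_0$ and Lemma~\ref{lemyy}(i) applies with a \emph{fixed} $C_{0\cdot}$ operator on the right; with $B\neq\mathbb O$ there is no such clean intertwining, and I do not see how to rescue the iteration.

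The paper takes a different route to $\mathcal H_1=\mathcal H$ (equivalently $\mathcal R^\infty(T)=\{0\}$), working entirely on the $T'$ side through the canonical quasiaffinity $X_{+,T'}\in\mathcal I(T',S_N)$. One represents the intertwiners as Toeplitz operators with matrix symbols, uses the inner--outer and $*$-inner--$*$-outer factorizations of the $H^\infty$ matrix $\Phi$ arising from $X_{+,T'}JZ$ (with $Z$ from Lemma~\ref{lemmodel}), and shows via a short function-theoretic argument that $\operatorname{clos}X_{+,T'}\mathcal H_1=H^2_N$. The conclusion $\mathcal H_1=\mathcal H$ then follows because, for a $C_{1\cdot}$-contraction with $(T')_+^{(a)}\cong S_N$, the map $\mathcal M\mapsto\operatorname{clos}X_{+,T'}\mathcal M$ is a lattice isomorphism between $\operatorname{Lat}T'$ and $\operatorname{Lat}S_N$ (this is \cite{gamal02}). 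None of this structure is visible from your iteration, and it is genuinely needed.

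A secondary point: your proposed derivation of $I-T'T'^*\in\frak S_1$ from the identity $I-X_{+,T'}^*X_{+,T'}=\sum_{k\ge0}T'^{*k}(I-T'^*T')T'^k$ does not give trace class directly; that identity only says the defect is summable in a weak sense. The paper simply invokes \cite{gamal}, which proves that a contraction which is a quasiaffine transform of $S_N$ (with the appropriate finiteness) has trace-class defect; once $T'\sim S_N$ is established, this citation closes the argument, and Lemma~\ref{lemdefect} transfers it to $T$.
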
  

\begin{proof} 
By Lemma \ref{lemfred}, there exists a quasiaffinity $Y$
such that  $YT^*=S_N^*Y$. Consequently, $Y\ker T^*=\ker S_N^*$.  
Furthermore, $Y=S_N^*YT'$.  By Lemma \ref{lemc0}, $(T')_+^{(a)}\cong S_N$. By Lemma \ref{lemyy}, there exists 
$X_+\in\mathcal L(H^2_N)$ such that  $Y=X_+X_{+,T'}$ and $S_N^*X_+S_N=X_+$. By Lemma \ref{lemcc},  there exists 
an $N\times N$ matrix $\Psi$ whose elements are functions from $L^\infty$ such that $X_+=T_\Psi$. 

Set $\mathcal H_1= \vee_{n=0}^\infty T'^n\ker T^*$ and  $T_1=T'|_{\mathcal H_1}$. Note that $\ker T^*=\ker T_1^*$.
Let $Z\in \mathcal I(S_N, T_1)$ be from Lemma \ref{lemmodel} applied to $T_1$.
Then $Z\ker S_N^*=\ker T^*$. By Lemma \ref{lemfred}, $Z$ is a quasiaffinity. 

Denote by $\mathcal H$ the space on which $T$ acts, and by $J$ the natural imbedding of 
 $\mathcal H_1$ into $\mathcal H$. 
Since $X_{+,T'}JZS_N=S_NX_{+,T'}JZ$, there exists an $N\times N$ matrix $\Phi$ whose elements are functions from $H^\infty$ such that 
$X_{+,T'}JZ=T_\Phi$. Let $\Phi=\Theta_0\Phi_0=\Phi_1\Theta_1$ be the canonical and $*$-canonical factorizations of 
operator-valued function 
$\Phi$ {\cite[Sec. V.4.3]{nfbk}}. Namely, $\Theta_0$ is inner, $\Phi_0$ is outer, $\Phi_1$ is $*$-outer, and $\Theta_1$ is $*$-inner. We have 
\begin{equation}\label{closhh1}\operatorname{clos}X_{+,T'}\mathcal H_1=\operatorname{clos}X_{+,T'}JZ H^2_N=\operatorname{clos}\Phi H^2_N=\Theta_0 H^2_M
\end{equation}
for some $M\leq N$. By Lemma \ref{lemmain0}, $(T'|_{\mathcal H_1})_+^{(a)}\cong S_N$. Consequently, $M=N$. By {\cite[Secs. V.6.1, V.6.2]{nfbk}}, 
$\Theta_0$ and $\Theta_1$ are inner from both sides, $\Phi_0$ and $\Phi_1$ are outer from both sides, and $\varphi:=\det \Phi_0=\det\Phi_1$ 
is outer. Clearly, $\varphi\in H^\infty$. 
Furthermore, $\Phi$ is outer if and only if both $\Theta_0$ and $\Theta_1$ are unitary constant functions. 
Assume that $\Theta_1$ is a non-constant inner function. Set 
\begin{equation*}\mathcal K_{\Theta_1}=H^2_N\ominus\Theta_1 H^2_N=\Theta_1 (H^2_-)_N\cap H^2_N.
\end{equation*}

 The equalities $YJZ\ker S_N^*=\ker S_N^*$ and $YJZ=T_{\Psi\Phi}$ imply that 
$\Psi\Phi=\overline G$, where $G$ is an $N\times N$ matrix whose elements are functions from $H^\infty$. 
Consequently,
\begin{equation} \label{kerxxplus} \Phi_1\mathcal K_{\Theta_1}\subset\Phi(H^2_-)_N\cap H^2_N\subset\ker X_+.
\end{equation}
Furthermore, 
 \begin{equation} \label{phi1} X_{+,T'}\mathcal H\cap\Phi_1 H^2_N\subset\Phi H^2_N.
\end{equation} 
Indeed, let $x\in\mathcal H$ be such that $X_{+,T'}x\in\Phi_1 H^2_N$. Then there exist $h\in H^2_N$ and 
 $f\in\mathcal K_{\Theta_1}$ such that  $X_{+,T'}x=\Phi_1\Theta_1 h+\Phi_1 f= \Phi h+\Phi_1 f$.  
Since $\Phi h=X_{+,T'}JZh$, we have   $\Phi_1 f=X_{+,T'}(x-JZh) $.  By \eqref{kerxxplus}, $Y(x-JZh)=0$. Since $\ker Y=\{0\}$, we have 
$\Phi_1 f\equiv 0$. Since $\Phi_1$ is an outer $N\times N$ matrix-valued function, we conclude that $f\equiv 0$. The inclusion \eqref{phi1} is proved.

Let $\Phi_1^{\mathrm{Ad}}$ be the (algebraic) adjoint of $\Phi_1$. Then $\Phi_1\Phi_1^{\mathrm{Ad}}=\varphi I_{N\times N}$. 
Consequently, 
\begin{equation*} \Phi_1\Phi_1^{\mathrm{Ad}}X_{+,T'}\mathcal H=\varphi(S_N)X_{+,T'}\mathcal H
=X_{+,T'}\varphi(T')\mathcal H\subset 
X_{+,T'}\mathcal H\cap\Phi_1 H^2_N\subset\Phi H^2_N
\end{equation*} 
by \eqref{phi1}. Since $\varphi$ is outer, we have 
\begin{equation*}H^2_N=\operatorname{clos}\varphi(S_N)H^2_N=
\operatorname{clos}\varphi(S_N)X_{+,T'}\mathcal H\subset\operatorname{clos}\Phi H^2_N.\end{equation*}
The latest inclusion and  \eqref{closhh1} imply that $\operatorname{clos} X_{+,T'}\mathcal H_1 =H^2_N$.  Since the mapping 
$\mathcal M\mapsto\operatorname{clos} X_{+,T'}\mathcal M$ ($\mathcal M\in\operatorname{Lat}T'$) 
 is a lattice-isomorphism between $\operatorname{Lat}T'$ and   $\operatorname{Lat}S_N$ \cite{gamal02},
 we conclude that $\mathcal H_1=\mathcal H$. Thus, $T'=T_1$ and the relation $T'\sim S_N$ is proved. 
Furthermore, by Lemma \ref{lemshimorin}(i),  $\mathcal R^\infty(T)=\{0\}$. By Corollary \ref{cormodel}, $T\sim S_N$. 

By \cite{gamal}, 
$I-T'^*T'\in\frak S_1$. By Lemma \ref{lemdefect},  $I-T^*T\in\frak S_1$. 
\end{proof}

\begin{lemma}\label{lemrr1}
Suppose that  $T$
 is  an expansive operator,   $\dim\ker T^*=1$, and $S\buildrel d\over\prec T$.  
Then there exists a quasiaffinity $Z_1\in \mathcal I(S,T)$ such that
  $\|Z_1\|=1$ and for every $0<\delta<1$ there exists an inner function $\vartheta$ 
such that  $\|Z_1\vartheta h\|\geq\delta \|h\|$ for every $h\in H^2$.
\end{lemma}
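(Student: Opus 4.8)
The plan is to use Theorem \ref{thmmain0} to pass to the Cauchy dual $T'$, build a distinguished norm-one intertwiner from the functional model of $T'$, and then feed the unimodular symbols that appear into Theorem \ref{theorembb}.

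First I would record the structural input. By Theorem \ref{thmmain0} (with $N=1$) we get $T\sim S$, $I-T^*T\in\frak S_1$ and $T'\sim S$; since $\mathcal R^\infty(T)=\{0\}$, Lemma \ref{lemnotuu}(iii) shows $T'$ is a completely non-unitary contraction, and by Lemma \ref{lemc0} it is of class $C_{10}$ with $(T')_+^{(a)}\cong S$. If $T\cong S$ the statement is trivial (take $Z_1$ a unitary identification of $H^2$ with $\mathcal H$ and $\vartheta$ arbitrary), so I assume $T\not\cong S$, equivalently $T^*T\ne I$. Realizing $T'$ by its minimal isometric dilation as in the proof of Lemma \ref{lemmodel} gives a concrete model in which one can exhibit a quasiaffinity $Z_1\in\mathcal I(S,T)$ with $\|Z_1\|=1$, $\ker(I-Z_1Z_1^*)=\{0\}$, and — the decisive property — an identity
\[ I-Z_1^*Z_1=\sum_j\lambda_j\,H_{u_j}^*H_{u_j}, \]
where $T^*T-I=\sum_j\lambda_j\,\xi_j\otimes\xi_j$ is the trace-class spectral decomposition of the positive operator $T^*T-I$ and each $u_j\in L^\infty$ is unimodular (the condition $\ker(I-Z_1Z_1^*)=\{0\}$ makes $\|Z_1^*\xi_j\|<1$, which is what lets $\xi_j$ correspond to an admissible symbol). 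This identity is what forces the right asymptotic behaviour of $Z_1$: since $\langle S^{*m}(I-Z_1^*Z_1)S^mh,h\rangle=\sum_j\lambda_j\|P_-(u_j\chi^mh)\|^2\to0$ by dominated convergence, we have $\lim_n\|T^nZ_1h\|=\|h\|$ for all $h$, and a generic quasiaffinity would fail this.

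Granting such a $Z_1$, the key estimate is immediate: for every inner $\vartheta$ and $h\in H^2$,
\[ \|Z_1\vartheta h\|^2=\|h\|^2-\langle(I-Z_1^*Z_1)\vartheta h,\vartheta h\rangle=\|h\|^2-\sum_j\lambda_j\|H_{u_j}(\vartheta h)\|^2\ge\Bigl(1-\sum_j\lambda_j\|H_{u_j\vartheta}\|^2\Bigr)\|h\|^2, \]
where I used $H_{u_j}(\vartheta h)=P_-(u_j\vartheta h)=H_{u_j\vartheta}h$, so that $\|H_{u_j}|_{\vartheta H^2}\|=\|H_{u_j\vartheta}\|=\operatorname{dist}(u_j\vartheta,H^\infty)\le1$. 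Now fix $0<\delta<1$ and choose $J$ with $\sum_{j>J}\lambda_j<(1-\delta^2)/2$ (possible as $\sum_j\lambda_j=\|T^*T-I\|_{\frak S_1}<\infty$), so the tail of the sum is $\le(1-\delta^2)/2$. For $j=1,\dots,J$ apply Theorem \ref{theorembb} with a small $\varepsilon$, writing $u_j=\frac{\overline{\varphi_j}}{\varphi_j}\alpha_j\overline{\beta_j}$ with $\alpha_j,\beta_j$ inner and $\|\varphi_j\|_\infty,\|1/\varphi_j\|_\infty\le1+\varepsilon$, and put $\vartheta=\beta_1\cdots\beta_J$. Then $u_j\vartheta=\frac{\overline{\varphi_j}}{\varphi_j}\alpha_j\prod_{k\ne j}\beta_k=|\varphi_j|^2\cdot\bigl(\alpha_j\varphi_j^{-2}\prod_{k\ne j}\beta_k\bigr)$ with the bracketed factor in $H^\infty$, so $\operatorname{dist}(u_j\vartheta,H^\infty)\le\bigl\|\,|\varphi_j|^2-1\bigr\|_\infty\|1/\varphi_j\|_\infty^2\le(2\varepsilon+\varepsilon^2)(1+\varepsilon)^2$. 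Choosing $\varepsilon$ small (depending on $\delta$, $J$, $\{\lambda_j\}$) makes $\sum_{j\le J}\lambda_j\|H_{u_j\vartheta}\|^2<(1-\delta^2)/2$, hence $\sum_j\lambda_j\|H_{u_j\vartheta}\|^2<1-\delta^2$, and the displayed inequality gives $\|Z_1\vartheta h\|\ge\delta\|h\|$ for all $h$. Together with $Z_1\in\mathcal I(S,T)$ being a quasiaffinity of norm $1$, this is exactly the assertion.

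I expect the main obstacle to be the middle step: producing, from the model of $T'$, the specific norm-one quasiaffinity $Z_1$ realizing $I-Z_1^*Z_1=\sum_j\lambda_jH_{u_j}^*H_{u_j}$ with unimodular $u_j$ and with $\ker(I-Z_1Z_1^*)=\{0\}$. This cannot be read off from the abstract relation $T\sim S$ alone; it requires analyzing $T$ (via the dilation of $T'$ realized on a vector-valued $H^2$) through Toeplitz- and Hankel-operator computations of the kind used in Lemmas \ref{lemma1} and \ref{lem27}, together with the factorization machinery behind Theorem \ref{theorembb}, and a separate verification that the resulting $\|Z_1\|$ equals $1$.
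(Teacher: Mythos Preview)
Your reduction to a Hankel-type estimate and your use of Theorem~\ref{theorembb} on the tail are clean, but the proposal rests entirely on the unproved identity
\[
I-Z_1^*Z_1=\sum_j\lambda_j\,H_{u_j}^*H_{u_j}
\]
with \emph{unimodular} $u_j$, and you yourself flag this as the main obstacle. It is a genuine gap: nothing in Lemma~\ref{lemmodel} or the minimal isometric dilation of $T'$ hands you such a formula. From $Z_1S=TZ_1$ one only gets $S^*(I-Z_1^*Z_1)S-(I-Z_1^*Z_1)=-\sum_j\lambda_j(Z_1^*\xi_j)\otimes(Z_1^*\xi_j)$, which is far from a decomposition of $I-Z_1^*Z_1$ into Hankel defects, and there is no mechanism in sight that would force the symbols to be unimodular (that is precisely the non-trivial content of Bourgain--Hjelle type results, not something one gets for free from $\|Z_1^*\xi_j\|<1$). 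So as written the argument does not close.

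The paper avoids this difficulty by never attempting to analyse $I-Z_1^*Z_1$. Instead it constructs $Z_1$ explicitly as a product
\[
Z_1=X_{+,T'}^*\,T_{\varphi_0/\overline{\varphi_0}},
\]
where $X_{+,T'}$ is the canonical intertwining map for the isometric asymptote of $T'$ (so $\|X_{+,T'}\|\le 1$, whence $\|Z_1\|\le 1$) and $\varphi_0\in H^\infty$ is a single outer function read off from the cyclic vector $x_0\in\ker T^*$. Only one unimodular function, $u=\varphi_0/\overline{\varphi_0}$, enters, and Theorem~\ref{theorembb} is applied to it once. The lower bound on $Z_1$ is then obtained not from a defect formula but from K\'erchy's shift-type invariant subspace theorem \cite{ker07}: for each $\delta_1<1$ there is $\mathcal M\in\operatorname{Lat}T'$ with $X_{+,T'}\mathcal M=\vartheta_0H^2$ and $\|X_{+,T'}x\|\ge\delta_1\|x\|$ on $\mathcal M$, which gives $\|X_{+,T'}^*h\|\ge\delta_1\|P_{\vartheta_0H^2}h\|$. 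Combining this with the factorization $u=\frac{\overline\varphi}{\varphi}\alpha\overline\beta$ from Theorem~\ref{theorembb} and setting $\vartheta=\vartheta_0\beta$ yields the estimate $\|Z_1\vartheta h\|\ge\delta\|h\|$ directly. The missing ingredient in your outline is exactly this use of \cite{ker07}; it replaces the unavailable Hankel identity and is what makes the construction go through.
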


\begin{proof} We repeat the part of the proof of  Theorem \ref{thmmain0}. 

By Lemma \ref{lemfred}, there exists a quasiaffinity $Y$
such that  $YT^*=S^*Y$. Consequently, there exists  $x_0\in\ker T^*$ such that   $Yx_0= \mathbf{1}$. 
Furthermore, $Y=S^*YT'$.  By Lemma \ref{lemc0}, $(T')_+^{(a)}\cong S$. By Lemma \ref{lemyy}, there exists 
$X_+\in\mathcal L(H^2)$ such that  $Y=X_+X_{+,T'}$ and $S^*X_+S=X_+$. By Lemma \ref{lemcc},  there exists $\psi\in L^\infty$ such that $X_+=T_\psi$. 

Denote by $\mathcal H$ the space on which $T$ acts. By Lemma \ref{lemshimorin}(i) and Theorem \ref{thmmain0}, 
\begin{equation}\label{ttx0} \mathcal H= \vee_{n=0}^\infty T'^nx_0.\end{equation}  
Let $Z\in \mathcal I(S, T')$ be from Lemma \ref{lemmodel} applied to $T'$. 
Multiplying $Z$ by an appropriate constant, we may assume that     $Z\mathbf{1}=x_0$. 
By Lemma \ref{lemfred}, $Z$ is a quasiaffinity. 

Set $\varphi_0=X_{+,T'}x_0$. By \eqref{ttx0},  $\varphi_0\in H^2$ is an outer function. 
Since $X_{+,T'}x_0=X_{+,T'}Z\mathbf{1}$, we conclude that $X_{+,T'}Z=\varphi_0(S)$. Therefore,
 $\varphi_0\in  H^\infty$. 
Furthermore, \begin{equation*} 0=S^*\mathbf{1}=S^*Yx_0=S^*X_+X_{+,T'}x_0=S^*T_\psi \varphi_0.
\end{equation*}
Consequently, $\overline{\psi\varphi_0}\in H^\infty$.  
Therefore, there exists $\eta\in H^\infty$ such that $\psi=\overline\eta\frac{\overline\varphi_0}{\varphi_0}$. 

We prove that $\eta$ is outer. Indeed, assume that $\eta=\theta g$, where $1\not\equiv\theta$ is inner. 
Let $0\not\equiv f\in \mathcal K_\theta$.  Set $y=Zf$. Then
\begin{equation*}Yy=X_+ X_{+,T'}y=X_+ X_{+,T'}Zf= T_\psi  \varphi_0 f=P_+\overline{\theta g}\frac{\overline\varphi_0}{\varphi_0}\varphi_0 f=
P_+\overline{\theta g\varphi_0} f=0,
\end{equation*}
because $f\in\mathcal K_\theta$. This contradicts with the equality $\ker Y=\{0\}$.  Thus, $\eta$ is outer.

Set $Z_1=X_{+,T'}^*T_{\frac{\varphi_0}{\overline\varphi_0}}$. Then $\|Z_1\|\leq 1$ and $Z_1\eta(S)=Y^*$. 
Therefore, $Z_1S=TZ_1$ and $\operatorname{clos}Z_1H^2=\mathcal H$. By Lemma \ref{lemfred}, $\ker Z_1=\{0\}$.

Let $0<\delta<1$. Take  $0<\delta_1<1$ and   $\varepsilon_1>0$ such that 
$\frac{\delta_1}{(1+\varepsilon_1)^2}\geq\delta$.
By {\cite[Ch. IX.3]{nfbk}} or \cite{ker07}, there exists $\mathcal M\in\operatorname{Lat}T'$ such that 
$\|X_{+,T'}x\|\geq\delta_1\|x\|$ for every $x\in\mathcal M$. 
Therefore, there exists an inner function $\vartheta_0$ such that $X_{+,T'}{\mathcal M}=\vartheta_0 H^2$. 
Consider $X_{+,T'}|_{\mathcal M}$ as a transformation from $\mathcal L(\mathcal M,\vartheta_0 H^2)$.  
Then  $X_{+,T'}|_{\mathcal M}$ is invertible, and  
\begin{equation*} \|((X_{+,T'}|_{\mathcal M})^*)^{-1}\|=\| (X_{+,T'}|_{\mathcal M})^{-1}\|\leq 1/\delta_1.\end{equation*}
Consequently,  
\begin{equation*}
 \|X_{+,T'}^*h\|\geq \|P_{\mathcal M}X_{+,T'}^*h\|= \|P_{\mathcal M}X_{+,T'}^*P_{\vartheta_0 H^2}h\|\geq\delta_1\|P_{\vartheta_0 H^2}h\|
\end{equation*}
for every $h\in H^2$.

Let   $\alpha$, $\beta$, $\varphi$ be from Theorem \ref{theorembb} applied to 
$\frac{\varphi_0}{\overline\varphi_0}$ and $\varepsilon_1$. 
Let $h\in H^2$. Then
\begin{align*} \|Z_1\vartheta_0\beta h \|=\|X_{+,T'}^*T_{\frac{\varphi_0}{\overline\varphi_0}}\vartheta_0\beta h\|\geq
\delta_1\|P_{\vartheta_0 H^2}T_{\frac{\overline\varphi}{\varphi}\alpha\overline\beta}\vartheta_0\beta h\|=
\delta_1\|T_{\overline\vartheta_0}T_{\frac{\overline\varphi}{\varphi}\alpha\overline\beta}\vartheta_0\beta h\|\\=
\delta_1\|P_+\frac{\overline\varphi}{\varphi}\alpha h\|\geq \delta_1\frac{1}{\|\varphi\|_\infty\|\frac{1}{\varphi}\|_\infty}\|\alpha h\|\geq\frac{\delta_1}{(1+\varepsilon_1)^2}\|\vartheta_0\beta h\|\geq\delta\|\vartheta_0\beta h\|.
\end{align*}
Setting $\vartheta=\vartheta_0\beta$, we conclude that $Z_1$  satisfies the conclusion of the lemma. 
\end{proof}

\begin{theorem}\label{thm1} Suppose that  $T\in\mathcal L(\mathcal H)$ is expansive, $\dim\ker T^*=1$,  and 
$S\buildrel d\over\prec T$. 
 Then for every 
$\varepsilon>0$ there exist $\mathcal M_1$, $\mathcal M_2\in\operatorname{Lat}T$ 
and invertible transformations  $Y_1$, $Y_2$ such that 
\begin{equation*}Y_kS=T|_{\mathcal M_k}Y_k, \ \ \|Y_k\|\|Y_k^{-1}\|\leq 1+\varepsilon \ (k=1,2), 
\ \text{ and }  \ \mathcal M_1\vee\mathcal M_2=\mathcal H. 
\end{equation*}
\end{theorem}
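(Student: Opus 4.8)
The plan is to manufacture both subspaces from a single quasiaffinity intertwining $S$ and $T$, combined with a Frostman shift of the inner function supplied by Lemma~\ref{lemrr1}. First I would invoke Lemma~\ref{lemrr1} to fix a quasiaffinity $Z_1\in\mathcal I(S,T)$ with $\|Z_1\|=1$ such that to every $\delta\in(0,1)$ there corresponds an inner function $\vartheta$ with $\|Z_1\vartheta h\|\geq\delta\|h\|$ for all $h\in H^2$. Given $\varepsilon>0$, I would choose $\delta\in(0,1)$ with $1/\delta<1+\varepsilon$, take the associated $\vartheta$, then pick $a\in\mathbb D\setminus\{0\}$ with $|a|$ small enough that $\delta':=\delta-\frac{2|a|}{1-|a|}>0$ and $1/\delta'<1+\varepsilon$, and form $\vartheta_a=\frac{\vartheta-a}{1-\overline a\vartheta}$ as in \eqref{thetaa}. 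Then $\vartheta$ and $\vartheta_a$ are relatively prime, and by \eqref{estthetaa} together with $\|Z_1\|=1$, for every $h\in H^2$,
\[
\|Z_1\vartheta_a h\|\geq\|Z_1\vartheta h\|-\|Z_1\|\,\|\vartheta-\vartheta_a\|_\infty\|h\|\geq\Bigl(\delta-\tfrac{2|a|}{1-|a|}\Bigr)\|h\|=\delta'\|h\|.
\]

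Next I would set $\mathcal M_1=Z_1(\vartheta H^2)$ and $\mathcal M_2=Z_1(\vartheta_a H^2)$. The two lower bounds show that $Z_1$ is bounded below on $\vartheta H^2$ and on $\vartheta_a H^2$, so $\mathcal M_1$ and $\mathcal M_2$ are closed; and they lie in $\operatorname{Lat}T$, since, e.g., $T\mathcal M_1=TZ_1(\vartheta H^2)=Z_1S(\vartheta H^2)=Z_1(\chi\vartheta H^2)\subset\mathcal M_1$. Defining $Y_1\in\mathcal L(H^2,\mathcal M_1)$ and $Y_2\in\mathcal L(H^2,\mathcal M_2)$ by $Y_1h=Z_1\vartheta h$ and $Y_2h=Z_1\vartheta_a h$, a direct computation gives $Y_kS=T|_{\mathcal M_k}Y_k$; since $|\vartheta|=|\vartheta_a|=1$ $m$-a.e.\ and $\|Z_1\|=1$ one has $\|Y_k\|\leq1$, while the lower bounds make $Y_1$, $Y_2$ bounded below by $\delta$ and $\delta'$, hence invertible onto $\mathcal M_1$, $\mathcal M_2$ with $\|Y_1^{-1}\|\leq1/\delta$ and $\|Y_2^{-1}\|\leq1/\delta'$. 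Thus $\|Y_k\|\,\|Y_k^{-1}\|\leq1+\varepsilon$ for $k=1,2$.

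Finally, to obtain $\mathcal M_1\vee\mathcal M_2=\mathcal H$ I would use that $\vartheta$ and $\vartheta_a$, being relatively prime, satisfy $\vartheta H^2\vee\vartheta_a H^2=H^2$, so $\vartheta H^2+\vartheta_a H^2$ is dense in $H^2$; applying the bounded operator $Z_1$, which has dense range, gives
\[
\mathcal M_1\vee\mathcal M_2=\operatorname{clos}\bigl(Z_1(\vartheta H^2)+Z_1(\vartheta_a H^2)\bigr)=\operatorname{clos}Z_1(\vartheta H^2+\vartheta_a H^2)=\operatorname{clos}Z_1H^2=\mathcal H.
\]
I do not expect a serious obstacle once Lemma~\ref{lemrr1} is in hand; the only point to watch is that the passage from $\vartheta$ to its Frostman shift $\vartheta_a$ must at the same time keep the lower bound for $Z_1$ under control — which is precisely what \eqref{estthetaa} provides — and yield a companion inner function relatively prime to $\vartheta$, and it is this coprimality that forces $\mathcal M_1$ and $\mathcal M_2$ to span $\mathcal H$.
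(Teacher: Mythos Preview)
Your proof is correct and follows essentially the same route as the paper: invoke Lemma~\ref{lemrr1} to obtain $Z_1$ and an inner function $\vartheta$, pass to a Frostman shift $\vartheta_a$ to produce a relatively prime companion, and take the two invariant subspaces $Z_1(\vartheta H^2)$, $Z_1(\vartheta_a H^2)$. Your lower bound for $\|Z_1\vartheta_a h\|$ via the direct triangle inequality $\|Z_1(\vartheta_a-\vartheta)h\|\leq\|\vartheta-\vartheta_a\|_\infty\|h\|$ is in fact a little cleaner than the paper's argument, which passes through the orthogonal decomposition $\vartheta_2 h=P_{\vartheta_1 H^2}\vartheta_2 h+P_{\mathcal K_{\vartheta_1}}\vartheta_2 h$ and the Hankel estimate~\eqref{hankel}.
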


\begin{proof} 
Let $Z_1$ be  a quasiaffinity from Lemma \ref{lemrr1}.  Let $\vartheta_1$ be an inner function from 
Lemma \ref{lemrr1} applied with $\delta_1>1/(1+\varepsilon)$. 
Let $\varepsilon_1>0$ be such that 
$\delta_1(1-\varepsilon_1^2)^{\frac{1}{2}}-\varepsilon_1\geq 1/(1+\varepsilon)$. Take 
an inner function $\vartheta_2$ such that $\vartheta_1$ and $\vartheta_2$ are relatively prime and 
$\|\vartheta_1-\vartheta_2\|_\infty \leq\varepsilon_1$ (for example, use \eqref{thetaa}).
By \eqref{hankel}, $\|P_{\mathcal K_{\vartheta_1}}|_{\vartheta_2 H^2}\|\leq\varepsilon_1$.

 Let $h\in H^2$. Then 
$\|P_{\vartheta_1 H^2}\vartheta_2 h\|^2=\|h\|^2-\|P_{\mathcal K_{\vartheta_1}}\vartheta_2 h\|^2\geq(1-\varepsilon_1^2)\|h\|^2$. 
Therefore,
\begin{align*} \|Z_1\vartheta_2 h\|&\geq\|Z_1P_{\vartheta_1 H^2}\vartheta_2 h\|
-\|Z_1P_{\mathcal K_{\vartheta_1}}\vartheta_2 h\|
\geq\delta_1\|P_{\vartheta_1 H^2}\vartheta_2 h\|-\varepsilon_1\|h\|\\&
\geq\delta_1(1-\varepsilon_1^2)^{\frac{1}{2}}\|h\|-\varepsilon_1\|h\|=
\bigl(\delta_1(1-\varepsilon_1^2)^{\frac{1}{2}}-\varepsilon_1)\|h\|\geq \|h\|/(1+\varepsilon).
\end{align*}
Since  $\|Z_1\vartheta_1 h\|\geq\delta_1\|h\|$, we obtain that 
\begin{equation*}\|Z_1\vartheta_k h\|\geq\|h\|/(1+\varepsilon) \ \ \text{ for every }h\in H^2\ \text{ and } k=1,2.
\end{equation*}
Set $\mathcal M_k=Z_1\vartheta_kH^2$ ($k=1,2$). Consider  $Y_k=Z_1|_{\vartheta_k H^2}$ as the transformations from 
 $\mathcal L(\vartheta_k H^2,\mathcal M_k)$. Then $T|_{\mathcal M_k}=Y_k S|_{\vartheta_k H^2}Y_k^{-1}$ and 
 $\|Y_k\|\|Y_k^{-1}\|\leq1+\varepsilon$. Clearly, $S|_{\vartheta_k H^2}\cong S$. 
Thus, $\mathcal M_k$ and $Y_k$ (up to appropriate unitary equivalence) ($k=1,2$) satisfy the conclusion of the theorem.
\end{proof}

\begin{theorem} \label{thmnn} Suppose that $N\in\mathbb N$, $N\geq 2$,  $T\in\mathcal L(\mathcal H)$ is expansive, 
$\ker T^*\neq\{0\}$, 
and $S_N\buildrel d\over\prec T$. 
Then for every 
$\varepsilon>0$ there exist $\{\mathcal M_j\}_{j=1}^N\subset\operatorname{Lat}T$ 
and invertible transformations  $Y_j$ such that
\begin{equation*}Y_jS=T|_{\mathcal M_j}Y_j, \ \ \|Y_j\|\|Y_j^{-1}\|\leq 1+\varepsilon \ (j=1,\ldots,N), 
\ \text{ and }  \ \vee_{j=1}^N\mathcal M_j=\mathcal H. 
\end{equation*}

\end{theorem}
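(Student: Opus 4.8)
Set $N_0=\dim\ker T^*$. Since $T$, being expansive, is left‑invertible, $T\mathcal H$ is closed of codimension $N_0$; as a dense‑ranged operator carries a subspace of codimension $k$ onto one whose closure has codimension at most $k$, the hypothesis $S_N\buildrel d\over\prec T$ gives $N_0\le N$, and $N_0\ge 1$. Arguing as for the corollary after Corollary~\ref{colssquasi}, and matching multiplicities, one checks that in fact $S_{N_0}\buildrel d\over\prec T$; hence Theorem~\ref{thmmain0} applies and $T\sim S_{N_0}$, $T'\sim S_{N_0}$, $\mathcal R^\infty(T)=\{0\}$, $I-T^*T\in\frak S_1$. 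The plan is now to produce $Z_0\in\mathcal I(S_{N_0},T)$ with $\operatorname{clos}Z_0H^2_{N_0}=\mathcal H$, $\|Z_0\|\le 1$, and such that for every $0<\delta_0<1$ there is an inner function $\theta$ with $\|Z_0(\theta h\oplus\mathbb O)\|\ge\delta_0\|h\|$ for every $h\in H^2$, where $H^2_{N_0}=H^2\oplus H^2_{N_0-1}$ and $\mathbb O$ is the zero vector of $H^2_{N_0-1}$.

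Granting such a $Z_0$, put $Z=Z_0P$, where $P\colon H^2_N=H^2_{N_0}\oplus H^2_{N-N_0}\to H^2_{N_0}$ is the coordinate projection. Then $Z\in\mathcal I(S_N,T)$, $\operatorname{clos}ZH^2_N=\mathcal H$, $\|Z\|\le 1$, and $\|Z(\theta h\oplus\mathbb O)\|=\|Z_0(\theta h\oplus\mathbb O)\|\ge\delta_0\|h\|$ with $\theta h$ placed in the first coordinate of $H^2_N$. Given $\varepsilon>0$, choose $0<\delta<\delta_0<1$ with $1/\delta\le 1+\varepsilon$; since $N\ge 2$, Lemma~\ref{lem27} applied to $Z$, $\theta$, $\delta_0$ yields $\{\mathcal N_j\}_{j=1}^N\subset\operatorname{Lat}S_N$ with $S_N|_{\mathcal N_j}\cong S$, $\|Zh\|\ge\delta\|h\|$ for $h\in\mathcal N_j$, and $\vee_{j=1}^N\mathcal N_j=H^2_N$. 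Set $\mathcal M_j=Z\mathcal N_j$: it is closed because $Z$ is bounded below on $\mathcal N_j$, it lies in $\operatorname{Lat}T$ since $T\mathcal M_j=ZS_N\mathcal N_j\subset Z\mathcal N_j$, one has $\vee_{j=1}^N\mathcal M_j=\operatorname{clos}Z(\vee_{j=1}^N\mathcal N_j)=\operatorname{clos}ZH^2_N=\mathcal H$, and $Z|_{\mathcal N_j}\colon\mathcal N_j\to\mathcal M_j$ is invertible with $\|Z|_{\mathcal N_j}\|\,\|(Z|_{\mathcal N_j})^{-1}\|\le\|Z\|/\delta\le 1+\varepsilon$. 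Composing $Z|_{\mathcal N_j}$ with a unitary realizing $S_N|_{\mathcal N_j}\cong S$ gives the desired $Y_j$, and the theorem follows.

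It remains to construct $Z_0$. When $N_0=1$ this is precisely Lemma~\ref{lemrr1} (take $Z_0=Z_1$), and in that case the whole statement already follows from Theorem~\ref{thm1} on putting $\mathcal M_3=\dots=\mathcal M_N=\mathcal M_1$. When $N_0\ge 2$ I would reprise the proof of Theorem~\ref{thmmain0}: with $Y$ the quasiaffinity satisfying $YT^*=S_{N_0}^*Y$, $Y\ker T^*=\ker S_{N_0}^*$, $Y=T_\Psi X_{+,T'}$, and with $Z\in\mathcal I(S_{N_0},T')$ the quasiaffinity from Lemma~\ref{lemmodel} (so $Z\ker S_{N_0}^*=\ker T^*$ and $X_{+,T'}Z=T_\Phi$), that proof shows $\Phi$ is outer and $*$‑outer from both sides and that $T_\Psi T_\Phi=T_{\overline G}$ for an analytic matrix function $G$. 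From the inner--outer data of $\Psi$ one reads off a unimodular matrix symbol $U$ so that $Z_0:=X_{+,T'}^*T_U$ lies in $\mathcal I(S_{N_0},T)$ — because $Z_0\,\Xi(S_{N_0})=Y^*$ for an outer matrix $\Xi$ and $Y^*\in\mathcal I(S_{N_0},T)$ — and has dense range, because $Y^*$ does. For the lower bound one chooses, by \cite{ker07}, a subspace $\mathcal M\in\operatorname{Lat}T'$ on which $X_{+,T'}$ is bounded below, applies Theorem~\ref{theorembb} to the scalar unimodular factor of $U$ governing the first coordinate, and estimates $\|Z_0(\theta h\oplus\mathbb O)\|$ as in the final display of the proof of Lemma~\ref{lemrr1}, taking the Bourgain--Hjelle perturbation parameter small enough that the remaining coordinates of $U$ contribute less than any prescribed amount.

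The main obstacle is this last step when $N_0\ge 2$: the passage from the scalar Hjelle-type estimate of Lemma~\ref{lemrr1} to its matrix-valued analogue, i.e.\ locating inside the operator-valued canonical and $*$‑canonical factorizations of $X_{+,T'}Z$ a single scalar unimodular factor that controls one fixed coordinate, running Theorem~\ref{theorembb} on it, and keeping the off-diagonal and remaining-diagonal contributions below the tolerance $\delta_0$. Everything else is a routine assembly of Theorem~\ref{thmmain0}, Lemma~\ref{lem27}, and standard shift-model bookkeeping.
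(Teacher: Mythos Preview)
You have correctly isolated the obstacle, but it is a real gap, not a technicality: the matrix-valued analogue of Lemma~\ref{lemrr1} that your sketch requires for $N_0\ge 2$ is not established anywhere, and the outline you give (extracting ``a single scalar unimodular factor that controls one fixed coordinate'' from the $*$-canonical factorization and then bounding the other entries) does not come with an argument. Theorem~\ref{theorembb} is a scalar result; there is no indication that the off-diagonal entries of the unimodular matrix symbol $U$ can be made small simultaneously with the Bourgain--Hjelle perturbation, and the K\'erchy lower bound on $X_{+,T'}|_{\mathcal M}$ gives control on a subspace of $H^2_{N_0}$, not on a single coordinate axis. So as written, the case $N_0\ge 2$ is open in your proposal.

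The paper sidesteps this entirely, and the trick is worth learning. It does \emph{not} pass to $N_0=\dim\ker T^*$ and it never needs a matrix-valued version of Lemma~\ref{lemrr1}. Instead, take any $Y$ realizing $S_N\buildrel d\over\prec T$, and for each coordinate set $\mathcal H_k=\operatorname{clos}Y(\{0\}\oplus\cdots\oplus H^2\oplus\cdots\oplus\{0\})\in\operatorname{Lat}T$, $T_k=T|_{\mathcal H_k}$. If all $T_k$ were surjective then each $\mathcal H_k\subset\mathcal R^\infty(T)$, forcing $\mathcal H=\vee_k\mathcal H_k\subset\mathcal R^\infty(T)$, which contradicts $\ker T^*\neq\{0\}$. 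So some $T_k$ (say $T_1$) has $\dim\ker T_1^*=1$, and the \emph{scalar} Lemma~\ref{lemrr1} applies to $T_1$, giving $Z_1\in\mathcal I(S,T_1)$ with $\|Z_1\|\le 1$ and the inner-function lower bound. Now define $Z\in\mathcal I(S_N,T)$ by $Z|_{H^2\oplus\{0\}}=Z_1$ (viewed as landing in $\mathcal H_1\subset\mathcal H$) and $Z|_{\{0\}\oplus H^2_{N-1}}=\frac{\varepsilon_1}{\|Y\|}Y|_{\{0\}\oplus H^2_{N-1}}$. This $Z$ has dense range, $\|Z\|\le 1+\varepsilon_1$, and satisfies the first-coordinate hypothesis of Lemma~\ref{lem27} for every $\delta_0<1$; the rest is exactly your second paragraph. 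The point is that the remaining $N-1$ coordinates need no lower bound at all---Lemma~\ref{lem27} only requires control on the first column---so one never leaves the scalar Hjelle world.
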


\begin{proof} Let $Y$ realize the relation $S_N\buildrel d\over\prec T$. For $1\leq k\leq N$ 
set $\mathcal H_k=\operatorname{clos}Y(\{0\}\oplus \ldots \oplus H^2\oplus\ldots\oplus\{0\})$ (where a unique 
 nonzero summand $H^2$ is on $k$th place), and 
$T_k=T|_{\mathcal H_k}$. If $\ker T_k^*=\{0\}$,   then 
$\mathcal H_k\subset \mathcal R^\infty(T)$. If $\ker T_k^*=\{0\}$ for all $k=1,\ldots,N$, then 
$\mathcal H=\vee_{k=1}^N\mathcal H_k\subset\mathcal R^\infty(T)$, a contradiction with the assumption 
$\ker T^*\neq\{0\}$. 
 Consequently, there exists $1\leq k\leq N$ such that $\ker T_k^*\neq\{0\}$. Without loss of generality we may assume that $k=1$.
 Then $T_1$ satisfies the assumptions of Lemma \ref{lemrr1}, because
the relation $S\buildrel d\over\prec T_1$ implies $\dim\ker T_1^*\leq 1$. 

Let $Z_1$  be  a quasiaffinity from Lemma \ref{lemrr1} applied to $T_1$. 
Take $\varepsilon_1>0$ such that $(1+\varepsilon_1)^2\leq 1+\varepsilon$. 
 Define $Z\in\mathcal L(H^2_N, \mathcal H)$ as follows: 
\begin{equation*} Z|_{H^2\oplus\{0\}}=Z_1, \ \ \ Z|_{\{0\}\oplus H^2_{N-1}}=\frac{\varepsilon_1}{\|Y\|}Y|_{\{0\}\oplus H^2_{N-1}}.
\end{equation*}
Then $ZS_N=TZ$, $\|Z\|\leq1+\varepsilon_1$, $\operatorname{clos}ZH^2_N=\mathcal H$,  and $Z$ satisfies the assumption of Lemma \ref{lem27} 
for every $0<\delta_0<1$ with some inner function $\theta$ (which depends on $\delta_0$). Let  $\{\mathcal N_j\}_{j=1}^N\subset\operatorname{Lat}S_N$ be from Lemma \ref{lem27}
applied with  $\delta\geq 1/(1+\varepsilon_1)$. Set $\mathcal M_j=Z\mathcal N_j$ ($j=1,\ldots, N$). 
 Consider  $Y_j=Z|_{\mathcal N_j}$ as the transformations from 
 $\mathcal L(\mathcal N_j,\mathcal M_j)$. Then $T|_{\mathcal M_j}=Y_j S_N|_{\mathcal N_j}Y_j^{-1}$
 and 
 $\|Y_j\|\|Y_j^{-1}\|\leq (1+\varepsilon_1)^2\leq 1+\varepsilon$. By Lemma \ref{lem27},  $S_N|_{\mathcal N_j}\cong S$.  
Thus, $\mathcal M_j$ and $Y_j$ (up to appropriate unitary equvalence) ($j=1,\ldots, N$) satisfy the conclusion of the theorem.
\end{proof}

\begin{theorem}\label{thmmain1} Suppose that  $T$ is an expansive operator, 
  $N=\dim\ker T^*<\infty$, $I-T^*T\in\frak S_1$, and 
 $\mathcal R^\infty(T)=\{0\}$. Then $T'\sim S_N$, $S_N\buildrel i\over\prec T\prec S_N$ and for every $\mathcal M\in\operatorname{Lat}T$ 
$\dim( \mathcal M\ominus T\mathcal M)\leq N$.
\end{theorem}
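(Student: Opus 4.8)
The plan is to transfer everything to the Cauchy dual $T'$, which is a contraction, and then feed it into the model and asymptotic machinery of the previous sections.

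First I would record the routine translations. Since $\mathcal R^\infty(T)=\{0\}$, Lemma \ref{lemnotuu}(iii) makes $T'$ a completely nonunitary contraction; being the Cauchy dual of an expansive operator it is left-invertible, with $\dim\ker T'^*=\dim\ker T^*=N$, hence Fredholm of index $-N$. Applying Lemma \ref{lemdefect} to $A=T$, $B=L_T$ turns $I-T^*T\in\frak S_1$ into $I-T'^*T'\in\frak S_1$ and $I-T'T'^*\in\frak S_1$. Corollary \ref{cormodel} (equivalently, the second part of Lemma \ref{lemmodel} applied to $R=T'$) produces a quasiaffinity $X\in\mathcal I(T,S_N)$ with $X\ker T^*=\ker S_N^*$; this is the relation $T\prec S_N$. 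The first part of Lemma \ref{lemmodel} applied to $R=T'$ produces $Y\in\mathcal I(S_N,T')$ with $Y\ker S_N^*=\ker T^*$ and $\operatorname{clos}YH^2_N=\vee_{n\ge0}T'^n\ker T^*$; by Lemma \ref{lemshimorin}(i) and $\mathcal R^\infty(T)=\{0\}$ this span is all of $\mathcal H$, so $Y$ has dense range, and Lemma \ref{lemfred} (with $A=T'$) then forces $\ker Y=\{0\}$. Hence $S_N\prec T'$.

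To get $T'\sim S_N$ I would establish $T'\prec S_N$, following the pattern of Lemmas \ref{lemmain0} and \ref{lemc0}. Composing $Y$ with $X_{+,T'}$ gives a dense-range element of $\mathcal I(S_N,(T')_+^{(a)})$; since the underlying space of $(T')_+^{(a)}$ is therefore generated under $(T')_+^{(a)}$ by the image of $\ker T^*$, one has $\mu_{(T')_+^{(a)}}\le N$ and $S_N\buildrel d\over\prec(T')_+^{(a)}$. The singular-unitary summand of the isometry $(T')_+^{(a)}$ must be absent, an a.c. operator cannot densely intertwine into a singular unitary, so $T'$ is an a.c. contraction. The crucial point is that $T'\in C_{1\cdot}$: this is exactly where $I-T'^*T'\in\frak S_1$ is used, together with $\mathcal H=\vee_{n\ge0}T'^n\ker T'^*$, either directly (for $x\in\mathcal H_{T',0}$ one would need $\sum_{k\ge0}\|(I-T'^*T')^{1/2}T'^kx\|^2=\|x\|^2$, which the trace-class smallness combined with the generating property excludes) or via the trace-class perturbation results of \cite{gamal} invoked in the proof of Theorem \ref{thmmain0}. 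Once $T'\in C_{1\cdot}$, $X_{+,T'}$ is injective with dense range, and Lemma \ref{lemasymp1} together with Theorem \ref{thmmainiso} — applied to $V_+=(T')_+^{(a)}$ and the transformation extracted from $Y$ by Lemma \ref{lemyy} — yields $(T')_+^{(a)}\cong S_N$. Thus $T'\prec(T')_+^{(a)}\cong S_N$, and with $S_N\prec T'$ this gives $T'\sim S_N$.

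For $S_N\buildrel i\over\prec T$ I would transfer the quasiaffinity $Y\in\mathcal I(S_N,T')$ to $T$ using the Cauchy-dual identities $T'^*T=T^*T'=I$, in the spirit of the way Corollary \ref{lemasymp11} and Lemma \ref{lemrr1} manufacture intertwiners into an expansive target: lift $Y$ through the minimal isometric dilation $V$ of $T'$, for which $T'=P_{\mathcal H}V|_{\mathcal H}$, to an injective $\widetilde Y\in\mathcal I(S_N,V)$, and push it back through the formula $T=P_{\mathcal H}V|_{\mathcal H}(V^*|_{\mathcal H}P_{\mathcal H}V|_{\mathcal H})^{-1}$ of Lemma \ref{lemmodel}; injectivity survives and no density is claimed (consistent with the statement giving only $\buildrel i\over\prec$, not $\prec$). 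For the invariant-subspace estimate, let $\mathcal M\in\operatorname{Lat}T$, $\mathcal M\ne\{0\}$; then $T|_{\mathcal M}$ is again expansive with $\mathcal R^\infty(T|_{\mathcal M})=\{0\}$, so $\dim(\mathcal M\ominus T\mathcal M)=\dim\ker(T|_{\mathcal M})^*\ge1$. Restricting $X$ exhibits $T|_{\mathcal M}$ as a dense-range injective image of the unilateral shift $S_N|_{\operatorname{clos}X\mathcal M}$, which has multiplicity $M\le N$ since $\operatorname{clos}X\mathcal M\in\operatorname{Lat}S_N$; and an expansive operator that is a dense-range image of $S_M$ has $\dim\ker$ of its adjoint at most $M$ (a short Fredholm/orthogonality argument, using that the range of an expansive operator is closed). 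Hence $\dim(\mathcal M\ominus T\mathcal M)\le M\le N$.

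I expect the main obstacle to be the step $T'\prec S_N$ — pinning down $(T')_+^{(a)}\cong S_N$ and the class $C_{1\cdot}$ for $T'$ at the same time — because the lemmas that deliver this ($\ref{lemmain0}$, $\ref{lemc0}$) are phrased under the hypothesis $S_N\prec T$, which is not available here and has to be replaced by the pair of one-sided relations $T\prec S_N$ and $S_N\prec T'$; the trace-class hypothesis is what compensates, and making precise how $I-T'^*T'\in\frak S_1$ forces $T'\in C_{1\cdot}$ is the delicate part. A secondary difficulty is that even the finiteness of $\dim(\mathcal M\ominus T\mathcal M)$ is not immediate, since the trace-class condition does not obviously pass to $T|_{\mathcal M}$.
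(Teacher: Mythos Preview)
Your proposal has two genuine gaps.

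\textbf{The step $T'\prec S_N$.} You correctly flag this as the crux, but neither of your two routes works. The direct one --- that $\sum_k\|(I-T'^*T')^{1/2}T'^kx\|^2=\|x\|^2$ for $x\in\mathcal H_{T',0}$ is ``excluded'' by trace-class smallness plus the generating property --- is not an argument: nothing in that identity prevents a c.n.u.\ contraction with trace-class defect and $\mathcal H=\vee_n T'^n\ker T'^*$ from having a nonzero $C_{0\cdot}$-part; that is precisely what has to be ruled out. The appeal to \cite{gamal} from the proof of Theorem~\ref{thmmain0} is in the wrong direction: there it is used to deduce $I-T'^*T'\in\frak S_1$ \emph{from} $T'\sim S_N$. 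And Lemmas~\ref{lemmain0}--\ref{lemc0} genuinely need $S_N\prec T$ (the expansive $T$), because the transformation $X_+$ fed into Theorem~\ref{thmmainiso} is manufactured from a quasiaffinity $Y$ with $YT^*=S_N^*Y$; the relation $S_N\prec T'$ you have gives the wrong intertwining direction for this construction. The paper proceeds quite differently: it invokes the structure theorem of Uchiyama and Takahashi for left-invertible contractions with trace-class defect (\cite{uchiyama84}, \cite{uchiyama83}, \cite{takahashi84}) to write $T'$ as upper-triangular $\bigl[\begin{smallmatrix}T_0&*\\ \mathbb O&T_1\end{smallmatrix}\bigr]$ with $T_0$ a weak contraction and $T_1\prec S_N$, and then kills $T_0$ by two multiplicity counts: first $\mu_{T_0^{(a)}}+N=\mu_{T'^{(a)}}\le\mu_{T'}=N$ forces $T_0$ to be $C_0$; then $T'\prec T_0\oplus S_N$ (via \cite{gamalaa03}) together with $\mu_{T_0}+N=\mu_{T_0\oplus S_N}\le\mu_{T'}=N$ (via \cite{vasyuninkaraev}) forces $T_0$ to act on $\{0\}$. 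This is where the trace-class hypothesis actually enters.

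\textbf{The bound $\dim(\mathcal M\ominus T\mathcal M)\le N$.} Your argument has the intertwining direction reversed. Restricting the quasiaffinity $X\in\mathcal I(T,S_N)$ to $\mathcal M$ gives $T|_{\mathcal M}\prec S_M$ with $M\le N$, not $S_M\buildrel d\over\prec T|_{\mathcal M}$; and from $T|_{\mathcal M}\prec S_M$ one obtains the \emph{opposite} inequality $\dim\ker(T|_{\mathcal M})^*\ge M$, since the adjoint of the quasiaffinity sends $\ker S_M^*$ injectively into $\ker(T|_{\mathcal M})^*$. The paper instead bootstraps on the part already proved: if some $\mathcal M$ had $\dim(\mathcal M\ominus T\mathcal M)>N$, pick $E\subset\mathcal M\ominus T\mathcal M$ with $\dim E=N+1$ and set $\mathcal N=\vee_n T^nE$. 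Then $T|_{\mathcal N}$ again satisfies all the hypotheses (expansive; $\dim\ker(T|_{\mathcal N})^*=N+1$; $I_{\mathcal N}-(T|_{\mathcal N})^*T|_{\mathcal N}=P_{\mathcal N}(I-T^*T)|_{\mathcal N}\in\frak S_1$; $\mathcal R^\infty(T|_{\mathcal N})\subset\mathcal R^\infty(T)=\{0\}$), so the already-proved conclusion yields $S_{N+1}\buildrel i\over\prec T|_{\mathcal N}\buildrel i\over\prec T\prec S_N$, a contradiction.

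For $S_N\buildrel i\over\prec T$ your dilation-and-pushback sketch is more complicated than needed. Once $T'\prec S_N$ is established, take a quasiaffinity $Y$ with $YS_N^*=T'^*Y$; then $Y\ker S_N^*=\ker T^*$, and the identities $Y=T'^*YS_N$ and $TT'^*=I-P_{\ker T^*}$ give $TY=Y\bigl(S_N+\sum_{k=1}^N e_k\otimes f_k\bigr)$ for suitable $f_k\in H^2_N$, whence Lemma~\ref{lemma1} yields $S_N\buildrel i\over\prec T$ directly.
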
  

\begin{proof} By Corollary \ref{cormodel}, $T\prec S_N$. Denote by $\mathcal H$ the space in which $T$ acts. By Lemma  \ref{lemshimorin}(i),  
\begin{equation*} \mathcal H=\vee_{n=0}^\infty T'^n\ker T'^*.
\end{equation*}
Therefore,  $\mu_{T'}=N$ (where $\mu_T$ for an operator $T$ is defined in \eqref{mu}), and $S_N\prec T'$ by Lemmas \ref{lemmodel} and \ref{lemfred}. In particular, $T'$ is an a.c. contraction. 

Furthermore, $T'$ is left-invertible, $\dim\ker T'^*=N<\infty$, and $I-T'^*T'\in\frak S_1$ by Lemma \ref{lemdefect}. By \cite{uchiyama84} and \cite{uchiyama83}  or \cite{takahashi84}, $T'$ has the form  
\begin{equation*} T'=\left[\begin{matrix}T_0 & * \\ \mathbb O & T_1\end{matrix}\right],
\end{equation*}
where $T_0$ is a weak contraction (see {\cite[Ch. VIII]{nfbk}} for definition) and $T_1\prec S_N$. 
By {\cite[Lemma 2.1]{gamalshiftindex}}, $(T_1)^{(a)}_+\cong S_N$. 
 By  {\cite[Ch. IX.1]{nfbk}}, 
$T'^{(a)}\cong T_0^{(a)}\oplus T_1^{(a)}=T_0^{(a)}\oplus U_{\mathbb T,N}$, and $T_0^{(a)}$ is a.c. unitary. Therefore, 
\begin{equation*} \mu_{T_0^{(a)}} + N =\mu_{T'^{(a)}}\leq \mu_{T'}=N. 
\end{equation*}
Consequently, $\mu_{T_0^{(a)}}=0$. This means that $T_0$ is a $C_0$-contraction. By {\cite[Theorem 0.1]{gamalaa03}}, 
$T'\prec T_0\oplus S_N$. By \cite{vasyuninkaraev},  
\begin{equation*} \mu_{T_0} + N =\mu_{T_0\oplus S_N}\leq \mu_{T'}=N. 
\end{equation*}
This means that $T_0$ acts on the zero space, that is, $T'=T_1\prec S_N$. 

Let $Y$ be a quasiaffinity such that $YS_N^*=T'^*Y$. Then $Y\ker S_N^*=\ker T^*$. 
Furthermore, 
\begin{align*} TY&=TT'^*YS_N=(I-P_{\ker T^*})YS_N=YS_N-P_{Y\ker S_N^*}YS_N\\&
=Y\Bigl(S_N +\sum_{k=1}^N e_k\otimes f_k\Bigr)
\end{align*}
for some $\{f_k\}_{k=1}^N\subset H^2_N$. By Lemma \ref{lemma1}, $S_N\buildrel i\over\prec (S_N +\sum_{k=1}^N e_k\otimes f_k)$. 
Thus, $S_N\buildrel i\over\prec T$.

Let $\mathcal M\in\operatorname{Lat}T$. If  $\dim( \mathcal M\ominus T\mathcal M)>N$, take a subspace 
$E\subset ( \mathcal M\ominus T\mathcal M)$ such that $\dim E=N+1$ and set  
$\mathcal N=\vee_{n=0}^\infty T^n E $.
Then $\dim\ker (T|_{\mathcal N})^*=N+1$. Applying to $T|_{\mathcal N}$ already proved part of the theorem, we obtain that $S_{N+1}\buildrel i\over\prec T|_{\mathcal N}$. 
Thus, $ S_{N+1}\buildrel i\over\prec T|_{\mathcal N}\buildrel i\over\prec T\prec S_N$, a contradiction.
\end{proof}

\section{Similarity to isometry} 

In this section, the relationship between similarity to isometry of an operator $T$ and its Cauchy dual $T'$ is studies. 

\begin{proposition}\label{prop1} Suppose that  $V$ and $V_1$ are isometries,  $T$ is a left-invertible operator,
 $T\approx V$ and $T'\approx V_1$. 
Then  $V\cong V_1$.
\end{proposition}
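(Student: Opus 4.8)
The plan is to pass from $T$ and $T'$ to their isometric asymptotes and then push the algebraic identities $T'^{*n}T^n=I_{\mathcal H}=T^{*n}T'^n$ ($n\in\mathbb N$), which follow at once from $T'^*T=I_{\mathcal H}=T^*T'$, down to the asymptotic level via a pairing. First observe that since $T\approx V$ with $V$ an isometry, $T$ is power bounded and $\|T^nx\|\ge c\|x\|$ for some $c>0$ and all $n$, so by the criterion recalled in the introduction $X_{+,T}$ is invertible and $T\approx T_+^{(a)}$; likewise $X_{+,T'}$ is invertible and $T'\approx T'^{(a)}_+$. Since two similar isometries are unitarily equivalent (a similarity carries $\mathcal R^\infty$ onto $\mathcal R^\infty$ by Lemma \ref{lemaabbxx} applied in both directions, these are the reducing unitary parts in the respective Wold decompositions, and the pure parts have the same multiplicity $\dim\ker(\cdot)^*$, a similarity invariant), it suffices to prove that the isometries $W_0:=T_+^{(a)}$ and $W_1:=T'^{(a)}_+$ are unitarily equivalent: then $V\approx T_+^{(a)}\cong T'^{(a)}_+\approx V_1$, hence $V\approx V_1$ and therefore $V\cong V_1$.

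For the pure parts this is immediate: $\dim\ker W_0^*=\dim\ker T^*=\dim\ker T'^*=\dim\ker W_1^*$, using $\ker T'^*=\ker T^*$ and the similarities $T\approx W_0$, $T'\approx W_1$. For the unitary parts, define a sesquilinear form on $\mathcal H_+^{(a)}\times\mathcal H_+^{'(a)}$ by $\Phi(X_{+,T}x,X_{+,T'}y)=\operatorname{Lim}_n(T^nx,T'^ny)$; it is well defined and $|\Phi(\xi,\eta)|\le\|\xi\|\,\|\eta\|$ by the Cauchy--Schwarz inequality for Banach limits together with $\|X_{+,T}x\|^2=\operatorname{Lim}_n\|T^nx\|^2$ (and its analogue for $T'$). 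Let $A\in\mathcal L(\mathcal H_+^{(a)},\mathcal H_+^{'(a)})$ be the operator with $\langle A\xi,\eta\rangle=\Phi(\xi,\eta)$. From $T'^{*n}T^n=I$ one gets $\langle AX_{+,T}x,X_{+,T'}x\rangle=\|x\|^2$, which — since $X_{+,T}$ and $X_{+,T'}$ are invertible — forces $A$ to be bounded below; by the symmetric computation with $T^{*n}T'^n=I$ also $A^*$ is bounded below, so $A$ is invertible. Shift‑invariance of the Banach limit gives $\Phi(W_0\xi,W_1\eta)=\Phi(\xi,\eta)$, that is, $W_1^*AW_0=A$ (equivalently $W_0^*A^*W_1=A^*$).

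It remains to read off the unitary summands $U_0=W_0|_{\mathcal R^\infty(W_0)}$ and $U_1=W_1|_{\mathcal R^\infty(W_1)}$ from $W_1^*AW_0=A$. For $\xi\in\mathcal R^\infty(W_0)$ one has $W_0W_0^*\xi=\xi$ and $W_0^*\xi\in\mathcal R^\infty(W_0)$, whence $W_1^{*n}A\xi=AW_0^{*n}\xi$ for all $n$; thus $A\xi=W_1^nAW_0^{*n}\xi+P_{\ker W_1^{*n}}A\xi$ is an orthogonal decomposition and $\|AW_0^{*n}\xi\|^2=\|A\xi\|^2-\|P_{\ker W_1^{*n}}A\xi\|^2\to\|P_{\mathcal R^\infty(W_1)}A\xi\|^2$. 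As $\|W_0^{*n}\xi\|=\|\xi\|$ and $A$ is bounded below, the left‑hand side stays $\ge c^2\|\xi\|^2$, so $B:=P_{\mathcal R^\infty(W_1)}A|_{\mathcal R^\infty(W_0)}$ is bounded below; moreover $BU_0=U_1B$, because $AW_0-W_1A$ has range in $\ker W_1^*\subset\mathcal R^\infty(W_1)^{\perp}$. Hence $U_0\buildrel i \over\prec U_1$, and symmetrically (using $W_0^*A^*W_1=A^*$) $U_1\buildrel i \over\prec U_0$. Polar‑decomposing $B$, whose modulus commutes with $U_0$, produces an isometry intertwining $U_0$ with a reducing restriction of $U_1$, and likewise in the other direction; classical multiplicity theory for unitary operators then yields $U_0\cong U_1$. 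Together with the equality of pure multiplicities and the Wold decomposition this gives $W_0\cong W_1$, which completes the argument.

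The step I expect to be the crux is the last paragraph: carefully establishing that $B=P_{\mathcal R^\infty(W_1)}A|_{\mathcal R^\infty(W_0)}$ is bounded below and intertwines the unitary restrictions (the Wold bookkeeping forced by $W_1^*AW_0=A$, where a priori the perturbation $AW_0-W_1A$ could leak part of the unitary part of $W_0$ into the wandering subspace of $W_1$), and then invoking the right form of ``reducing‑restriction‑equivalent unitary operators are unitarily equivalent.'' One should also keep in mind that when $\dim\ker T^*=\infty$ a mere comparison of the unitary asymptotes $T^{(a)}$ and $T'^{(a)}$ does not decide the unitary parts, so the finer pairing $\Phi$ is genuinely needed.
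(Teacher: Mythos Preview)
Your proof is correct, but it takes a considerably more elaborate route than the paper's. The paper argues directly: write the Wold decompositions $V\cong U\oplus S_N$ and $V_1\cong U_1\oplus S_N$ (the common $N$ coming from $\dim\ker V^*=\dim\ker T^*=\dim\ker T'^*=\dim\ker V_1^*$); pull back the unitary summand via the similarity $T\approx U\oplus S_N$ to get $\mathcal M\in\operatorname{Lat}T$ with $T|_{\mathcal M}\approx U$; then the single identity $T'^*T=I$ gives $\mathcal M\in\operatorname{Lat}T'^*$ and $T'^*|_{\mathcal M}=(T|_{\mathcal M})^{-1}\approx U^{-1}$; pushing this forward through the similarity $T'\approx V_1$ and projecting onto the unitary summand $\mathcal K_1$ of $V_1$ (the projection is injective on the image of $\mathcal M$ because $S_N^*$ is of class $C_{0\cdot}$) yields $U^{-1}\buildrel i\over\prec U_1^{-1}$. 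Symmetry via $T''=T$ gives the reverse relation, and then the same Fuglede--Putnam plus multiplicity argument you invoke finishes.

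You instead manufacture, via a Banach-limit pairing, an invertible $A$ satisfying $W_1^*AW_0=A$ for the asymptotes $W_0=T_+^{(a)}$, $W_1=T'^{(a)}_+$, and extract from it an injective intertwiner $B$ between their unitary parts. This works, and the estimates you give (well-definedness and contractivity of $\Phi$ via Cauchy--Schwarz for states, invertibility of $A$, boundedness below of $B$, and $BU_0=U_1B$) all check out. But the isometric-asymptote layer is doing no real work: since $X_{+,T}$ and $X_{+,T'}$ are invertible, $W_0\approx V$ and $W_1\approx V_1$, so you have only traded $V,V_1$ for a particular similar pair before beginning. The paper shows that the algebraic relation $T'^*T=I$ alone, together with Wold, already produces the injective intertwiners between the unitary parts with no limiting construction. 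What your approach buys is an intrinsic device (the form $\Phi$) that avoids first locating an invariant subspace carrying the unitary part; in this proposition, however, that device costs more than it saves.
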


\begin{proof} Since $\dim\ker V^*=\dim\ker T^*=\dim\ker T'^*=\dim\ker V_1^*$, we conclude that there exist $0\leq N\leq\infty$ 
and unitaries $U\in\mathcal L(\mathcal K)$  and $U_1\in\mathcal L(\mathcal  K_1)$
 such that $V\cong U\oplus S_N$ and $V_1\cong U_1\oplus S_N$. 
Since $T\approx  U\oplus S_N$, there exists $\mathcal M\in \operatorname{Lat}T$ such that $T|_{\mathcal M}\approx U$.
Since $T'^*T=I$, we have $\mathcal M\in \operatorname{Lat}T'^*$ and  $T'^*|_{\mathcal M}\approx U^{-1}$.
Therefore, there exists $\mathcal N\in \operatorname{Lat}( U_1\oplus S_N)^*$ such that 
$( U_1\oplus S_N)^*|_{\mathcal N}\approx U^{-1}$. Since 
\begin{equation*} \ker P_{\mathcal K_1}|_{\mathcal N}=\mathcal N\cap H^2_N=\{0\}\ \ \text{ and } \ \ 
  P_{\mathcal K_1}|_{\mathcal N}\in\mathcal I(( U_1\oplus S_N)^*|_{\mathcal N}, U_1^{-1}), \end{equation*} 
we obtain 
that $U^{-1}\buildrel i\over\prec  U_1^{-1}$. 

Since $T''=T$, we can apply already proved result and obtain that $U_1^{-1}\buildrel i\over\prec  U^{-1}$.  
Consequently, $U\cong U_1$. 
\end{proof}

\begin{proposition} \label{prop2} Suppose that  $V$ is an isometry, $T$ is expansive, and  $T\approx V$. 
Then $T'\approx V$. 
\end{proposition}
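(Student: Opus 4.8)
The plan is to show first that $T'$ is similar to an isometry, and then to identify that isometry with the given $V$ by means of Proposition \ref{prop1}. Since $T$ is expansive it is bounded below, hence left-invertible, so the Cauchy dual $T'=T(T^*T)^{-1}$ is defined and $T^*T'=T^*T(T^*T)^{-1}=I_{\mathcal H}$; iterating this identity gives $(T^n)^*(T')^n=I_{\mathcal H}$ for every $n\in\mathbb N$.

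Next I would use the hypothesis $T\approx V$ only to bound the powers of $T$ uniformly. Fix an invertible $W$ with $WT=VW$. Since $V$ is an isometry, $\|T^n\|=\|W^{-1}V^nW\|\le\|W\|\,\|W^{-1}\|=:C$ for all $n$. Combining this with the identity above, for every $x\in\mathcal H$ and every $n\in\mathbb N$,
\[
\|x\|=\bigl\|(T^n)^*(T')^n x\bigr\|\le\|T^n\|\,\bigl\|(T')^n x\bigr\|\le C\,\bigl\|(T')^n x\bigr\|,
\]
so $\|(T')^n x\|\ge\|x\|/C$ with a constant independent of $n$. By Lemma \ref{lemnotuu}, $T'$ is a contraction, hence power bounded; therefore, by the characterization of operators similar to an isometry recalled in Section 1 (via the isometric asymptote), $T'\approx(T')_+^{(a)}$, and $V_1:=(T')_+^{(a)}$ is an isometry.

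Finally, since $T''=T$, the operators $T$, $T'$ together with the isometries $V$, $V_1$ satisfy the hypotheses of Proposition \ref{prop1}, whence $V\cong V_1$; consequently $T'\approx V_1\cong V$, that is, $T'\approx V$. Once the identity $(T^n)^*(T')^n=I_{\mathcal H}$ and the uniform bound $\sup_n\|T^n\|<\infty$ are in place the argument is essentially one line; the only point deserving attention is that the lower estimate for $\|(T')^n x\|$ comes out uniform in $n$ — exactly what the similarity criterion needs — and that matching the isometric asymptote of $T'$ with the prescribed $V$ really does require Proposition \ref{prop1}, since $\dim\ker T'^*=\dim\ker T^*$ determines only the shift part of $V_1$, not its unitary summand.
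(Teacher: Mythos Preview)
Your proof is correct and follows essentially the same approach as the paper's: both use $T^{*n}T'^n=I_{\mathcal H}$ together with the power boundedness of $T$ (from $T\approx V$) to get the uniform lower bound $\|T'^nx\|\ge\|x\|/C$, conclude via the isometric-asymptote criterion that the contraction $T'$ is similar to an isometry, and then invoke Proposition~\ref{prop1} to identify that isometry with $V$. The only difference is cosmetic---you spell out the constant $C=\|W\|\,\|W^{-1}\|$ and name $V_1=(T')_+^{(a)}$ explicitly, whereas the paper leaves these implicit.
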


\begin{proof} Since $T\approx V$, we have $C=\sup_{n\in\mathbb N}\|T^n\|<\infty$. Denote by $\mathcal H$
 the space on which $T$ acts.  For $x\in\mathcal H$ and $n\in\mathbb N$ we have
\begin{equation*} \|x\|=\|T^{*n}T'^nx\|\leq\|T^{*n}\|\|T'^nx\|\leq C \|T'^nx\|.
\end{equation*}
Since $T'$ is a contraction, the estimate $\inf_{n\in\mathbb N}\|T'^nx\|\geq\frac{1}{C}\|x\|$ ($x\in\mathcal H$) implies that 
$T'$ is similar to an isometry. By Proposition \ref{prop1}, $T'\approx V$.
\end{proof}

The following two examples show that an expansive operator $T$ in  Proposition \ref{prop2} cannot be replaced by contraction. 
In Example \ref{exa53} $T$ is an expansive operator such that $T'\approx S$, $T\sim S$ and $T\not\approx S$. 
In Example \ref{exa54} $T$ is an expansive operator such that $T'\approx S$ and $\mathcal R^\infty(T)\neq \{0\}$. 
The following result from \cite{nakamura93} will be used. 

 Let $g\in H^2$ be such that $\|g\|=1$ and $0<|g(0)|<1$. Set 
\begin{equation}\label{ttg} T=S-\mathbf{1}\otimes S^*\frac{g}{g(0)}. \end{equation} 
Then $T'=S-g\otimes S^*g$. Let $\omega$ be defined by \eqref{nakamura1} applied to $g$. By {\cite[Theorem 5]{nakamura93}}, 
\begin{equation*} \left[\begin{matrix} \omega\\(1-\omega)g\end{matrix}\right]
\end{equation*}
is the characteristic function of $T'$ (see {\cite[Ch. VI]{nfbk}} for the characteristic function of a contraction). 

\begin{example}\label{exa53} Let $g\in H^2$ be such that $\|g\|=1$, $|g(0)|<1$,  and $1/g\in H^\infty$. 
Define $\omega$ by \eqref{nakamura1} and $T$ by \eqref{ttg}.
Then 
\begin{equation*}\omega+\frac{1}{g}(1-\omega)g=1.\end{equation*}
 By \cite{nf73} or \cite{nf76}, 
$T'\approx S$. Furthermore, by   Lemma \ref{lemgsimss}(ii), $T\sim S$. Indeed, $T_{\frac{\overline{g(0)}g}{g(0)\overline g}}$ 
is a quasiaffinity, because $g$, $1/g\in H^2$, and $T_\varphi T_{\frac{\overline g}{\overline{g(0)}}}$
 is a quasiaffinity for some approriate $\varphi$, 
because $g$ is outer.  

 By Lemma \ref{lemgsimss}(iii), $T$ is similar to an isometry if and only if $T_{\frac{\overline{g(0)}g}{g(0)\overline g}}$ is invertible,
 what is of course equivalent that $T_{\frac{g}{\overline g}}$ is invertible. If  $T_{\frac{g}{\overline g}}$ is invertible, then by 
{\cite[Corollary 3.2.2]{peller}} there exists $p>2$ and $f\in H^p$ such that $1/f\in H^p$ and 
 $\frac{g}{\overline g}=\frac{\overline f}{f}$. Since $gf\in H^1$ and $gf=\overline{gf}$, 
we conclude that $gf\equiv c$ for some $c\in\mathbb C$. Thus, if $T_{\frac{g}{\overline g}}$ is invertible, then 
there exists $p>2$ such that $g$, $1/g\in H^p$.
Consequently, if $g\not\in H^p$ for any $p>2$, then 
 $T$ is not similar to an isometry.  The function  $g$ such that  $1/g\in H^\infty$ and 
 $g\not\in H^p$ for any $p>2$ is given in {\cite[Sec. 7, Example]{zheng}}. 
Namely, let $g_0$ be the outer function such that 
\begin{equation*}|g_0(\mathrm{e}^{\mathrm{i}\pi t})|=\frac{1}{|t|^{\frac{1}{2}}\log\frac{2}{|t|}},\ \ \ t\in(-1,0)\cup(0,1), 
\end{equation*}
and $g=g_0/\|g_0\|$.
\end{example}

\begin{example}\label{exa54} Let $f\in H^2$ be a nonconstant  function such that $\|f\|=1$, $|f|^2\in L^2$, 
and $1/f$, $P_+|f|^2\in H^\infty$. 
For example,  it is sufficient to take  $f$ which is analytic on $\mathbb D$, continuous on $\overline{\mathbb D}$, and 
such that $f(z)\neq 0$ and $|f(z)-f(w)|\leq C|z-w|$ for every $z$, $w\in\overline{\mathbb D}$ and some constant $C$. Let $\omega$ be defined by \eqref{nakamura1} applied to $f$. 
Then $\frac{1}{1-\omega}=P_+|f|^2$. Since $\omega\not\equiv 0$,
 there exist $\varphi_1$, $\varphi_2$, $\theta\in H^\infty$ such that 
$1\not\equiv\theta$ is inner and $\varphi_1\theta+\varphi_2\omega=1$ (see, for example,
the proof of {\cite[Prop. 5.3] {gamal22}}). Since $\omega(0)=0$, we have $\theta(0)\neq 0$.  Note that $\theta$ can be chosen such that $\dim\mathcal K_\theta=\infty$.
 Set $g=\theta f$ and define $T$ by \eqref{ttg}. 
Then
\begin{equation*}  \varphi_2\omega+\frac{1}{1-\omega}\frac{1}{f}\varphi_1(1-\omega)g=1. \end{equation*}
 By \cite{nf73} or \cite{nf76}, 
$T'\approx S$. By Lemma \ref{lemgsimss}(i), $\mathcal R^\infty(T)=\mathcal K_\theta\neq \{0\}$.
\end{example}
 
The following example shows that an expansive operator $T$ in  Proposition \ref{prop2} cannot be replaced by  an operator similar to expansive one.

\begin{example} Suppose that  $g\in H^2$,   $g(0)=1$, and $S^*g\not\equiv 0$. 
Set $E=\mathbf{1}\vee g$, $d_1=(g-1)/\|S^*g\|$ and  $d_2=\mathbf{1}$. 
Then $\{d_1,d_2\}$ is an orthonormal basis of $E$.
Take $a>\|S^*g\|^2$. Let
\begin{equation*} 
Y_0=\left[\begin{matrix}a & \|S^*g\| \\ \|S^*g\| & 1\end{matrix}\right]
\end{equation*}
be the matrix of the positive invertible operator  $Y_0\in\mathcal L(E)$ in the basis $\{d_1,d_2\}$. 
Set $Y=I_{H^2\ominus E}\oplus Y_0$. Then $Y\in\mathcal L(H^2)$ is a positive invertible operator, and $Y\mathbf{1}=g$.

Set $X=Y^{-\frac{1}{2}}$ and  $T=XSX^{-1}$. Then $T'=X^{-1}(S-\mathbf{1}\otimes S^*g)X$.  
Indeed, 
\begin{equation*} T'^*T=X(S^*-S^*g\otimes\mathbf{1})X^{-1}XSX^{-1}=X(S^*-S^*g\otimes\mathbf{1})SX^{-1}=XX^{-1}=I,
\end{equation*}
\begin{equation*} \ker T^*=\{h\in H^2: Xh=c \text{ for some } c\in\mathbb C\},
\end{equation*}
and
\begin{align*} \ker T'^* & =\{h\in H^2: X^{-1}h=cg \text{ for some }  c\in\mathbb C\}\\&=
\{h\in H^2: Y^{\frac{1}{2}}h=cY\mathbf{1}\text{ for some } c\in\mathbb C\} \\ & =
\{h\in H^2: h=cY^{\frac{1}{2}}\mathbf{1}\text{ for some }  c\in\mathbb C\}= \ker T^*.
\end{align*}
If  $T_{\frac{g}{\overline g}}$ is not  invertible, then, by Lemma \ref{lemgsimss}, $T'$ is not similar to an isometry.
\end{example}

\section{Quasisimilarity of expansive operator to the unilateral shift does not preserve the lattice of invariant subspaces}

 If $T$ is a contraction and $T\sim S$, then the intertwining quasiaffinities give a bijection between 
$\operatorname{Lat}T$ and $\operatorname{Lat}S$ (see \cite{gamal02} for more general result, see also references therein). 
 In particular, 
for every quasiaffinity $Y\in\mathcal I(S,T)$  
and every $\{0\}\neq\mathcal M\in \operatorname{Lat}T$ there exists an inner function $\vartheta$ such that 
 $\mathcal M=\operatorname{clos}Y\vartheta H^2$. In this section, it is shown that a contraction $T$ cannot be replaced by an expansive operator 
(Corollary \ref{cor6}). 

Recall that for an inner function $\vartheta$ the space $\mathcal K_\vartheta$ is defined in \eqref{30}.

\begin{lemma}\label{lemthetabetadense} Suppose that $\theta$ and $\beta$ are inner functions. Then 
\begin{equation}\label{kkthetabeta} \mathcal K_{\theta\beta}=\theta \mathcal K_\beta\oplus\mathcal K_\theta\end{equation} and 
\begin{equation}\label{kksubset} (\theta-1)\mathcal K_\beta\subset \mathcal K_{\theta\beta}.\end{equation}
Moreover, 
\begin{equation} \label{6eq} \operatorname{clos}(\theta-1)\mathcal K_\beta= \mathcal K_{\theta\beta}\end{equation}
and if and only if 
\begin{equation} \label{6cap}(\mathcal K_\theta+(\theta-1)\mathcal K_\beta)\cap \beta H^2=\{0\}.\end{equation}
\end{lemma}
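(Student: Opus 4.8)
The plan is to prove \eqref{kkthetabeta} and \eqref{kksubset} by elementary orthogonality, then isolate a single inner-product computation from which the equivalence of \eqref{6eq} and \eqref{6cap} reads off.

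First I would establish \eqref{kkthetabeta}. Since $\theta\beta H^2\subset\theta H^2\subset H^2$, the orthogonal decomposition gives $\mathcal K_{\theta\beta}=H^2\ominus\theta\beta H^2=(H^2\ominus\theta H^2)\oplus(\theta H^2\ominus\theta\beta H^2)=\mathcal K_\theta\oplus(\theta H^2\ominus\theta\beta H^2)$, and since $|\theta|=1$ $m$-a.e. multiplication by $\theta$ is isometric on $H^2$, so $\theta H^2\ominus\theta\beta H^2=\theta(H^2\ominus\beta H^2)=\theta\mathcal K_\beta$; this also shows the sum in \eqref{kkthetabeta} is orthogonal, so the decomposition of a vector is unique. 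For \eqref{kksubset}, note $\theta\beta H^2\subset\beta H^2$ gives $\mathcal K_\beta\subset\mathcal K_{\theta\beta}$, and together with $\theta\mathcal K_\beta\subset\mathcal K_{\theta\beta}$ from \eqref{kkthetabeta} we get $(\theta-1)f=\theta f-f\in\mathcal K_{\theta\beta}$ for every $f\in\mathcal K_\beta$.

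Next I would prove the key claim: if $g=\theta u+v\in\mathcal K_{\theta\beta}$ is the decomposition from \eqref{kkthetabeta} (so $u\in\mathcal K_\beta$, $v\in\mathcal K_\theta$), then
\[g\perp(\theta-1)\mathcal K_\beta\iff u-g\in\beta H^2\iff v+(\theta-1)u\in\beta H^2.\]
Indeed, for $f\in\mathcal K_\beta$ one has $(g,(\theta-1)f)=(g,\theta f)-(g,f)$; since multiplication by $\theta$ is isometric and $v\perp\theta H^2$, $(g,\theta f)=(\theta u,\theta f)=(u,f)$, so $g\perp(\theta-1)f$ for all $f\in\mathcal K_\beta$ iff $(u-g,f)=0$ for all $f\in\mathcal K_\beta$, i.e. iff $u-g\in\mathcal K_\beta^\perp\cap H^2=\beta H^2$; and $u-g=-(v+(\theta-1)u)$, which yields the last equivalence. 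From this, relation \eqref{6eq} (equivalent to $\mathcal K_{\theta\beta}\ominus\operatorname{clos}(\theta-1)\mathcal K_\beta=\{0\}$, both being closed subspaces of $\mathcal K_{\theta\beta}$) means exactly that there is no nonzero $g=\theta u+v$ with $v+(\theta-1)u\in\beta H^2$. For the implication \eqref{6eq}$\Rightarrow$\eqref{6cap}: if $w\in(\mathcal K_\theta+(\theta-1)\mathcal K_\beta)\cap\beta H^2$, write $w=v+(\theta-1)u$ with $v\in\mathcal K_\theta$, $u\in\mathcal K_\beta$; then $g:=\theta u+v$ is orthogonal to $(\theta-1)\mathcal K_\beta$, hence $g=0$, so by uniqueness $u=v=0$ and $w=0$. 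For \eqref{6cap}$\Rightarrow$\eqref{6eq}: if $g=\theta u+v$ is orthogonal to $(\theta-1)\mathcal K_\beta$, then $v+(\theta-1)u\in(\mathcal K_\theta+(\theta-1)\mathcal K_\beta)\cap\beta H^2=\{0\}$, so $v=(1-\theta)u$ and $g=\theta u+v=u\in\mathcal K_\beta$; then $(\theta-1)g\in(\theta-1)\mathcal K_\beta$ and $g\perp(\theta-1)g$ give $(g,\theta g)=\|g\|^2$, hence $\operatorname{Re}(\theta g,g)=\|g\|^2$ and, using $\|\theta g\|=\|g\|$, $\|\theta g-g\|^2=\|\theta g\|^2+\|g\|^2-2\operatorname{Re}(\theta g,g)=0$; thus $(\theta-1)g=0$, and since $\theta\not\equiv\mathbf 1$ it follows that $g=0$.

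The computations above are all elementary; the one point deserving care is the final step of the converse, where the non-constancy of $\theta$ is used to pass from $(\theta-1)g=0$ to $g=0$. Without some such restriction on $\theta$ the stated equivalence fails (for instance $\theta\equiv\mathbf 1$ with $\beta$ non-constant makes \eqref{6cap} trivially true but \eqref{6eq} false), so I would read ``inner function'' here as excluding the constant $\mathbf 1$, or add that hypothesis explicitly. Apart from that, the main thing to get right is bookkeeping: that the sum in \eqref{kkthetabeta} is orthogonal (so that the decomposition $g=\theta u+v$ is unique) and that $\mathcal K_\beta^\perp\cap H^2=\beta H^2$.
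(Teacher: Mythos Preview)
Your argument is correct and, in one direction, cleaner than the paper's. Both proofs isolate the same bijection: with $g=\theta u+v$ ($u\in\mathcal K_\beta$, $v\in\mathcal K_\theta$) one has $g\perp(\theta-1)\mathcal K_\beta$ if and only if $v+(\theta-1)u\in\beta H^2$. The paper obtains this by multiplying by $\overline\theta-1$ and decomposing in $L^2=\beta H^2\oplus\mathcal K_\beta\oplus H^2_-$; your inner-product computation $(g,(\theta-1)f)=(u-g,f)$ does the same job more directly. The genuine difference is in the implication \eqref{6cap}$\Rightarrow$\eqref{6eq}. The paper argues by contrapositive: starting from a nonzero $g=\theta h+f$ orthogonal to $(\theta-1)\mathcal K_\beta$, it produces $(1-\theta)h-f\in\beta H^2$ and then invokes \eqref{31}, i.e.\ $(1-\theta)H^2\cap\mathcal K_\theta=\{0\}$ (derived earlier from the Clark representation and the F.~and~M.~Riesz theorem), to conclude $(1-\theta)h-f\neq 0$. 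You avoid \eqref{31} entirely: once \eqref{6cap} forces $v+(\theta-1)u=0$ you get $g=u\in\mathcal K_\beta$, and the equality case of Cauchy--Schwarz ($(g,\theta g)=\|g\|^2$, $\|\theta g\|=\|g\|$) yields $(\theta-1)g=0$. This is more elementary and self-contained; the paper's route has the advantage of making explicit the role of the nontrivial fact \eqref{31}, which is also used elsewhere. Your remark that the non-constancy of $\theta$ is needed is apt and applies equally to the paper's proof (both to pass from $(\theta-1)g=0$ to $g=0$, and in the paper's use of \eqref{31}).
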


\begin{proof} The equality \eqref{kkthetabeta} follows from the definition of the space $\mathcal K_\vartheta$ for an inner function $\vartheta$ (see \eqref{30}), 
and the inclusion \eqref{kksubset}  easy follows from \eqref{kkthetabeta}. Let $f\in \mathcal K_\theta$ and $h\in\mathcal K_\beta$ be such that 
$0\not\equiv\theta h+f\perp (\theta-1)\mathcal K_\beta$. Then there exist $h_1\in H^2$ and $h_2\in H^2_-$ such that 
\begin{equation*} (1-\theta)h+(\overline\theta-1)f=\beta h_1+h_2.\end{equation*}
By \eqref{30}, $\overline\theta f\in H^2_-$. Consequently, $(1-\theta)h-f=\beta h_1$. By \eqref{31},
 the relation $(1-\theta)h-f\equiv 0$ implies
$h\equiv 0$ and $f\equiv 0$, a contradiction with the assumption on $h$ and $f$. Thus, if \eqref{6eq} is not fulfilled, then \eqref{6cap} 
is not fulfilled.

Conversely, let $f\in \mathcal K_\theta$,  $h\in\mathcal K_\beta$, and  $h_1\in H^2$ be such that $0\not\equiv(1-\theta)h-f=\beta h_1$. 
Then \begin{equation*} (\overline\theta-1)\theta h + (\overline\theta-1)f=\beta h_1+\overline\theta f.\end{equation*}
By \eqref{30}, $(\overline\theta-1)\theta h + (\overline\theta-1)f\perp\mathcal K_\beta$. Consequently, 
$\theta h+f\perp (\theta-1)\mathcal K_\beta$. If $\theta h+f\equiv 0$, then $h\equiv 0$ and $f\equiv 0$, 
 a contradiction with the assumption on $h$ and $f$. Thus, if \eqref{6cap} is not fulfilled, then \eqref{6eq} 
is not fulfilled.
\end{proof}

For $\zeta\in\mathbb T$ and $t_0>0$ set 
\begin{equation}\label{ddelta}\Delta(\zeta, t_0)=\{\zeta\mathrm{e}^{\mathrm{i}t}\ :\ |t|\leq t_0\}.\end{equation} 
 For $\zeta\in\mathbb T$ and $0<s_0<\pi/2$ denote by $\mathcal S(\zeta,s_0)$ the Stolz angle, that is, the closed 
sector with vertex $\zeta$ of angle $2s_0$ 
symmetric with respect to the radius $\{r\zeta\ : r\in[0,1]\}$. (Usually, the Stolz angle assumed to be an open set, but it is convenient to consider closed set here.)  For  $0<r_0<1$ sufficiently close to $1$ 
both rays which form the boundary of $\mathcal S(\zeta,s_0)$ 
intersect the circle $\{|z|=r_0\}$ in two points.  Denote by $z_\pm$  two points from this intersection 
closest to $\zeta$. Define $t(s_0,r_0)$ as follows:  $z_\pm=r_0\zeta\mathrm{e}^{\pm\mathrm{i}t(s_0,r_0)}$. 
Then 
\begin{equation}\label{tsr}\tan s_0=\frac{r_0\sin t(s_0,r_0)}{1-r_0\cos t(s_0,r_0)}.\end{equation}
Set \begin{equation}\label{sszsr}
\mathcal S(\zeta,s_0,r_0)=\mathcal S(\zeta,s_0)\cap\{r\zeta\mathrm{e}^{\mathrm{i}t}\ :\ |t|\leq t(s_0,r_0),\  r_0\leq r\leq 1\}.\end{equation}
Then $\mathcal S(\zeta,s_0,r_0) $ is  a ``triangle" with vertices $\zeta$ and $r_0\zeta\mathrm{e}^{\pm\mathrm{i}t(s_0,r_0)}$; its two edges are segments and one edge is a subarc of 
the circle $\{|z|=r_0\}$.

The following simple lemma is given by convenience of references; its proofs is omitted.

\begin{lemma} \label{lemepsilon} Let $0<\varepsilon<1$, and let $\zeta_0\in\mathbb T$. Set $\lambda_0=(1-\varepsilon)\zeta_0$ and define
 $0<s(\varepsilon)<\pi/2$ by the formula 
\begin{equation*} \tan s(\varepsilon)=\frac{(1-\varepsilon)\sin\varepsilon}{1-(1-\varepsilon)\cos\varepsilon}.
\end{equation*}
Then $s(\varepsilon)\to\pi/4$ as $\varepsilon\to 0$. Furthermore, if  $\zeta\in\Delta(\zeta_0,\varepsilon)$, then 
$|\zeta-\lambda_0|^2\leq \varepsilon^2+2(1-\varepsilon)(1-\cos\varepsilon)$ and 
$\lambda_0\in\mathcal S(\zeta, s(\varepsilon))$.  
\end{lemma}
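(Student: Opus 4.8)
The plan is to normalize and then compute. Since the arc $\Delta(\zeta_0,\varepsilon)$, the point $\lambda_0=(1-\varepsilon)\zeta_0$, the number $s(\varepsilon)$, and the Stolz angles $\mathcal S(\zeta,s_0)$ are all covariant under rotations of $\mathbb T$, I would first apply the rotation $z\mapsto\overline{\zeta_0}z$ and so assume $\zeta_0=1$; then $\lambda_0=1-\varepsilon\in(0,1)$, and a point $\zeta\in\Delta(1,\varepsilon)$ is $\zeta=\mathrm{e}^{\mathrm{i}t}$ with $|t|\le\varepsilon$. It is worth noting that the defining identity for $s(\varepsilon)$ is precisely \eqref{tsr} with $r_0=1-\varepsilon$ and $t(s_0,r_0)=\varepsilon$: it is the value of the half-angle for which the containment $\lambda_0\in\mathcal S(\zeta,s(\varepsilon))$ becomes an equality at the extreme points $\zeta=\mathrm{e}^{\pm\mathrm{i}\varepsilon}$ of $\Delta(1,\varepsilon)$, which is why one expects it to be the correct (and smallest possible) choice.

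For the limit statement I would use $\sin\varepsilon=\varepsilon+O(\varepsilon^{3})$ and $1-\cos\varepsilon=\tfrac12\varepsilon^{2}+O(\varepsilon^{4})$ together with the rewriting $1-(1-\varepsilon)\cos\varepsilon=(1-\cos\varepsilon)+\varepsilon\cos\varepsilon$. Dividing numerator and denominator of the defining fraction by $\varepsilon$, both quotients tend to $1$ as $\varepsilon\to0$, hence $\tan s(\varepsilon)\to1$ and $s(\varepsilon)\to\pi/4$.

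For the norm estimate I would simply expand, for $\zeta=\mathrm{e}^{\mathrm{i}t}$ with $|t|\le\varepsilon$,
\[ |\zeta-\lambda_0|^{2}=1-2(1-\varepsilon)\cos t+(1-\varepsilon)^{2}=\varepsilon^{2}+2(1-\varepsilon)(1-\cos t), \]
and then invoke $\cos t\ge\cos\varepsilon$ for $|t|\le\varepsilon$ to conclude $|\zeta-\lambda_0|^{2}\le\varepsilon^{2}+2(1-\varepsilon)(1-\cos\varepsilon)$.

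The containment $\lambda_0\in\mathcal S(\zeta,s(\varepsilon))$ is the only assertion requiring a small argument, and I expect it to be the main point. Since $|\lambda_0|<1=|\zeta|$ the vector $\lambda_0-\zeta$ points strictly into the disc, so membership in the (closed) sector $\mathcal S(\zeta,s(\varepsilon))$ is equivalent to the angle between $\lambda_0-\zeta$ and the inward radial direction $-\zeta$ at $\zeta$ being at most $s(\varepsilon)$. Multiplying through by the unimodular factor $\overline\zeta=\mathrm{e}^{-\mathrm{i}t}$ (an angle-preserving rotation), this is the angle between $(1-\varepsilon)\mathrm{e}^{-\mathrm{i}t}-1$ and $-1$; by the reflection symmetry $t\mapsto-t$ one may assume $0\le t\le\varepsilon$, and since $(1-\varepsilon)\mathrm{e}^{-\mathrm{i}t}-1$ then lies in the closed third quadrant while $-1$ lies on the negative real axis, that angle equals $\arctan\psi(t)$ with $\psi(t):=\dfrac{(1-\varepsilon)\sin t}{1-(1-\varepsilon)\cos t}$, exactly the ratio appearing in \eqref{tsr}. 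Hence it suffices to prove $\psi(t)\le\psi(\varepsilon)=\tan s(\varepsilon)$ for $0\le t\le\varepsilon$, i.e. that $\psi$ is nondecreasing on $[0,\varepsilon]$. Writing $r=1-\varepsilon$, differentiation gives $\psi'(t)=\dfrac{r(\cos t-r)}{(1-r\cos t)^{2}}$, so $\psi'\ge0$ on $[0,\varepsilon]$ as soon as $\cos t\ge r=1-\varepsilon$ there, and the latter follows from $\cos t\ge\cos\varepsilon\ge1-\tfrac12\varepsilon^{2}\ge1-\varepsilon$ (using $0<\varepsilon<1$). The only point demanding care is the bookkeeping of signs and quadrants that legitimizes identifying the relevant angle with $\arctan$ of the ratio of the absolute values of the components of the vector; everything else is elementary.
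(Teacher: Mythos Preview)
Your argument is correct in every part: the rotation reduction, the Taylor-expansion limit, the direct computation of $|\zeta-\lambda_0|^2$, and the monotonicity argument for $\psi(t)$ (including the check $\cos\varepsilon\ge 1-\varepsilon^2/2\ge 1-\varepsilon$) all go through cleanly. The paper itself omits the proof of this lemma, calling it simple and leaving it to the reader, so there is nothing to compare against; your write-up is exactly the kind of verification intended.
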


For $\lambda\in\mathbb D$, $\lambda\neq 0$,  a Blaschke factor is $b_\lambda(z)=\frac{|\lambda|}{\lambda}\frac{\lambda-z}{1-\overline\lambda z}$ 
($z\in\mathbb D$). If $\Lambda\subset\mathbb D$ satisfies
 the Blaschke condition 
\begin{equation} \label{6blaschke}\sum_{\lambda\in\Lambda} (1-|\lambda|)<\infty, \end{equation}
then the Blaschke product  $\beta=\prod_{\lambda\in\Lambda} b_\lambda$ converges and $\beta$ is an inner function.

For $\zeta\in\mathbb T$ and  $0<s<1$ set
\begin{equation*} Q(\zeta, s)=\{z\in\mathbb D\ :\ 1-s\leq |z|<1, \ \frac{z}{|z|}\in\Delta(\zeta, s)\}.
\end{equation*} 
The set $Q(\zeta, s)$ is called the Carleson box or the Carleson window. Let $\Lambda\subset\mathbb D$ satisfy \eqref{6blaschke}.
A particular case of the Carleson embedding theorem 
(see, for example, {\cite[Theorem 11.22]{gmr}}) is the following: the relations 
\begin{equation} \label{6carleson} \sum_{\lambda\in\Lambda} |h(\lambda)|^2(1-|\lambda|)<\infty \ \text{ for every }h\in H^2
\end{equation}
and 
\begin{equation} \label{6carlesonqq} 
\sup_{\zeta\in\mathbb T\atop 0<s<1}\frac{1}{s}\sum_{\lambda\in\Lambda\cap Q(\zeta, s)} (1-|\lambda|)<\infty
\end{equation}
are equivalent.

\begin{lemma}\label{lem61} Suppose that $0<s_0<\pi/2$,  $\Lambda\subset\mathbb D$, $\Lambda$ satisfies \eqref{6carleson}, 
$\nu$ is a singular positive Borel measure on $\mathbb T$,
and 
\begin{equation*}
\nu(\{\zeta\in\mathbb T\ :\ \zeta\in\operatorname{clos}(\Lambda\cap\mathcal S(\zeta,s_0))\}= \nu(\mathbb T)=1.
\end{equation*}
Set $\beta=\prod_{\lambda\in\Lambda}b_\lambda$, and define $\theta$ by \eqref{thetaclark}. Then $\theta$ and $\beta$ satisfy \eqref{6cap}.
\end{lemma}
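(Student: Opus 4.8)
The plan is to prove \eqref{6cap} directly. Take $g\in(\mathcal K_\theta+(\theta-1)\mathcal K_\beta)\cap\beta H^2$ and write $g=f+(\theta-1)h$ with $f\in\mathcal K_\theta$, $h\in\mathcal K_\beta$; I claim it suffices to show $f=0$. Indeed, set $\Phi(z)=\int_{\mathbb T}\frac{f(\zeta)}{1-z\bar\zeta}\,\mathrm d\nu(\zeta)$ for $z\in\mathbb D$; by \eqref{yy1}, $f=(1-\theta)\Phi$ on $\mathbb D$, and by \eqref{thetaclark}, $\Phi(\lambda)=f(\lambda)/(1-\theta(\lambda))=f(\lambda)\int_{\mathbb T}\frac{\mathrm d\nu(\zeta)}{1-\lambda\bar\zeta}$ for $\lambda\in\mathbb D$ (here $1-\theta(\lambda)\neq0$ because $\theta$ is inner with $\theta(0)=0$, hence non-constant, hence $|\theta|<1$ on $\mathbb D$). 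Since $g=(1-\theta)(\Phi-h)$ and $g\in\beta H^2$, the function $\Phi-h$ vanishes on the zero set $\Lambda$ of $\beta$, i.e. $\Phi(\lambda)=h(\lambda)$ for all $\lambda\in\Lambda$. If $f=0$ then $\Phi\equiv0$, so $h$ vanishes on $\Lambda$, whence $h\in\beta H^2\cap\mathcal K_\beta=\{0\}$ and $g=0$; and if $f\neq0$ then, again by \eqref{yy1}, $f$ is not $\nu$-a.e.\ zero.

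So assume $f\neq0$; then $\nu(G)>0$ where $G=\{\zeta\in\mathbb T:|f(\zeta)|>c\}$ for a suitable $c>0$. Let $(P\nu)$ denote the Poisson integral of $\nu$. From $\operatorname{Re}\frac1{1-\lambda\bar\zeta}\ge\frac12\,\frac{1-|\lambda|^2}{|1-\lambda\bar\zeta|^2}$ one gets $|\Phi(\lambda)|\ge\frac12|f(\lambda)|\,(P\nu)(\lambda)$, so, using $\Phi(\lambda)=h(\lambda)$, $h\in\mathcal K_\beta\subset H^2$, and the Carleson hypothesis \eqref{6carleson},
\begin{equation*}
\sum_{\lambda\in\Lambda}(1-|\lambda|)\,|f(\lambda)|^2\,(P\nu)(\lambda)^2\ \le\ 4\sum_{\lambda\in\Lambda}(1-|\lambda|)\,|h(\lambda)|^2\ <\ \infty .
\end{equation*}
The core of the argument is to contradict this. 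For $\nu$-a.e.\ $\zeta$ the following hold at once: $f$ has nontangential limit $f(\zeta)$ at $\zeta$ (as recalled before \eqref{yy1}); $\zeta\in\operatorname{clos}(\Lambda\cap\mathcal S(\zeta,s_0))$, by hypothesis; and $\nu(\Delta(\zeta,t))/t\to\infty$ as $t\to0^+$, since $\nu\perp m$ (differentiation of measures; equivalently, $\theta$ has nontangential limit $1$ at $\zeta$). By Egorov one may shrink $G$ to a set $G'$ with $\nu(G')>0$ on which the last two statements hold uniformly. For $\lambda\in\Lambda$ put $I(\lambda)=\{\zeta\in\mathbb T:\lambda\in\mathcal S(\zeta,s_0)\}$, an arc centred at $\lambda/|\lambda|$ of length comparable to $1-|\lambda|$. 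Then, for all $\lambda$ with $1-|\lambda|$ below a fixed threshold and $I(\lambda)\cap G'\neq\emptyset$, a point $\zeta_\lambda\in I(\lambda)\cap G'$ gives $|f(\lambda)|\ge c/2$ (uniform nontangential convergence) and, since a window over $\lambda$ of width $\asymp1-|\lambda|$ contains $\Delta(\zeta_\lambda,\kappa(1-|\lambda|))$ for a fixed $\kappa$, also $(P\nu)(\lambda)\gtrsim\nu(\Delta(\zeta_\lambda,\kappa(1-|\lambda|)))/(1-|\lambda|)\gtrsim1$; hence the $\lambda$-term above is $\gtrsim(1-|\lambda|)(P\nu)(\lambda)^2\gtrsim\nu(\Delta(\zeta_\lambda,\kappa(1-|\lambda|)))\geq\nu(I(\lambda))$. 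On the other hand every $\zeta\in G'$ belongs to $I(\lambda)$ for infinitely many $\lambda\in\Lambda$ with $1-|\lambda|\to0$, so the sets $\bigcup\{I(\lambda):1-|\lambda|<1/m,\ I(\lambda)\cap G'\neq\emptyset\}$ all have $\nu$-measure $\ge\nu(G')>0$; thus the tails of $\sum\nu(I(\lambda))$ over the relevant $\lambda$ stay bounded below, so this sum — and hence the displayed sum — diverges. This contradiction yields $f=0$, hence $g=0$, and \eqref{6cap} is proved.

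The step I expect to be the real obstacle is the last one: matching the scale $1-|\lambda|$ of a zero $\lambda\in\Lambda$ to the scale at which $\nu(\Delta(\zeta,\cdot))$ is large and at which $f$ has essentially reached its boundary value, and then converting the resulting family of windows into a genuine divergence by a covering / Borel–Cantelli argument, uniformly enough (via the Egorov step) that the implied constants do not collapse to $0$ as $1-|\lambda|\to0$. Everything else is soft: \eqref{6carleson} is used only for the finiteness of $\sum_\lambda(1-|\lambda|)|h(\lambda)|^2$, and \eqref{thetaclark} only through $1/(1-\theta)=\int_{\mathbb T}\frac{\mathrm d\nu(\zeta)}{1-\,\cdot\,\bar\zeta}$ together with \eqref{yy1}.
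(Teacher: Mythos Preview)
Your approach is correct and is essentially the same as the paper's: both show that if $f\not\equiv0$ then $\sum_{\lambda\in\Lambda}(1-|\lambda|)\,|h(\lambda)|^2$ diverges by writing $h(\lambda)=f(\lambda)/(1-\theta(\lambda))$, bounding $|1-\theta(\lambda)|^{-1}$ below by a Poisson-type average of $\nu$ over an arc of scale $1-|\lambda|$, and then covering a set $\tau$ of positive $\nu$-measure by such arcs coming from zeros $\lambda$ approaching $\tau$ through the Stolz angle. Your flagged ``real obstacle'' is exactly where the paper is explicit --- it fixes one constant $C$ with $\nu(\Delta(\zeta,t))\ge Ct$ for $t\le t_j$ on a set $\sigma_j$, and one pair $(\delta,r_0)$ with $|f|\ge\delta$ on the truncated Stolz triangles $\mathcal S(\zeta,s_0,r_0)$ for $\zeta$ in a set $\tau_0$, then intersects to get $\tau=\tau_0\cap\sigma_j\cap\{\zeta\in\operatorname{clos}(\Lambda\cap\mathcal S(\zeta,s_0))\}$ with $\nu(\tau)>0$ --- which is precisely the Egorov-type uniformization you sketch, so your constants do not collapse.
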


\begin{proof} Let $C>0$,  and let $\{t_j\}_{j=1}^\infty$ be such that $t_j>0$ and $t_j\to 0$. Set 
\begin{equation*}\sigma_j=\{\zeta\in\mathbb T\ : \ \nu(\Delta(\zeta, t))\geq Ct \ \text{ for all } 0<t\leq t_j\}\ \ (j\geq 1).
\end{equation*}
Since $\nu$ is a singular measure, we have $\nu(\mathbb T)=\nu(\cup_{j=1}^\infty\sigma_j)$ 
(see, for example, {\cite[Theorem 1.2]{gmr}} or  {\cite[formula (II.6.3)]{garnett}}).

Let $0\not\equiv f\in\mathcal K_\theta$. 
By \eqref{yy1}, 
$f$ has nontangential boundary values $f(\zeta)$ for $\nu$-a.e. $\zeta\in\mathbb T$, and 
$\nu(\{\zeta\in\mathbb T\ :\ f(\zeta)\neq 0\})>0$. Therefore, there exist $\delta>0$ and $0<r_0<1$ such that 
$\nu(\tau_0)>0$, where
\begin{equation*}\tau_0=\{\zeta\in\mathbb T\ : \ |f(z)|\geq\delta \ \text{ for every } z\in \mathcal S(\zeta,s_0,r_0)\}
\end{equation*}
and  $\mathcal S(\zeta,s_0,r_0)$ is defined in \eqref{sszsr}.
Indeed, let $\{\delta_n\}_{n=1}^\infty$ and $\{r_n\}_{n=1}^\infty$ be such that $\delta_n>0$, $0<r_n<1$,  
$\delta_n\to 0$, 
and $r_n\to 1$. Set 
\begin{equation*}\tau_{nk}=\{\zeta\in\mathbb T\ : \ |f(z)|\geq\delta_n \ \text{ for every } z\in \mathcal S(\zeta,s_0,r_k)\}.
\end{equation*}
Then $\{\zeta\in\mathbb T\ :\ f(\zeta)\neq 0\}=\cup_{n,k=1}^\infty \tau_{nk}$. Consequently, there exist $n$ and $k$ such that 
$\nu(\tau_{nk})>0$. Set $\delta=\delta_n$, $r_0=r_k$, and $\tau_0=\tau_{nk}$. Furthermore, there exists $j$ such that 
$\nu(\tau_0\cap\sigma_j)>0$. Set $t_0=t_j$ and 
\begin{equation*}\tau=\tau_0\cap\sigma_j\cap
\{\zeta\in\mathbb T\ :\ \zeta\in\operatorname{clos}(\Lambda\cap\mathcal S(\zeta,s_0))\}.\end{equation*} 
Then $\nu(\tau)>0$.

Let $h\in H^2$ be such that $f+(\theta-1)h\in\beta H^2$. Then $h(\lambda)=f(\lambda)/(1-\theta(\lambda))$ for every $\lambda\in\Lambda$.
The equality \eqref{thetaclark} implies that 
\begin{equation*} \frac{1}{|\theta(z)-1|}\geq\int_{\mathbb T}\mathrm{Re}\frac{1}{1-z\overline\zeta}\mathrm{d}\nu(\zeta)
\geq\frac{1-|z|\cos t}{1-2|z|\cos t+|z|^2}\nu\Bigl(\Delta\Bigl(\frac{z}{|z|}, t\Bigr)\Bigr)\end{equation*}
for every $z\in\mathbb D$ and $0<t<\pi/2$.

Let $\lambda\in\Lambda\cap\mathcal S(\zeta,s_0,r_0)$ for some $\zeta\in\tau$. Then 
\begin{equation*} |h(\lambda)|=\frac{|f(\lambda)|}{|\theta(\lambda)-1|}\geq 
\delta\frac{1-|\lambda|\cos t}{1-2|\lambda|\cos t+|\lambda|^2}\nu\Bigl(\Delta\Bigl(\frac{\lambda}{|\lambda|}, t\Bigr)\Bigr)  \end{equation*}
for every  $0<t<\pi/2$. Set $t(\lambda)=2t(s_0,|\lambda|)$, where $t(s_0,r)$ is defined before \eqref{tsr} for $0<s_0<\pi/2$ and $0<r<1$ sufficiently close to $1$.  Then
\begin{equation}\label{6c} \begin{aligned}t(\lambda) & \sim 2(1-|\lambda|)\tan s_0 \ \text{ and }\\
 \frac{1-|\lambda|\cos t(\lambda) }{1-2|\lambda|\cos t(\lambda) +|\lambda|^2} & \sim \frac{1}{1+4\tan^2s_0}\frac{1}{1-|\lambda|}
 \ \text{ as } |\lambda|\to 1\end{aligned}
\end{equation}
(where $a(t)\sim b(t)$ as $t\to c$ means that $\lim_{t\to c}a(t)/b(t)=1$).
Furthermore, 
\begin{equation*} \Delta(\zeta, t(s_0,|\lambda|))\subset\Delta\Bigl(\frac{\lambda}{|\lambda|}, t(\lambda)\Bigr).
\end{equation*}
It follows from this inclusion and the  construction of $\tau$ that 
 $\nu(\Delta(\frac{\lambda}{|\lambda|}, t(\lambda)))\geq Ct(s_0,|\lambda|)=Ct(\lambda)/2$, if $t(s_0,|\lambda|)\leq t_0$. 
By \eqref{6c}, there exists $0<c<1$ (which does not depend on $\lambda$) such that 
\begin{align*} 
&|h(\lambda)|^2(1-|\lambda|)\\& 
\geq c \delta^2 C (1-|\lambda|)\tan s_0 \frac{1}{(1+4\tan^2s_0)^2}\frac{1}{(1-|\lambda|)^2}(1-|\lambda|)
\nu\Bigl(\Delta\Bigl(\frac{\lambda}{|\lambda|}, t(\lambda)\Bigr)\Bigr)\\&= c \delta^2 C \tan s_0 \frac{1}{(1+4\tan^2s_0)^2}
\nu\Bigl(\Delta\Bigl(\frac{\lambda}{|\lambda|},t(\lambda)\Bigr)\Bigr).
\end{align*}
Set $C_1=c\delta^2 C \tan s_0 \frac{1}{(1+4\tan^2s_0)^2}$. Let $0<\varepsilon<1-r_0$.
Then 
\begin{align*} 
\sum_{\lambda\in\Lambda: |\lambda|\geq 1-\varepsilon}&|h(\lambda)|^2(1-|\lambda|)\geq 
\sum_{\lambda\in\Lambda\cap(\cup_{\zeta\in\tau}\mathcal S(\zeta,s_0,r_0))
: \atop |\lambda|\geq 1-\varepsilon}|h(\lambda)|^2(1-|\lambda|) \\&
\geq \sum_{\lambda\in\Lambda\cap(\cup_{\zeta\in\tau}\mathcal S(\zeta,s_0,r_0))
:\atop |\lambda|\geq 1-\varepsilon} C_1\nu\Bigl(\Delta\Bigl(\frac{\lambda}{|\lambda|},t(\lambda)\Bigr)\Bigr)\\&
\geq C_1 \nu\Bigl(\bigcup_{\lambda\in\Lambda\cap(\cup_{\zeta\in\tau}\mathcal S(\zeta,s_0,r_0))
:\atop |\lambda|\geq 1-\varepsilon}\Delta\Bigl(\frac{\lambda}{|\lambda|},t(\lambda)\Bigr)\Bigr)
\geq C_1 \nu(\tau),
\end{align*} 
because for every $\zeta\in\tau$  there exists $\lambda\in \Lambda\cap\mathcal S(\zeta,s_0,r_0)$ with $|\lambda|\geq 1-\varepsilon$. 
Consequently, 
\begin{equation*} \lim_{\varepsilon\to 0}\sum_{\lambda\in\Lambda: |\lambda|\geq 1-\varepsilon}
|h(\lambda)|^2(1-|\lambda|)\geq C_1 \nu(\tau)>0,
\end{equation*}
a contradiction with \eqref{6carleson}. Thus, if $f\in\mathcal K_\theta$ and $h\in H^2$ are such that $f+(\theta-1)h\in\beta H^2$, then  
$f\equiv 0$ and $(\theta-1)h\in\beta H^2$. Since $\theta-1$ is outer, we have $h\in\beta H^2$. If $h\in\mathcal K_\beta$, then $h\equiv 0$.
\end{proof}

The following simple lemma is given by convenience of references; its proofs is omitted.

\begin{lemma}\label{lemkk} Let $K\subset \mathbb T$ be a  compact, and let $\delta>0$. 
 If $m(K)=0$, then there exist $N\in\mathbb N$
 and nonempty closed subarcs 
 $\{\Delta_k\}_{k=1}^N$ such that $\Delta_k\subset\mathbb T$, $\Delta_k\cap\Delta_j=\emptyset$, if $k\neq j$ $(1\leq k,j\leq N)$, $K\subset\cup_{k=1}^N\Delta_k$  and $\sum_{k=1}^N\pi m(\Delta_k)<\delta$. 
\end{lemma}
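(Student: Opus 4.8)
The plan is to deduce the statement from outer regularity of the measure $m$ together with the description of open subsets of $\mathbb{T}$. First I would use $m(K)=0$ to choose an open set $U\subset\mathbb{T}$ with $K\subset U$, $\pi m(U)<\delta$, and $m(U)<1$; such a $U$ exists because a set of measure zero can be covered by open sets of arbitrarily small measure. Since $m(U)<1=m(\mathbb{T})$, we have $U\neq\mathbb{T}$, so $U$ is the disjoint union of its connected components $\{I_j\}_j$, each of which is an open proper subarc of $\mathbb{T}$ (and there are at most countably many of them).

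Next, since $K$ is compact and the $I_j$ form an open cover of $K$, finitely many of them already cover $K$: $K\subset I_{j_1}\cup\dots\cup I_{j_n}$. The point I would exploit is that, because each $I_{j_i}$ is a \emph{maximal} open connected subset of $U$, its two endpoints lie in $\mathbb{T}\setminus U$, hence in $\mathbb{T}\setminus K$; consequently $K_i:=K\cap I_{j_i}=K\cap\overline{I_{j_i}}$ is closed, hence compact. Discarding the indices with $K_i=\emptyset$ and relabelling, for each remaining $i$ let $\Delta_i$ be the smallest closed subarc of $I_{j_i}$ containing $K_i$; this is well defined because $I_{j_i}$ is a proper arc and $K_i$ is a nonempty compact subset of it, so parametrising $I_{j_i}$ as an interval one may take $\Delta_i$ to be the segment between $\min K_i$ and $\max K_i$. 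If some $\Delta_i$ happens to be a single point, I would enlarge it slightly inside $I_{j_i}$ to a nondegenerate closed subarc, choosing the finitely many enlargements small enough that $\sum_i m(\Delta_i)$ remains below $\delta/\pi$ (there is room to do so since $m(U)<\delta/\pi$).

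Finally, the arcs $\Delta_i$ are pairwise disjoint, since they lie in the pairwise disjoint arcs $I_{j_i}\subset U$; they are nonempty closed subarcs of $\mathbb{T}$; and $K=\bigcup_i K_i\subset\bigcup_i\Delta_i$. Moreover $\sum_i m(\Delta_i)\le\sum_i m(I_{j_i})\le m(U)<\delta/\pi$, whence $\sum_i\pi m(\Delta_i)<\delta$; taking $N$ to be the number of these arcs yields the conclusion. I do not expect a genuine obstacle here: the only two points needing any care are the compactness of each $K_i$, which is secured by the maximality of the components of $U$, and — if one insists that the $\Delta_k$ be nondegenerate — the harmless enlargement step; both are routine, which is presumably why the statement was offered without proof.
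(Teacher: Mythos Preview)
Your argument is correct: outer regularity of $m$ gives a small open neighbourhood $U\neq\mathbb T$ of $K$, its connected components are open proper arcs whose endpoints lie outside $K$, compactness reduces to finitely many of them, and shrinking each to the minimal closed subarc containing the relevant piece of $K$ (with a harmless enlargement if a degenerate arc occurs) yields the desired family. The paper itself omits the proof, declaring the lemma ``simple'' and ``given by convenience of references'', so there is nothing further to compare.
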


\begin{lemma} \label{lemcarlesonkk}
 Suppose that $\{K_n\}_{n=1}^\infty$ is a sequence of compact subsets of $\mathbb T$ 
such that $K_n\subset K_{n+1}$ and 
$m(K_n)=0$ for all $n=1,2, \ldots$. Then there exists $\Lambda\subset\mathbb D$  which satisfies \eqref{6blaschke} and  \eqref{6carlesonqq}  and
 such that
\begin{equation} \label{nontang} \zeta\in\operatorname{clos}(\Lambda\cap\mathcal S(\zeta,s))\end{equation} 
 for every $\zeta\in\cup_{n=1}^\infty K_n$ and $\pi/4<s<\pi/2$.
\end{lemma}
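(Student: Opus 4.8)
The plan is to construct $\Lambda$ by an explicit covering procedure, placing each point just inside an arc that covers part of some $K_n$, at a depth comparable to the length of that arc, so that the required nontangential visibility is supplied by Lemma \ref{lemepsilon}. First I fix an exhaustion: choose recursively scales $\varepsilon_1>\varepsilon_2>\cdots$ with $\varepsilon_{j+1}\le\varepsilon_j/2$ and indices $n_j\to\infty$ so that (using $m(K_{n_j})=0$, Lemma \ref{lemkk}, and the fact that the total length of a cover of an $m$-null compact set by disjoint arcs of length comparable to $\varepsilon_j$ tends to $0$ as $\varepsilon_j\to0$) at stage $j$ one covers $K_{n_j}$ by finitely many pairwise disjoint closed arcs $\Delta_{j,1},\dots,\Delta_{j,N_j}$, each of arclength in $[\varepsilon_j,2\varepsilon_j]$, with total length $\rho_j:=\sum_k|\Delta_{j,k}|$ prescribed in advance to decay as fast as the Carleson estimate below requires. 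Letting $c_{j,k}$ be the midpoint of $\Delta_{j,k}$, put the Blaschke points $\lambda_{j,k}:=(1-\varepsilon_j)c_{j,k}$ ($1\le k\le N_j$, $j\ge1$) into $\Lambda$.

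Next I check the two easy requirements. The Blaschke condition \eqref{6blaschke} holds since $\sum_{\lambda\in\Lambda}(1-|\lambda|)=\sum_j N_j\varepsilon_j\le\sum_j\rho_j<\infty$ (each $|\Delta_{j,k}|\ge\varepsilon_j$, so $N_j\varepsilon_j\le\rho_j$), whence $\beta=\prod_{\lambda\in\Lambda}b_\lambda$ is a well-defined inner function. For \eqref{nontang}, fix $\zeta\in\cup_n K_n$, say $\zeta\in K_{n_0}$. For every $j$ with $n_j\ge n_0$ one has $\zeta\in K_{n_j}$, hence $\zeta\in\Delta_{j,k}$ for some $k$, so $\zeta\in\Delta(c_{j,k},\varepsilon_j)$ and, by Lemma \ref{lemepsilon}, $\lambda_{j,k}\in\mathcal S(\zeta,s(\varepsilon_j))$ with $|\lambda_{j,k}|=1-\varepsilon_j$; since $s(\varepsilon_j)\to\pi/4$, $\varepsilon_j\to0$, and there are infinitely many such $j$, it follows that $\zeta\in\operatorname{clos}(\Lambda\cap\mathcal S(\zeta,s))$ for every $\pi/4<s<\pi/2$.

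The heart of the matter is the Carleson-box condition \eqref{6carlesonqq} (equivalently \eqref{6carleson}). For a window $Q(\zeta,s)$ only the stages $j$ with $\varepsilon_j\le s$ can contribute, and $\lambda_{j,k}\in Q(\zeta,s)$ forces $c_{j,k}\in\Delta(\zeta,s)$; since the $\Delta_{j,k}$ are pairwise disjoint, of length $\ge\varepsilon_j$, and contained in $\Delta(\zeta,2s)$, the number of such $k$ is at most $4s/\varepsilon_j$ and their total length is at most $\rho_j$, so stage $j$ contributes at most $\min(4s,\rho_j)$ to $\sum_{\lambda\in\Lambda\cap Q(\zeta,s)}(1-|\lambda|)$; summing, $\tfrac1s\sum_{\lambda\in\Lambda\cap Q(\zeta,s)}(1-|\lambda|)\le\sum_{j:\varepsilon_j\le s}\min(4,\rho_j/s)$. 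To make the right side bounded independently of $\zeta$ and $s$ I would choose the indices $n_j$ and the budgets $\rho_j$ with care: the $n_j$ are made to increase only very slowly — each $K_n$ handled only once $\varepsilon_j$ has become extremely small relative to $2^{-n}$, using that the $\varepsilon$-covering length of the $m$-null set $K_n$ is $o(1)$ — and one replaces the crude bound $\min(4s,\rho_j)$ by the local estimate that the scale-$\varepsilon_j$ cover of $K_{n_j}\cap\Delta(\zeta,2s)$ (a piece of an $m$-null set of "size" $\lesssim s$) has total length $o(s)$ as $\varepsilon_j\to0$, thereby arranging that for every $s$ only boundedly many stages produce a contribution comparable to $s$ while the rest sum to $O(s)$.

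I expect this last step — verifying \eqref{6carlesonqq} uniformly — to be the main obstacle. The naive per-stage bound $\min(4s,\rho_j)$ alone is not enough (it loses a factor of order $\log(1/s)$), so the construction must interleave the scales $\varepsilon_j$, the budgets $\rho_j$, and the growth of $n_j$ in a genuinely coordinated recursion, invoking the $m$-nullity of each $K_n$ in the windowed form above; everything else — the geometry of Stolz angles and Carleson boxes and the bookkeeping of arclengths — is routine.
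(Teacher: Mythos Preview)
Your construction and the first two verifications (the Blaschke sum and the nontangential approach via Lemma \ref{lemepsilon}) are correct and essentially match the paper. The gap is in the Carleson estimate, but not where you locate it: your per-stage bound $\min(4s,\rho_j)$ is \emph{already sufficient}, with no $\log(1/s)$ loss and no need for any ``windowed'' form of the nullity of the $K_n$. What is missing is a single elementary recursive constraint. Do not prescribe the budgets $\rho_j$ in advance; instead, at stage $j$ first fix $\rho_j\le\varepsilon_{j-1}/2$, and only then choose $\varepsilon_j$ small enough that $K_{n_j}$ admits a disjoint cover at scale $\varepsilon_j$ with total length at most $\rho_j$ (possible since $m(K_{n_j})=0$). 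Because the cover contains at least one arc of length $\ge\varepsilon_j$, one gets $\varepsilon_j\le\rho_j\le\varepsilon_{j-1}/2$, hence $\varepsilon_j\le\varepsilon_J\,2^{-(j-J)}$ and $\sum_{j>J}\rho_j\le\sum_{j>J}\varepsilon_{j-1}/2\le\varepsilon_J$ for every $J$. Now for $Q(\zeta,s)$ with $\varepsilon_J\le s<\varepsilon_{J-1}$ only stages $j\ge J$ contribute; stage $J$ contributes at most $4s$ by your disjointness count, while the remaining stages contribute at most $\sum_{j>J}\rho_j\le\varepsilon_J\le s$, so the Carleson ratio is at most $5$.

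This is precisely the paper's mechanism, packaged slightly differently: the paper allows the arcs within a stage to have varying lengths (taking them directly from Lemma \ref{lemkk} without normalising to a fixed scale) and encodes the recursion by requiring that a tail $\sum_{k\ge M_n}\delta_k$ of a prechosen summable sequence be smaller than the \emph{minimum} arclength $\varepsilon_{nN_n}$ at stage $n$; then for any $s$ there is exactly one ``critical'' stage whose contribution is controlled by disjointness, and all later stages together cost at most $s$. Your fixed-scale variant is fine, but the interleaving you thought would be delicate is one inequality per step and uses nothing about $K_n$ beyond $m(K_n)=0$.
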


\begin{proof} 
Take $\{\delta_k \}_{k=1}^\infty$ such that $0<\delta_k<1$ for all $k=1,2, \ldots$, 
and $\sum_{k=1}^\infty \delta_k<\infty$. 
Let $\{\Delta_{1k}\}_{k=1}^{N_1}$ are subarcs from Lemma \ref{lemkk} applied to $K_1$ and $\delta_1$. 
Set $\varepsilon_{1k}=\pi m(\Delta_{1k})$ ($k=1,\ldots, N_1$). We may assume that 
\begin{equation*} \varepsilon_{1N_1}=\min_{k=1,\ldots, N_1}\varepsilon_{1k}. \end{equation*}
There exists $M_1>1$ such that $\sum_{k=M_1}^\infty \delta_k< \varepsilon_{1N_1}$.
Let $\{\Delta_{2k}\}_{k=1}^{N_2}$ are subarcs from Lemma \ref{lemkk} applied to $K_2$ and $\delta_{M_1}$. 
Set   $\varepsilon_{2k}=\pi m(\Delta_{2k})$ ($k=1,\ldots, N_2$). We may assume that 
\begin{equation*} \varepsilon_{2N_2}=\min_{k=1,\ldots, N_2}\varepsilon_{2k}. \end{equation*}
There exists $M_2>M_1$ such that $\sum_{k=M_2}^\infty \delta_k< \varepsilon_{2N_2}$. 
Let $\{\Delta_{3k}\}_{k=1}^{N_3}$ are subarcs from Lemma \ref{lemkk} applied to $K_3$ and $\delta_{M_2}$, and so on. 
Set $M_0=1$. 
We obtain  the sequence $\{M_n\}_{n=0}^\infty\subset\mathbb N$, the 
 closed subarcs $\Delta_{nk}$ and the quantities $\varepsilon_{nk}>0$ ($n=1,2, \ldots$, $k=1,\ldots, N_n$). 
Note that \begin{gather*}
\varepsilon_{nN_n}\leq\varepsilon_{nk}<\varepsilon_{n-1,N_{n-1}} \ \ (k=1, \ldots, N_n) \ \ \text{and }\\  \sum_{k=1}^{N_n}\varepsilon_{nk}<\delta_{M_{n-1}}\ \ (n=1,2,\ldots).  \end{gather*}

Define $\lambda_{nk}\in\mathbb D$ such that 
\begin{equation*} \Delta_{nk}=\Delta\Bigl(\frac{\lambda_{nk}}{|\lambda_{nk}|}, 1-|\lambda_{nk}|\Bigr) \ \ (n=1,2, \ldots, \ k=1,\ldots, N_n),
 \end{equation*}
where $\Delta(\zeta, s)$ is defined by \eqref{ddelta}. Then $1-|\lambda_{nk}|=\varepsilon_{nk}$ ($n=1,2, \ldots$, $k=1,\ldots, N_n$). 
Set $\Lambda=\{\lambda_{nk}, \ n=1,2, \ldots,\ k=1,\ldots, N_n\}$. 
We will to prove that $\Lambda$ satisfies the conclusion of the lemma.

The relation \eqref{6blaschke} easy follows from the construction. 

Let $\zeta\in \cup_{n=1}^\infty K_n$. Then there exists $q\in\mathbb N$ such that $\zeta\in  K_n$ for all $n\geq q$. By construction, 
for every $n\geq q$ there exists $1\leq k\leq N_n$ such that $\zeta\in\Delta_{nk}$. By Lemma \ref{lemepsilon}, 
$|\zeta-\lambda_{nk}|\to 0$ when $n\to\infty$. Define $s(\varepsilon_{nk})$ as in Lemma \ref{lemepsilon}. Then 
  $\lambda_{nk}\in\mathcal S(\zeta, s(\varepsilon_{nk}))$ and $s(\varepsilon_{nk})\to\pi/4$ when $n\to\infty$. Consequently,  the relation \eqref{nontang} is fulfilled. 

Let $\zeta\in\mathbb T$, and let  $0<s<1$ be sufficiently close to $0$. Then there exists  $q\in\mathbb N$ such that 
$\varepsilon_{qN_q}\leq s <\varepsilon_{q-1, N_{q-1}}$. 
 Let $\lambda_{nk}\in Q(\zeta, s)$. Then $|\lambda_{nk}|\geq 1-s$. 
Consequently, $n\geq q$. We have \begin{equation}
\begin{aligned}\label{6lem1}\frac{1}{s}\sum_{n= q+1}^\infty \sum_{k=1}^{N_n} (1-|\lambda_{nk}|)&=
\frac{1}{s}\sum_{n= q+1}^\infty \sum_{k=1}^{N_n} \varepsilon_{nk}\\&
\leq\frac{1}{s}\sum_{n= q+1}^\infty \delta_{M_{n-1}}
\leq\frac{1}{s}\sum_{k=M_q}^\infty \delta_k\leq\frac{1}{s} \varepsilon_{qN_q}\leq 1.
\end{aligned}\end{equation}
Clearly, if $\lambda_{qk}\in Q(\zeta, s)$, then $\frac{\lambda_{qk}}{|\lambda_{qk}|}\in \Delta(\zeta, s)$. 
Since $\Delta_{qk}\cap\Delta_{qj}=\emptyset$, if $k\neq j$ ($1\leq k,j\leq N_q$), we have 
\begin{equation*} \operatorname{card}\{k\ : \ 1\leq k\leq N_q, \lambda_{qk}\in Q(\zeta, s), \ \Delta_{qk}\not\subset \Delta(\zeta, s)\}\leq 2.
\end{equation*}
Therefore, \begin{equation}
\begin{aligned}\label{6lem2}\frac{1}{s} &\sum_{1\leq k\leq N_q: \atop \lambda_{qk}\in Q(\zeta, s)} (1-|\lambda_{qk}|) 
\leq
\frac{1}{s}\Bigl(2s + \sum_{1\leq k\leq N_q: \atop \Delta_{qk}\subset  \Delta(\zeta, s)} (1-|\lambda_{qk}|)\Bigr)\\&
=
\frac{1}{s}\Bigl(2s + \sum_{1\leq k\leq N_q: \atop \Delta_{qk}\subset  \Delta(\zeta, s)} \pi m(\Delta_{qk})\Bigr)\leq 
\frac{1}{s}\Bigl(2s +  \pi m(\Delta(\zeta, s))\Bigr)=3.
\end{aligned}\end{equation}
The relation  \eqref{6carlesonqq} follows from \eqref{6lem1} and \eqref{6lem2}.
\end{proof}

\begin{remark} In Lemma \ref{lemcarlesonkk} it is possible that $K_n=K_{n+1}$ for all $n=1,2, \ldots$.\end{remark}

\begin{theorem}\label{thm6main} Let $\theta$ be an inner function such that $\theta(0)=0$. Then there exists 
a Blaschke product $\beta$ with simple zeroes such that $\theta$ and $\beta$ satisfy \eqref{6eq}, and the set $\Lambda$ of zeroes of $\beta$   
satisfies \eqref{6carleson}.
\end{theorem}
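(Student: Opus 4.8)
The plan is to realize the given inner function $\theta$ through its Clark measure and then feed the data into Lemmas \ref{lem61} and \ref{lemcarlesonkk}, which were prepared for exactly this purpose. Since $\theta(0)=0$, by \eqref{thetaclark} there is a singular positive Borel measure $\nu$ on $\mathbb T$ with $\nu(\mathbb T)=1$ such that $1/(1-\theta)$ is the Cauchy transform of $\nu$; this $\nu$ is the Clark measure of $\theta$. Because $\nu\perp m$, choose a Borel set $E\subset\mathbb T$ with $m(E)=0$ and $\nu(\mathbb T\setminus E)=0$; using inner regularity of $\nu$ pick compact sets $C_n\subset E$ with $\nu(C_n)>1-\tfrac1n$, and put $K_n=C_1\cup\dots\cup C_n$. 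Then $\{K_n\}_{n=1}^\infty$ is an increasing sequence of compact subsets of $\mathbb T$ with $m(K_n)=0$ for all $n$, and $\nu\bigl(\bigcup_n K_n\bigr)=\lim_n\nu(K_n)=1$.

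Next I would apply Lemma \ref{lemcarlesonkk} to $\{K_n\}$, obtaining $\Lambda\subset\mathbb D$ that satisfies \eqref{6blaschke} and \eqref{6carlesonqq} and such that $\zeta\in\operatorname{clos}(\Lambda\cap\mathcal S(\zeta,s))$ for every $\zeta\in\bigcup_n K_n$ and every $s\in(\pi/4,\pi/2)$. A glance at the construction in that lemma shows that points of $\Lambda$ from different levels $n$ have strictly different moduli, while points within one level are centers of pairwise disjoint arcs of mutually determined lengths; hence the points of $\Lambda$ are pairwise distinct. Together with \eqref{6blaschke} this makes $\beta:=\prod_{\lambda\in\Lambda}b_\lambda$ a Blaschke product whose zero set is exactly $\Lambda$ and whose zeroes are simple. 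Moreover, by the Carleson embedding theorem, \eqref{6carlesonqq} yields \eqref{6carleson} for $\Lambda$.

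Finally, fix any $s_0$ with $\pi/4<s_0<\pi/2$. Since $\bigcup_n K_n$ is contained in the set $\{\zeta\in\mathbb T:\zeta\in\operatorname{clos}(\Lambda\cap\mathcal S(\zeta,s_0))\}$, whose $\nu$-measure is therefore $1=\nu(\mathbb T)$, the hypotheses of Lemma \ref{lem61} are met by $s_0$, $\Lambda$ and $\nu$; and since $\nu$ was taken to be the Clark measure of $\theta$, the inner function produced from $\nu$ via \eqref{thetaclark} in that lemma is again $\theta$. Lemma \ref{lem61} then gives that $\theta$ and $\beta$ satisfy \eqref{6cap}, hence, by Lemma \ref{lemthetabetadense}, also \eqref{6eq}; combined with \eqref{6carleson} for $\Lambda$ this is the assertion. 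The argument is essentially an assembly, the analytic heart being Lemmas \ref{lem61} and \ref{lemcarlesonkk}; the only points that need a little care are the routine measure-theoretic fact that a singular $\nu$ is exhausted in mass by an increasing sequence of measure-zero compacta, and the verification that the $\Lambda$ furnished by Lemma \ref{lemcarlesonkk} consists of distinct points so that $\beta$ genuinely has simple zeroes.
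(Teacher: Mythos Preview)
Your proof is correct and follows essentially the same route as the paper's: Clark measure $\nu$ of $\theta$, exhaustion of $\nu$ by an increasing sequence of $m$-null compacta, then Lemma~\ref{lemcarlesonkk} followed by Lemma~\ref{lem61} and Lemma~\ref{lemthetabetadense}. You are in fact a bit more careful than the paper in spelling out the construction of the $K_n$ via inner regularity and in checking that the points of $\Lambda$ are pairwise distinct so that $\beta$ has simple zeroes, a point the paper leaves implicit.
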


\begin{proof} Let $\nu$ be defined by \eqref{thetaclark}. Since $\nu$ is singular with respect to $m$, there exists a sequence $\{K_n\}_{n=1}^\infty$ 
of compact subsets of $\mathbb T$ 
such that $K_n\subset K_{n+1}$, 
$m(K_n)=0$ for all $n=1,2, \ldots$, and $\nu(\cup_{n=1}^\infty K_n)=\nu(\mathbb T)$. Let $\Lambda$ 
 be the set from Lemma \ref{lemcarlesonkk} applied to $\{K_n\}_{n=1}^\infty$. Then  $\Lambda$    
satisfies \eqref{6carleson}, because \eqref{6carleson} and \eqref{6carlesonqq} are equivalent
 when \eqref{6blaschke} is fulfilled (see the reference before \eqref{6carleson}).  Set $\beta=\prod_{\lambda\in\Lambda}b_\lambda$. By Lemma \ref{lem61}, $\theta$ and $\beta$ satisfy \eqref{6cap}. 
By Lemma \ref{lemthetabetadense}, $\theta$ and $\beta$ satisfy \eqref{6eq}. 
\end{proof}

The following simple lemma is given for convenience of references; its proof is omitted. 

\begin{lemma} \label{lem6kerdense} Let $X\in\mathcal L(\mathcal H,\mathcal K)$ have the form 
 \begin{equation*} X=\left[\begin{matrix} X_1 & * \\ \mathbb O  & X_0\end{matrix}\right]
\end{equation*} with respect to some decompositions $\mathcal H=\mathcal H_1\oplus \mathcal H_0$ and 
$\mathcal K=\mathcal K_1\oplus \mathcal K_0$. Then  \begin{enumerate}[\upshape (i)]

\item if $\ker X_0=\{0\}$ and $\ker X_1=\{0\}$, then $\ker X=\{0\}$;

\item if  $\operatorname{clos}X_0\mathcal H_0=\mathcal K_0$ and  $\operatorname{clos}X_1\mathcal H_1=\mathcal K_1$, then 
$\operatorname{clos}X\mathcal H=\mathcal K$.
\end{enumerate}
\end{lemma}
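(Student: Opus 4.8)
The plan is to unwind the block decomposition and argue coordinatewise. Write each $h\in\mathcal H$ as $h_1\oplus h_0$ with $h_j\in\mathcal H_j$; the assumed form of $X$ says $X(h_1\oplus h_0)=(X_1h_1+Yh_0)\oplus X_0h_0$, where $Y\in\mathcal L(\mathcal H_0,\mathcal K_1)$ is the $(1,0)$-entry marked $*$. In particular $P_{\mathcal K_0}Xh=X_0h_0$ for all $h\in\mathcal H$, and $X(h_1\oplus 0)=X_1h_1\oplus 0$.

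For (i) I would argue as follows: if $Xh=0$, then $X_0h_0=P_{\mathcal K_0}Xh=0$, so $h_0=0$ by the hypothesis $\ker X_0=\{0\}$; feeding this back in, $X_1h_1=0$, so $h_1=0$ by $\ker X_1=\{0\}$, whence $h=0$. This direction is immediate.

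For (ii), set $\mathcal L=\operatorname{clos}X\mathcal H$, a closed (hence linear) subspace of $\mathcal K$. First, $X_1\mathcal H_1\oplus\{0\}=X(\mathcal H_1\oplus\{0\})\subset\mathcal L$, so $\mathcal K_1\oplus\{0\}=\operatorname{clos}(X_1\mathcal H_1)\oplus\{0\}\subset\mathcal L$. Next, given $k_0\in\mathcal K_0$, pick $h_0^{(n)}\in\mathcal H_0$ with $X_0h_0^{(n)}\to k_0$; then $X(0\oplus h_0^{(n)})=Yh_0^{(n)}\oplus X_0h_0^{(n)}\in\mathcal L$, and since $Yh_0^{(n)}\oplus 0\in\mathcal K_1\oplus\{0\}\subset\mathcal L$, linearity of $\mathcal L$ gives $0\oplus X_0h_0^{(n)}\in\mathcal L$; passing to the limit, $0\oplus k_0\in\mathcal L$. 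Hence $\{0\}\oplus\mathcal K_0\subset\mathcal L$ as well, and therefore $\mathcal L=\mathcal K$. The only mildly delicate point is invoking the linearity of the closed subspace $\mathcal L$ to cancel the (harmless but nonzero) off-diagonal term $Yh_0^{(n)}$, so that density of $X_0\mathcal H_0$ in $\mathcal K_0$ propagates to density of $X\mathcal H$ in $\mathcal K$; beyond that the argument is entirely routine and I do not expect any real obstacle.
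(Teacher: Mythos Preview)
Your argument is correct in both parts; the paper itself omits the proof of this lemma, calling it simple, so there is nothing to compare against beyond noting that your coordinatewise reasoning is exactly the routine verification one would expect.
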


The following theorem is the main result of this section. 

\begin{theorem} \label{thm6inv} Suppose that  $\theta$, $\beta\in H^\infty$ are inner functions,  $\theta(0)=0$, and 
$a$, $b\in\mathbb C$.  
Set $\varphi=a+b(\theta-1)$. Suppose that $\varphi\not\equiv 0$. Define $T$, $X$, $Y\in\mathcal L(H^2)$ as follows:
\begin{equation*} T=S+(1+(a-1)\theta)\beta\otimes\beta\overline\chi\theta+b\theta\beta\otimes P_+\overline\chi\beta,
\end{equation*}
\begin{equation*} X(\theta\beta h+\beta f+g)=(\theta-1)\beta h+a\beta f + \varphi g 
\ \ \ (h\in H^2, \ f\in\mathcal K_\theta, \ g\in\mathcal K_\beta),
\end{equation*}
\begin{equation*} Y(\beta h + g)=\theta\beta\varphi h +\theta P_{\beta H^2}\varphi g + (\theta-1)g 
\ \ \ (h\in H^2,  \ g\in\mathcal K_\beta).
\end{equation*}
Then $\theta\beta H^2$,  $\beta H^2\in\operatorname{Lat}T$, 
\begin{equation}\label{6uucong}P_{\beta\mathcal K_\theta}T|_{\beta\mathcal K_\theta}\cong U(\theta),\end{equation} 
where $U(\theta)$ is defined in \eqref{uutheta}, $YS=TY$, $XT=SX$, and $\ker Y=\{0\}$. 
Furthermore, \begin{enumerate}[\upshape (i)]

\item if $\varphi$ is outer and $a\neq 0$, then $X$ is a quasiaffinity;

\item if $\varphi$ is outer and \eqref{6eq} is fulfilled for $\theta$ and $\beta$, then $Y$ is a quasiaffinity;

\item $T$ is expansive if and only if $2\operatorname{Re}\overline ab\leq -1$, and then $1/\varphi\in H^\infty$. 
\end{enumerate}
\end{theorem}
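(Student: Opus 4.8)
The plan is to realize $T$ as a rank-two perturbation of $S$ and to compute $\langle(T^{*}T-I)x,x\rangle=\|Tx\|^{2}-\|x\|^{2}$ directly. I would set
\begin{equation*}
u_{1}=(1+(a-1)\theta)\beta,\quad v_{1}=\beta\overline\chi\theta,\quad u_{2}=b\theta\beta,\quad v_{2}=P_{+}\overline\chi\beta=S^{*}\beta,
\end{equation*}
all of which lie in $H^{\infty}$ because $\theta(0)=0$, so that $T=S+u_{1}\otimes v_{1}+u_{2}\otimes v_{2}$, where $(u\otimes v)x=\langle x,v\rangle u$. Fixing $x\in H^{2}$ and writing $c_{j}=\langle x,v_{j}\rangle$, the auxiliary identities I would use are
\begin{equation*}
\langle\chi x,\beta\rangle=\langle x,S^{*}\beta\rangle=c_{2},\qquad \langle\chi x,\theta\beta\rangle=\langle x,\overline\chi\theta\beta\rangle=c_{1}
\end{equation*}
(the second because $(\theta\beta)(0)=0$), together with $\langle\theta,1\rangle=\theta(0)=0$ and $\|\beta h\|=\|h\|$ for inner $\beta$; these give $\langle Sx,u_{1}\rangle=c_{2}+(\overline a-1)c_{1}$, $\langle Sx,u_{2}\rangle=\overline b\,c_{1}$, $\|u_{1}\|^{2}=1+|a-1|^{2}$, $\|u_{2}\|^{2}=|b|^{2}$, and $\langle u_{1},u_{2}\rangle=\overline b(a-1)$. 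Expanding $\|Sx+c_{1}u_{1}+c_{2}u_{2}\|^{2}$, collecting the $|c_{1}|^{2}$, $\overline{c_{1}}c_{2}$, $|c_{2}|^{2}$ terms and simplifying via $|a-1|^{2}=|a|^{2}-2\operatorname{Re}a+1$ should yield
\begin{equation*}
\|Tx\|^{2}-\|x\|^{2}=|a|^{2}|c_{1}|^{2}+2\operatorname{Re}\bigl[(1+\overline a b)\,\overline{c_{1}}c_{2}\bigr]+|b|^{2}|c_{2}|^{2}.
\end{equation*}

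Next I would observe that this is the Hermitian form of the matrix $M=\left[\begin{smallmatrix}|a|^{2}&1+\overline a b\\ \overline{1+\overline a b}&|b|^{2}\end{smallmatrix}\right]$ evaluated at $(c_{1},c_{2})$. Since $\beta$ is non-constant, $v_{1}$ and $v_{2}$ are linearly independent — a relation $\alpha_{1}v_{1}+\alpha_{2}v_{2}=0$ gives, after multiplication by $\chi$ and use of $\theta(0)=0$, $\beta(\alpha_{1}\theta+\alpha_{2})=\alpha_{2}\beta(0)$, and taking moduli on $\mathbb T$ forces $\alpha_{1}=\alpha_{2}=0$ — so $x\mapsto(c_{1},c_{2})$ maps $H^{2}$ onto $\mathbb C^{2}$. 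Hence $T$ is expansive iff $M\geq0$; as $|a|^{2},|b|^{2}\geq0$ this amounts to $\det M\geq0$, and $\det M=|a|^{2}|b|^{2}-|1+\overline a b|^{2}=-1-2\operatorname{Re}\overline a b$. Therefore $T$ is expansive precisely when $2\operatorname{Re}\overline a b\leq-1$.

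For the last claim, assuming $2\operatorname{Re}\overline a b\leq-1$ one has $\overline a b\neq0$ (so $b\neq0$) and $|a-b|^{2}=|a|^{2}+|b|^{2}-2\operatorname{Re}\overline a b\geq|b|^{2}+1$, hence $|a-b|>|b|$. Writing $\varphi=(a-b)+b\theta$ and using $|\theta(z)|\leq1$ on $\mathbb D$, I would estimate $|\varphi(z)|\geq|a-b|-|b|\,|\theta(z)|\geq|a-b|-|b|\geq\sqrt{|b|^{2}+1}-|b|>0$ for all $z\in\mathbb D$; so $\inf_{\mathbb D}|\varphi|>0$, i.e. $\varphi$ is invertible in $H^{\infty}$ and $1/\varphi\in H^{\infty}$ (in particular $\varphi\not\equiv0$).

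The step I expect to be the real work is the expansion that produces the clean quadratic form above: one must track every cross term and repeatedly invoke $\theta(0)=0$ to collapse the inner products. Once that identity is in hand, the expansiveness criterion is elementary $2\times2$ linear algebra and the $1/\varphi$ statement is a one-line minimum-modulus estimate.
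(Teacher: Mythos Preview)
Your proposal addresses only part~(iii) and, for that part, is correct and essentially the paper's argument. Both recognize $T^{*}T-I$ as rank two with range in $\operatorname{span}\{\beta\overline\chi\theta,\,P_{+}\overline\chi\beta\}$ and reduce expansiveness to positive semidefiniteness of a $2\times2$ Hermitian matrix with determinant $-1-2\operatorname{Re}\overline a b$; the paper writes this matrix in the orthonormal basis $\{\beta\overline\chi\theta,\,P_{+}\overline\chi\beta/\|P_{+}\overline\chi\beta\|\}$ (orthogonality follows from $\theta(0)=0$), while you compute the quadratic form $\|Tx\|^{2}-\|x\|^{2}$ directly in the coordinates $(c_{1},c_{2})$ and obtain a congruent matrix. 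The $1/\varphi\in H^{\infty}$ step is identical: both derive $|a-b|>|b|$ from $2\operatorname{Re}\overline a b\leq-1$ and bound $|\varphi|=|(a-b)+b\theta|$ below on $\mathbb D$. (Both proofs tacitly assume $\beta$ is non-constant so that $P_{+}\overline\chi\beta\neq0$.)

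The remaining assertions --- $\theta\beta H^{2},\,\beta H^{2}\in\operatorname{Lat}T$, the equivalence~\eqref{6uucong}, the intertwinings $YS=TY$ and $XT=SX$, $\ker Y=\{0\}$, and parts~(i)--(ii) --- are not treated in your proposal. The paper records the intertwinings and invariances as direct verifications, and for (i), (ii) and $\ker Y=\{0\}$ it writes $X$ and $Y$ in block upper-triangular form with respect to $H^{2}=\beta H^{2}\oplus\mathcal K_{\beta}$ (and $H^{2}=\theta\beta H^{2}\oplus\mathcal K_{\theta\beta}$ for $Y$), identifies the diagonal blocks as multiplication by $\theta\varphi$, by $\theta-1$, and as $\varphi(S(\beta))$, and then applies Lemma~\ref{lem6kerdense} together with~\eqref{31} and the outerness of $\theta-1$ and $\varphi$. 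These pieces are short but are required for the full statement.
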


\begin{proof} Many statements of the theorem can be checked directly.  Unitary equivalence in \eqref{6uucong} is given by 
the operator of  multiplication by $\beta$. 

The definition of $Y$ implies that  $Y$ has the form 
\begin{equation*} Y=\left[\begin{matrix} Y_1 & * \\ \mathbb O  & Y_0\end{matrix}\right]
\end{equation*} with respect to the decompositions $H^2=\beta H^2\oplus \mathcal K_\beta$ and 
 $H^2=\theta\beta H^2\oplus \mathcal K_{\theta\beta}$, where $Y_1$ and $Y_0$ are the operators of multiplication by 
$\theta\varphi$ and $\theta-1$, respectively. The equality $\ker Y=\{0\}$ and the statement (ii)  follow from Lemma \ref{lem6kerdense}. 

The definition of $X$ implies that  $X$ has the form 
\begin{equation*} X=\left[\begin{matrix} X_1 & * \\ \mathbb O  & X_0\end{matrix}\right]
\end{equation*} with respect to the decomposition
$H^2=\beta H^2\oplus \mathcal K_\beta$. The operator  $X_0\in\mathcal L(\mathcal K_\beta)$ acts by the formula 
 $X_0g=P_{\mathcal K_\beta}\varphi g$ ($g\in\mathcal K_\beta$), that is, $X_0=\varphi(S(\beta))$ 
(see {\cite[Theorem III.2.1]{nfbk}}). 
If $\varphi$ is outer, then $\varphi(S(\beta))$ is a quasiaffinity by {\cite[Prop. III.3.1]{nfbk}}. 
Furthermore, $X_1(\theta\beta h+\beta f)=(\theta-1)\beta h+a\beta f$ ($h\in H^2$, $ f\in\mathcal K_\theta$). Consequently, 
$\operatorname{clos}X_1\theta\beta H^2=\beta H^2$, because $\theta-1$ is outer. 
If $a\neq 0$, then $\ker X_1=\{0\}$ by \eqref{31}. The statement (i) follows from Lemma \ref{lem6kerdense}. 

A computation shows that $\{\beta\overline \chi\theta, \ P_+\overline \chi\beta/\|P_+\overline \chi\beta\|\}$ is an orthonormal basis of the  
range of $T^*T-I$, and 
\begin{equation*} A:=\left[\begin{matrix}|a|^2 & (\overline ab+1)\|P_+\overline \chi\beta\|\\
(a\overline b+1)\|P_+\overline \chi\beta\| & |b|^2\|P_+\overline \chi\beta\|^2\end{matrix}\right]
\end{equation*}
 is the matrix of the restriction of $T^*T-I$ on its range in this basis.   Thus, $T$ is expansive if and only if $A\geq 0$. 
Furthermore,  $A\geq 0$ if and only if $|a|^2|b|^2\geq |\overline ab+1|^2$,  
which is equivalent to the inequality $2\operatorname{Re}\overline ab\leq -1$. 
On the other hand, if $|a-b|>|b|$, then  $1/\varphi\in H^\infty$. Clearly, $|a-b|>|b|$ if and only if $|a-b|^2>|b|^2$, 
which is equivalent to the inequality $2\operatorname{Re}\overline ab< |a|^2$, which follows 
from the inequality $2\operatorname{Re}\overline ab\leq -1$.  
The statement (iii) is proved.
\end{proof}

\begin{corollary} \label{cor6} There exists an expansive operator $T$ with the following properties:  $T\sim S$, and there exists 
$\mathcal M\in \operatorname{Lat}T$ such that $\mathcal M\neq\operatorname{clos}Y\vartheta H^2$ for every $Y\in\mathcal I(S,T)$ and 
inner function $\vartheta$. 
\end{corollary}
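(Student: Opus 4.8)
The plan is to realise $T$ inside the family of Theorem~\ref{thm6inv}, choosing the parameters so that $T$ is at once expansive and quasisimilar to $S$, and then to take $\mathcal M=\beta H^2$ as the invariant subspace that cannot be produced by an intertwining quasiaffinity. Concretely, I would fix any non-constant inner function $\theta$ with $\theta(0)=0$ and, by Theorem~\ref{thm6main}, pick a Blaschke product $\beta$ for which $\theta$ and $\beta$ satisfy \eqref{6eq} (then $\beta$ is necessarily non-constant, since otherwise \eqref{6eq} would force $\mathcal K_\theta=\{0\}$). In Theorem~\ref{thm6inv} I would take $a=1$ and $b=-1$, so that $\varphi=a+b(\theta-1)=2-\theta$; hence $1/\varphi=1/(2-\theta)\in H^\infty$, in particular $\varphi$ is outer, and $2\operatorname{Re}\overline ab=-2\le-1$, so part~(iii) gives that $T$ is expansive. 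Since $\varphi$ is outer and $a=1\ne0$, part~(i) shows that $X$ is a quasiaffinity, so $T\prec S$; since $\varphi$ is outer and \eqref{6eq} holds, part~(ii) shows that $Y$ is a quasiaffinity, so $S\prec T$. Hence $T\sim S$.

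Next I would identify $T$ restricted to $\mathcal M=\beta H^2\in\operatorname{Lat}T$. With $a=1$ the coefficient $1+(a-1)\theta$ equals $\mathbf 1$, and since $(\beta f,\overline\chi\beta)=\int_{\mathbb T}f\chi\,\mathrm{d}m=0$ for every $f\in H^2$, the summand $b\theta\beta\otimes P_+\overline\chi\beta$ annihilates $\beta H^2$; a direct computation then gives $T(\beta f)=\chi\beta f+(f,\overline\chi\theta)\beta=\beta\bigl(\chi f+(f,\overline\chi\theta)\mathbf 1\bigr)$. Conjugating by the unitary $f\mapsto\beta f$ of $H^2$ onto $\beta H^2$ yields $T|_{\mathcal M}\cong S+\mathbf 1\otimes\overline\chi\theta=:T_0$. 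As a restriction of the expansive operator $T$, the operator $T_0$ is expansive; it is the rank-one perturbation $T_0=S-\mathbf 1\otimes S^*(1-\theta)$ treated in Lemma~\ref{lemgsimss} (with outer datum $1-\theta$), and directly $\ker T_0^*=\mathbb C(1-\theta)$, so $\dim\ker T_0^*=1$. Moreover $S\not\prec T_0$, by the Example in Section~3 concerning $S+\mathbf 1\otimes\overline\chi\theta$.

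To finish, suppose for contradiction that $\beta H^2=\operatorname{clos}Y'\vartheta H^2$ for some $Y'\in\mathcal I(S,T)$ and some inner function $\vartheta$. Then $Y'(\vartheta H^2)\subset\beta H^2$, so $Y'|_{\vartheta H^2}\in\mathcal L(\vartheta H^2,\beta H^2)$ intertwines $S|_{\vartheta H^2}$ with $T|_{\beta H^2}$ and has dense range; since $S|_{\vartheta H^2}\cong S$ and $T|_{\beta H^2}\cong T_0$, this gives $S\buildrel d\over\prec T_0$. By Theorem~\ref{thmmain0} (applied with $N=1$) it follows that $T_0\sim S$, whence $S\prec T_0$, contradicting the previous paragraph. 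Therefore no such $Y'$ and $\vartheta$ exist, and the pair $T$, $\mathcal M=\beta H^2$ proves the corollary.

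The substantive work is carried out in Theorems~\ref{thm6inv} and~\ref{thm6main}, so there is no serious obstacle left inside this corollary. The only care needed is bookkeeping: checking that the single choice $a=1$, $b=-1$ simultaneously makes $T$ expansive (part~(iii)), keeps $X$ a quasiaffinity (which uses $a\ne0$ in part~(i)), and collapses $T|_{\beta H^2}$ onto the already analysed operator $S+\mathbf 1\otimes\overline\chi\theta$, together with noting that the condition \eqref{6eq} required by part~(ii) for $S\prec T$ is exactly what Theorem~\ref{thm6main} supplies.
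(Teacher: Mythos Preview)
Your proof is correct and follows the paper's construction: pick $\theta,\beta$ via Theorem~\ref{thm6main}, build $T$ from Theorem~\ref{thm6inv}, and take $\mathcal M=\beta H^2$. The only difference is the endgame. The paper derives the contradiction directly from \eqref{6uucong}: since $\theta\beta H^2\in\operatorname{Lat}T$, the subspace $\beta\mathcal K_\theta$ is invariant for $(T|_{\mathcal M})^*$, so $S\buildrel d\over\prec T|_{\mathcal M}$ would give $U(\theta)^*\buildrel i\over\prec S^*$, impossible because $U(\theta)\cong U_\nu$ is singular unitary. You instead specialise to $a=1$ so that $T|_{\beta H^2}\cong S+\mathbf 1\otimes\overline\chi\theta$, and then feed $S\buildrel d\over\prec T_0$ into Theorem~\ref{thmmain0} to obtain $S\prec T_0$, contradicting the first Example of Section~3. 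Your detour through the Section~4 machinery is heavier than needed---Lemma~\ref{lemfred} alone already upgrades $S\buildrel d\over\prec T_0$ to $S\prec T_0$---but the argument is sound; the paper's route stays entirely within Section~6 and works for every admissible pair $a,b$, not just $a=1$.
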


\begin{proof} Take $a$, $b\in\mathbb C$ such that $2\operatorname{Re}\overline ab\leq -1$, and inner functions
$\theta$, $\beta\in H^\infty$ such that $\theta(0)=0$ and \eqref{6eq} is fulfilled for $\theta$ and $\beta$. 
(Such inner functions exist by Theorem \ref{thm6main}.) Define $T$ as in Theorem \ref{thm6inv} and set $\mathcal M= \beta H^2$. 
By Theorem \ref{thm6inv}, $T\sim S$. If $\mathcal M=\operatorname{clos}Y\vartheta H^2$ for some $Y\in\mathcal I(S,T)$, then 
$S\buildrel d\over\prec T|_{\mathcal M}$. Consequently, $(T|_{\mathcal M})^*\buildrel i\over\prec S^*$. 
By \eqref{6uucong}, $U(\theta)^* \buildrel i\over\prec S^*$, a contradiction (see \eqref{thetanucong}).
\end{proof}

\end{document}